\documentclass[11pt,reqno]{amsart}
\usepackage[hypertex]{hyperref}

\setlength{\textheight}{8.8in}
\setlength{\topmargin}{-.1in}
\setlength{\textwidth}{6in}
\setlength{\oddsidemargin}{.26in}
\setlength{\evensidemargin}{.26in}
\parskip=.1in

\usepackage{xcolor}
\usepackage{amsmath,amsthm}
\usepackage{amssymb}
\usepackage{euscript}
\usepackage[frame,ps,matrix,arrow,curve,rotate,all,2cell,tips]{xy}
\usepackage{epic,eepic}\setlength{\unitlength}{.35mm}

\numberwithin{equation}{subsection}

\newcommand{\sqsp}{\renewcommand{\baselinestretch}{1.25}\tiny\normalsize}

\raggedbottom
\tolerance=9000
\hbadness=10000
\hfuzz=1.5pt
\setcounter{secnumdepth}{3}
\setcounter{tocdepth}{2}




\newtheorem{theorem}[subsubsection]{Theorem}
\newtheorem{lemma}[subsubsection]{Lemma}

\newtheorem{proposition}[subsubsection]{Proposition}
\newtheorem{corollary}[subsubsection]{Corollary}


\theoremstyle{definition}
\newtheorem{definition}[subsubsection]{Definition}
\newtheorem{example}[subsubsection]{Example}
\newtheorem{remark}[subsubsection]{Remark}

\newcommand{\frakC}{\mathfrak{C}}

\newcommand{\sfE}{\mathsf{E}}
\newcommand{\sfI}{\mathsf{I}}

\newcommand{\sfO}{\mathsf{O}}
\newcommand{\sfP}{\mathsf{P}}
\newcommand{\pbar}{\overline{\sfP}}
\newcommand{\sfQ}{\mathsf{Q}}

\newcommand{\Se}{\mathsf{Se}}
\newcommand{\sfT}{\mathsf{T}}

\newcommand{\bSe}{\mathbf{Se}}
\newcommand{\set}{\mathbf{Set}}
\newcommand{\bk}{\mathbf{k}}
\newcommand{\bP}{\mathbf{P}}
\newcommand{\bone}{\mathbf{1}}
\newcommand{\bM}{\mathbf{M}}

\newcommand{\ttone}{\mathtt{1}}

\newcommand{\catc}{\mathcal{C}}

\newcommand{\cate}{\mathcal{E}}

\newcommand{\catm}{\mathcal{M}}

\newcommand{\catp}{\mathcal{P}}

\newcommand{\catrcf}{\mathcal{RCF}}

\newcommand{\ua}{\underline{a}}
\newcommand{\ub}{\underline{b}}
\newcommand{\uc}{\underline{c}}
\newcommand{\ud}{\underline{d}}

\newcommand{\uy}{\underline{y}}
\newcommand{\ualpha}{\underline{\alpha}}
\newcommand{\ubeta}{\underline{\beta}}
\newcommand{\ugamma}{\underline{\gamma}}
\newcommand{\udelta}{\underline{\delta}}
\newcommand{\uepsilon}{\underline{\varepsilon}}

\newcommand{\xbar}{\overline{x}}
\newcommand{\zbar}{\overline{z}}

\newcommand{\Xtilde}{\widetilde{X}}

\newcommand{\boxv}{\boxtimes_v}
\newcommand{\boxh}{\boxtimes_h}

\newcommand{\sigmaec}{\mathbf{\Sigma}^\frakC_\cate} 
\newcommand{\sigmac}{\mathbf{\Sigma}^\frakC}
\newcommand{\prop}{\mathbf{PROP}} 
\newcommand{\propc}{\mathbf{PROP}^{\frakC}}

\newcommand{\operadc}{\mathbf{Operad}^\frakC}
\newcommand{\mon}{\mathbf{Mon}} 
\newcommand{\alg}{\mathbf{Alg}} 
\newcommand{\algn}{\mathbf{Alg}^n} 
\newcommand{\propset}{\set^{\bP(\sfP)}} 
\newcommand{\propsetQ}{\set^{\bP(\sfQ)}} 
\newcommand{\kmodule}{\mathbf{Mod}(\bk)}  
\newcommand{\propmodule}{\kmodule^{\bP(\sfP)}}  
\newcommand{\stack}{\mathbf{Stack}} 

\DeclareMathOperator{\elt}{elt}

\DeclareMathOperator*{\colim}{colim}


\begin{document}

\title[Higher dimensional algebras via colored PROPs]{Higher dimensional algebras via colored PROPs}
\author{Donald Yau}

\begin{abstract}
Starting from any unital colored PROP $\sfP$, we define a category $\bP(\sfP)$ of shapes called $\sfP$-propertopes.  Presheaves on $\bP(\sfP)$ are called $\sfP$-propertopic sets.  For $0 \leq n \leq \infty$ we define and study $n$-time categorified $\sfP$-algebras as $\sfP$-propertopic sets with some lifting properties.  Taking appropriate PROPs $\sfP$, we obtain higher categorical versions of polycategories, $2$-fold monoidal categories, topological quantum field theories, and so on.
\end{abstract}

\keywords{Colored PROP, propertope, propertopic set, higher dimensional algebra, higher dimensional category.}

\subjclass[2000]{18A05, 18D50, 55P99}

\address{Department of Mathematics\\
    The Ohio State University at Newark\\
    1179 University Drive\\
    Newark, OH 43055}
\email{dyau@math.ohio-state.edu}

\date{\today}
\maketitle

\tableofcontents

\section{Introduction}

\sqsp


The purpose of this paper is to study higher dimensional versions of algebraic structures.  By \emph{higher dimensional algebras} we mean higher categorical analogues of algebras.  The process of going from algebras to higher dimensional algebras is called \emph{categorification}.  For example, a set is the simplest kind of algebra, one in which there is no further structure.  A category is a $1$-time categorification of a set.  Likewise, a monoidal category is a $1$-time categorification of a monoid.  Roughly speaking, higher category theory is the study of $n$-time categorified sets, monoids, commutative monoids, etc.  We aim to study $n$-time categorified algebras in general for $0 \leq n \leq \infty$.

We have several specific goals for this paper:
\begin{enumerate}
\item
We develop a concept of higher dimensional algebra for a sufficiently general class of algebras.  In particular, we consider not only algebras with multiple inputs and one output (e.g., a monoid), but also those that have multiple inputs and multiple outputs.  Such algebraic structures include bialgebras, polycategories (Example ~\ref{ex:polycategory}), and the Segal category (Example ~\ref{ex:CFT}).
\item
Our definition of higher dimensional algebra is sufficiently simple and intuitive so that they can be readily used in applications, including topological field theories in mathematical physics and higher category theory itself.
\item
We organize the coherence laws of our higher dimensional algebras in a systematic and trackable way.
\end{enumerate}


To be more precise, our algebras are algebras over an arbitrary colored PROP.  A PROP \cite{maclane1}, short for \textbf{pro}duct and \textbf{p}ermutation category, is a very general algebraic machinery that can describe algebraic structures with multiple inputs and multiple outputs.  PROPs have long been used in algebraic topology to study loop spaces \cite{adams,bv}.  Most familiar types of algebras -- e.g., associative, Lie, commutative, Gerstenhaber, and Hopf -- are algebras over certain PROPs.  Moreover, the Segal PROP \cite{segalA,segalB,segalC}, made up of Riemann surfaces with boundary holes, in topological field theories is another example of a PROP (Example ~\ref{ex:CFT}).

A \emph{colored} PROP is a generalization of a PROP that can encode even more general types of algebraic structures.  For example, diagrams of algebras over a PROP, modules over an algebra, module-algebras, and its variants (module-coalgebras, comodule-algebras, and comodule-coalgebras) are algebras over certain colored PROPs.  The Segal PROP has an obvious colored analogue in which the boundary holes in the Riemann surfaces are allowed to have different circumferences.  Closely related is the colored PROP $\catrcf(g)$ \cite{chataur,cg} in string topology that is built from spaces of reduced metric Sullivan chord diagrams with genus $g$.  Multi-categories (a.k.a.\ colored operads) are to operads what colored PROPs are to PROPs.  In the simplest case, the set of colors $\frakC$ is the one-element set, and $\{*\}$-colored PROPs are just PROPs.

Categorification involves a level-shifting process.  For example, sets and functions are replaced by categories and functors.  Extra structures on sets are replaced by functors on categories.  The equations satisfied by these extra structures are replaced by natural transformations, which satisfy their own coherence laws.  For example, in a monoidal category, the monoidal product $\otimes$ is not associative, but there is an associator natural isomorphism.  The associator is required to satisfy a pentagon axiom \cite[Chapter VII]{maclane2}.  See \cite{baez1,bd2} for an introduction to categorification.


We achieve the level-shifting effect of categorification by the so-called \emph{slice construction}.  This construction was pioneered by Baez and Dolan \cite{bd1}, in which the construction was defined for colored operads.  The slice PROP, which we also call \emph{higher PROP}, $\sfP^+$ of a colored PROP $\sfP$ has the following properties:
\begin{enumerate}
\item
The set of colors in $\sfP^+$ is the set of operations ($=$ elements) in $\sfP$.
\item
The operations in $\sfP^+$ are the reduction laws in $\sfP$.
\item
The reduction laws in $\sfP^+$ are the ways of combining reduction laws in $\sfP$ to obtain other reduction laws.
\end{enumerate}
What we mean by a \emph{reduction law} here is an equation stating that the composite of some elements is equal to some element.  The slice construction can be iterated, giving rise to the higher PROPs $\sfP^{n+} = (\sfP^{(n-1)+})^+$ for $n \geq 1$ with $\sfP^{0+} = \sfP$.


The higher PROPs are used to define higher dimensional $\sfP$-algebras, which we call \emph{weak-$n$ $\sfP$-algebras}, using some ideas from homotopy theory.  The elements in the higher PROP $\sfP^{n+}$ are called \emph{$n$-dimensional $\sfP$-propertopes}.  The name \emph{propertope} is an abbreviation for \textbf{pro}duct, \textbf{per}mutation, and poly\textbf{tope}.  We construct a category $\bP(\sfP)$ consisting of the $\sfP$-propertopes of all dimensions, in which morphisms are generated by certain \emph{face maps}.  These face maps satisfy some consistency conditions that are analogous to the simplicial identities.  By analogy with simplicial sets, we look at the presheaf category $\propset$, whose objects are called \emph{$\sfP$-propertopic sets}.

In a $\sfP$-propertopic set, there are elements called \emph{$k$-cells} with shapes corresponding to $k$-dimensional $\sfP$-propertopes for $k \geq 0$, \emph{horns}, and \emph{boundaries}.  For $n$ in the range $0 \leq n \leq \infty$, we define a \emph{weak-$n$ $\sfP$-algebra} as a $\sfP$-propertopic set in which certain horns and boundaries have (unique) extensions to cells, called \emph{fillings}.  Under this analogy with simplicial sets, weak-$n$ $\sfP$-algebras are the $\sfP$-propertopic analogues of homotopy $n$-types when $n < \infty$.  When $n = \infty$, weak-$\omega$ $\sfP$-algebras are the $\sfP$-propertopic analogues of Kan complexes.



In a weak-$n$ $\sfP$-algebra, the $k$-cells play the roles of $k$-morphisms in higher category theory.  For $0 \leq k < n$, $k$-cells can be composed via $(k+1)$-cells using the horn-filling property of a weak-$n$ $\sfP$-algebra.  These compositions are in general \emph{not} a function, but a multi-valued function.  On the other hand,  composition of the $n$-cells (if $n < \infty$) is an honest operation, which comes from the unique horn-filling and boundary-filling properties in a weak-$n$ $\sfP$-algebra.  In fact, all the higher cells together (i.e., $m$-cells for $m \geq n$) form a $\sfP^{n+}$-algebra.


Since we start with a colored PROP $\sfP$, the (multi-valued) compositions in a weak-$n$ $\sfP$-algebra have multiple inputs and multiple outputs.  By allowing compositions to have multiple inputs and multiple outputs, our theory of higher dimensional $\sfP$-algebras should be particularly suitable for applications in topological field theories, logic, and computer science.  Some such applications in topological field theories are briefly discussed in \S\ref{subsec:hdtft}.


In higher category theory, one major issue is to organize the coherence laws of the higher morphisms.  In our theory of weak-$n$ $\sfP$-algebras, \emph{coherence laws are treated as compositions}.  Coherence laws about the $k$-cells are the relations among the $k$-cells.  Such relations are exactly the ways in which the $k$-cells are composed via the $(k+1)$-cells.  The relations among these $(k+1)$-cells are the ways in which they are composed via the $(k+2)$-cells, and so forth.  This characteristic of our weak-$n$ $\sfP$-algebras is similar to Leinster's definition of a weak $n$-category \cite[Chapter 9]{leinster}, in which coherence laws and compositions are also treated as the same concept called \emph{contraction}.

\subsection{Organization of the paper}

Here is a brief summary of the remaining sections in this paper.  There is a summary at the beginning of each section as well.

In \S\ref{sec:PROP} we discuss some basics of colored PROPs and their relationships with colored operads.  Some examples of colored PROPs are given at the end of that section.  In \S\ref{sec:higherPROP} we construct the slice PROP $\sfP^+$ of a colored PROP $\sfP$ and discuss its properties.  In \S\ref{sec:propertopes} we construct the category of $\sfP$-propertopes and discuss the combinatorics of drawing $\sfP$-propertopes.  Then we discuss $\sfP$-propertopic sets and the concepts of cells, horns, boundaries, and fibrations.  In \S\ref{sec:weakalgebra} we define weak-$n$ $\sfP$-algebras and study their structures.  In \S\ref{sec;somehighercat} we discuss several specific types of weak-$n$ $\sfP$-algebras that are relevant in some applications, including higher category theory, higher topological field theories, and higher algebraic geometry.



The only necessary prerequisite to read this paper is some basic knowledge of category theory.  We do not assume any knowledge of higher (strict or weak) categories, except for motivational discussions.  For the reader who is interested in weak $n$-categories (instead of weak-$n$ $\sfP$-algebras in general), we recommend \cite{leinster01,leinster}, in which various definitions of weak $n$-categories are described, including \cite{bd1,batanin,hmp1,hmp2,hmp3,joyal,may01,penon,street,simpson,tam}.

Although our approach to weak-$n$ $\sfP$-algebras is a generalization of \cite{bd1}, we do not assume knowledge of the Baez-Dolan opetopes and the slice construction for colored operads.  We will describe colored PROPs, slice PROPs, $\sfP$-propertopes, and so forth from scratch.






\section{Colored PROPs and algebras}
\label{sec:PROP}

In \S\ref{subsec:coloredbimodules} we introduce colored $\Sigma$-bimodules and colored PROPs. In \S\ref{subsec:coloredoperad} the adjunction between colored operads and colored PROPs is constructed (Theorem ~\ref{operadpropadj}), and the consequence on algebras is discussed (Corollary ~\ref{Opropalgebra}).  In \S\ref{subsec:PROPex} several examples of colored PROPs and their algebras are discussed.  These examples include polycategories, bicommutative bimonoids, and the Segal PROP.

\subsection{Setting}

We work over the base category $\set$ of sets and functions.  The materials in this section are actually valid in a closed symmetric monoidal category $(\cate, \otimes, \bone)$ with all small limits and colimits \cite[Ch.VII and Ch.XI]{maclane2}.  For example, one can easily adapt the discussion in this section to the categories of $\bk$-modules (where $\bk$ is a field of characteristic $0$), chain complexes of $\bk$-modules, simplicial sets, topological spaces, symmetric spectra \cite{hss}, and $S$-modules \cite{ekmm}.

If $\catc$ is a category and $X$ and $Y$ are objects in $\catc$, then $\catc(X,Y)$ denotes the set of morphisms from $X$ to $Y$ in $\catc$.

\subsection{Colored $\Sigma$-bimodules and colored PROPs}
\label{subsec:coloredbimodules}

Colored PROPs are colored $\Sigma$-bimodules equipped with a horizontal composition and a compatible vertical composition (Definition ~\ref{def:Ccoloredprop}).  We first discuss colored $\Sigma$-bimodules.

Fix a non-empty set $\frakC$ once and for all.  The elements in $\frakC$ are called \textbf{colors}.  Our PROPs have a base set of colors $\frakC$.  The simplest case is when $\frakC = \{*\}$, which gives $1$-colored PROPs.

Let $\mathcal{P}(\frakC)$ denote the category whose objects, called \textbf{profiles} or \textbf{$\frakC$-profiles}, are finite non-empty sequences of colors.  If
\[
\ud = (d_1, \ldots , d_m) \in \catp(\frakC),
\]
then we write $|\ud| = m$.  Our convention is to use a normal alphabet, possibly with a subscript (e.g., $d_1$) to denote a color and to use an underlined alphabet (e.g., $\ud$) to denote an object in $\catp(\frakC)$.

Permutations $\sigma \in \Sigma_{|\ud|}$ act on such a profile $\ud$ from the left by permuting the $|\ud|$ colors.  Given two profiles $\uc = (c_1, \ldots , c_n)$ and $\ud = (d_1, \ldots , d_m)$, a \textbf{morphism}
\[
\sigma \colon \uc \to \ud \in \mathcal{P}(\frakC)
\]
is a permutation $\sigma \in \Sigma_{|\uc|}$ such that
\[
\sigma(\uc) = \ud.
\]
Such a morphism exists if and only if $\ud$ is in the orbit of $\uc$.  Of course, if such a morphism exists, then $|\uc| = |\ud|$.  The \textbf{orbit type} of a $\frakC$-profile $\uc$ is denoted by $[\uc]$.

To emphasize that the permutations act on the profiles from the left, we will also write $\catp(\frakC)$ as $\catp_l(\frakC)$.  If we let the permutations act on the profiles from the right instead, then we get an equivalent category $\catp_r(\frakC)$.

Given profiles as above, we define
\begin{equation}
\label{eq:concat}
(\uc, \ud) = (c_1, \ldots , c_n, d_1, \ldots , d_m) \in \catp(\frakC),
\end{equation}
the concatenation of $\uc$ and $\ud$.

\begin{definition}
The category of \textbf{$\frakC$-colored $\Sigma$-bimodules} over $\set$ is defined to be the diagram category $\set^{\catp_l(\frakC) \times \catp_r(\frakC)}$.  To simplify the typography, we will write $\sigmac$ for $\set^{\catp_l(\frakC) \times \catp_r(\frakC)}$.
\end{definition}

In other words, a $\frakC$-colored $\Sigma$-bimodule is a functor
\[
\sfP \colon \catp_l(\frakC) \times \catp_r(\frakC) \to \set,
\]
and a morphism of $\frakC$-colored $\Sigma$-bimodules is a natural transformation of such functors.  Unpacking this definition, one obtains the following concrete description of a $\frakC$-colored $\Sigma$-bimodule.

\begin{proposition}
\label{prop:sigmabimodule}
A $\frakC$-colored $\Sigma$-bimodule $\sfP$ consists of exactly the following data:
\begin{enumerate}
\item
For any $\frakC$-profiles $\ud \in \catp_l(\frakC)$ and $\uc \in \catp_r(\frakC)$, it has a set
\[
\sfP\binom{\underline{d}}{\underline{c}} = \sfP\binom{d_1, \ldots , d_m}{c_1 , \ldots , c_n}.
\]
\item
For any permutations $\sigma \in \Sigma_{|\ud|}$ and $\tau \in \Sigma_{|\uc|}$, it has a map
\begin{equation}
\label{eq:sigmatauaction}
(\sigma; \tau) \colon \sfP\binom{\ud}{\uc} \to \sfP\binom{\sigma\ud}{\uc \tau} \in \set
\end{equation}
such that:
\begin{enumerate}
\item $(1;1)$ is the identity morphism,
\item $(\sigma' \sigma; \tau\tau') = (\sigma';\tau') \circ (\sigma;\tau)$, and
\item $(1;\tau) \circ (\sigma; 1) = (\sigma; \tau) = (\sigma; 1) \circ (1; \tau)$.
\end{enumerate}
\end{enumerate}
Moreover, a morphism $f \colon \sfP \to \sfQ$ of $\frakC$-colored $\Sigma$-bimodules consists of color-preserving maps
\[
\left\{\sfP\binom{\underline{d}}{\underline{c}} \xrightarrow{f} \sfQ\binom{\underline{d}}{\underline{c}} \colon (\ud; \uc) \in \catp_l(\frakC) \times \catp_r(\frakC) \right\}
\]
such that the square
\[
\SelectTips{cm}{10}
\xymatrix{
\sfP\dbinom{\ud}{\uc} \ar[r]^-f  \ar[d]_-{(\sigma;\tau)} & \sfQ\dbinom{\ud}{\uc} \ar[d]^-{(\sigma;\tau)} \\
\sfP\dbinom{\sigma\ud}{\uc\tau}\ar[r]^-f & \sfQ\dbinom{\sigma\ud}{\uc\tau}
}
\]
is commutative for any permutations $\sigma \in \Sigma_{|\ud|}$ and $\tau \in \Sigma_{|\uc|}$.
\end{proposition}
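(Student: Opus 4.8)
The plan is to prove this by directly unpacking the definition of a functor out of the product category $\catp_l(\frakC) \times \catp_r(\frakC)$ and of a natural transformation between two such functors, then matching the resulting data against conditions (1), (2), (a)--(c) and the naturality square. This is essentially a bookkeeping argument; the only genuine care needed is in tracking the opposite left/right conventions for the permutation actions so that composition comes out in the asserted order.

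First I would record the structure of the index category. An object of $\catp_l(\frakC) \times \catp_r(\frakC)$ is a pair $(\ud, \uc)$ with $\ud \in \catp_l(\frakC)$ and $\uc \in \catp_r(\frakC)$, and a morphism is a pair of morphisms in the two factors. By the definition of $\catp_l(\frakC)$ a morphism out of $\ud$ is a permutation $\sigma \in \Sigma_{|\ud|}$ with target $\sigma\ud$; dually, since permutations act on the right in $\catp_r(\frakC)$, a morphism out of $\uc$ is a permutation $\tau \in \Sigma_{|\uc|}$ with target $\uc\tau$. Thus every morphism has the form $(\sigma, \tau) \colon (\ud, \uc) \to (\sigma\ud, \uc\tau)$. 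Composition in a product category is componentwise, so given a further morphism $(\sigma', \tau') \colon (\sigma\ud, \uc\tau) \to (\sigma'\sigma\ud, \uc\tau\tau')$, the composite is $(\sigma'\sigma, \tau\tau')$, where the two components are multiplied in opposite orders precisely because $\catp_l(\frakC)$ uses a left action and $\catp_r(\frakC)$ a right action.

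With the index category understood, the equivalence of data is immediate. A functor $\sfP$ assigns a set $\sfP(\ud, \uc) =: \sfP\binom{\ud}{\uc}$ to each object, which is datum (1), and a function to each morphism $(\sigma, \tau)$, which is exactly the map $(\sigma;\tau)$ of \eqref{eq:sigmatauaction}, which is datum (2). Functoriality then splits into preservation of identities, which is condition (a) because the identity on $(\ud, \uc)$ is $(1,1)$, and preservation of composition, which by the composite computed above is exactly condition (b). Condition (c) is then a consequence rather than an independent axiom: specializing (b) to $\sigma' = 1 = \tau$ gives $(\sigma;\tau') = (1;\tau') \circ (\sigma;1)$, while specializing to $\sigma = 1 = \tau'$ gives $(\sigma';\tau) = (\sigma';1) \circ (1;\tau)$, and together (after relabelling) these are (c). Conversely, any family of sets and maps satisfying (a) and (b) defines a functor, since every morphism of the index category is of the displayed form.

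Finally I would treat morphisms. A natural transformation $f \colon \sfP \to \sfQ$ is by definition a family of component maps $\sfP\binom{\ud}{\uc} \xrightarrow{f} \sfQ\binom{\ud}{\uc}$ indexed by the objects $(\ud, \uc)$, subject to the naturality square for every morphism of the index category. Since I have shown that every such morphism is of the form $(\sigma, \tau)$, imposing naturality on all of them is the same as requiring the displayed square to commute for all $\sigma \in \Sigma_{|\ud|}$ and $\tau \in \Sigma_{|\uc|}$, which is the stated condition. I do not expect any real obstacle in this proof; the one point demanding attention is the orientation convention, namely confirming that composition in $\catp_r(\frakC)$ multiplies permutations as $\tau\tau'$ rather than $\tau'\tau$, so that (b) and the composite appear with exactly the orders asserted in the statement.
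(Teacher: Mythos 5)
Your proposal is correct and matches the paper's (implicit) argument: the paper gives no formal proof, simply asserting that the proposition follows by "unpacking" the definition of the diagram category $\set^{\catp_l(\frakC)\times\catp_r(\frakC)}$, which is exactly the bookkeeping you carry out. Your additional observation that condition (c) is a formal consequence of (b) is accurate and harmless.
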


One should think of the set $\sfP\binom{\underline{d}}{\underline{c}}$ as a space of operations with $|\uc| = n$ inputs and $|\ud| = m$ outputs.   The $n$ inputs have colors $c_1, \ldots , c_n$, and the $m$ outputs have colors $d_1, \ldots , d_m$.

\begin{definition}
Let $\sfP$ be a $\frakC$-colored $\Sigma$-bimodule, and let $m$ and $n$ be positive integers.  Define the set
\begin{equation}
\label{eq:Edecomp}
\sfP(m,n)
= \colim \sfP\binom{d_1, \ldots , d_m}{c_1 , \ldots , c_n}
= \colim \sfP\binom{\underline{d}}{\underline{c}},
\end{equation}
where the colimit is taken over all $\frakC$-profiles $\ud$ and $\uc$ with $|\ud| = m$ and $|\uc| = n$ using the maps \eqref{eq:sigmatauaction}.  The object $\sfP(m,n)$ is said to have \textbf{biarity $(m,n)$}, and $\sfP\binom{\underline{d}}{\underline{c}}$ is called a \textbf{component} of $\sfP(m,n)$.
\end{definition}

The following result is an immediate consequence of Proposition ~\ref{prop:sigmabimodule}.

\begin{corollary}
Let $\sfP$ be a $\frakC$-colored $\Sigma$-bimodule, and let $m$ and $n$ be positive integers.  Then the set $\sfP(m,n)$ admits a left $\Sigma_m$-action and a right $\Sigma_n$-action such that the two actions commute.
\end{corollary}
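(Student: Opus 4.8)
The plan is to work directly with the concrete description of the colimit in \eqref{eq:Edecomp}. The index category is the full subcategory of $\catp_l(\frakC) \times \catp_r(\frakC)$ spanned by the pairs $(\ud,\uc)$ with $|\ud|=m$ and $|\uc|=n$; since every morphism in $\catp_l(\frakC)$ and $\catp_r(\frakC)$ is a permutation, this index category is a groupoid, and the colimit $\sfP(m,n)$ is simply the set of equivalence classes $[a]$ of elements $a \in \sfP\binom{\ud}{\uc}$ under the relation $a \sim (\sigma;\tau)(a)$ generated by the maps \eqref{eq:sigmatauaction}. On representatives I would define the two actions by $\alpha \cdot [a] := [(\alpha;1)(a)]$ for $\alpha \in \Sigma_m$ and $[a] \cdot \beta := [(1;\beta)(a)]$ for $\beta \in \Sigma_n$; here $(\alpha;1)(a) \in \sfP\binom{\alpha\ud}{\uc}$ and $(1;\beta)(a) \in \sfP\binom{\ud}{\uc\beta}$ again have biarity $(m,n)$, so these classes make sense.

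The one step that is not completely formal is well-definedness, and I expect it to be the main obstacle, because the colimit relation is itself built out of $\Sigma_m$- and $\Sigma_n$-morphisms, which a priori could clash with the action we are trying to impose. Suppose $a' = (\sigma;\tau)(a)$. Using the composition rule (b) of Proposition~\ref{prop:sigmabimodule}, one computes $(\alpha;1)(a') = (\alpha\sigma;\tau)(a)$, whereas $(\alpha;1)(a)$ lives in $\sfP\binom{\alpha\ud}{\uc}$. The key observation is that applying the groupoid morphism $(\alpha\sigma\alpha^{-1};\tau)$ to $(\alpha;1)(a)$ and invoking (b) once more yields exactly $(\alpha\sigma;\tau)(a)$; hence $[(\alpha;1)(a)] = [(\alpha;1)(a')]$, and the conjugation $\alpha\sigma\alpha^{-1}$ is the crux of the matter. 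The right action is handled symmetrically, conjugating $\tau$ by $\beta$ instead.

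Everything else is a direct bookkeeping check against the three axioms of Proposition~\ref{prop:sigmabimodule}. Rule (a) gives $1\cdot[a]=[a]$; rule (b) gives $(\alpha'\alpha)\cdot[a] = \alpha'\cdot(\alpha\cdot[a])$ together with the analogous identity on the right, so each of the two assignments is a genuine group action. Finally, commutativity of the two actions is immediate from rule (c): both $\alpha\cdot([a]\cdot\beta)$ and $(\alpha\cdot[a])\cdot\beta$ reduce to $[(\alpha;\beta)(a)]$, since $(1;\beta)\circ(\alpha;1) = (\alpha;\beta) = (\alpha;1)\circ(1;\beta)$.

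I note that the whole argument can be phrased more slickly as saying that $\alpha \mapsto [\,(\alpha;1)(-)\,]$ is the natural transformation induced on colimits by the automorphism of the index groupoid sending $(\ud,\uc)$ to $(\alpha\ud,\uc)$, with the conjugation above being precisely its naturality square; but for a result advertised as an immediate consequence of Proposition~\ref{prop:sigmabimodule}, the explicit representative-level verification seems the cleanest route.
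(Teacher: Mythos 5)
Your proof is correct and is exactly the argument the paper leaves implicit when it calls the corollary an ``immediate consequence'' of Proposition~\ref{prop:sigmabimodule}: the colimit over the permutation groupoid is the set of orbits, the outer $\Sigma_m$- and $\Sigma_n$-actions descend via $(\alpha;1)$ and $(1;\beta)$, and axioms (a)--(c) give the action and commutation identities. Your identification of the conjugation $\alpha\sigma\alpha^{-1}$ (resp.\ $\beta^{-1}\tau\beta$) as the point where well-definedness on the quotient could fail, and the verification that it does not, is the only nontrivial step and you have it right.
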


The category $\sigmac$ of $\frakC$-colored $\Sigma$-bimodules can be decomposed into smaller pieces according to the orbit types of $\frakC$-profiles.  To describe this decomposition, we need the following smaller indexing categories.

\begin{definition}
\label{sigmab}
Let $\ub = (b_1, \ldots , b_k)$ be a $\frakC$-profile.  Define the category $\Sigma_{\ub}$ whose objects are the $\frakC$-profiles
\[
\tau\ub = \left(b_{\tau(1)}, \ldots , b_{\tau(k)}\right) \in \catp(\frakC)
\]
obtained from $\ub$ by permutations $\tau \in \Sigma_k$.  Given two (possibly equal) objects $\tau \ub$ and $\tau'\ub$ in $\Sigma_{\ub}$, a morphism
\[
\tau'' \colon \tau \ub \to \tau' \ub
\]
is a permutation in $\Sigma_k$ such that
\[
\tau''\tau \ub = \tau' \ub
\]
as $\frakC$-profiles.
\end{definition}

Notice that when we write $\tau\ub$ as an object in $\Sigma_{\ub}$, the permutation $\tau$ is not necessarily unique.  Indeed, $\tau'\ub$ is the same object as $\tau\ub$ if and only if they are equal as $\frakC$-profiles.

The category $\Sigma_{\ub}$ is a groupoid, i.e., every morphism in it is invertible.  Moreover, this groupoid is connected.  In other words, given any two objects $\tau \ub$ and $\tau'\ub$ in $\Sigma_{\ub}$, there is at least one morphism
\[
\tau'\tau^{-1} \colon \tau \ub \to \tau' \ub.
\]
There are other morphisms $\tau \ub \to \tau' \ub$ if and only if $\ub$ has repeated colors.  The set of objects in $\Sigma_{\ub}$ is exactly what constitutes the orbit type of $\ub$.  A morphism in $\Sigma_{\ub}$ is a way to permute from one representative in the orbit type of $\ub$ to another representative.  It is easy to see that there is an isomorphism
\[
\Sigma_{\ub} \cong \Sigma_{\tau\ub}
\]
of groupoids for any $\tau \in \Sigma_{|\ub|}$.

\begin{example}
\label{ex:Sigmab}
In the one-colored case, i.e., $\frakC = \{*\}$, a $\frakC$-profile $\ub$ is uniquely determined by its length $|\ub| = k$.  In this case, there is precisely one object
\[
\ub = \underbrace{(*, \ldots , *)}_{k \, *'s}
\]
in the category $\Sigma_{\ub}$, since $\ub$ is unchanged by any permutation in $\Sigma_k$.  For the same reason, the set of morphisms $\ub \to \ub$ is exactly $\Sigma_k$.  In other words, in the one-colored case, $\Sigma_{\ub}$ is the permutation group $\Sigma_{|\ub|}$, regarded as a category with one object.\qed
\end{example}

\begin{example}
\label{ex:Sigmab'}
In the other extreme, suppose that $\ub = \left(b_1, \ldots , b_k\right)$ consists of distinct colors, i.e., $b_i \not= b_j$ if $i \not= j$.  There are now $k!$ different permutations of $\ub$, one for each $\tau \in \Sigma_k$.  So there are $k!$ objects in $\Sigma_{\ub}$.  Given two objects $\tau\ub$ and $\tau'\ub$ in $\Sigma_{\ub}$, there is a unique way to permute $\tau\ub$ to get $\tau'\ub$, namely,
\[
(\tau'\tau^{-1})\tau\ub = \tau'\ub.
\]
In other words, given any two objects $\tau\ub$ and $\tau'\ub$ in $\Sigma_{\ub}$, there is a unique morphism $\tau'\tau^{-1} \colon \tau\ub \to \tau'\ub$.\qed
\end{example}

To decompose $\frakC$-colored $\Sigma$-bimodules, we actually need a pair of $\frakC$-profiles at a time.  So we introduce the following groupoid.

\begin{definition}
Given any pair of $\frakC$-profiles $\ud$ and $\uc$, define the groupoid
\[
\Sigma_{\ud; \uc} = \Sigma_{\ud} \times \Sigma_{\uc}^{op},
\]
where $\Sigma_{\ud}$ and $\Sigma_{\uc}$ are the groupoids defined in Definition ~\ref{sigmab}.
\end{definition}

If $\ud = (d_1, \ldots , d_m)$ and $\uc = (c_1, \ldots , c_n)$, then we write the objects in $\Sigma_{\ud; \uc}$ as pairs
\[
\binom{\sigma\ud}{\uc\tau} = \binom{d_{\sigma(1)}, \ldots , d_{\sigma(m)}}{c_{\tau^{-1}(1)}, \ldots , c_{\tau^{-1}(n)}}
\]
for $\sigma \in \Sigma_m$ and $\tau \in \Sigma_n$.

\begin{example}
Continuing Example ~\ref{ex:Sigmab}, if $\frakC = \{*\}$, then $\Sigma_{\ud; \uc}$ is the product group $\Sigma_{|\ud|} \times \Sigma_{|\uc|}^{op}$, considered as a category with one object.\qed
\end{example}

\begin{example}
On the other hand, suppose that each of $\ud$ and $\uc$ consists of distinct colors, as in Example ~\ref{ex:Sigmab'}.  Then there are $|\ud|! |\uc|!$ objects in $\Sigma_{\ud; \uc}$.  There is a unique morphism from any object in $\Sigma_{\ud; \uc}$ to any other object.\qed
\end{example}

Given any $\frakC$-profile $\ud$, recall that we denote by $[\ud]$ the orbit type of $\ud$ under permutations in $\Sigma_{|\ud|}$.  The following result is the decomposition of $\frakC$-colored $\Sigma$-bimodules that we have been referring to.

\begin{proposition}
\label{decomposition}
There is a canonical isomorphism
\[
\sigmac \cong \prod_{[\ud], [\uc]} \set^{\Sigma_{\ud; \uc}},
\quad \sfP \mapsto \left\{\sfP\binom{[\ud]}{[\uc]}\right\}
\]
of categories, in which the product runs over all the pairs of orbit types of $\frakC$-profiles.
\end{proposition}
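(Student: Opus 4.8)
The plan is to exploit the fact that the indexing category $\catp_l(\frakC)\times\catp_r(\frakC)$ is a groupoid that splits as a disjoint union of its connected components, together with the standard principle that the diagram functor $\set^{(-)}$ carries coproducts of categories into products. First I would record that $\catp_l(\frakC)$ is a groupoid whose connected components are precisely the orbit types: two profiles $\ud,\ue$ lie in the same component iff there is a permutation $\sigma$ with $\sigma\ud=\ue$, i.e.\ iff $[\ud]=[\ue]$, and the full subcategory spanned by a single orbit type $[\ud]$ is, by Definition~\ref{sigmab}, exactly the groupoid $\Sigma_{\ud}$ for any chosen representative (the choice being immaterial by the isomorphism $\Sigma_{\ud}\cong\Sigma_{\sigma\ud}$ already recorded). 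Hence
\[
\catp_l(\frakC) \;=\; \coprod_{[\ud]} \Sigma_{\ud}.
\]
The same argument applied to the right action gives the analogous decomposition of $\catp_r(\frakC)$, except that composing permutations on the right reverses their order, so the component of $\uc$ is the opposite groupoid $\Sigma_{\uc}^{op}$, yielding $\catp_r(\frakC)=\coprod_{[\uc]}\Sigma_{\uc}^{op}$.

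Next I would combine the two decompositions. Since the Cartesian product of categories distributes over disjoint unions (products preserve coproducts in $\mathbf{Cat}$ in each variable),
\[
\catp_l(\frakC)\times\catp_r(\frakC)
\;\cong\;
\left(\coprod_{[\ud]}\Sigma_{\ud}\right)\times\left(\coprod_{[\uc]}\Sigma_{\uc}^{op}\right)
\;\cong\;
\coprod_{[\ud],[\uc]} \Sigma_{\ud}\times\Sigma_{\uc}^{op}
\;=\;
\coprod_{[\ud],[\uc]} \Sigma_{\ud;\uc},
\]
the last equality being the definition of $\Sigma_{\ud;\uc}$. Then I would invoke the general fact that a functor out of a coproduct of categories is the same datum as a tuple of functors out of the summands, i.e.\ the canonical isomorphism $\set^{\coprod_i\catc_i}\cong\prod_i\set^{\catc_i}$ of diagram categories. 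Applied to the coproduct just obtained, this gives
\[
\sigmac \;=\; \set^{\catp_l(\frakC)\times\catp_r(\frakC)}
\;\cong\;
\set^{\coprod_{[\ud],[\uc]}\Sigma_{\ud;\uc}}
\;\cong\;
\prod_{[\ud],[\uc]}\set^{\Sigma_{\ud;\uc}},
\]
which is the asserted isomorphism.

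Finally I would verify that this isomorphism has the stated effect on objects, namely that $\sfP$ is sent to the family $\left\{\sfP\binom{[\ud]}{[\uc]}\right\}$ of its restrictions to the components. This is immediate from unwinding the coproduct/product correspondence: the functor occupying the slot indexed by $([\ud],[\uc])$ is exactly the restriction of $\sfP\colon\catp_l(\frakC)\times\catp_r(\frakC)\to\set$ along the inclusion of the component $\Sigma_{\ud;\uc}$. I expect the only delicate points to be bookkeeping ones: confirming that passing to the right action introduces the opposite groupoid $\Sigma_{\uc}^{op}$ rather than $\Sigma_{\uc}$, and checking that each factor of the product is well defined independently of the chosen representative of its orbit type. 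Both are settled by the structural facts already in the text --- the connectedness of each $\Sigma_{\ub}$ and the isomorphism $\Sigma_{\ub}\cong\Sigma_{\tau\ub}$ --- so no genuinely hard step remains; the entire content is the two standard categorical facts about groupoids and diagram categories.
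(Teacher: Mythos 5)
Your proposal is correct and is essentially the paper's own argument: the paper likewise observes that the restriction of $\sfP$ to a pair of orbit types lands in $\set^{\Sigma_{\ud;\uc}}$ and that $\sfP$ is determined by these restrictions, which is exactly your coproduct-of-components decomposition of $\catp_l(\frakC)\times\catp_r(\frakC)$ combined with $\set^{\coprod_i\catc_i}\cong\prod_i\set^{\catc_i}$, just stated less explicitly. You merely make the underlying categorical principle visible, and your flagged bookkeeping points (the $\Sigma_{\uc}^{op}$ and the choice of orbit representatives) are handled the same way in the text.
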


\begin{proof}
First we should clarify the meaning of $\set^{\Sigma_{\ud; \uc}}$.  For each orbit type $[\ud]$, we choose a representative $\ud$.  Such choices of representatives are then used to form the groupoids $\Sigma_{\ud}$ and the diagram categories $\set^{\Sigma_{\ud; \uc}} = \set^{\Sigma_{\ud} \times \Sigma_{\uc}^{op}}$.

Now given a $\frakC$-colored $\Sigma$-bimodule $\sfP$ over $\set$, we restrict to a pair $[\ud]$ and $[\uc]$ of orbit types of $\frakC$-profiles.  The restricted diagram $\sfP\binom{[\ud]}{[\uc]}$ has objects $\sfP\binom{\sigma\ud}{\uc\tau}$ for $\sigma \in \Sigma_{|\ud|}$ and $\tau \in \Sigma_{|\uc|}$.  Each map
\[
\sfP\binom{\sigma\ud}{\uc\tau} \to \sfP\binom{\sigma'\ud}{\uc\tau'}
\]
in $\sfP$ corresponds to a unique morphism
\[
\binom{\sigma\ud}{\uc\tau} \to \binom{\sigma'\ud}{\uc\tau'}
\]
in $\Sigma_{\ud;\uc}$.  So the restricted diagram is actually an object in the diagram category $\set^{\Sigma_{\ud; \uc}}$.  Since $\sfP$ is uniquely determined by such restricted diagrams, the result follows.
\end{proof}

\begin{example}
If $\frakC = \{*\}$, then the decomposition in Proposition ~\ref{decomposition} becomes
\[
\sigmac \cong \prod_{m, n \geq 1} \set^{\Sigma_m \times \Sigma_n^{op}}.
\]
A object in the diagram category $\set^{\Sigma_m \times \Sigma_n^{op}}$ is simply a set $\sfP(m,n)$ with a left $\Sigma_m$-action and a right $\Sigma_n$-action that commute with each other.\qed
\end{example}

We now define $\frakC$-colored PROPs.

\begin{definition}
\label{def:Ccoloredprop}
A \textbf{unital $\frakC$-colored PROP} $\sfP$ consists of a $\frakC$-colored $\Sigma$-bimodule $\sfP$ with the following additional structures:
\begin{enumerate}
\item
For any $\frakC$-profiles $\ub$, $\uc$, and $\ud$, it has a \textbf{vertical composition}
\begin{equation}
\label{verticalcomp}
\sfP\binom{\ud}{\ub} \times \sfP\binom{\ub}{\uc} \xrightarrow{\circ} \sfP\binom{\ud}{\uc}
\end{equation}
that is associative and bi-equivariant.  The bi-equivariance of $\circ$ means that the diagram
\begin{equation}
\label{verticalequivariance}
\SelectTips{cm}{10}
\xymatrix{
\sfP\dbinom{\ud}{\ub\tau^{-1}} \times \sfP\dbinom{\tau\ub}{\uc} \ar@{=}[r] & \sfP\dbinom{\ud}{\ub\tau^{-1}} \times \sfP\dbinom{\tau\ub}{\uc} \ar[d]^-{\circ} \\
\sfP\dbinom{\ud}{\ub} \times \sfP\dbinom{\ub}{\uc} \ar[u]^-{(1;\tau^{-1}) \times (\tau;1)} \ar[r]^-{\circ} \ar[d]_-{(\sigma;1) \times (1;\mu)} & \sfP\dbinom{\ud}{\uc} \ar[d]^-{(\sigma;\mu)} \\
\sfP\dbinom{\sigma\ud}{\ub} \times \sfP\dbinom{\ub}{\uc\mu} \ar[r]^-{\circ} & \sfP\dbinom{\sigma\ud}{\uc\mu}
}
\end{equation}
is commutative for any permutations $\sigma \in \Sigma_{|\ud|}$, $\mu \in \Sigma_{|\uc|}$, and $\tau \in \Sigma_{|\ub|}$.
\item
For any $\frakC$-profiles $\ud_1$, $\ud_2$, $\uc_1$, and $\uc_2$, it has a \textbf{horizontal composition}
\begin{equation}
\label{horizontalcomposition2}
\sfP\binom{\ud_1}{\uc_1} \times \sfP\binom{\ud_2}{\uc_2} \xrightarrow{\otimes} \sfP\binom{\ud_1,\ud_2}{\uc_1,\uc_2}
\end{equation}
that is associative and bi-equivariant.  The bi-equivariance of $\otimes$ means that the square
\begin{equation}
\label{horizontalequivariance}
\SelectTips{cm}{10}
\xymatrix{
\sfP\dbinom{\ud_1}{\uc_1} \times \sfP\dbinom{\ud_2}{\uc_2} \ar[r]^-{\otimes} \ar[d]_-{(\sigma_1;\tau_1) \times (\sigma_2;\tau_2)} & \sfP\dbinom{\ud_1,\ud_2}{\uc_1,\uc_2} \ar[d]^-{(\sigma_1 \times \sigma_2; \tau_1 \times \tau_2)}\\
\sfP\dbinom{\sigma_1\ud_1}{\uc_1\tau_1} \times \sfP\dbinom{\sigma_2\ud_2}{\uc_2\tau_2} \ar[r]^-{\otimes} & \sfP\dbinom{\sigma_1\ud_1,\sigma_2\ud_2}{\uc_1\tau_1,\uc_2\tau_2}
}
\end{equation}
is commutative for any permutations $\sigma_i \in \Sigma_{|\ud_i|}$ and $\tau_i \in \Sigma_{|\uc_i|}$.
\item
For each color $c \in \frakC$, it has a \textbf{$c$-colored unit}
\[
\mathtt{1}_c \in \sfP\binom{c}{c}
\]
such that for $\uc = (c_1, \ldots , c_n)$, the horizontal composite
\[
\mathtt{1}_{c_1} \otimes \cdots \otimes \mathtt{1}_{c_n} \in \sfP\binom{\uc}{\uc}
\]
acts as the two-sided unit for the vertical composition.
\end{enumerate}
Moreover, the vertical and horizontal compositions are required to satisfy the \textbf{interchange rule}, which says that the diagram
\begin{equation}
\label{interchangerule}
\SelectTips{cm}{10}
\xymatrix{
\left[\sfP\binom{\ud_1}{\ub_1} \times \sfP\binom{\ud_2}{\ub_2}\right] \times \left[\sfP\binom{\ub_1}{\uc_1} \times \sfP\binom{\ub_2}{\uc_2}\right] \ar[r]^-{\text{switch}}_-{\cong} \ar[d]_-{(\otimes, \otimes)} &
\left[\sfP\binom{\ud_1}{\ub_1} \times \sfP\binom{\ub_1}{\uc_1}\right] \times \left[\sfP\binom{\ud_2}{\ub_2} \times \sfP\binom{\ub_2}{\uc_2}\right] \ar[d]^-{(\circ, \circ)}\\
\sfP\binom{\ud_1,\ud_2}{\ub_1,\ub_2} \times \sfP\binom{\ub_1,\ub_2}{\uc_1,\uc_2} \ar[d]_-{\circ} & \sfP\binom{\ud_1}{\uc_1} \times \sfP\binom{\ud_2}{\uc_2} \ar[d]^-{\otimes}\\
\sfP\binom{\ud_1,\ud_2}{\uc_1,\uc_2} \ar@{=}[r] & \sfP\binom{\ud_1,\ud_2}{\uc_1,\uc_2}.
}
\end{equation}
is commutative for any $\frakC$-profiles $\ub_i$, $\uc_i$, and $\ud_i$ $(i = 1, 2)$.

A \textbf{morphism} of unital $\frakC$-colored PROPs is a morphism of the underlying $\frakC$-colored $\Sigma$-bimodules that commutes with the horizontal and the vertical compositions and preserves the $c$-color unit for each $c \in \frakC$.
\end{definition}

One obtains the notion of a \textbf{non-unital} $\frakC$-colored PROP by omitting the requirements about the $c$-colored units.  The category of non-unital $\frakC$-colored PROPs is denoted by $\propc$.  The category of unital $\frakC$-colored PROPs is a subcategory of $\propc$.

If $\frakC = \{*\}$ is the one-element set, then we say \textbf{$1$-colored PROPs} or just \textbf{PROPs} for $\{*\}$-colored PROPs.

\begin{remark}
A unital $\frakC$-colored PROP in a symmetric monoidal category $\cate$ can also be defined as a strict monoidal category $(\sfP, \odot)$ enriched over $\cate$.  The objects in $(\sfP, \odot)$ are the $\frakC$-profiles, and the monoidal product $\odot$ is concatenation of $\frakC$-profiles.  The morphism object $\sfP(\ud,\uc)$ is what we write as $\sfP\binom{\ud}{\uc}$ above.  Moreover, given any permutations $\sigma \in \Sigma_{|\ud|}$ and $\tau \in \Sigma_{|\uc|}$, it is required that there be an associated map
\[
(\sigma;\tau) \colon \sfP(\ud,\uc) \to \sfP(\sigma\ud,\uc\tau) \in \cate
\]
on the morphism objects such that some obvious axioms are satisfied.  In this formulation, the horizontal composition is induced on the morphism objects by the monoidal product $\odot$ and the enrichment over $\cate$.  The vertical composition is the categorical composition in the category $\sfP$.  This generalizes what is known in the $1$-colored case \cite{maclane1,markl06}.
\end{remark}

\begin{remark}
There is another conceptual description of (non-unital) $\frakC$-colored PROPs in a symmetric monoidal category $\cate$ with a zero object.  In this setting, non-unital $\frakC$-colored PROPs are \emph{$\boxv$-monoidal $\boxh$-monoids}, where $\boxv$ is a monoidal product on the category $\sigmaec$ of $\frakC$-colored $\Sigma$-bimodules in $\cate$.  A monoid in $(\sigmaec,\boxv)$ is a $\frakC$-colored $\Sigma$-bimodule equipped with a vertical composition.  There is a monoidal product $\boxh$ on the category $\mon(\sigmaec,\boxv)$ of monoids in $(\sigmaec,\boxv)$.  The monoids in $\left(\mon(\sigmaec,\boxv),\boxh\right)$ are exactly the non-unital $\frakC$-colored PROPs.  This description of $\frakC$-colored PROPs as $\boxv$-monoidal $\boxh$-monoids is analogous to the description of operads as monoids in the category of $\Sigma$-objects. The reader is referred to \cite{jy} for detailed discussion of colored PROPs from this view point.
\end{remark}

Before we talk about algebras over a $\frakC$-colored PROP $\sfP$, let us first spell out the colored endomorphism PROP through which $\sfP$-algebras are defined.  If $X$ and $Y$ are sets, then we write $Y^X$ for the set $\set(X,Y)$ of functions from $X$ to $Y$.

\begin{definition}
\label{def:endoPROP}
A  $\frakC$-colored \textbf{endomorphism PROP} $E_X$ consists of a $\frakC$-graded set $X = \{X_c\}_{c\in \frakC}$.  Given $m, n \geq 1$ and colors $c_1, \ldots , c_n$, $d_1, \ldots , d_m$, it has the component
\[
E_X\binom{\underline{d}}{\underline{c}}
= \left(X_{d_1} \times \cdots \times X_{d_m}\right)^{\left(X_{c_1} \times \cdots \times X_{c_n}\right)} = X_{\underline{d}}^{X_{\underline{c}}}.
\]
The $\Sigma_m$-$\Sigma_n$ action is the obvious one, with $\Sigma_m$ permuting the $m$ factors $X_{\underline{d}} = X_{d_1} \times \cdots \times X_{d_m}$ and $\Sigma_n$ permuting the $n$ factors in the exponent.  The horizontal composition in $E_X$ is given by Cartesian products of functions.  The vertical composition in $E_X$ is composition of functions with matching colors.
\end{definition}

Note that the endomorphism PROP $E_X$ is a \emph{unital} $\frakC$-colored PROP.
Indeed, for a color $c \in \frakC$, the $c$-colored unit
\[
\mathtt{1}_c \in E_X\binom{c}{c} = X_c^{X_c}
\]
is the identity map of $X_c$.

\begin{definition}
\label{def:coloredPROPalgebra}
For a unital (resp. non-unital) $\frakC$-colored PROP $\sfP$, a \textbf{$\sfP$-algebra} is a morphism
\[
\lambda \colon \sfP \to E_X
\]
of unital (resp. non-unital) $\frakC$-colored PROPs, where $E_X$ is the $\frakC$-colored endomorphism PROP of a $\frakC$-graded set $X$.  We say that $X$ is a $\sfP$-algebra with \textbf{structure map} $\lambda$.  Morphisms of $\sfP$-algebras are defined below.  The category of $\sfP$-algebras is denoted by $\alg(\sfP)$.
\end{definition}

Suppose that $\sfP$ is a unital $\frakC$-colored PROP.  If we want to emphasize that we are considering $\sfP$-algebras with $\sfP$ being unital, we will call them \textbf{unital $\sfP$-algebras}.

As usual one can unpack Definition ~\ref{def:coloredPROPalgebra} and, using adjunction, express the structure map as a collection of maps
\begin{equation}
\label{propstructuremap}
\lambda \colon \sfP\binom{\underline{d}}{\underline{c}} \times X_{\underline{c}} \to X_{\underline{d}},
\end{equation}
one for each pair $(\underline{d}; \underline{c})$ of $\frakC$-profiles.  These maps are associative (with respect to both the horizontal and the vertical compositions) and bi-equivariant.  They also respect the $c$-colored units in the unital case.

A \textbf{morphism} $f \colon X \to Y$ of $\sfP$-algebras is a collection of maps
\[
f = \{f_c \colon X_c \to Y_c\}_{c\in \frakC}
\]
such that the diagram
\begin{equation}
\label{algebramap}
\SelectTips{cm}{10}
\xymatrix{
\sfP\dbinom{\underline{d}}{\underline{c}} \times X_{\underline{c}} \ar[r]^-{\lambda_X} \ar[d]_{Id \times f_{\underline{c}}} & X_{\underline{d}} \ar[d]^{f_{\underline{d}}} \\
\sfP\dbinom{\underline{d}}{\underline{c}} \times Y_{\underline{c}} \ar[r]^-{\lambda_Y} & Y_{\underline{d}}
}
\end{equation}
commutes for all $m, n \geq 1$ and colors $c_1, \ldots , c_n$ and $d_1, \ldots , d_m$.  Here we used the shorthand
\[
f_{\underline{c}} = f_{c_1} \times \cdots \times f_{c_n},
\]
and similarly for $f_{\ud}$.

\subsection{Colored operads and colored PROPs}
\label{subsec:coloredoperad}

A \textbf{$\frakC$-colored operad} $\sfO$ over $\set$ consists of sets
\[
\sfO\binom{d}{\uc} = \sfO\binom{d}{c_1, \ldots , c_n}
\]
for any colors $d, c_1, \ldots , c_n \in \frakC$.  There is a structure map
\[
\rho \colon \sfP\dbinom{d}{\uc} \times \sfP\dbinom{c_1}{\ub^1} \times \cdots \times \sfP\dbinom{c_n}{\ub^n} \to \sfP\dbinom{d}{\ub^1, \ldots , \ub^n}
\]
that is associative, right equivariant, and unital (in the unital case) in a suitable sense.  We refer the reader to \cite{may97} or \cite{markl06} for the precise formulations of these well-known axioms for (unital) operads.  The definitions in the colored case can be found in, e.g., \cite[Section 2]{markl04} or \cite[Section 2]{bd1}.  The category of $\frakC$-colored operads over $\set$ is denoted by $\operadc$.

The \textbf{$\frakC$-colored endomorphism operad} $End_X$ of a $\frakC$-graded set $X = \{X_c\}$ has components
\[
End_X\binom{d}{c_1, \ldots , c_n} = X_d^{X_{c_1} \times \cdots \times X_{c_n}}.
\]
The structure map $\rho$ is given by Cartesian product of functions and composition.  The right equivariance comes from permutations of the factors in $X_{c_1} \times \cdots \times X_{c_n}$.  For a $\frakC$-colored operad $\sfO$, an \textbf{$\sfO$-algebra} is a map
\[
\lambda \colon \sfO \to End_X
\]
of $\frakC$-colored operads.

Comparing the definitions of the $\frakC$-colored endomorphism PROP $E_X$ and endomorphism operad $End_X$, one can see that a colored operad is ``small" than a colored PROP.  In fact, it is straightforward to see that the colored endomorphism operad $End_X$ is obtained from the colored endomorphism PROP $E_X$ by forgetting structures.  This is, of course, not an accident.  In fact, there is a free-forgetful adjoint pair between colored operads and colored PROPs.

To construct the free colored PROP of a colored operad, we need a functor
\[
\boxdot \colon \set^{\Sigma_{\ud;\uc}} \times \set^{\Sigma_{\ub;\ua}} \to \set^{\Sigma_{(\ud,\ub);(\uc,\ua)}}.
\]
The functor $\boxdot$ is constructed as an inclusion functor followed by a left Kan extension.  Indeed, there is a functor
\[
\iota \colon \set^{\Sigma_{\ud;\uc}} \times \set^{\Sigma_{\ub;\ua}} \to \set^{\Sigma_{\ud} \times \Sigma_{\ub} \times \Sigma_{\uc}^{op} \times \Sigma_{\ua}^{op}}, \quad (X,Y) \mapsto X \times Y
\]
that sends $(X,Y) \in \set^{\Sigma_{\ud;\uc}} \times \set^{\Sigma_{\ub;\ua}}$ to the diagram $X \times Y$ with
\begin{equation}
\label{XtensorY}
(X \times Y)\left(\sigma\ud; \mu\ub; \uc\tau^{-1}; \ua\nu^{-1}\right) = X\binom{\sigma\ud}{\uc\tau^{-1}} \times Y\binom{\mu\ub}{\ua\nu^{-1}},
\end{equation}
and similarly for maps in $\Sigma_{\ud} \times \Sigma_{\ub} \times \Sigma_{\uc}^{op} \times \Sigma_{\ua}^{op}$.  On the other hand, the subcategory inclusion
\[
\left(\Sigma_{\ud} \times \Sigma_{\ub}\right) \times \left(\Sigma_{\uc}^{op} \times \Sigma_{\ua}^{op}\right) \xrightarrow{i} \Sigma_{(\ud,\ub);(\uc,\ua)} = \Sigma_{(\ud;\ub)} \times \Sigma_{(\uc,\ua)}^{op}
\]
induces a functor on the diagram categories
\begin{equation}
\label{ei}
\set^{i} \colon \set^{\Sigma_{(\ud,\ub);(\uc,\ua)}} \to \set^{\Sigma_{\ud} \times \Sigma_{\ub} \times \Sigma_{\uc}^{op} \times \Sigma_{\ua}^{op}}.
\end{equation}
This last functor has a left adjoint
\begin{equation}
\label{leftKanext}
K \colon \set^{\Sigma_{\ud} \times \Sigma_{\ub} \times \Sigma_{\uc}^{op} \times \Sigma_{\ua}^{op}} \to \set^{\Sigma_{(\ud,\ub);(\uc,\ua)}},
\end{equation}
which sends a functor $Z \in \set^{\Sigma_{\ud} \times \Sigma_{\ub} \times \Sigma_{\uc}^{op} \times \Sigma_{\ua}^{op}}$ to the left Kan extension of $Z$ 
along $i$ \cite[pp.236-240]{maclane2}.  This left Kan extension exists because $\Sigma_{\ud} \times \Sigma_{\ub} \times \Sigma_{\uc}^{op} \times \Sigma_{\ua}^{op}$ is a small category, and $\set$ has small colimits.

\begin{lemma}
\label{boxdotassociative}
The functor
\[
\boxdot = K\iota \colon \set^{\Sigma_{\ud;\uc}} \times \set^{\Sigma_{\ub;\ua}} \to \set^{\Sigma_{(\ud,\ub);(\uc,\ua)}}
\]
is associative in the obvious sense.
\end{lemma}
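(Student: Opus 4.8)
The plan is to realize both threefold products $(X\boxdot Y)\boxdot Z$ and $X\boxdot(Y\boxdot Z)$ as one and the same ``ternary'' product, obtained by taking the objectwise Cartesian product of all three bimodules and then left Kan extending \emph{once} along the total block inclusion of permutation groupoids. Fix $\frakC$-colored $\Sigma$-bimodules $X \in \set^{\Sigma_{\ud;\uc}}$, $Y \in \set^{\Sigma_{\ub;\ua}}$, and $Z \in \set^{\Sigma_{\ue;\ue'}}$, and write $K = \mathrm{Lan}_i$ throughout, so that $\boxdot = K\iota = \mathrm{Lan}_i \circ (-\times-)$ with $\iota$ the objectwise product of \eqref{XtensorY}. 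The two formal inputs I would lean on are: (i) transitivity of left Kan extensions, $\mathrm{Lan}_{j_2}\,\mathrm{Lan}_{j_1} \cong \mathrm{Lan}_{j_2 j_1}$, which holds because left Kan extension is left adjoint to restriction and restriction is contravariantly functorial; and (ii) the fact that $\set$ is Cartesian closed, so that for fixed $Z$ the objectwise product functor $(-)\times Z$ is a left adjoint and hence commutes with the colimits that compute $K$.

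First I would compute $(X\boxdot Y)\boxdot Z$. By definition this is $\mathrm{Lan}_{i_{(12)3}}\bigl((X\boxdot Y)\times Z\bigr)$, where $i_{(12)3}$ is the block inclusion $\Sigma_{(\ud,\ub)}\times\Sigma_{\ue}\times\Sigma_{(\uc,\ua)}^{op}\times\Sigma_{\ue'}^{op} \hookrightarrow \Sigma_{(\ud,\ub,\ue);(\uc,\ua,\ue')}$, and $X\boxdot Y = \mathrm{Lan}_{i_{12}}(X\times Y)$. The decisive step is to slide $(-)\times Z$ past this inner Kan extension: since $(-)\times Z$ preserves colimits and left Kan extension along $i_{12}\times\mathrm{id}$ is computed pointwise by colimits over comma categories that split off the untouched $Z$-factor, one obtains a natural isomorphism $\bigl(\mathrm{Lan}_{i_{12}}(X\times Y)\bigr)\times Z \cong \mathrm{Lan}_{i_{12}\times\mathrm{id}}\bigl(X\times Y\times Z\bigr)$, where $\mathrm{id}$ is the identity on the $Z$-indexing groupoid. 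Applying $\mathrm{Lan}_{i_{(12)3}}$ and invoking transitivity gives $(X\boxdot Y)\boxdot Z \cong \mathrm{Lan}_{i_{(12)3}\circ(i_{12}\times\mathrm{id})}\bigl(X\times Y\times Z\bigr)$. Finally I would observe that the composite $i_{(12)3}\circ(i_{12}\times\mathrm{id})$ is exactly the total block inclusion $i_{123}\colon \Sigma_{\ud}\times\Sigma_{\ub}\times\Sigma_{\ue}\times\Sigma_{\uc}^{op}\times\Sigma_{\ua}^{op}\times\Sigma_{\ue'}^{op} \hookrightarrow \Sigma_{(\ud,\ub,\ue);(\uc,\ua,\ue')}$, since inserting $\Sigma_{\ud}\times\Sigma_{\ub}$ into $\Sigma_{(\ud,\ub)}$ and then into $\Sigma_{(\ud,\ub,\ue)}$ agrees with the direct three-block insertion.

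Running the symmetric computation for $X\boxdot(Y\boxdot Z)$ yields $\mathrm{Lan}_{i_{1(23)}\circ(\mathrm{id}\times i_{23})}\bigl(X\times Y\times Z\bigr)$, and the associativity of concatenation of $\frakC$-profiles (equation \eqref{eq:concat}) shows $i_{1(23)}\circ(\mathrm{id}\times i_{23})$ is the very same total block inclusion $i_{123}$. Hence both iterated products are canonically isomorphic to $\mathrm{Lan}_{i_{123}}(X\times Y\times Z)$, which furnishes the associativity isomorphism asserted by the lemma. Because every isomorphism used is the canonical comparison coming from the universal property of left Kan extensions (equivalently, from uniqueness of adjoints), the resulting associator is natural in $X$, $Y$, $Z$ and automatically satisfies the pentagon, which is what ``associative in the obvious sense'' should mean.

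I expect the main obstacle to be the interchange step, namely the commutation $\bigl(\mathrm{Lan}_{i_{12}}(X\times Y)\bigr)\times Z \cong \mathrm{Lan}_{i_{12}\times\mathrm{id}}(X\times Y\times Z)$. Transitivity of Kan extensions and the combinatorial identity $i_{(12)3}\circ(i_{12}\times\mathrm{id})=i_{123}$ are formal, but making the interchange precise requires the pointwise (coend) formula for $\mathrm{Lan}$ together with the observation that the comma category controlling $\mathrm{Lan}_{i_{12}\times\mathrm{id}}$ at an object factors as the comma category for $i_{12}$ times a discrete contribution from the untouched $Z$-block; combined with Cartesian closedness of $\set$, this lets the product with $Z$ pass through the colimit. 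Once this is in hand, the remainder of the argument is bookkeeping.
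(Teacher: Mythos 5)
Your proposal is correct and takes essentially the same approach as the paper, whose entire proof is the one-line observation that associativity of $\boxdot$ follows from the associativity of $\times$ in $\set$ together with the universal properties of left Kan extensions. Your argument (transitivity of $\mathrm{Lan}$, commuting $(-)\times Z$ past the pointwise colimit, and matching the composite block inclusions) is precisely the detailed elaboration of that sentence.
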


\begin{proof}
The associativity of $\boxdot$ is a consequence of the associativity of $\times$ in $\set$ \eqref{XtensorY} and the universal properties of left Kan extensions.
\end{proof}

\begin{theorem}
\label{operadpropadj}
There is a pair of adjoint functors
\begin{equation}
\label{operadprop}
(-)_{prop} \colon \operadc \rightleftarrows \propc \colon U
\end{equation}
between the categories of non-unital $\frakC$-colored PROPs and non-unital $\frakC$-colored operads, with $U$ being the right adjoint.  Moreover, these functors restrict to the subcategories of unital $\frakC$-colored operads and unital $\frakC$-colored PROPs.
\end{theorem}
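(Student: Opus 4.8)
The plan is to realize $U$ as a forgetful functor picking out the single-output operations and $(-)_{prop}$ as a free functor assembled from the horizontal-composition functor $\boxdot$, and then to establish the adjunction through its universal property.

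\textbf{The right adjoint.} Given a non-unital $\frakC$-colored PROP $\sfP$, I define $U(\sfP)$ to be the $\frakC$-colored operad with components $U(\sfP)\binom{d}{\uc} = \sfP\binom{d}{\uc}$, i.e.\ the single-output components (here the output profile $(d)$ has length $1$). The operad structure map is the composite $\rho = \circ \circ (\mathrm{id} \times \otimes)$, which first horizontally composes the inner operations
\[
\sfP\binom{c_1}{\ub^1} \times \cdots \times \sfP\binom{c_n}{\ub^n} \xrightarrow{\otimes} \sfP\binom{\uc}{\ub^1,\ldots,\ub^n}
\]
and then vertically composes with the outer operation $\sfP\binom{d}{\uc} \to \cdot$ via $\circ$. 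Operadic associativity, right equivariance, and (in the unital case) unitality of $\rho$ follow at once from the associativity of $\circ$ and $\otimes$, the bi-equivariance maps, the interchange rule \eqref{interchangerule}, and the $c$-colored units. Since PROP morphisms preserve both compositions, $U$ is a functor.

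\textbf{The left adjoint.} Given a $\frakC$-colored operad $\sfO$, view each single-output piece $\sfO_d = \sfO\binom{d}{-}$ as living in the appropriate factor $\set^{\Sigma_{(d);\uc}}$ of the decomposition of Proposition \ref{decomposition}. For an output profile $\ud = (d_1,\ldots,d_m)$ I set
\[
\sfO_{prop}\binom{[\ud]}{[\uc]} = \left(\sfO_{d_1} \boxdot \cdots \boxdot \sfO_{d_m}\right)\binom{[\ud]}{[\uc]},
\]
well-defined and associative by Lemma \ref{boxdotassociative}. Concretely, an element is a tuple $(\theta_1,\ldots,\theta_m)$ with $\theta_i \in \sfO\binom{d_i}{\ub^i}$ together with a splitting of $\uc$ into blocks $\ub^i$, taken modulo the identifications imposed by the left Kan extension inside $\boxdot$. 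Horizontal composition is concatenation of such tuples, associative by Lemma \ref{boxdotassociative}; vertical composition is defined blockwise by operad composition, feeding into each $\theta_i$ the lower operations $\psi_j$ that sit over its inputs, via $\rho$.

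\textbf{The adjunction and the main obstacle.} The unit $\eta \colon \sfO \to U(\sfO_{prop})$ is the identity on single-output components, and it is an operad morphism because the operad structure recovered on $U(\sfO_{prop})$ is, by the very definition of the blockwise vertical composition, the original $\rho$. For the universal property, an operad map $f \colon \sfO \to U(\sfP)$ forces a PROP map $\bar f$ to equal $f$ on single outputs and, since every element of $\sfO_{prop}\binom{\ud}{\uc}$ is a horizontal composite of single-output operations, to send $\theta_1 \otimes \cdots \otimes \theta_m \mapsto f(\theta_1) \otimes \cdots \otimes f(\theta_m)$; this pins down $\bar f$ uniquely, with well-definedness coming from the universal property of the Kan extension in $\boxdot$ together with the equivariance of $f$. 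The hard part is checking that $\sfO_{prop}$ genuinely satisfies all the PROP axioms of Definition \ref{def:Ccoloredprop}, most delicately the interchange rule \eqref{interchangerule} and the bi-equivariance \eqref{verticalequivariance} of the blockwise vertical composition: the interchange rule holds because vertical composition acts blockwise while horizontal composition merely concatenates, so the two commute on tuples, whereas bi-equivariance requires carefully tracking how the Kan-extension identifications interact with the right-equivariance of $\rho$. Finally, in the unital case the $c$-colored unit of $\sfO_{prop}$ is the one-entry tuple given by the operad unit $\ttone_c \in \sfO\binom{c}{c}$, whose horizontal composites act as two-sided units for the blockwise $\circ$ precisely because $\rho$ is unital; since $\eta$, $\bar f$, and $U$ all preserve units, the adjunction restricts to the unital subcategories.
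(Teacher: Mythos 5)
Your proposal is correct and follows essentially the same route as the paper: the forgetful functor $U$ restricts to single-output components with structure map $\rho = \circ \circ (\mathrm{id}\times\otimes)$, and the free functor builds $\sfO_{prop}$ from $\boxdot$-products of single-output pieces (your formula unpacks, via the decomposition of Proposition~\ref{decomposition}, to the paper's coproduct over partitions in \eqref{Oprop}), with concatenation as horizontal composition and blockwise operad composition, mediated by the Kan-extension identifications, as vertical composition. Your sketch of the unit and the universal property is in fact slightly more explicit than the paper's, which leaves the adjunction check to the reader.
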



\begin{proof}
First we construct the forgetful functor $U$.  Suppose that $d, c_i, b^i_j \in \frakC$ are colors, where $1 \leq i \leq n$ and, for each $i$, $1 \leq j \leq k_i$.  Write
\[
\begin{split}
\uc &= (c_1, \ldots , c_n),\\
\ub^i &= (b^i_1, \ldots , b^i_{k_i}),\\
\ub &= (\ub^1, \ldots , \ub^n).
\end{split}
\]
If $\sfP$ is a $\frakC$-colored PROP, then the components in the $\frakC$-colored operad $U\sfP$ are
\begin{equation}
\label{Uprop}
(U\sfP)\binom{d}{c_1, \ldots , c_n} = \sfP\binom{d}{c_1, \ldots , c_n} = \sfP\binom{d}{\uc}.
\end{equation}
The structure map $\rho$ of the $\frakC$-colored operad $U\sfP$ is the composition
\begin{equation}
\label{Upropstructuremap}
\SelectTips{cm}{10}
\xymatrix{
\sfP\dbinom{d}{\uc} \times \sfP\dbinom{c_1}{\ub^1} \times \cdots \times \sfP\dbinom{c_n}{\ub^n} \ar[dr]^-{\rho} \ar[d]_-{Id \times \text{(horizontal)}} & \\
\sfP\dbinom{d}{\uc} \times \sfP\dbinom{\uc}{\ub} \ar[r]^-{\circ} & \sfP\dbinom{d}{\ub}.
}
\end{equation}
The associativity of the horizontal and the vertical compositions in $\sfP$ together with the interchange rule \eqref{interchangerule} imply that $\rho$ is associative.  The right equivariance of $\rho$ follows from those of $\otimes$ and $\circ$.  If $\sfP$ is unital, it is easy to see that $U\sfP$ is unital as well.

Now we construct the unique colored PROP $\sfO_{prop}$ generated by a colored operad $\sfO$.  Let $\sfO$ be a $\frakC$-colored operad with components
\[
\sfO\binom{d}{c_1, \ldots , c_n} = \sfO\binom{d}{\uc}
\]
for $d, c_i \in \frakC$.   First we define the underlying $\frakC$-colored $\Sigma$-bimodule of $\sfO_{prop}$.  Using the decomposition of $\sigmac$ (Proposition ~\ref{decomposition}), we have to specify the diagrams
\[
\sfO_{prop}\binom{[\ud]}{[\uc]} \in \set^{\Sigma_{\ud;\uc}} = \set^{\Sigma_{\ud} \times \Sigma_{\uc}^{op}},
\]
where $\ud = (d_1, \ldots , d_m)$ and $\uc = (c_1, \ldots , c_n)$ are $\frakC$-profiles.  To each partition
\[
r_1 + \cdots + r_m = n
\]
of $n$ with each $r_i \geq 1$, we can associate to the $\frakC$-colored operad $\sfO$ the diagrams
\[
\sfO\binom{[d_i]}{[\uc_i]} \in \set^{\Sigma_{d_i; \uc_i}} = \set^{\Sigma_{d_i} \times \Sigma_{\uc_i}^{op}} = \set^{\{*\} \times \Sigma_{\uc_i}^{op}}
\]
for $1 \leq i \leq m$, where
\[
\uc_i = \left(c_{r_1 + \cdots + r_{i-1} + 1}, \ldots , c_{r_1 + \cdots + r_i}\right).
\]
Using the associativity of $\boxdot$ (Lemma ~\ref{boxdotassociative}), we define the object
\begin{equation}
\label{Oprop}
\sfO_{prop}\binom{[\ud]}{[\uc]} = \coprod_{r_1 + \cdots + r_m = n} \sfO\binom{[d_1]}{[\uc_1]} \boxdot \cdots \boxdot \sfO\binom{[d_m]}{[\uc_m]} \in \set^{\Sigma_{\ud;\uc}},
\end{equation}
where the coproduct is taken over all the partitions $r_1 + \cdots + r_m = n$ with each $r_i \geq 1$.  (Of course, if $m > n$, then no such partition exists, in which case $\sfO_{prop}\binom{[\ud]}{[\uc]}$ is the empty diagram.)  By Proposition ~\ref{decomposition}, this defines $\sfO_{prop}$ as an object in $\sigmac$.

The horizontal composition in $\sfO_{prop}$ is given by concatenation of $\boxdot$ products and inclusion of summands.  Using the universal properties of left Kan extensions, the vertical composition in $\sfO_{prop}$ is uniquely determined by the operad composition in $\sfO$.  One can check that $(-)_{prop}$ is left adjoint to the forgetful functor $U$.
\end{proof}

Note that the left adjoint $(-)_{prop}$ is an embedding.  In fact, for a $\frakC$-color operad $\sfO$, it follows from the definitions of $(-)_{prop}$ and $U$ that
\[
\sfO = U (\sfO_{prop}).
\]
Using the above adjunction, we now observe that passing from a colored operad $\sfO$ to the colored PROP $\sfO_{prop}$ does not alter the category of algebras.

\begin{corollary}
\label{Opropalgebra}
Let $\sfO$ be a $\frakC$-colored operad.  Then there are functors
\[
\Phi \colon \alg(\sfO) \rightleftarrows \alg(\sfO_{prop}) \colon \Psi
\]
that give an equivalence between the categories $\alg(\sfO)$ of $\sfO$-algebras and $\alg(\sfO_{prop})$ of $\sfO_{prop}$-algebras.  Moreover, if $\sfO$ is unital, then these functors give an equivalence between the categories of unital $\sfO$-algebras and unital $\sfO_{prop}$-algebras.
\end{corollary}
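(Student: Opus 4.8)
The plan is to deduce the equivalence directly from the adjunction \eqref{operadprop}, using the single structural observation that the two endomorphism constructions are identified by the forgetful functor $U$. First I would record the identity
\[
U(E_X) = End_X
\]
for every $\frakC$-graded set $X$, naturally in $X$. This is a component-wise check: by \eqref{Uprop} the operad $U(E_X)$ has components $E_X\binom{d}{\uc} = X_d^{X_{c_1} \times \cdots \times X_{c_n}}$, which are exactly the components of $End_X$; and the operad structure map of $U(E_X)$, defined in \eqref{Upropstructuremap} as horizontal composition followed by vertical composition in $E_X$, unwinds to Cartesian product of functions followed by composition, which is precisely the structure map $\rho$ of $End_X$. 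In the unital case both carry the identity maps $\mathtt{1}_c$ as units, so $U$ respects units here as well.

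Next I would define the two functors on objects. Given an $\sfO$-algebra $X$ with structure map $\lambda \colon \sfO \to End_X = U(E_X)$, let $\Phi(X)$ be $X$ equipped with the adjoint transpose $\lambda^\sharp \colon \sfO_{prop} \to E_X$, a morphism of $\frakC$-colored PROPs and hence an $\sfO_{prop}$-algebra. Conversely, given an $\sfO_{prop}$-algebra $X$ with structure map $\mu \colon \sfO_{prop} \to E_X$, let $\Psi(X)$ be $X$ with structure map $U\mu \colon \sfO = U\sfO_{prop} \to U(E_X) = End_X$; here I use the remark after Theorem \ref{operadpropadj} that $\sfO = U(\sfO_{prop})$, so that the unit of the adjunction is the identity and $U\mu$ is exactly the transpose of $\mu$. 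The adjunction bijection
\[
\propc(\sfO_{prop}, E_X) \cong \operadc(\sfO, U E_X)
\]
is then a bijection between $\sfO_{prop}$-algebra structures and $\sfO$-algebra structures on a fixed $X$, with $\Phi$ and $\Psi$ mutually inverse at the level of structure maps.

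It remains to treat morphisms, and both $\Phi$ and $\Psi$ act as the identity on the underlying families $f = \{f_c\}_{c \in \frakC}$, so the point is that the morphism axiom \eqref{algebramap} for $\sfO$-algebras is equivalent to the corresponding axiom for $\sfO_{prop}$-algebras. This I would verify from the explicit form \eqref{Oprop} of $\sfO_{prop}$. The biarity-$(1,n)$ components are, via the trivial partition $n = n$, precisely $\sfO\binom{d}{\uc}$, and the transported structure map restricts there to $\lambda$; thus the $\sfO_{prop}$-morphism condition on these components is literally the $\sfO$-morphism condition. For a general component $\sfO\binom{[d_1]}{[\uc_1]} \boxdot \cdots \boxdot \sfO\binom{[d_m]}{[\uc_m]}$, the induced action on $X$ is the horizontal composite of the actions $\lambda(\theta_i, -) \colon X_{\uc_i} \to X_{d_i}$, so the relevant instance of \eqref{algebramap} factors as a product of the $m$ biarity-$(1,\cdot)$ instances; hence it is automatic once the operadic condition holds. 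Therefore $f$ is an $\sfO$-algebra morphism if and only if it is an $\sfO_{prop}$-algebra morphism, and $\Phi,\Psi$ are well-defined mutually inverse functors.

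The main obstacle is exactly this last step: confirming that the higher-biarity components of $\sfO_{prop}$ impose no morphism condition beyond the operadic one. This amounts to unwinding the transported composition through $\boxdot$ and the left Kan extension \eqref{leftKanext}, which is the same computation used to define the structure map of $\sfO_{prop}$ in the proof of Theorem \ref{operadpropadj}; the bi-equivariance encoded in \eqref{Oprop} guarantees that the factorization descends to orbit types. The unital statement then requires no separate argument, since the adjunction of Theorem \ref{operadpropadj} already restricts to the unital subcategories and the identity $U(E_X) = End_X$ matches up the colored units.
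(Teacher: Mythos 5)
Your proposal is correct and follows essentially the same route as the paper: both hinge on the free--forgetful adjunction of Theorem~\ref{operadpropadj} together with the observation that $U(E_X)$ is the endomorphism operad $End_X$, with $\Phi$ given by adjoint transposition (equivalently, the universal property of the left Kan extensions in \eqref{Oprop}) and $\Psi$ by applying $U$ and using $\sfO = U(\sfO_{prop})$. The only difference is that you spell out the morphism-level check that the paper leaves as ``one can check,'' and your factorization of the higher-biarity conditions through the biarity-$(1,n)$ components is the right way to do it.
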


\begin{proof}
First observe that in each of the two categories, an algebra has an underlying $\frakC$-graded set $\{A_c\}$.  Given an $\sfO$-algebra $A$, the formula \eqref{Oprop} for $\sfO_{prop}$ together with the universal properties of left Kan extensions extend $A$ to an $\sfO_{prop}$-algebra.  This is the functor $\Phi$.

Conversely, an $\sfO_{prop}$-algebra is a map
\[
\lambda \colon \sfO_{prop} \to E_X
\]
of $\frakC$-colored PROPs, where $E_X$ is the $\frakC$-colored endomorphism PROP of a $\frakC$-graded set $X = \{X_c\}$.  Using the free-forgetful adjunction from Theorem  ~\ref{operadpropadj}, this $\sfO_{prop}$-algebra is equivalent to a map
\[
\lambda' \colon \sfO \to U(E_X)
\]
of $\frakC$-colored operads.  From the definition (\eqref{Uprop} and \eqref{Upropstructuremap}) of the forgetful functor $U$, one observes that $U(E_X)$ is the $\frakC$-colored endomorphism operad of $X$.  Therefore, the map $\lambda'$ is actually giving an $\sfO$-algebra structure on $X$.  This is the functor $\Psi$.  One can check that the functor $\Phi$ and $\Psi$ give an equivalence of categories.  The unital assertion is immediate from the definitions of $\Phi$ and $\Psi$
\end{proof}

\subsection{Examples}
\label{subsec:PROPex}

\begin{example}[\textbf{Polycategories as colored PROPs}]
\label{ex:polycategory}
Lambek's multicategory ($=$ colored operad) \cite{lambek1} generalizes a small category by allowing the source of a morphism to be a finite sequence of objects.  So a morphism in a multicategory takes the form
\[
f \colon (x_1, \ldots , x_n) \to y,
\]
where the $x_i$ and $y$ are objects in the multicategory.  A \textbf{polycategory} \cite{koslowski,szabo} generalizes a multicategory by allowing both the source and the target of a morphism to be finite sequences of objects.  So a morphism in a polycategory, called \textbf{polymorphism}, takes the form
\[
f \colon (x_1, \ldots , x_n) \to (y_1, \ldots , y_m).
\]
Polycategories and their close variants are important tools in proof theory and theoretical computer science \cite{bhru,hyland,hs}.  The point is that compositions of polymorphisms allow one to perform \emph{cuts} to \emph{sequents}; see, e.g., \cite{bs,glt} for the definitions of these terms from proof theory.  Polycategories are even used in linguistics \cite{lambek2}.

As pointed out in \cite{markl06}, a polycategory is essentially a colored dioperad \cite{gan}.  The set of colors is the set of objects in the polycategory.  Just as a colored operad generates a unique colored PROP (Theorem ~\ref{operadpropadj}), so does a colored dioperad.  In fact, the pasting scheme that defines dioperads are the connected simply-connected graphs, which form a subset of the graphs constituting the pasting scheme of PROPs in general.  In particular, for a polycategory $C$, one can associate to it an $Ob(C)$-colored PROP, which uniquely determines the polycategory $C$.  Weak $n$ versions of polycategories will be discussed in \S\ref{subsec:hdcat}.
\qed
\end{example}

\begin{example}[\textbf{Sets as algebras over the initial PROP}]
\label{initialprop}
Let $\sfI$ be the initial $1$-colored unital PROP in $\set$, i.e, the initial object among the $1$-colored unital PROPs.  The components of $\sfI$ are
\[
\sfI(m,n) = \begin{cases} \varnothing & \text{if } m \not= n,\\
\{*\} & \text{if } m = n.\end{cases}
\]
The PROP structure on $\sfI$ is the obvious ones.  A unital $\sfI$-algebra consists of a set $A$ together with maps
\[
A^{\times n} \cong \sfI(n,n) \times A^{\times n} \to A^{\times n},
\]
which all act as the identity maps.  Thus, the categories of unital $\sfI$-algebras and $\set$ are isomorphic.  We say that $\sfI$ is the PROP for sets.

Note that $\sfI$ is the unique $1$-colored PROP generated by the $1$-colored unital operad $I$, whose only non-empty component is $I(1) = \{*\}$.  In particular, the categories of $\sfI$-algebras and $I$-algebras are isomorphic by Corollary ~\ref{Opropalgebra}.  Since it is well-known that $I$-algebras are sets (see, e.g., \cite[Example 16]{bd1}), this also confirms that $\sfI$ is the PROP for sets.  We will use $\sfI$ to define weak $n$-categories in Definition ~\ref{def:weakncat}.
\qed
\end{example}

\begin{example}[\textbf{Bicommutative bimonoids as algebras over the terminal PROP}]
\label{terminalprop}
Let $\sfT$ be the $1$-colored unital PROP given by
\[
\sfT(m,n) = \{*\}
\]
for all $m,n \geq 1$.  The PROP structure on $\sfT$ is the obvious ones.  Then $\sfT$ is the terminal object among all the $1$-colored PROPs in $\set$ (not just the unital PROPs).  A unital $\sfT$-algebra consists of a set $B$ together with bi-equivariant maps
\[
B^{\times n} \cong \sfT(m,n) \times B^{\times n} \xrightarrow{\mu(m,n)} B^{\times m}
\]
for $m, n \geq 1$ such that
\begin{equation}
\label{mumn}
\mu(m,n) = \mu(m,k) \circ \mu(k,n)
\end{equation}
for all $n, k, m \geq 1$,
\begin{equation}
\label{musum}
\mu(m_1 + m_2, n_1 + n_2) = \mu(m_1, n_1) \times \mu(m_2, n_2)
\end{equation}
for all $m_i, n_i \geq 1$, and $\mu(n,n)$ is the identity map for each $n$.  In particular, the maps
\[
B^{\times 2} \xrightarrow{\mu = \mu(1,2)} B \quad \text{and} \quad
B \xrightarrow{\Delta = \mu(2,1)} B^{\times 2}
\]
give $B$ the structures of an associative commutative monoid and of a coassociative cocommutative comonoid, respectively.  Moreover, we have
\[
Id = \mu(1,1) = \mu(1,2) \circ \mu(2,1) = \mu \circ \Delta \colon B \to B,
\]
\[
\mu^{n-1} = \mu(1,n) \colon B^{\times n} \to B, \quad \text{and} \quad
\Delta^{m-1} = \mu(m,1) \colon B \to B^{\times m}.
\]
We call $(B, \mu, \Delta)$ (the object $B$ with the two (co)associative (co)commutative operations $\mu$ and $\Delta$ such that $Id = \mu \circ \Delta$) a \textbf{bicommutative bimonoid}.

We claim that the bicommutative bimonoid structure of $(B, \mu, \Delta)$ uniquely determines $B$ as a unital $\sfT$-algebra.  In fact, we have
\[
\begin{split}
\mu(m,n) &= \mu(m,k) \circ \mu(k,n)\\
&= \mu(m,1) \circ \mu(1,k) \circ \mu(k,1) \circ \mu(1,n)\\
&= \mu(m,1) \circ Id \circ \mu(1,n) \\
&= \Delta^{m-1} \circ \mu^{n-1}.
\end{split}
\]
This shows that every $\mu(m,n)$ is uniquely determined by $\mu$ and $\Delta$.  It follows that the categories of unital $\sfT$-algebras and of bicommutative bimonoids are canonically isomorphic.  Following the tradition of homotopy theory, we might also call $\sfT$ an \textbf{$E_\infty$-PROP}.

Note that $\sfT$ is \emph{not} the unique PROP generated by the terminal $1$-colored operad $T$, which has $T(n) = \{*\}$ for each $n \geq 1$.  One can see this by considering the $(m,n)$ component (with $m > n$) of the PROP $T_{prop}$ generated by $T$.  For example, one can check that
\[
T_{prop}(m,n) = \varnothing \quad \text{when} \quad m > n.
\]
So clearly
\[
\sfT \not= T_{prop}.
\]
However, since $\sfT$ is the terminal $1$-colored PROP, there is a unique map
\[
i \colon T_{prop} \to \sfT.
\]
Thus, each unital $\sfT$-algebra ($=$ bicommutative bimonoid) $B = (B, \mu, \Delta)$ also has a unital $T_{prop}$-algebra ($=$ $T$-algebra) structure $i^*B$. It is known that unital $T$-algebras are commutative monoids (see, e.g, \cite{bd1}).  It is not hard to check that, in fact, $i^*B$ is the commutative monoid $(B, \mu)$ obtained from the bicommutative bimonoid $(B, \mu, \Delta)$ by forgetting about the comultiplication $\Delta$.

Weak $n$ versions of bicommutative bimonoids will be defined in Definition ~\ref{def:bicomweakncat}.
\qed
\end{example}

\begin{example}[\textbf{Topological Field Theories and the Segal PROP}]
\label{ex:CFT}
Our discussion of the Segal $\Se$ PROP follows \cite{cv,segalA,segalB,segalC}.  The $1$-colored Segal PROP $\Se$ comes from moduli spaces of Riemann surfaces with boundary holes.  It is of great importance in mathematical physics because several topological field theories are algebras over various versions of the Segal PROP.  Among those topological field theories are:
\begin{enumerate}
\item
Conformal Field Theory (CFT);
\item
Topological Conformal Field Theory (TCFT), also known as a string background;
\item
Cohomological Field Theory-$I$ (CohFT-$I$);
\item
Topological Quantum Field Theory (TQFT).
\end{enumerate}

Although the discussion below focuses on the $1$-colored version of $\Se$, we should point out that it is easy to generalize the Segal PROP $\Se$ to allow boundary holes with different sizes, in which case $\Se$ is a colored PROP.  In fact, in the Riemann surfaces under consideration, we can allow the boundary holes to have different circumferences.  In other words, we can allow not only the unit disk but all disks with, say, non-zero circumferences.  In this case, the generalized Segal PROP is a colored PROP, where the set of colors is the set of allowable circumferences of the boundary holes.  The vertical composition in this generalized, colored Segal PROP is then performed only to the Riemann surfaces whose boundary holes have matching circumferences.  With this in mind, our discussion of the various topological field theories can be easily extended to this colored setting as well.

Considering varying circumferences in the boundary holes is not unprecedented.  For example, in the setting of string topology, there is a combinatorially defined colored PROP $\catrcf(g)$ \cite{chataur,cg} that is built from spaces of reduced metric Sullivan chord diagrams with genus $g$.  Such a Sullivan chord diagram is a marked \emph{fat graph} (also known as ribbon graph) that represents a surface with genus $g$ that has a certain number of input and output circles in its boundary.  These boundary circles are allowed to have different circumferences.  The set of such circumferences is the set of colors for the colored PROP $\catrcf(g)$.

For integers $m, n \geq 1$, let $\Se(m,n)$ be the moduli space of (isomorphism classes of) complex Riemann surfaces whose boundaries consist of $m + n$ labeled holomorphic holes that are mutually non-overlapping.  In the literature, $\Se(m,n)$ is sometimes denoted by $\widehat{\catm}(m,n)$.  The holomorphic holes are actually bi-holomorphic maps from $m + n$ copies of the closed unit disk to the Riemann surface.  The first $m$ labeled holomorphic holes are called the \emph{outputs} and the last $n$ are called the \emph{inputs}.  Note that these Riemann surfaces $M$ can have arbitrary genera and are \emph{not} required to be connected.

\[
\setlength{\unitlength}{.6mm}
\begin{picture}(120,80)(10,-10)
\qbezier(20,15)(20,17)(25,17)
\qbezier(25,17)(30,17)(30,15)
\qbezier(20,15)(20,13)(25,13)
\qbezier(25,13)(30,13)(30,15)
\qbezier(30,15)(30,18)(35,18) 
\qbezier(35,18)(40,18)(40,15) 
\qbezier(40,15)(40,17)(45,17)
\qbezier(45,17)(50,17)(50,15)
\qbezier(40,15)(40,13)(45,13)
\qbezier(45,13)(50,13)(50,15)
\qbezier(20,15)(20,22.5)(15,30)
\qbezier(15,30)(10,37.5)(10,45)
\qbezier(50,15)(50,22.5)(55,30)
\qbezier(55,30)(60,37.5)(60,45)
\qbezier(10,45)(10,43)(15,43)
\qbezier(15,43)(20,43)(20,45)
\qbezier(10,45)(10,47)(15,47)
\qbezier(15,47)(20,47)(20,45)
\qbezier(20,45)(20,42)(25,42) 
\qbezier(25,42)(30,42)(30,45) 
\qbezier(30,45)(30,43)(35,43)
\qbezier(35,43)(40,43)(40,45)
\qbezier(30,45)(30,47)(35,47)
\qbezier(35,47)(40,47)(40,45)
\qbezier(40,45)(40,42)(45,42) 
\qbezier(45,42)(50,42)(50,45) 
\qbezier(50,45)(50,43)(55,43)
\qbezier(55,43)(60,43)(60,45)
\qbezier(50,45)(50,47)(55,47)
\qbezier(55,47)(60,47)(60,45)
\qbezier(21,31)(23,29)(25,29) 
\qbezier(25,29)(27,29)(29,31)
\qbezier(23,30)(23,31)(25,31)
\qbezier(25,31)(27,31)(27,30)
\qbezier(41,31)(43,29)(45,29) 
\qbezier(45,29)(47,29)(49,31)
\qbezier(43,30)(43,31)(45,31)
\qbezier(45,31)(47,31)(47,30)
\qbezier(70,15)(70,17)(75,17)
\qbezier(75,17)(80,17)(80,15)
\qbezier(70,15)(70,13)(75,13)
\qbezier(75,13)(80,13)(80,15)
\qbezier(80,15)(80,18)(85,18) 
\qbezier(85,18)(90,18)(90,15) 
\qbezier(90,15)(90,17)(95,17)
\qbezier(95,17)(100,17)(100,15)
\qbezier(90,15)(90,13)(95,13)
\qbezier(95,13)(100,13)(100,15)
\qbezier(100,15)(100,18)(105,18) 
\qbezier(105,18)(110,18)(110,15) 
\qbezier(110,15)(110,17)(115,17)
\qbezier(115,17)(120,17)(120,15)
\qbezier(110,15)(110,13)(115,13)
\qbezier(115,13)(120,13)(120,15)
\qbezier(90,45)(90,47)(95,47)
\qbezier(95,47)(100,47)(100,45)
\qbezier(90,45)(90,43)(95,43)
\qbezier(95,43)(100,43)(100,45)
\qbezier(70,15)(70,22.5)(80,30)
\qbezier(80,30)(90,37.5)(90,45)
\qbezier(120,15)(120,22.5)(110,30)
\qbezier(110,30)(100,37.5)(100,45)
\qbezier(91,31)(93,29)(95,29)
\qbezier(95,29)(97,29)(99,31)
\qbezier(93,30)(93,31)(95,31)
\qbezier(95,31)(97,31)(97,30)
\put(65,65){\makebox(0,0){outputs}}
\put(65,-2){\makebox(0,0){inputs}}
\put(25,8){\makebox(0,0){$4$}}
\put(45,8){\makebox(0,0){$1$}}
\put(75,8){\makebox(0,0){$5$}}
\put(95,8){\makebox(0,0){$2$}}
\put(115,8){\makebox(0,0){$3$}}
\put(15,52){\makebox(0,0){$2$}}
\put(35,52){\makebox(0,0){$4$}}
\put(55,52){\makebox(0,0){$1$}}
\put(95,52){\makebox(0,0){$3$}}
\end{picture}
\]

One can visualize a Riemann surface $M \in \Se(m,n)$ as a pair of \emph{alien pants} in which there are $n$ legs (the inputs) and $m$ waists (the outputs).  See the picture above for an element of $\Se(4,5)$ with two connected components.  With this picture in mind, such a Riemann surface is also known as a \emph{worldsheet} in the physics literature.  In this interpretation, a worldsheet is an embedding of closed strings in space-time.  We think of such a Riemann surface $M$ as a machine that provides an operation with $n$ inputs and $m$ outputs.

The collection of moduli spaces
\[
\Se = \{\Se(m,n) \colon m, n \geq 1\}
\]
forms a $1$-colored topological PROP, called the \textbf{Segal PROP}, also known as the \textbf{Segal category}.  Its horizontal composition
\[
\Se(m_1,n_1) \times \Se(m_2,n_2) \xrightarrow{\otimes \,=\, \sqcup} \Se(m_1 + m_2,n_1 + n_2)
\]
is given by disjoint union $M_1 \sqcup M_2$.  In other words, put two pairs of alien pants side-by-side.  Its vertical composition
\[
\Se(m,n) \times \Se(n,k) \xrightarrow{\circ} \Se(m,k), \quad (M,N) \mapsto M \circ N
\]
is given by holomorphically sewing the $n$ output holes (the waists) of $N$ with the $n$ input holes (the legs) of $M$.  The $\Sigma_m$-$\Sigma_n$ action on $\Se(m,n)$ is given by permuting the labels of the $m$ output and the $n$ input holomorphic holes.

Let $\bk$ be a field of characteristic $0$, and let $C_*$ denote the singular chain functor with coefficients in $\bk$.  Applying this singular chain functor to the Segal PROP $\Se$, we obtain
\[
\bSe = C_*(\Se),
\]
which is a $1$-colored PROP over chain complexes of $\bk$-modules.  An algebra over the $\bk$-linear chain PROP $\bSe$ is by definition a \textbf{Topological Conformal Field Theory}.

Passing to homology first, we obtain
\begin{equation}
\label{H*Se}
H_*(\Se),
\end{equation}
which is a $1$-colored PROP over graded $\bk$-modules.  An algebra over the graded $\bk$-linear PROP $H_*(\Se)$ is by definition a \textbf{Cohomological Field Theory-$I$}.

If we take only the $0$th homology module, then we obtain
\begin{equation}
\label{H0Se}
H_0(\Se),
\end{equation}
which is a $1$-colored PROP over $\bk$-modules.  An algebra over the $\bk$-linear PROP $H_0(\Se)$ is by definition a \textbf{Topological Quantum Field Theory}.  Weak $n$ versions of cohomological field theory-$I$ and topological quantum field theory will be defined in \S\ref{subsec:hdtft}.
\qed
\end{example}

\section{Higher PROPs}
\label{sec:higherPROP}

Throughout this section, our underlying category is $\set$.  This assumption can be relaxed a little bit.  What we actually need is that the underlying category $\cate$ be set-based, i.e., there is a suitable forgetful functor $\cate \to \set$.  In this setting, it makes sense to talk about the \emph{underlying set of elements} of an object and the \emph{underlying function} of a morphism in $\cate$.  For example, one can easily adapt the discussion in this section to the case $\cate =$ the category of $\bk$-modules, where $\bk$ is a field of characteristic $0$.

It has long been known that there is a colored operad whose algebras are operads, i.e., the operad for operads.  In fact, given any unital colored operad $\sfO$, it is shown in \cite{bd1} that there exists a unital $\elt(\sfO)$-colored operad $\sfO^+$ whose algebras are exactly the colored operads over $\sfO$.  Here $\elt(\sfO)$ is the set of elements, also called \emph{operations}, in $\sfO$.  For example, starting with the terminal $1$-colored operad $T$, one obtains $T^+$, which is the colored operad for $1$-colored operads.

The so-called \textbf{slice construction} $\sfO^+$ lies at the very heart of the higher category theory of Baez and Dolan \cite{bd1}.  One considers $\sfO^+$ as a \emph{higher operad}, in the sense that the operations in $\sfO$ are now the colors in $\sfO^+$.  From its construction, the operations in $\sfO^+$ are the \emph{reduction laws} in $\sfO$, which are equations stating that the composite of certain operations is equal to some operation.  Moreover, the reduction laws in $\sfO^+$ are the ways of combining reduction laws in $\sfO$ to obtain other reduction laws.  The upgrading process described in the last two sentences, repeated multiple (or infinitely many) times, is essentially how categorification is achieved in the Baez-Dolan setting \cite{bd1}.

The main purpose of this section is to show that there is an analogous slice construction for colored PROPs, giving rise to \emph{higher PROPs}.  Its purpose is the same as in the operad case.  In other words, given a $\frakC$-colored PROP $\sfP$, we will construct a unital $\elt(\sfP)$-colored PROP $\sfP^+$ whose algebras are exactly the $\frakC$-colored PROPs over $\sfP$.  Restricting to the terminal unital PROP $\sfT$ (Example ~\ref{terminalprop}), it follows that $\sfT^+$ is the colored PROP whose algebras are PROPs, i.e., $\sfT^+$ is the colored PROP for $1$-colored PROPs.  Starting with a $\frakC$-colored version $\sfT_\frakC$, one obtains the colored PROP $\sfT_\frakC^+$ for $\frakC$-colored PROPs (Example ~\ref{T+}).

There is another interesting example if we start with the initial $1$-colored PROP $\sfI$ (Example ~\ref{initialprop}).  As we will see in Example ~\ref{I+}, unital $\sfI^+$-algebras are \emph{bi-equivariant graded monoidal monoids}.  Disregarding the bi-equivariance and the grading, these monoidal monoids can be regarded as de-categorified versions of the \emph{$2$-fold monoidal categories} of \cite{bfsv}.

Following the Baez-Dolan approach \cite{bd1} and using our higher PROP construction, we will define the category of \textbf{$\sfP$-propertopes} in Section ~\ref{sec:propertopes}.  These propertopes -- as opposed to shapes such as globes, cubes, simplices, or opetopes -- are the shapes of higher cells in our setting.  Higher dimensional $\sfP$-algebras (i.e., $n$-time categorified $\sfP$-algebras for $0 \leq n \leq \infty$) are certain presheaves on the category of $\sfP$-propertopes.
Since our setting is based on colored PROPs, which model algebraic structures with multiple inputs and multiple outputs, the cells in our higher dimensional algebras also have multiple inputs and multiple outputs, as in a polycategory (Example ~\ref{ex:polycategory}).

In \S\ref{subsec:slice} we state the main result regarding the existence of the slice PROP $\sfP^+$ (Theorem ~\ref{thm:P+}) and discuss several examples.  
The rest of this section, \S\ref{subsec:graphs} and \S\ref{subsec:sliceconstruction}, is devoted to proving Theorem ~\ref{thm:P+}.

\subsection{Slice PROPs}
\label{subsec:slice}

To state the main result of this section, we use the following notations.

\begin{definition}
Given any unital $\frakC$-colored PROP $\sfP$, define the set 
\[
\elt(\sfP) = \coprod_{(\ud;\uc)} \sfP\binom{\ud}{\uc},
\]
where the disjoint union is taken over all the pairs $(\ud;\uc)$ of $\frakC$-profiles.  In other words, $\elt(\sfP)$ is the set of elements in $\sfP$.
\end{definition}

For a category $\catc$ and an object $A$ in $\catc$, the \textbf{over category} $\catc/A$ has as objects the morphisms
\[
f \colon B \to A \in \catc.
\]
A morphism in $\catc/A$ is a commutative triangle in $\catc$:
\[
\SelectTips{cm}{10}
\xymatrix{
B \ar[r]^-{f} \ar[d] & A\\
D \ar[ur]_-{g}.
}
\]
Recall that, given a unital $\frakC$-colored PROP $\sfQ$, the category of unital $\sfQ$-algebras is denoted by $\alg(\sfQ)$.  Also, the category of (non-unital) $\frakC$-colored PROPs is denoted by $\propc$.

\begin{theorem}
\label{thm:P+}
Let $\sfP$ be a unital $\frakC$-colored PROP over $\set$.  Then there exist a unital $\elt(\sfP)$-colored PROP $\sfP^+$ and a canonical isomorphism of categories:
\begin{equation}
\label{p+algebras}
\propc/\sfP \cong \alg(\sfP^+).
\end{equation}
\end{theorem}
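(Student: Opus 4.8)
The plan is to build $\sfP^+$ explicitly from a pasting scheme of $\sfP$-decorated graphs and then to exhibit the isomorphism \eqref{p+algebras} by a fibering construction together with its inverse. The guiding principle is that an object of $\propc/\sfP$, i.e.\ a morphism $\phi \colon \sfQ \to \sfP$ of $\frakC$-colored PROPs, is the same datum as its collection of fibers. Explicitly, for each element $p \in \sfP\binom{\ud}{\uc}$ set
\[
X_p = \phi^{-1}(p) \subseteq \sfQ\binom{\ud}{\uc},
\]
so that $\sfQ\binom{\ud}{\uc} = \coprod_{p} X_p$ with $p$ ranging over $\sfP\binom{\ud}{\uc}$. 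This produces an $\elt(\sfP)$-graded set $X = \{X_p\}$, and the claim is that the entire PROP structure of $\sfQ$ is encoded, after passing to fibers, by structure maps indexed precisely by the reduction laws in $\sfP$.

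First I would analyze how the structure maps of $\sfQ$ restrict to fibers. Since $\phi$ is a morphism of PROPs, vertical composition carries $X_{p_1} \times X_{p_2}$ into $X_{p_1 \circ p_2}$, horizontal composition carries $X_{p_1} \times X_{p_2}$ into $X_{p_1 \otimes p_2}$, and the $(\sigma;\tau)$-action carries $X_p$ into $X_{(\sigma;\tau)p}$, while the units $\mathtt{1}_c$ of $\sfQ$ lie over those of $\sfP$. Each such fibered operation is governed by a fixed element of $\sfP$ — the relevant composite or permute of the indexing elements — which is what I mean by a reduction law. Accordingly the colors of $\sfP^+$ are the elements of $\sfP$, namely $\elt(\sfP)$, and an operation of $\sfP^+$ with output profile $(p_1,\ldots,p_m)$ and input profile $(q_1,\ldots,q_n)$ records a way of producing each $p_i$ as an iterated horizontal/vertical composite of a portion of the $q_j$'s inside $\sfP$. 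This is the content I would formalize in \S\ref{subsec:graphs} and \S\ref{subsec:sliceconstruction} by $\sfP$-decorated graphs, whose edges are labeled by elements of $\sfP$ and whose vertices record composites; horizontal and vertical composition in $\sfP^+$ arise from disjoint union and stacking of such graphs, and the units from single-edge graphs. That $\sfP^+$ so constructed is a unital $\elt(\sfP)$-colored PROP — associative, bi-equivariant, and satisfying the interchange rule — would follow from the corresponding axioms for $\sfP$ together with the elementary combinatorics of composing graphs.

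With $\sfP^+$ in hand, I would define $\Phi \colon \propc/\sfP \to \alg(\sfP^+)$ by the fiber construction above, reading off the $\sfP^+$-algebra structure on $X$ from the fibered operations just described, and $\Psi \colon \alg(\sfP^+) \to \propc/\sfP$ by the reverse assembly: given an $\sfP^+$-algebra $X$, set $\sfQ\binom{\ud}{\uc} = \coprod_{p \in \sfP\binom{\ud}{\uc}} X_p$, equip it with horizontal and vertical compositions, equivariance, and units dictated by the $\sfP^+$-structure maps, and let $\phi$ be the evident projection sending the $X_p$-summand to $p$. One then checks that $\Phi$ and $\Psi$ are mutually inverse functors, yielding the isomorphism \eqref{p+algebras}; the unital restriction is immediate from the construction of the units.

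The main obstacle is the faithful bookkeeping of the pasting scheme. Unlike the operad case of \cite{bd1}, a PROP admits both multiple inputs and multiple outputs, so its operations are built from graphs that need be neither connected nor simply connected, and the two compositions $\otimes$ and $\circ$ together with the interchange rule must be matched against the corresponding graph operations. The delicate point is to verify that the operations of $\sfP^+$ are in exact bijection with the genuine $\sfP$-reduction-laws — with no over- or under-counting once orbit types and the $\Sigma$-actions are taken into account — so that the fibered structure of an arbitrary $\sfQ$ over $\sfP$ is captured by $\sfP^+$ precisely, with nothing left over and no spurious constraints imposed.
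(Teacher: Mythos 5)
Your proposal follows essentially the same route as the paper: the slice PROP $\sfP^+$ is built from $\sfP$-decorated graphs encoding reduction laws, and the isomorphism \eqref{p+algebras} is realized by the fiber construction $\sfQ \mapsto \{g^{-1}(\alpha)\}_{\alpha \in \elt(\sfP)}$ (the paper's functor $\partial$) together with the disjoint-union reassembly $A \mapsto \coprod_\alpha A_\alpha$ (the paper's functor $\int$). The only cosmetic discrepancy is that in the paper's pasting scheme the \emph{vertices} of a decorated graph carry elements of $\sfP$ while the edges carry colors in $\frakC$, rather than the other way around; this does not affect the argument.
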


The proof will be given at the end of this section.  We note that Theorem ~\ref{thm:P+} also holds with $\bk$-modules in place of $\set$.  The minor modifications needed to adapt the constructions and proofs below to $\bk$-modules will be discussed in Remarks ~\ref{remark:P+chain} and ~\ref{remark:P+alg}.  In the $\bk$-linear setting, the isomorphism \eqref{p+algebras} is an isomorphism of categories enriched over $\bk$-modules.

Observe that we now have two ``enlarging" constructions associated to any unital $\frakC$-colored operad $\sfO$:
\begin{enumerate}
\item
$\sfO \mapsto \sfO_{prop}$, the free $\frakC$-colored PROP generated by $\sfO$ (Theorem ~\ref{operadpropadj}).
\item
$\sfO \mapsto \sfO^+$, the Baez-Dolan \cite{bd1} slice operad of $\sfO$.
\end{enumerate}
These two constructions do \emph{not} commute with each other.  In fact, $(\sfO^+)_{prop}$ is the free colored PROP generated by $\sfO^+$, which is $\elt(\sfO)$-colored.  On other other hand, $(\sfO_{prop})^+$ is $\elt(\sfO_{prop})$- colored.  From the construction of $(-)_{prop}$, one can see that there are, in general, more elements in $\sfO_{prop}$ than in $\sfO$ itself.  This suggests that $(\sfO_{prop})^+$ is in some sense bigger than $(\sfO^+)_{prop}$.  The following result, which will not be used in what follows, gives one interpretation of this comparison.

\begin{corollary}
Let $\sfO$ be a unital $\frakC$-colored operad.  Then there is an embedding of categories
\[
\iota \colon \alg\left((\sfO^+)_{prop}\right) \hookrightarrow \alg\left((\sfO_{prop})^+\right)
\]
\end{corollary}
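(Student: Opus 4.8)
The plan is to rewrite both algebra categories as over categories and then realize $\iota$ as the functor induced by the free-PROP construction $(-)_{prop}$ of Theorem~\ref{operadpropadj}. For the domain, note that $(\sfO^+)_{prop}$ is the free PROP on the $\elt(\sfO)$-colored operad $\sfO^+$, so Corollary~\ref{Opropalgebra} (applied to the operad $\sfO^+$) gives a canonical equivalence $\alg((\sfO^+)_{prop}) \cong \alg(\sfO^+)$, and the Baez--Dolan slice construction \cite{bd1} identifies $\alg(\sfO^+)$ with the over category $\operadc/\sfO$ of colored operads over $\sfO$. For the codomain, Theorem~\ref{thm:P+} applied to the unital $\frakC$-colored PROP $\sfO_{prop}$ gives $\alg((\sfO_{prop})^+) \cong \propc/\sfO_{prop}$. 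Hence it suffices to construct an embedding $\operadc/\sfO \hookrightarrow \propc/\sfO_{prop}$.

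Such an embedding is provided by $(-)_{prop}$. An object of $\operadc/\sfO$ is an operad map $g\colon \sfR\to\sfO$, which I send to the PROP map $g_{prop}\colon \sfR_{prop}\to\sfO_{prop}$, an object of $\propc/\sfO_{prop}$ since $\sfO_{prop}=(-)_{prop}(\sfO)$; a morphism $h$ over $\sfO$ goes to $h_{prop}$, and $g'h=g$ forces $g'_{prop}h_{prop}=g_{prop}$ by functoriality. I then define $\iota$ to be the composite
\[
\alg\!\left((\sfO^+)_{prop}\right)\cong \operadc/\sfO \xrightarrow{\ (-)_{prop}\ } \propc/\sfO_{prop}\cong \alg\!\left((\sfO_{prop})^+\right).
\]
To verify $\iota$ is an embedding it suffices (the two outer functors being equivalences) to show the induced functor on over categories is injective on objects and faithful. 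Both descend from $(-)_{prop}$: the identity $U(\sfO_{prop})=\sfO$ exhibits the forgetful functor $U$ as a retraction, so $(-)_{prop}$ is injective on objects and faithful, while the adjunction isomorphism $\propc(\sfR_{prop},\sfR'_{prop})\cong\operadc(\sfR,U\sfR'_{prop})=\operadc(\sfR,\sfR')$ shows it is in fact fully faithful. Applying $U$ to a PROP map $\sfR_{prop}\to\sfO_{prop}$ recovers the underlying operad map $\sfR\to\sfO$, giving injectivity on objects of the over-category functor, and full faithfulness transports to the over categories because $g'_{prop}k=g_{prop}$ is equivalent to $g'h=g$ whenever $k=h_{prop}$.

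The hard part is not a calculation but the bookkeeping that makes the three identifications mutually compatible: one must check that the equivalence of Corollary~\ref{Opropalgebra}, the Baez--Dolan identification of \cite{bd1}, and the slice identification of Theorem~\ref{thm:P+} are natural enough that the square relating $\iota$ to $(-)_{prop}$ on over categories commutes on the nose. Once this is established the embedding property is formal. It is also worth recording that $\iota$ is genuinely non-surjective in general, matching the preceding remark that $(\sfO_{prop})^+$ is ``bigger'' than $(\sfO^+)_{prop}$: a $\frakC$-colored PROP over $\sfO_{prop}$ need not be the free PROP on any operad over $\sfO$, so such objects lie outside the image of $\iota$. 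Throughout one uses the unital versions of the cited results, which are available since $(-)_{prop}$, the slice construction, and the Baez--Dolan construction all preserve unitality.
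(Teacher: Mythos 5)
Your proposal is correct and follows essentially the same route as the paper: both factor $\iota$ as the equivalence $\alg((\sfO^+)_{prop})\simeq\alg(\sfO^+)$ from Corollary~\ref{Opropalgebra}, the Baez--Dolan identification $\alg(\sfO^+)\cong\operadc/\sfO$, the functor induced by $(-)_{prop}$ on over categories, and the isomorphism $\propc/\sfO_{prop}\cong\alg((\sfO_{prop})^+)$ from Theorem~\ref{thm:P+}. Your added justification that $(-)_{prop}$ is fully faithful (via $U(\sfR_{prop})=\sfR$ and the adjunction) is a correct elaboration of what the paper asserts without proof.
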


\begin{proof}
The desired embedding is defined as the following composition:
\[
\SelectTips{cm}{10}
\xymatrix{
\alg\left((\sfO^+)_{prop}\right) \ar[rrr]^-{\iota} \ar[d]_-{\Psi} & & & \alg\left((\sfO_{prop})^+\right)\\
\alg(\sfO^+) \ar[r]^-{\cong} & \operadc/\sfO \ar[rr]^-{(-)_{prop}} & & \propc/\sfO_{prop} \ar[u]_-{\cong}.
}
\]
The embedding $\Psi$ is part of the equivalence in Corollary ~\ref{Opropalgebra}, applied to the $\elt(\sfO)$-colored operad $\sfO^+$.  The embedding $(-)_{prop}$ is induced on the over categories by the original free colored PROP functor with the same notation (Theorem ~\ref{operadpropadj}).  The other two functors are isomorphisms.  The right vertical isomorphism is from Theorem ~\ref{thm:P+}, and the other isomorphism is the operad version from Proposition 13 and Theorem 14 in \cite{bd1}.
\end{proof}

What follows are a few examples of $\sfP^+$-algebras for various colored PROPs $\sfP$.

\begin{example}[\textbf{Colored PROPs as $\sfT_\frakC^+$-algebras}]
\label{T+}
In Example ~\ref{terminalprop} we considered the terminal $1$-colored unital PROP $\sfT$.  Here we consider the $\frakC$-colored version $\sfT_\frakC$, which is given by
\[
\sfT_\frakC\binom{\ud}{\uc} = \{*\}
\]
for any $\frakC$-profiles $\ud$ and $\uc$.  The $\frakC$-colored PROP structure on $\sfT_\frakC$ is the obvious one.  Then $\sfT_\frakC$ is a unital $\frakC$-colored PROP that is the terminal object in the category of all $\frakC$-colored PROPs in $\set$.  So $\frakC$-colored PROPs over $\sfT_\frakC$ are just $\frakC$-colored PROPs.  Thus, by Theorem ~\ref{thm:P+} we have a canonical isomorphism
\[
\propc \cong \alg(\sfT_\frakC^+)
\]
of categories.  In other words, $\frakC$-colored PROPs are exactly the $\sfT_\frakC^+$-algebras.\qed
\end{example}

\begin{example}[\textbf{PROPic algebra structures on $A$ as $E_A^+$-algebras}]
Let $A = \{A_c\}$ be a $\frakC$-graded set, and let $\sfE = E_A$ be the $\frakC$-colored endomorphism PROP of $A$.  A map
\[
f \colon \sfP \to \sfE
\]
of $\frakC$-colored PROPs is, by definition, a $\sfP$-algebra structure on $A$.  Thus, by Theorem ~\ref{thm:P+}, the category $\alg(\sfE^+)$ is canonically isomorphic to the category $\propc/\sfE$ of PROPic algebra structures on $A$.  So all the possible PROPic algebra structures on $A$ are, in fact, just algebras over a single colored PROP $\sfE^+$.
\qed
\end{example}

\begin{example}[\textbf{$\sfI^+$-algebras as de-categorified $2$-fold monoidal categories}]
\label{I+}
Let $\sfI$ be the initial $1$-colored unital PROP in $\set$ (Example ~\ref{initialprop}), which is given by
\[
\sfI(m,n) = \begin{cases} \varnothing & \text{if } m \not= n,\\
\{*\} & \text{if } m = n.\end{cases}
\]
By Theorem ~\ref{thm:P+}, $\sfI^+$-algebras are exactly the $1$-colored PROPs over $\sfI$.  Suppose that
\[
f \colon \sfQ \to \sfI
\]
is a PROP over $\sfI$.  Since $\sfI(m,n) = \varnothing$ unless $m = n$, it follows that
\[
\sfQ(m,n) = \varnothing \quad \text{if} \quad m \not= n.
\]
So the only possibly non-empty components in $\sfQ$ are the diagonal components $\sfQ_n := \sfQ(n,n)$ for $n \geq 1$.  The map
\[
f \colon \sfQ_n \to I(n,n) = \{*\}
\]
is the unique map to the one-element set, which gives no information about the set $\sfQ_n$.  Thus, $\sfI^+$-algebras are $1$-colored PROPs whose non-diagonal components are empty.  Of course, given any $1$-colored PROP $\sfP$, we can replace its non-diagonal components with $\varnothing$.  The result is an $\sfI^+$-algebra.  We now provide an intrinsic description of an $\sfI^+$-algebra $\sfQ$.

Each set $\sfQ_n$ has commuting left $\Sigma_n$-action and right $\Sigma_n$-action, i.e., $\sfQ_n$ is $\Sigma_n$-bi-equivariant.  The horizontal composition in $\sfQ$ takes the form
\[
\sfQ_m \times \sfQ_n \xrightarrow{\otimes} \sfQ_{m+n},
\]
which is associative and bi-equivariant.  In other words, $\coprod_{n \geq 1} \sfQ_n$ is a graded bi-equivariant monoid with respect to $\otimes$.  The vertical composition in $\sfQ$ consists of maps
\[
\sfQ_n \times \sfQ_n \xrightarrow{\circ_n} \sfQ_n
\]
that are associative and bi-equivariant.  In other words, each $\sfQ_n$ is a $\Sigma_n$-bi-equivariant monoid with respect to $\circ_n$.  The interchange rule in this case says that
\begin{equation}
\label{interchangexy}
(x_1 \otimes y_1) \circ_{m+n} (x_2 \otimes y_2) = (x_1 \circ_m x_2) \otimes (y_1 \circ_n y_2)
\end{equation}
for $x_1, x_2 \in \sfQ_m$ and $y_1, y_2 \in \sfQ_n$.  In other words, the local monoid structures of the individual $\sfQ_n$ are compatible with the global monoid structure $\otimes$.  We call such an object
\[
\sfQ = \coprod_{n \geq 1} \sfQ_n
\]
with the above bi-equivariant structures and compatible local and global monoid multiplications a \textbf{bi-equivariant graded monoidal monoid}, or simply \textbf{monoidal monoid}.  So $\sfI^+$ is the countably colored PROP for monoidal monoids. The PROP $\sfI^+$ is countably colored because its set of colors is $\elt(\sfI)$, which has one element for each $n \geq 1$.

For example, let $A$ be an associative algebra over a field $\bk$ of characteristic $0$. Then its tensor algebra
\[
TA = \bigoplus_{n \geq 1} A^{\otimes n} = A \oplus A^{\otimes 2} \oplus A^{\otimes 3} \oplus \cdots
\]
gives such a monoidal monoid with $\sfQ_n = A^{\otimes n}$.  Indeed, we can insist that the $\Sigma_n$-bi-equivariant action on $A^{\otimes n}$ be trivial.  Its local monoid structure $\circ_n$ is the induced multiplication structure from $A$.  In other words, we have
\[
(x_1 \otimes \cdots \otimes x_n) \circ_n (y_1 \otimes \cdots \otimes y_n) = x_1y_1 \otimes \cdots \otimes x_ny_n.
\]
The global monoid structure
\[
A^{\otimes m} \times A^{\otimes n} \to A^{\otimes (m+n)}
\]
is concatenation of tensor factors.  The interchange rule \eqref{interchangexy} in this case says that concatenation of tensor factors commutes with the multiplications on the summands $A^{\otimes n}$.

There is a close connection between our monoidal monoids and the $2$-fold monoidal categories of \cite{bfsv}.  Recall from \cite{bfsv} that a \textbf{$2$-fold monoidal category} consists of a category $\catc$, two strictly associative monoidal products
\[
\otimes_i \colon \catc \times \catc \to \catc
\]
for $i = 1$ and $2$, and an \emph{interchange natural transformation}
\[
(A \otimes_2 B) \otimes_1 (C \otimes_2 D) \xrightarrow{\eta} (A \otimes_1 C) \otimes_2 (B \otimes_1 D)
\]
that makes two associativity type squares commute.  (There are also units for $\otimes_1$ and $\otimes_2$ that we have ignored.)  We can thus think of a monoidal monoid $\sfQ$ as a bi-equivariant graded version of a \emph{de-categorified} $2$-fold monoidal category.  The local and global monoid multiplications $\circ$ and $\otimes$ in $\sfQ$ are de-categorifications of the strictly associative monoidal products $\otimes_1$ and $\otimes_2$.  The interchange rule \eqref{interchangexy} is a de-categorification of the interchange natural transformation $\eta$.  In particular, higher dimensional $\sfI^+$-algebras can be thought of as (close cousins of) higher $2$-fold monoidal categories, or $2$-fold monoidal $n$-categories (Definition ~\ref{def:monoidalmonoidal}).
\qed
\end{example}

\subsection{Graphs, decorations, and evaluations}
\label{subsec:graphs}

Before we can prove Theorem ~\ref{thm:P+}, first we define the graphs that serve as the pasting scheme for the slice PROP $\sfP^+$.  Our graphs are slight modifications of those used in \cite{markl06,mv} for the free $1$-colored PROP.

\begin{definition}
\label{def:graph}
For $m, n \geq 1$, an \textbf{$(m,n)$-graph}\label{graph} is a non-empty, not-necessarily connected,  finite directed graph $G$ satisfying the following conditions:
\begin{enumerate}
\item
Each vertex has at least one incoming edge and at least one outgoing edge.
\item
There are no wheels (i.e., directed cycles).
\item
There are exactly $n$ edges, called \textbf{inputs}, that do not have an initial vertex.
\item
There are exactly $m$ edges, called \textbf{outputs}, that do not have a terminal vertex.
\item
The connected components of $G$ are labeled $\{1, 2, \ldots \}$.
\item
Each connected component has at least one vertex (and hence at least one input and one output).
\item
Within each connected component, the sets of vertices, inputs, and outputs are separately labeled $\{1, 2, \ldots \}$.
\end{enumerate}
When $m$ and $n$ are understood from the context, we will simply call an $(m,n)$-graph $G$ a \textbf{graph}.
\end{definition}

If a graph $G$ has $r$ connected components, then we write
\[
G = G^1 \sqcup \cdots \sqcup G^r,
\]
where $G^j$ is the $j$th connected component of $G$.  The $i$th vertex in $G^j$ is denoted by $v_i^j$.  The sets of vertices and edges in a graph $G$ are denoted by $v(G)$ and $e(G)$, respectively.

The $(m,n)$-graphs are the objects of a groupoid.  An \textbf{isomorphism} between two $(m,n)$-graphs consists of a bijection between the sets of vertices and a bijection between the sets of edges preserving all the edge relations.  Moreover, it is required that corresponding connected components, vertices, inputs, and outputs have the same labels.  In what follows, we will identify isomorphic graphs.  We choose, once and for all, one representative from each isomorphism class of $(m,n)$-graphs.

\begin{example}
Here is a graphical representation of a $(5,3)$-graph $G$ with seven vertices and one connected component:
\begin{equation}
\label{35graph}
\setlength{\unitlength}{.6mm}
\begin{picture}(95,70)(5,12)
\put(30,30){\circle*{2}} 
\put(70,30){\circle*{2}}
\put(10,45){\circle*{2}}
\put(50,45){\circle*{2}}
\put(90,45){\circle*{2}}
\put(30,60){\circle*{2}}
\put(70,60){\circle*{2}}
\put(20,20){\vector(1,1){9.5}} 
\put(40,20){\vector(-1,1){9.5}}
\put(70,20){\vector(0,1){9}}
\put(30,30){\vector(-4,3){19.5}}
\put(30,30){\vector(4,3){19.5}}
\put(70,30){\vector(-4,3){19.5}}
\put(70,30){\vector(4,3){19.5}}
\put(10,45){\vector(4,3){19.5}}
\put(10,45){\vector(4,1){59.3}}
\put(50,45){\vector(-4,3){19.3}}
\put(50,45){\vector(4,3){19.3}}
\put(90,45){\vector(0,1){25}}
\put(30,60){\vector(-1,1){10}}
\put(30,60){\vector(0,1){10}}
\put(30,60){\vector(1,1){10}}
\put(70,60){\vector(0,1){10}}
\put(30,35){\makebox(0,0){$3$}} 
\put(70,35){\makebox(0,0){$6$}}
\put(7,45){\makebox(0,0){$4$}}
\put(50,39){\makebox(0,0){$5$}}
\put(93,45){\makebox(0,0){$1$}}
\put(30,55){\makebox(0,0){$7$}}
\put(73,60){\makebox(0,0){$2$}}
\put(20,17){\makebox(0,0){$2$}} 
\put(40,17){\makebox(0,0){$3$}}
\put(70,17){\makebox(0,0){$1$}}
\put(20,73){\makebox(0,0){$3$}}
\put(30,73){\makebox(0,0){$5$}}
\put(40,73){\makebox(0,0){$1$}}
\put(70,73){\makebox(0,0){$4$}}
\put(90,73){\makebox(0,0){$2$}}
\end{picture}
\end{equation}
A vertex is represented by a $\bullet$, and an edge is represented by a directed arrow.  The number closest to a vertex is its label.  The three numbers at the bottom are the labels of the inputs, and the five numbers at the top are the labels of the outputs.  The picture \eqref{35graph} uniquely determines the (isomorphism class of the) graph $G$.  Note that we have arranged the edges so that they all flow from the bottom (the inputs) to the top (the outputs).  We will continue to draw graphs with a bottom-to-top flow for the rest of this paper.\qed
\end{example}

We now decorate graphs with elements and colors from a fixed unital $\frakC$-colored PROP $\sfP$.

\begin{definition}
\label{def:decoratedgraph}
By a \textbf{$\sfP$-decorated $(m,n)$-graph}, or simply a \textbf{$\sfP$-decorated graph}, we mean a pair $(G, \xi)$ consisting of:
\begin{enumerate}
\item
An $(m,n)$-graph $G$ for some $m, n \geq 1$.
\item
A \textbf{decorating function}
\[
\xi \colon v(G) \sqcup e(G) \to \elt(\sfP) \sqcup \frakC
\]
with $\xi(v(G)) \subseteq \elt(\sfP)$ and $\xi(e(G)) \subseteq \frakC$.
\end{enumerate}
The decorating function $\xi$ is required to satisfy the following \textbf{color-matching property}: For a vertex $v \in v(G)$, denote by $in(v)_i$ and $out(v)_j$ the $i$th incoming edge and the $j$th outgoing edge of $v$ (from left to right in its graphical representation).  Then it is required that
\begin{equation}
\label{colormatching}
\xi(v) \in \sfP\binom{\xi(out(v)_1), \ldots , \xi(out(v)_s)}{\xi(in(v)_1), \ldots , \xi(in(v)_r)}
\end{equation}
for every $v \in v(G)$ with $r$ incoming and $s$ outgoing edges.  The image under $\xi$ of a vertex (or an edge) is called its \textbf{decoration}.
\end{definition}

In other words, what the color-matching property \eqref{colormatching} says is this: Let $v$ be an arbitrary vertex of $G$ with, say, $r$ incoming and $s$ outgoing edges.  If these edges connected to $v$ have decorations
\[
\xi(in(v)_i) = c_i \in \frakC \quad \text{and} \quad \xi(out(v)_j) = d_j \in \frakC,
\]
then the decoration $\xi(v) \in \elt(\sfP)$ of $v$ must have $\frakC$-profiles
\[
\binom{\ud}{\uc} = \binom{d_1, \ldots , d_s}{c_1, \ldots , c_r}.
\]

We can draw a $\sfP$-decorated graph $(G,\xi)$ by first drawing the underlying graph $G$.  The decorations of the vertices and the edges are then added to the picture.  The decoration $\xi(v)$ of a vertex $v$ is drawn next to $v$, just like its label.  The decoration of an edge can be drawn on the edge.  If the decorations on the edges are understood from the context, we will omit them from the picture of the decorated graph.  In what follows, we sometimes write $G$ to denote a $\sfP$-decorated graph $(G,\xi)$ if the decorating function $\xi$ is understood from the context.

To each $\sfP$-decorated graph $(G, \xi)$, there is an associated element
\begin{equation}
\label{grapheval}
ev((G, \xi)) \in \elt(\sfP),
\end{equation}
called the \textbf{evaluation} of $(G, \xi)$, which is defined as follows \cite{markl06,markl07}.  (It is called a \emph{contraction along $G$} in \cite{markl07}.)  First suppose that $G$ has only one connected component.  In this case, we can compose the decorations $\xi(v)$ $(v \in v(G))$ in $\sfP$ according to the graph $G$ using the colored PROP structure of $\sfP$.  For example, if $G$ is the graph in \eqref{35graph} and if
\[
\xi(v_i) = \alpha_i \in \elt(\sfP) \quad \text{for} \quad 1 \leq i \leq 7,
\]
then
\begin{equation}
\label{exampleeval}
ev((G,\xi)) = \sigma_1\left[\left(\alpha_7 \otimes \alpha_2 \otimes \texttt{1}_{d} \right) \circ \tau\left(\alpha_4 \otimes \alpha_5 \otimes \alpha_1\right) \circ \left(\alpha_3 \otimes \alpha_6\right)\right]\sigma_2.
\end{equation}
Here $\sigma_1$, $\sigma_2$, and $\tau$ are the permutations
\[
\sigma_1 = \binom{1, 2, 3, 4, 5}{3, 5, 1, 4, 2}, \quad \sigma_2^{-1} = \binom{1, 2, 3}{2, 3, 1}, \quad \tau = \binom{1, 2, 3, 4, 5}{1, 3, 2, 4, 5}.
\]
So $\sigma_1$ and $\sigma_2$ are the permutations at the top and the bottom of $G$, and $\tau$ is the permutation for the only crossing in $G$.  The element $\texttt{1}_{d} \in \sfP\binom{d}{d}$ is the unit in $\sfP$ corresponding to the color $d \in \frakC$, which is the output profile of $\xi(v_1) = \alpha_1$.  That the element $ev((G,\xi))$ \eqref{exampleeval} makes sense in $\sfP$ follows from the color-matching property \eqref{colormatching}.

In the general case, suppose that
\[
G = G^1 \sqcup \cdots \sqcup G^r.
\]
Then its \textbf{evaluation} is defined as the horizontal composition
\begin{equation}
\label{evalG}
ev((G,\xi)) = ev((G^1,\xi^1)) \otimes \cdots \otimes ev((G^r,\xi^r)) \in \sfP,
\end{equation}
where $\xi^i$ is the restriction of the decorating function $\xi$ to (the vertices and edges in) the connected component $G^i$.

Evaluations of $\sfP$-decorated graphs give us a way to keep track of the reduction laws in $\sfP$.  In other words, every $\sfP$-decorated graph $(G,\xi)$ gives a reduction law in $\sfP$, such as \eqref{exampleeval}, via evaluation.  Conversely, every reduction law in $\sfP$ can be represented as a $\sfP$-decorated graph whose evaluation gives the original equation.

\subsection{Slice construction for colored PROPs}
\label{subsec:sliceconstruction}

Now we define certain sets that constitute the components of the slice PROP $\sfP^+$ of the unital $\frakC$-colored PROP $\sfP$.

Suppose that $\alpha_i$ $(1 \leq i \leq k)$ and $\beta_j$ $(1 \leq j \leq l)$ are elements in $\sfP$ and that
\[
k_1 + \cdots + k_l = k
\]
is a partition of $k$ with each $k_j \geq 1$.  Define
\begin{equation}
\label{pbar+}
\pbar^+_{(k_1, \ldots , k_l)}\left(\beta_1, \ldots , \beta_l; \alpha_1, \ldots , \alpha_k\right)
\end{equation}
to be the set of $\sfP$-decorated graphs $(G, \xi)$ in which:
\begin{enumerate}
\item
$G$ has $l$ connected components $G^j$ $(1 \leq j \leq l)$;
\item
$G^j$ has $k_j$ vertices $(1 \leq j \leq l)$;
\item
for $1 \leq j \leq l$ and $1 \leq r \leq k_j$, one has
\begin{eqnarray}
\xi\left(v^j_r\right) &=& \alpha_{k_1 + \cdots + k_{j-1} + r}, \label{pbardecoration} \\
ev((G^j,\xi^j)) &=& \beta_j.\label{pbareval}
\end{eqnarray}
\end{enumerate}

The condition \eqref{pbardecoration} means that the labeled vertices in $G^1$, $G^2$, etc., are decorated by the elements $\alpha_1, \alpha_2$, etc., in this order.  The condition \eqref{pbareval} says that $\beta_1$ is a composite of $\alpha_1, \ldots , \alpha_{k_1}$ in $\sfP$, where the composition is expressed by the graph $G^1$, and similarly for the other $\beta_j$.  Note that \eqref{evalG} and \eqref{pbareval} together imply
\[
ev((G,\xi)) = \beta_1 \otimes \cdots \otimes \beta_l
\]
for each
\[
(G,\xi) \in \pbar^+_{(k_1, \ldots , k_l)}\left(\beta_1, \ldots , \beta_l; \alpha_1, \ldots , \alpha_k\right).
\]
In particular, the $\sfP$-decorated graph $(G,\xi)$ gives a way of expressing $\beta_1 \otimes \cdots \otimes \beta_l$ as a composite (horizontally and vertically, possibly with permutations) of $\alpha_1, \ldots , \alpha_k$ in $\sfP$.

There is another intermediate set that we need to define before $\sfP^+$.  Suppose that $\alpha_i$ $(1 \leq i \leq k)$ and $\beta$ are elements in $\sfP$.  Define the set
\begin{equation}
\label{pbardef}
\pbar^+\binom{\beta}{\alpha_1, \ldots , \alpha_k} = \coprod_{\substack{l \geq 1 \\ k = k_1 + \cdots + k_l \\ \beta = \beta_1 \otimes \cdots \otimes \beta_l}} \pbar^+_{(k_1, \ldots , k_l)}\left(\beta_1, \ldots , \beta_l; \alpha_1, \ldots , \alpha_k\right).
\end{equation}
This disjoint union is taken over:
\begin{itemize}
\item
all integers $l \geq 1$, and for each $l$,
\item
all possible partitions
\[
k = k_1 + \cdots + k_l
\]
of $k$ with each $k_j \geq 1$ and
\item
all possible decompositions of $\beta$ as
\[
\beta = \beta_1 \otimes \cdots \otimes \beta_l
\]
in $\sfP$.
\end{itemize}
The summand $\pbar^+_{(k_1, \ldots , k_l)}\left(\beta_1, \ldots , \beta_l; \alpha_1, \ldots , \alpha_k\right)$ is defined above \eqref{pbar+}.  Note that the elements in the set \eqref{pbardef} are $\sfP$-decorated graphs corresponding, via evaluations, exactly to the ways of expressing $\beta$ as a composite (horizontally and vertically, possibly with permutations) of $\alpha_1, \ldots , \alpha_k$ in $\sfP$.  In other words,  $\pbar^+\binom{\beta}{\alpha_1, \ldots , \alpha_k}$ \emph{is the set of all possible reduction laws in $\sfP$ relating $\alpha_1, \ldots , \alpha_k$ to $\beta$}.

Now we define a unital $\elt(\sfP)$-colored PROP $\sfP^+$ as follows.  Pick any $\alpha_i$ $(1 \leq i \leq s)$ and $\beta_j$ $(1 \leq j \leq r)$ in $\elt(\sfP)$.  Then the component of $\sfP^+$ corresponding to the $\elt(\sfP)$-profiles
\begin{equation}
\label{Pprofiles}
\binom{\ubeta}{\ualpha} = \binom{\beta_1, \ldots , \beta_r}{\alpha_1, \ldots , \alpha_s}
\end{equation}
is defined as
\begin{multline}
\label{p+def}
\sfP^+\binom{\ubeta}{\ualpha} = \coprod_{\substack{s = s_1 + \cdots + s_r \\ \sigma \in \Sigma_r,\, \tau \in \Sigma_s}} \pbar^+\binom{\beta_{\sigma(1)}}{\alpha_{\tau^{-1}(1)}, \ldots , \alpha_{\tau^{-1}(s_1)}} \times \cdots \\
\times \pbar^+\binom{\beta_{\sigma(r)}}{\alpha_{\tau^{-1}(s_1 + \cdots + s_{r-1} + 1)}, \ldots , \alpha_{\tau^{-1}(s)}}.
\end{multline}
This disjoint union is taken over:
\begin{itemize}
\item
all possible partitions
\[
s = s_1 + \cdots + s_r
\]
of $s$ into $r$ integers with each $s_j \geq 1$ and
\item
all permutations $\sigma \in \Sigma_r$ and $\tau \in \Sigma_s$.
\end{itemize}
The sets $\pbar^+\binom{\beta_{\sigma(j)}}{\cdots}$ are defined above \eqref{pbardef}.  An element in $\sfP^+\binom{\ubeta}{\ualpha}$ is a sequence of $r$ $\sfP$-decorated graphs, in which the $j$th graph has evaluation some $\beta_i$.

\begin{theorem}
\label{p+prop}
There is a unital $\elt(\sfP)$-colored PROP $\sfP^+$ with components \eqref{p+def}.
\end{theorem}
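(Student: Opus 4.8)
The plan is to equip the collection of sets \eqref{p+def} with an $\elt(\sfP)$-colored $\Sigma$-bimodule structure together with horizontal and vertical compositions and colored units, all described directly in terms of the decorated graphs that constitute the elements of $\sfP^+\binom{\ubeta}{\ualpha}$, and then to check the axioms of Definition \ref{def:Ccoloredprop}. For the underlying $\Sigma$-bimodule, recall that by \eqref{p+def} an element of $\sfP^+\binom{\ubeta}{\ualpha}$ is a sequence of $r$ decorated graphs tagged by a partition $s = s_1 + \cdots + s_r$ and permutations $\sigma \in \Sigma_r$, $\tau \in \Sigma_s$. A left permutation $\rho \in \Sigma_{r}$ and a right permutation $\mu \in \Sigma_{s}$ act by reindexing this disjoint union, carrying the summand tagged by $(\sigma,\tau)$ to the summand tagged by $(\rho\sigma, \tau\mu)$ inside $\sfP^+\binom{\rho\ubeta}{\ualpha\mu}$. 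Since the coproduct in \eqref{p+def} already ranges over all $\sigma$ and $\tau$, these are bijections and the bimodule axioms (a)--(c) of Proposition \ref{prop:sigmabimodule} hold by inspection.

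Next I would define the easier of the two compositions and the units. The horizontal composition \eqref{horizontalcomposition2} is concatenation of decorated-graph sequences: the composite of representatives of $\sfP^+\binom{\ubeta_1}{\ualpha_1}$ and $\sfP^+\binom{\ubeta_2}{\ualpha_2}$ is their juxtaposition, which lands in $\sfP^+\binom{\ubeta_1,\ubeta_2}{\ualpha_1,\ualpha_2}$; associativity and bi-equivariance are immediate from the corresponding properties of disjoint union and reindexing. For each color $\alpha \in \elt(\sfP)$ the unit $\ttone_\alpha \in \sfP^+\binom{\alpha}{\alpha}$ is the unique decorated corolla, namely the one-vertex graph whose vertex carries $\alpha$ and whose input and output edges are colored by the profiles of $\alpha$; its evaluation is $\alpha$ itself.

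The heart of the argument, and the step I expect to be the main obstacle, is the vertical composition \eqref{verticalcomp}
\[
\sfP^+\binom{\ubeta}{\ugamma} \times \sfP^+\binom{\ugamma}{\ualpha} \xrightarrow{\circ} \sfP^+\binom{\ubeta}{\ualpha},
\]
which I would define by graph substitution. Writing $t = |\ugamma|$, a representative of the first factor is a sequence of graphs $H_1, \ldots, H_r$ whose $t$ vertices are decorated, once each, by the intermediate elements $\gamma_1, \ldots, \gamma_t$, while a representative of the second factor is a sequence of graphs $G_1, \ldots, G_t$ whose vertices are decorated by the $\alpha$'s and whose evaluations are the $\gamma$'s. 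The composite is formed by grafting, for each vertex of each $H_j$ carrying some $\gamma_i$, the graph $G$ among the second factor with $ev(G) = \gamma_i$ into that vertex, identifying the incoming and outgoing edges of the vertex with the inputs and outputs of $G$. The color-matching property \eqref{colormatching} guarantees that this grafting is legal, and the result is again a legitimate acyclic graph decorated by the $\alpha$'s. The delicate point is that $ev(\widetilde{H}_j) = ev(H_j)$ after substitution, so the composite genuinely lands in $\sfP^+\binom{\ubeta}{\ualpha}$; this is precisely the compatibility of evaluation with graph substitution, which follows from the associativity of the vertical and horizontal compositions of $\sfP$ together with the interchange rule \eqref{interchangerule}, the same principle recorded for evaluations in \cite{markl06,markl07}.

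Finally I would dispatch the remaining axioms. Associativity of $\circ$ reduces to the associativity of iterated graph substitution, which in turn rests on the associativity of composition in $\sfP$; bi-equivariance of $\circ$ holds because the left and right permutations merely relabel outputs and inputs and commute with grafting; and the interchange rule \eqref{interchangerule} for $\sfP^+$ is automatic, since concatenation of graph sequences and vertex substitution operate on disjoint sets of vertices and edges and hence commute. The two-sided unit property is the observation that grafting a corolla $\ttone_\alpha$ into a vertex, or grafting a graph into a corolla, returns the original decorated graph unchanged. Assembling these verifications shows that $\sfP^+$, with $\otimes$, $\circ$, and the units $\{\ttone_\alpha\}$, is a unital $\elt(\sfP)$-colored PROP.
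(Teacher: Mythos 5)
Your proposal is correct and follows essentially the same route as the paper's proof: the $\Sigma$-bimodule structure is the tautological identification coming from the fact that the coproduct in \eqref{p+def} already ranges over all partitions and permutations, horizontal composition is concatenation of decorated-graph sequences, the units are the one-vertex corollas $\ttone_\alpha$, and vertical composition is graph substitution, with the axioms reducing to associativity of Cartesian product and of grafting. Your explicit flagging of the compatibility of evaluation with graph substitution (via associativity and the interchange rule in $\sfP$) is a point the paper leaves implicit, but it is the same argument.
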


\begin{proof}
First note that
\[
\sfP^+\binom{\ubeta}{\ualpha} = \sfP^+\binom{\pi\ubeta}{\ualpha\mu}
\]
for any $\sfP$-profiles $\binom{\ubeta}{\ualpha}$ \eqref{Pprofiles} and permutations $\pi \in \Sigma_r$ and $\mu \in \Sigma_s$.  The map
\[
(\pi; \mu) \colon \sfP^+\binom{\ubeta}{\ualpha} \to \sfP^+\binom{\pi\ubeta}{\ualpha\mu},
\]
which is part of the $\elt(\sfP)$-colored $\Sigma$-bimodule $\sfP^+$, is defined as the identity map.  This defines $\sfP^+$ as an $\elt(\sfP)$-colored $\Sigma$-bimodule.

The horizontal composition
\[
\otimes \colon \sfP^+\binom{\ubeta}{\ualpha} \times \sfP^+\binom{\udelta}{\uepsilon} \to \sfP^+\binom{\ubeta, \udelta}{\ualpha, \uepsilon}
\]
in $\sfP^+$ is given by the obvious summand inclusion.  Graphically, the horizontal composition is the concatenation of two sequences of $\sfP$-decorated graphs, i.e., put them side-by-side.

The vertical composition
\[
\circ \colon \sfP^+\binom{\ubeta}{\ualpha} \times \sfP^+\binom{\ualpha}{\udelta} \to \sfP^+\binom{\ubeta}{\udelta}
\]
in $\sfP^+$ is defined on a typical summand as the map
\begin{equation}
\label{p+verticalcomp}
\begin{split}
& \left[\pbar^+\binom{\beta_{\sigma(1)}}{\alpha_{\tau^{-1}(1)}, \ldots , \alpha_{\tau^{-1}(s_1)}} \times \cdots \times \pbar^+\binom{\beta_{\sigma(r)}}{\alpha_{\tau^{-1}(s_1 + \cdots + s_{r-1} + 1)}, \ldots , \alpha_{\tau^{-1}(s)}}\right] \\
& \times \left[\pbar^+\binom{\alpha_{\pi(1)}}{\delta_{\mu^{-1}(1)}, \ldots , \delta_{\mu^{-1}(t_1)}} \times \cdots \times \pbar^+\binom{\alpha_{\pi(s)}}{\delta_{\mu^{-1}(t_1 + \cdots + t_{s-1} + 1)}, \ldots , \delta_{\mu^{-1}(t)}}\right] \\
& \xrightarrow{\circ} \left[\pbar^+\binom{\beta_{\sigma(1)}}{\delta_{\nu^{-1}(1)}, \ldots , \delta_{\nu^{-1}(\cdots)}} \times \cdots \times \pbar^+\binom{\beta_{\sigma(r)}}{\delta_{\nu^{-1}(\cdots)}, \ldots , \delta_{\nu^{-1}(t)}}\right] \subseteq \sfP^+\binom{\ubeta}{\udelta}.
\end{split}
\end{equation}
This map is given by \textbf{graph substitution}.  More precisely, suppose that
\[
\left((G_1,\xi_1), \ldots , (G_r,\xi_r); (H_1, \zeta_1), \ldots, (H_s, \zeta_s)\right)
\]
is a typical element in the domain of the map $\circ$ in \eqref{p+verticalcomp}.  Recall that an element
\[
(G_1,\xi_1) \in \pbar^+\binom{\beta_{\sigma(1)}}{\alpha_{\tau^{-1}(1)}, \ldots , \alpha_{\tau^{-1}(s_1)}}
\]
is a $\sfP$-decorated graph whose vertices are decorated by the indicated $\alpha$'s and whose evaluation is $\beta_{\sigma(1)}$.  We use the shorthand
\[
T_i = t_1 + t_2 + \cdots + t_i.
\]
There is a unique $i_1$ such that $\tau^{-1}(1) = \pi(i_1)$.  Now we replace the vertex $v$ in $(G_1,\xi_1)$ with decoration $\alpha_{\tau^{-1}(1)}$ by the $\sfP$-decorated graph
\[
(H_{i_1}, \zeta_{i_1}) \in \pbar^+\binom{\alpha_{\pi(i_1)}}{\delta_{\mu^{-1}(T_{i_1 - 1} + 1)}, \ldots , \delta_{\mu^{-1}(T_{i_1})}}.
\]
Repeat this graph substitution for the vertices decorated by $\alpha_{\tau^{-1}(2)}, \ldots , \alpha_{\tau^{-1}(s_1)}$ in $(G_1,\xi_1)$.  After these $s_1$ graph substitutions in $(G_1,\xi_1)$ and a suitable relabeling of the vertices, the resulting $\sfP$-decorated graph lies in
\[
\pbar^+\binom{\beta_{\sigma(1)}}{\delta_{\mu^{-1}(T_{i_1-1}+1)}, \ldots , \delta_{\mu^{-1}(T_{i_1})}, \ldots , \delta_{\mu^{-1}(T_{i_{s_1}-1}+1)}, \ldots , \delta_{\mu^{-1}(T_{i_{s_1}})}}.
\]
Now repeat the above graph substitution process for the other $(r-1)$ $\sfP$-decorated graphs $(G_2,\xi_2), \ldots , (G_r,\xi_r)$.  The resulting sequence of $r$ $\sfP$-decorated graphs lies in the desired target in \eqref{p+verticalcomp} when we define $\nu \in \Sigma_t$ by
\[
\nu^{-1}(1) = \mu^{-1}(T_{i_1-1}+1),\, \nu^{-1}(2) = \mu^{-1}(T_{i_1-1}+2),
\]
and so forth.

For an element $\alpha \in \elt(\sfP)$, the $\alpha$-colored unit in $\sfP^+$ is the $\sfP$-decorated $(m,n)$-graph
\begin{equation}
\setlength{\unitlength}{.6mm}
\label{1alpha}
\begin{picture}(90,35)(15,12)
\put(30,30){\circle*{2}}
\put(20,20){\vector(1,1){9.5}}
\put(40,20){\vector(-1,1){9.5}}
\put(30,30){\vector(-1,1){10}}
\put(30,30){\vector(1,1){10}}
\put(25,30){\makebox(0,0){$\alpha$}}
\put(31,23){\makebox(0,0){$\cdots$}}
\put(31,35){\makebox(0,0){$\cdots$}}
\put(20,16){\makebox(0,0){$1$}}
\put(40,16){\makebox(0,0){$n$}}
\put(20,43){\makebox(0,0){$1$}}
\put(40,43){\makebox(0,0){$m$}}
\put(5,30){\makebox(0,0){$\mathtt{1}_{\alpha} =$}}
\put(50,28){$\in \pbar^+_{(1)}(\alpha; \alpha) = \pbar^+\dbinom{\alpha}{\alpha} = \sfP^+\dbinom{\alpha}{\alpha}.$}
\end{picture}
\end{equation}
Here we are assuming that $\alpha \in \sfP\binom{\ud}{\uc}$ with $|\ud| = m$ and $|\uc| = n$.  The underlying  $(m,n)$-graph has only one vertex, which is decorated by $\alpha$.  The $n$ inputs are labeled $1, 2, \ldots , n$ from left to right.  The $j$th input is decorated by $c_j$ $(1 \leq j \leq n$).  The $m$ outputs of the graph are labeled $1, 2, \ldots , m$ from left to right, with the $i$th output decorated by $d_i$ $(1 \leq i \leq m)$.  Note that the decorations of the inputs and outputs are not displayed in the above graph.

The associativity of $\otimes$ and $\circ$ in $\sfP^+$ amount to the associativity of Cartesian products and graph substitutions, respectively.  The other $\elt(\sfP)$-colored PROP axioms (bi-equivariance, the interchange rule, and the unit axiom) are equally straightforward to check.
\end{proof}

\begin{remark}
\label{remark:P+chain}
The obvious analogue of Theorem ~\ref{p+prop} in the category $\kmodule$ of $\bk$-modules, where $\bk$ is a field of characteristic $0$, is also true.  Indeed, in this setting we take $\pbar^+_{(k_1, \ldots , k_l)}\left(\beta_1, \ldots , \beta_l; \alpha_1, \ldots , \alpha_k\right)$ to be the $\bk$-module generated by the $\sfP$-decorated graphs $(G,\xi)$ as specified on p.\pageref{pbar+}.  In \eqref{pbardef} and \eqref{p+def}, we replace $\coprod$ and $\times$ by direct sum $\oplus$ and tensor product $\otimes$ of $\bk$-modules, respectively.  The proof of Theorem ~\ref{p+prop} then goes through basically verbatim, giving a unital $\elt(\sfP)$-colored PROP $\sfP^+$ over $\kmodule$.
\end{remark}

\begin{proof}[Proof of Theorem ~\ref{thm:P+}]
Using the slice PROP $\sfP^+$ from Theorem ~\ref{p+prop}, it remains to establish the isomorphism \eqref{p+algebras} of categories.  We will construct two functors
\begin{equation}
\label{phipsi}
\partial \colon \propc/\sfP \rightleftarrows \alg(\sfP^+) \colon \int
\end{equation}
and observe that they are inverse isomorphisms of each other.  The choices of these notations will become clear below.

Let us begin with $\int$.  Suppose that $A = \{A_\alpha \colon \alpha \in \elt(\sfP)\}$ is a unital $\sfP^+$-algebra.  Given any elements $\alpha_i$ $(1 \leq i \leq s)$ and $\beta_j$ $(1 \leq j \leq r)$ in $\elt(\sfP)$, there is a $\sfP^+$-algebra structure map
\[
\lambda \colon \sfP^+\binom{\beta_1, \ldots , \beta_r}{\alpha_1, \ldots , \alpha_s} \times A_{\alpha_1} \times \cdots \times A_{\alpha_s} \to A_{\beta_1} \times \cdots \times A_{\beta_r}.
\]
If $\theta \in \sfP^+\binom{\ubeta}{\ualpha}$, then we write
\[
\lambda(\theta) \colon A_{\alpha_1} \times \cdots \times A_{\alpha_s} \to A_{\beta_1} \times \cdots \times A_{\beta_r}
\]
for the map induced by $\lambda$.

First we define $\int A$ as a $\frakC$-colored $\Sigma$-bimodule.  Given any $\frakC$-profiles $\ud = (d_1, \ldots , d_m)$ and $\uc = (c_1, \ldots , c_n)$, we define
\begin{equation}
\label{Adc}
\int A\binom{\ud}{\uc} = \coprod_{\alpha \in \sfP\binom{\ud}{\uc}} A_{\alpha}.
\end{equation}
In other words, $\int A$ at a typical pair of $\frakC$-profiles is obtained by ``integrating" the sets $A_\alpha$ for $\alpha \in \sfP\binom{\ud}{\uc}$.  Suppose that
\[
(\sigma; \tau) \colon \binom{\ud}{\uc} \to \binom{\sigma\ud}{\uc\tau}
\]
is a map of $\frakC$-profiles.  Then the map
\[
\int A(\sigma; \tau) \colon \int A\binom{\ud}{\uc} \to \int A\binom{\sigma\ud}{\uc\tau}
\]
is defined on a typical summand as the map
\begin{equation}
\label{Alambda}
A_\alpha \xrightarrow{\lambda(\sigma \mathtt{1}_{\alpha} \tau)} A_{\sigma\alpha\tau} \subseteq \int A\binom{\sigma\ud}{\uc\tau}.
\end{equation}
Here $\sigma\alpha\tau$ is the image of $\alpha$ under the map
\[
\sfP(\sigma;\tau) \colon \sfP\binom{\ud}{\uc} \to \sfP\binom{\sigma\ud}{\tau\uc}.
\]
The element $\sigma \mathtt{1}_{\alpha} \tau$ in $\sfP^+$ is the $\sfP$-decorated $(m,n)$-graph
\begin{equation}
\setlength{\unitlength}{.6mm}
\label{1alphatwisted}
\begin{picture}(50,50)(25,0)
\put(30,30){\circle*{2}}
\put(20,20){\vector(1,1){9.5}}
\put(40,20){\vector(-1,1){9.5}}
\put(30,30){\vector(-1,1){10}}
\put(30,30){\vector(1,1){10}}
\put(25,30){\makebox(0,0){$\alpha$}}
\put(31,23){\makebox(0,0){$\cdots$}}
\put(31,35){\makebox(0,0){$\cdots$}}
\put(12,16){\makebox(0,0){$\tau^{-1}(1)$}}
\put(48,16){\makebox(0,0){$\tau^{-1}(n)$}}
\put(15,43){\makebox(0,0){$\sigma(1)$}}
\put(48,43){\makebox(0,0){$\sigma(m)$}}
\put(-13,30){\makebox(0,0){$\sigma\mathtt{1}_{\alpha}\tau =$}}
\put(58,28){$\in \pbar^+_{(1)}(\sigma\alpha\tau; \alpha) = \sfP^+\dbinom{\sigma\alpha\tau}{\alpha}.$}
\end{picture}
\end{equation}
This is obtained from the $\sfP$-decorated graph $\mathtt{1}_{\alpha}$ \eqref{1alpha} by relabeling the inputs and outputs to $\tau^{-1}(1), \ldots , \tau^{-1}(n)$ and $\sigma(1), \ldots , \sigma(m)$, respectively.  Using the fact that $\lambda$ is compatible with the vertical composition in $\sfP^+$, it is straightforward to check that \eqref{Alambda} satisfies the required bi-equivariance axioms.

Next we define the vertical composition
\[
\circ \colon \int A\binom{\ud}{\uc} \times \int A\binom{\uc}{\ub} \to \int A\binom{\ud}{\ub}
\]
in $\int A$.  On a typical summand with $\alpha \in \sfP\binom{\ud}{\uc}$ and $\beta \in \sfP\binom{\uc}{\ub}$, this map is defined as
\[
A_\alpha \times A_\beta \xrightarrow{\lambda(G_{\alpha \circ \beta})} A_{\alpha \circ \beta} \subseteq \int A\binom{\ud}{\ub}.
\]
Here $G_{\alpha \circ \beta}$ is the $\sfP$-decorated graph
\begin{equation}
\setlength{\unitlength}{.6mm}
\label{Galphabeta}
\begin{picture}(50,58)(25,10)
\put(30,30){\circle*{2}}
\put(30,50){\circle*{2}}
\put(20,20){\vector(1,1){9.5}}
\put(40,20){\vector(-1,1){9.5}}
\put(30,50){\vector(-1,1){10}}
\put(30,50){\vector(1,1){10}}
\put(28,48){\vector(1,1){1.5}}
\put(32,48){\vector(-1,1){1.5}}
\qbezier(30,30)(15,39)(28,48)
\qbezier(30,30)(45,39)(32,48)
\put(20,30){\makebox(0,0){$\beta$}}
\put(20,50){\makebox(0,0){$\alpha$}}
\put(31,23){\makebox(0,0){$\cdots$}}
\put(31,39){\makebox(0,0){$\cdots$}}
\put(31,55){\makebox(0,0){$\cdots$}}
\put(17,18){\makebox(0,0){$1$}}
\put(45,18){\makebox(0,0){$|\ub|$}}
\put(17,63){\makebox(0,0){$1$}}
\put(45,63){\makebox(0,0){$|\ud|$}}
\put(-13,44){\makebox(0,0){$G_{\alpha \circ \beta} =$}}
\put(65,42){$\in \pbar^+_{(2)}(\alpha\circ\beta; \alpha, \beta) \subseteq \sfP^+\dbinom{\alpha\circ\beta}{\alpha, \beta}.$}
\end{picture}
\end{equation}
In this $\sfP$-decorated $(|\ud|,|\ub|)$-graph, there are two vertices, in which the upper vertex is labeled $1$ and is decorated by $\alpha$.  The lower vertex is labeled $2$ and is decorated by $\beta$.  The $|\ud|$ outputs are labeled $1, 2, \ldots, |\ud|$ from left to right, and they are decorated by the colors $d_1, d_2, \ldots $ that constitute the $\frakC$-profile $\ud$.  Likewise, the $|\ub|$ inputs are labeled $1, 2, \ldots , |\ub|$ from left to right, and they are decorated by the colors $b_1, b_2, \ldots $ that constitute the $\frakC$-profile $\ub$.  The only other edges are the $|\uc|$ edges from the lower vertex to the upper vertex.  They are decorated from left to right by the colors $c_1, c_2, \ldots $ that constitute the $\frakC$-profile $\uc$.

The horizontal composition
\[
\otimes \colon \int A\binom{\ud}{\uc} \times \int A\binom{\ub}{\ua} \to \int A\binom{\ud,\ub}{\uc,\ua}
\]
in $\int A$ is defined on a typical summand with $\alpha \in \sfP\binom{\ud}{\uc}$ and $\beta \in \sfP\binom{\ub}{\ua}$ as the map
\[
A_\alpha \times A_\beta \xrightarrow{\lambda(G_{\alpha\otimes\beta})} A_{\alpha \otimes \beta} \subseteq \int A\binom{\ud,\ub}{\uc,\ua}.
\]
Here $G_{\alpha\otimes\beta}$ is the $\sfP$-decorated $(|\ud| + |\ub|, |\uc| + |\ua|)$-graph
\begin{equation}
\setlength{\unitlength}{.6mm}
\label{Galphatensorbeta}
\begin{picture}(110,40)(15,12)
\put(30,30){\circle*{2}}      
\put(20,20){\vector(1,1){9.5}}
\put(40,20){\vector(-1,1){9.5}}
\put(30,30){\vector(-1,1){10}}
\put(30,30){\vector(1,1){10}}
\put(22,30){\makebox(0,0){$\alpha$}}
\put(31,23){\makebox(0,0){$\cdots$}}
\put(31,35){\makebox(0,0){$\cdots$}}
\put(20,16){\makebox(0,0){$1$}}
\put(40,16){\makebox(0,0){$|\uc|$}}
\put(20,43){\makebox(0,0){$1$}}
\put(40,43){\makebox(0,0){$|\ud|$}}
\put(60,30){\circle*{2}}      
\put(50,20){\vector(1,1){9.5}}
\put(70,20){\vector(-1,1){9.5}}
\put(60,30){\vector(-1,1){10}}
\put(60,30){\vector(1,1){10}}
\put(53,30){\makebox(0,0){$\beta$}}
\put(61,23){\makebox(0,0){$\cdots$}}
\put(61,35){\makebox(0,0){$\cdots$}}
\put(50,16){\makebox(0,0){$1$}}
\put(70,16){\makebox(0,0){$|\ua|$}}
\put(50,43){\makebox(0,0){$1$}}
\put(70,43){\makebox(0,0){$|\ub|$}}
\put(-15,30){\makebox(0,0){$G_{\alpha\otimes\beta} = \mathtt{1}_\alpha \sqcup \mathtt{1}_\beta =$}}
\put(80,28){$\in \pbar^+_{(1,1)}(\alpha,\beta; \alpha,\beta) \subseteq \sfP^+\dbinom{\alpha\otimes\beta}{\alpha,\beta}.$}
\end{picture}
\end{equation}
In other words, the $\sfP$-decorated graph $G_{\alpha\otimes\beta}$ has two connected components, $\mathtt{1}_\alpha$ and $\mathtt{1}_\beta$, which are defined in \eqref{1alpha}.

The associativity and bi-equivariance of the vertical and the horizontal compositions in $\int A$ are easy to check.  The interchange rule is an immediate consequence of the definitions \eqref{Galphabeta} and \eqref{Galphatensorbeta}.  So we have a (non-unital) $\frakC$-colored PROP $\int A$.

We now define a $\frakC$-colored PROP morphism
\[
f \colon \int A \to \sfP.
\]
Given any $\frakC$-profiles $\ud$ and $\uc$, $f$ is defined on a typical summand with $\alpha \in \sfP\binom{\ud}{\uc}$ as the unique map
\begin{equation}
\label{Aalphaf}
A_\alpha \xrightarrow{f} \{\alpha\} \subseteq \sfP\binom{\ud}{\uc}.
\end{equation}
It is straightforward to check that this defines a morphism of $\frakC$-colored PROPs.  The naturality of the construction
\[
A \mapsto \left(\int A \xrightarrow{f} \sfP\right)
\]
is clear, so we have defined the functor
\[
\int \colon \alg(\sfP^+) \to \propc/\sfP
\]
in \eqref{phipsi}.

Next we define the functor $\partial$ in \eqref{phipsi}.  So let
\[
g \colon \sfQ \to \sfP
\]
be a $\frakC$-colored PROP over $\sfP$.  First we define the underlying $\elt(\sfP)$-graded set of $\partial\sfQ$.  For each element $\alpha \in \sfP\binom{\ud}{\uc}$, we take the pre-image
\begin{equation}
\label{qalpha}
\partial\sfQ_\alpha = g^{-1}(\alpha) \subseteq \sfQ\binom{\ud}{\uc}.
\end{equation}
Then we have
\begin{equation}
\label{qdccoprod}
\sfQ\binom{\ud}{\uc} = \coprod_{\alpha \in \sfP\binom{\ud}{\uc}} \partial\sfQ_\alpha.
\end{equation}
In other words, the sets $\partial\sfQ_\alpha$ are obtained by ``dividing" the sets $\sfQ\binom{\ud}{\uc}$.

To define the unital $\sfP^+$-algebra structure
\begin{equation}
\label{p+algq}
\rho \colon \sfP^+\binom{\beta_1, \ldots , \beta_r}{\alpha_1, \ldots , \alpha_s} \times \partial\sfQ_{\alpha_1} \times \cdots \times \partial\sfQ_{\alpha_s} \to \partial\sfQ_{\beta_1} \times \cdots \times \partial\sfQ_{\beta_r}
\end{equation}
on $\partial\sfQ$, let $G = (G_1, \ldots , G_r)$ be an element in
\[
\pbar^+\binom{\beta_{\sigma(1)}}{\alpha_{\tau^{-1}(1)}, \ldots , \alpha_{\tau^{-1}(s_1)}} \times \cdots \times \pbar^+\binom{\beta_{\sigma(r)}}{\alpha_{\tau^{-1}(s_1 + \cdots + s_{r-1} + 1)}, \ldots , \alpha_{\tau^{-1}(s)}},
\]
which is a typical summand of $\sfP^+\binom{\ubeta}{\ualpha}$ \eqref{p+def}.  Also let $q_i \in \partial\sfQ_{\alpha_i}$ for $1 \leq i \leq s$.  The element
\[
\rho\left(G, q_1, \ldots , q_s\right) \in \partial\sfQ_{\beta_1} \times \cdots \times \partial\sfQ_{\beta_r}
\]
is defined by \textbf{decoration replacements} and \textbf{evaluations}:  In the sequence $G$ of $r$ $\sfP$-decorated graphs, replace the decoration $\alpha_i$ by $q_i$ for each $i$.  The result
\[
G' = (G'_1, \ldots , G'_r)
\]
is a sequence of $r$ $\sfQ$-decorated graphs because
\[
g(q_i) = \alpha_i
\]
for each $i$, so $q_i$ and $\alpha_i$ have the same $\frakC$-profiles.  For $1 \leq j \leq r$, since $g$ is a map of $\frakC$-colored PROPs, we have
\[
g\left(ev(G'_j)\right) = ev(G_j) = \beta_{\sigma(j)}.
\]
It follows from the definition \eqref{qalpha} that
\[
ev(G'_j) \in \partial\sfQ_{\beta_{\sigma(j)}},
\]
so we have
\[
\left(ev(G'_1), \ldots , ev(G'_r)\right) \in \partial\sfQ_{\beta_{\sigma(1)}} \times \cdots \times \partial\sfQ_{\beta_{\sigma(r)}}.
\]
Now we define
\[
\begin{split}
\rho\left(G, q_1, \ldots , q_s\right) &= \sigma^{-1}\left(ev(G'_1), \ldots , ev(G'_r)\right)\\
&= \left(ev(G'_{\sigma^{-1}(1)}), \ldots , ev(G'_{\sigma^{-1}(r)})\right) \in \partial\sfQ_{\beta_1} \times \cdots \times \partial\sfQ_{\beta_r}.
\end{split}
\]
Using the $\elt(\sfP)$-colored PROP structure of $\sfP^+$ (Theorem ~\ref{p+prop}), it is easy to check that $\rho$ \eqref{p+algq} gives $\partial\sfQ$ the structure of a unital $\sfP^+$-algebra.  The naturality of the construction
\[
\left(g \colon \sfQ \to \sfP\right) \mapsto \left\{\partial\sfQ_\alpha \colon \alpha \in \elt(\sfP)\right\}
\]
is also clear, so we have defined the functor $\partial$ in \eqref{phipsi}.

One observes from \eqref{Adc}, \eqref{Aalphaf}, \eqref{qalpha}, \eqref{qdccoprod} and the associated structure maps that $\partial$ and $\int$ are indeed inverses of each other, hence both of them are isomorphisms.
\end{proof}

\begin{remark}
\label{remark:P+alg}
As in Remark ~\ref{remark:P+chain}, the obvious analogue of Theorem ~\ref{thm:P+} in the category of $\bk$-modules is also true, where $\bk$ is a field of characteristic $0$.  Indeed, to adapt the above proof of Theorem ~\ref{thm:P+} to the case of $\bk$-modules, we merely need to replace $\times$ and $\coprod$ by tensor product $\otimes$ and direct sum $\oplus$ of $\bk$-modules, respectively.
\end{remark}

\section{Propertopes and propertopic sets}
\label{sec:propertopes}

In \S\ref{subsec:Ppropertope}, starting from a unital $\frakC$-colored PROP $\sfP$, we define the category of $\sfP$-\textbf{propertopes} $\bP(\sfP)$.  The objects in $\bP(\sfP)$ -- the $\sfP$-propertopes -- are obtained by iterating the slice PROP construction (Theorem ~\ref{thm:P+}).  These $\sfP$-propertopes serve as our shapes of higher cells.  If $\sfO$ is a colored operad, then the Baez-Donald $\sfO$-opetopes \cite{bd1} are among our $\sfO_{prop}$-propertopes (Remark ~\ref{remark:opetope}).

In \S\ref{subsec;draw} we discuss the combinatorics of drawing $\sfP$-propertopes.  In order to recover an $n$-dimensional $\sfP$-propertope, it suffices to remember a sequence of $n$-level \textbf{metagraphs}.  The top $n-1$ levels of such a metagraph contains only graphs, so it is completely combinatorial.  The bottom level contains certain elements in $\sfP$.

In \S\ref{subsec:propertopicset} we define \textbf{$\sfP$-propertopic sets} as presheaves of sets on the category $\bP(\sfP)$ of $\sfP$-propertopes.  The transition from $\sfP$-propertopes to $\sfP$-propertopic sets is somewhat analogous to going from the category $\Delta$ of non-empty finite ordered sets and non-decreasing functions to simplicial sets $\set^{\Delta^{op}}$.  We will use the familiar setting of simplicial sets as our guide as we make certain definitions.  However, one should keep in mind that we are using simplicial sets only as a soft analogy.  Our main goal is to construct higher dimensional $\sfP$-algebras using higher PROPs.  Therefore, at some points our choices are motivated by higher dimensional algebras and not by analogy with simplicial sets.

In \S\ref{subsec:horns} we define $\sfP$-propertopic analogues of cells, horns, and boundaries.  These objects are used in \S\ref{subsec:propertopicfibration} to describe fibrations of $\sfP$-propertopic sets and, in particular, fibrant $\sfP$-propertopic sets.  In the next section, we will use the concepts of cells, horns, and boundaries to define higher dimensional $\sfP$-algebras.  As we will see, weak-$\omega$ $\sfP$-algebras are exactly the fibrant $\sfP$-propertopic sets.  For $n < \infty$, weak-$n$ $\sfP$-algebras are analogous to homotopy $n$-types.

\subsection{$\sfP$-propertopes}
\label{subsec:Ppropertope}

Fix a unital $\frakC$-colored PROP $\sfP$ over $\set$ for the rest of this section.  By Theorem ~\ref{thm:P+} we know that its slice PROP $\sfP^+$ is unital and $\elt(\sfP)$-colored.  So we can apply the slice PROP construction to $\sfP^+$, 
and so forth.

\begin{definition}
\label{def:Pn+}
Set
\[
\elt(\sfP^{(-1)+}) = \frakC, \, \sfP^{0+} = \sfP,
\]
and inductively,
\[
\sfP^{n+} = (\sfP^{(n-1)+})^+
\]
for $n \geq 1$.  
The elements in $\elt(\sfP^{(n-1)+})$ are called \textbf{$n$-dimensional $\sfP$-propertopes}.  The \textbf{category of $\sfP$-propertopes}, denoted $\bP(\sfP)$, has the $n$-dimensional $\sfP$-propertopes $(n \geq 0)$ as objects.  Its morphisms are defined below.
\end{definition}

Note that $\frakC$ is the set of $0$-dimensional $\sfP$-propertopes, and $\elt(\sfP)$ is the set of $1$-dimensional $\sfP$-propertopes.  For $n \geq 1$, $\sfP^{n+}$ is a unital $\elt(\sfP^{(n-1)+})$-colored PROP.  The $(n+1)$-dimensional $\sfP$-propertopes (i.e., elements in the set $\elt(\sfP^{n+})$) are finite non-empty sequences of $\sfP^{(n-1)+}$-decorated graphs (Definition ~\ref{def:decoratedgraph}).  By Theorem ~\ref{thm:P+} there is a canonical isomorphism
\[
\prop^{\elt(\sfP^{(n-2)+})}/\sfP^{(n-1)+} \cong \alg(\sfP^{n+})
\]
of categories.  In other words, $\sfP^{n+}$ is the unital $\elt(\sfP^{(n-1)+})$-colored PROP for $\elt(\sfP^{(n-2)+})$-colored PROPs over $\sfP^{(n-1)+}$.

\begin{remark}
\label{remark:opetope}
Let $\sfO$ be a unital $\frakC$-colored operad.  Iterating the Baez-Dolan slice construction $\sfO^+$ \cite{bd1}, one calls the elements in $\sfO^{(n-1)+}$ \textbf{$n$-dimensional $\sfO$-opetopes}.  These $\sfO$-opetopes are actually among the $\sfO_{prop}$-propertopes (Definition ~\ref{def:Pn+}), where $\sfO_{prop}$ is the unital $\frakC$-colored PROP generated by $\sfO$ (Theorem ~\ref{operadpropadj}).  In fact, $\sfO$ and $\sfO_{prop}$ are both $\frakC$-colored, so $0$-dimensional $\sfO$-opetopes and $\sfO_{prop}$-propertopes are both the elements in $\frakC$.  We also know that $\sfO_{prop}$ contains all the elements in $\sfO$ \eqref{Oprop}, so $1$-dimensional $\sfO$-opetopes are among the $1$-dimensional $\sfO_{prop}$-propertopes.

Going one dimension higher, an element in the slice operad $\sfO^+$ is a certain $\sfO$-decorated tree.  On the other hand, an element in the slice PROP $(\sfO_{prop})^+$ is a finite non-empty sequence of $\sfO_{prop}$-decorated graphs.  Among these elements are the sequences of length $1$ of $\sfO_{prop}$-decorated graphs.  We can restrict ourselves further to just the elements in $\sfO$ (as opposed to all of $\sfO_{prop}$) for decorating vertices and to trees (as opposed to graphs). So the elements in $\sfO^+$ are among the elements in $(\sfO_{prop})^+$.  In other words, $2$-dimensional $\sfO$-opetopes are among the $2$-dimensional $\sfO_{prop}$-propertopes.  Inductively, essentially the same discussion applies to the elements in the higher operads $\sfO^{n+}$ and the higher PROPs $(\sfO_{prop})^{n+}$ for $n \geq 2$.  For more discussion of the Baez-Dolan opetopes, the reader is referred to \cite{bd1,cheng1,cheng2,cheng3}.
\end{remark}

Now we define the morphisms in the category $\bP(\sfP)$ of $\sfP$-propertopes.  One can think of an $n$-dimensional $\sfP$-propertope as a kind of generalized $n$-simplex.  A usual $n$-simplex $\gamma$ has $n+1$ faces $d_i\gamma$ $(0 \leq i \leq n)$, which are $(n-1)$-simplices.  Each of these faces $d_i\gamma$ has its own faces, which are $(n-2)$-simplices, and so on.  There are also some simplicial identities that the face maps must satisfy.  The morphisms between the $\sfP$-propertopes are similarly generated by certain face maps.

First suppose that $n \geq 1$ and that $\gamma \in \elt(\sfP^{(n-1)+})$ is an $n$-dimensional $\sfP$-propertope and $\alpha \in \elt(\sfP^{(n-2)+})$ is an $(n-1)$-dimensional $\sfP$-propertope.  To every occurrence of $\alpha$ as an input or output color of $\gamma$, we associate to it a unique morphism
\[
\gamma \to \alpha \in \bP(\sfP).
\]
In other words, if
\begin{equation}
\label{gammaPn-1}
\gamma \in \sfP^{(n-1)+}\binom{\beta_1, \ldots , \beta_s}{\alpha_1, \ldots , \alpha_r},
\end{equation}
then there is exactly one morphism $\gamma \to \alpha$ for every $\beta_i = \alpha$ or $\alpha_j = \alpha$.  So if $k$ of the $\beta_i$ are equal to $\alpha$ and if $l$ of the $\alpha_j$ are equal to $\alpha$, then the set $\bP(\sfP)(\gamma,\alpha)$ of morphisms $\gamma \to \alpha$ has cardinality $k + l$.  The diagram
\begin{equation}
\label{gammafacemaps}
\setlength{\unitlength}{.6mm}
\begin{picture}(60,30)(0,-5)
\put(35,20){\makebox(0,0){$\gamma$}}
\put(10,0){\makebox(0,0){$\beta_1$}}
\put(20,0){\makebox(0,0){$\cdots$}}
\put(30,0){\makebox(0,0){$\beta_s$}}
\put(40,0){\makebox(0,0){$\alpha_1$}}
\put(50,0){\makebox(0,0){$\cdots$}}
\put(60,0){\makebox(0,0){$\alpha_r$}}
\put(33,15){\vector(-1,-4){2.8}}
\put(38,15){\vector(1,-4){2.8}}
\put(30,15){\vector(-4,-3){15}}
\put(40,15){\vector(4,-3){15}}
\end{picture}
\end{equation}
depicts all $s+r$ morphisms in $\bP(\sfP)$ from $\gamma$ \eqref{gammaPn-1} to $(n-1)$-dimensional $\sfP$-propertopes.  A morphism of the form
\begin{equation}
\label{outfacemap}
g_j \colon \gamma \to \beta_j \quad (1 \leq j \leq s),
\end{equation}
from a $\sfP$-propertope $\gamma$ to one of its output colors, is called an \textbf{out-face map}.  Likewise, a morphism of the form
\begin{equation}
\label{infacemap}
f_i \colon \gamma \to \alpha_i \quad (1 \leq i \leq r),
\end{equation}
from a $\sfP$-propertope $\gamma$ to one of its input colors, is called an \textbf{in-face map}.  A \textbf{face map} is either an in-face map or an out-face map.

\emph{The face maps generate all the morphisms in $\bP(\sfP)$ subject to four consistency conditions to be specified below}. In other words, suppose that $\gamma$ is an $n$-dimensional $\sfP$-propertope and $\delta$ is a $k$-dimensional $\sfP$-propertope.
\begin{enumerate}
\item
If $k \geq n$, then
\[
\bP(\sfP)(\gamma,\delta) =
\begin{cases}
\{1_\gamma\} & \text{if } \gamma = \delta,\\
\varnothing & \text{otherwise}.
\end{cases}
\]
\item
If $k < n$, then an element in $\bP(\sfP)(\gamma,\delta)$ is a sequence
\begin{equation}
\label{Pmorphism}
\gamma = \delta_n \xrightarrow{h_n} \delta_{n-1} \xrightarrow{h_{n-1}} \cdots \xrightarrow{h_{k+2}} \delta_{k+1} \xrightarrow{h_{k+1}} \delta_k = \delta,
\end{equation}
in which each
\[
h_l \colon \delta_l \to \delta_{l-1}
\]
is a face map.  So each $\delta_l \in \elt(\sfP^{(l-1)+})$ is an $l$-dimensional $\sfP$-propertope.  Each map $h_l$ records a specific occurrence of $\delta_{l-1} \in \elt(\sfP^{(l-2)+})$ as an input or output color of $\delta_l$.
\end{enumerate}
These morphisms are subject to the following four \textbf{consistency conditions}.
\begin{enumerate}
\item
\textbf{Horizonal Consistency Conditions:} For $n \geq 2$ and $(n-1)$-dimensional $\sfP$-propertopes
\[
\alpha \in \sfP^{(n-2)+}\binom{\uepsilon}{\udelta} \quad\text{and}\quad \beta \in \sfP^{(n-2)+}\binom{\uepsilon'}{\udelta'},
\]
all the \textbf{horizonal consistency diagrams} of face maps
\begin{equation}
\label{hconsistency1}
\SelectTips{cm}{10}
\SelectTips{cm}{10}
\xymatrix{
\alpha \ar[d]_-{\text{in}} & G_{\alpha \otimes \beta} \ar[l]_-{\text{in}} \ar[d]^-{\text{out}} \ar[r]^-{\text{in}} & \beta \ar[d]^-{\text{in}}\\
\delta_i & \alpha\otimes\beta \ar[l]_-{\text{in}} \ar[r]^-{\text{in}} & \delta'_k
}
\end{equation}
and
\begin{equation}
\label{hconsistency2}
\SelectTips{cm}{10}
\SelectTips{cm}{10}
\xymatrix{
\alpha \ar[d]_-{\text{out}} & G_{\alpha \otimes \beta} \ar[l]_-{\text{in}} \ar[d]^-{\text{out}} \ar[r]^-{\text{in}} & \beta \ar[d]^-{\text{out}}\\
\varepsilon_j & \alpha\otimes\beta \ar[l]_-{\text{out}} \ar[r]^-{\text{out}} & \varepsilon'_l
}
\end{equation}
are required to commute.  The $n$-dimensional $\sfP$-propertope
\[
G_{\alpha \otimes \beta} \in \sfP^{(n-1)+}\binom{\alpha\otimes\beta}{\alpha,\beta}
\]
is defined in \eqref{Galphatensorbeta}.  In the diagrams above (and below),
\[
\alpha \to \delta_i
\]
is the $i$th in-face map of $\alpha$,
\[
\alpha\otimes\beta \to \varepsilon'_l
\]
is the $(|\uepsilon| + l)$th out-face map of $\alpha\otimes\beta$, and so forth.
\item
\textbf{Vertical Consistency Condition}: For $n \geq 2$ and $(n-1)$-dimensional $\sfP$-propertopes
\[
\alpha \in \sfP^{(n-2)+}\binom{\uepsilon}{\udelta} \quad\text{and}\quad \beta \in \sfP^{(n-2)+}\binom{\udelta}{\ugamma},
\]
all the \textbf{vertical consistency diagrams} of face maps
\begin{equation}
\label{vconsistency}
\SelectTips{cm}{10}
\xymatrix{
\alpha \ar[d]_-{\text{out}} & G_{\alpha \circ \beta} \ar[l]_-{\text{in}} \ar[d]^-{\text{out}} \ar[r]^-{\text{in}} & \beta \ar[d]^-{\text{in}}\\
\varepsilon_j & \alpha\circ\beta \ar[l]_-{\text{out}} \ar[r]^-{\text{in}} & \gamma_i
}
\end{equation}
are required to commute.  The $n$-dimensional $\sfP$-propertope
\[
G_{\alpha\circ\beta} \in \sfP^{(n-1)+}\binom{\alpha\circ\beta}{\alpha,\beta}
\]
is defined in \eqref{Galphabeta}.
\item
\textbf{Unital Consistency Condition}: For $n \geq 1$ and $(n-1)$-dimensional $\sfP$-propertopes
\[
\alpha_i \in \elt(\sfP^{(n-2)+}) \quad (1 \leq i \leq m),
\]
all the \textbf{unital consistency diagrams}
\begin{equation}
\label{uconsistency}
\SelectTips{cm}{10}
\xymatrix{
\ttone_{\alpha_1} \otimes \cdots \otimes \ttone_{\alpha_m} \ar[d]_-{i^\text{th}\text{ in}} \ar@{=}[rr] & & \ttone_{\alpha_1} \otimes \cdots \otimes \ttone_{\alpha_m} \ar[d]^-{i^{\text{th}}\text{ out}}\\
\alpha_i \ar@{=}[rr] & & \alpha_i
}
\end{equation}
are required to commute.  The $n$-dimensional $\sfP$-propertope $\ttone_{\alpha_i}$ is defined in \eqref{1alpha}, and
\[
\ttone_{\alpha_1} \otimes \cdots \otimes \ttone_{\alpha_m} \in \sfP^{(n-1)+}\binom{\alpha_1, \ldots , \alpha_m}{\alpha_1, \ldots , \alpha_m}
\]
is a horizontal composite in $\sfP^{(n-1)+}$.
In the diagram \eqref{uconsistency}, the left and the right vertical maps are the $i$th in-face map and the $i$th out-face map, respectively.
\item
\textbf{Equivariance Consistency Condition}: For $n \geq 1$, an $n$-dimensional $\sfP$-propertope
\[
\gamma \in \sfP^{(n-1)+}\binom{\beta_1, \ldots , \beta_s}{\alpha_1, \ldots , \alpha_r} = \sfP\binom{\ubeta}{\ualpha},
\]
and permutations $\sigma \in \Sigma_s$ and $\tau \in \Sigma_r$, all the \textbf{equivariance consistency diagrams} of face maps
\begin{equation}
\label{econsistency}
\SelectTips{cm}{10}
\xymatrix{
\sigma\ttone_\gamma\tau \ar[rr]^-{\text{out}} \ar[d]_-{\text{in}} & & \sigma\gamma\tau \ar[d]^-{\sigma^{-1}(j)^{\text{th}}\text{ out}}\\
\gamma \ar[rr]_-{j^{\text{th}} \text{ out}} & & \beta_j
}
\end{equation}
are required to commute.  The $(n+1)$-dimensional $\sfP$-propertope
\[
\sigma\ttone_\gamma\tau \in \sfP^{n+}\binom{\sigma\gamma\tau}{\gamma}
\]
is defined in \eqref{1alphatwisted}.
\end{enumerate}

Composition of non-identity morphisms in $\bP(\sfP)$ is achieved by splicing together chains of face maps of the form \eqref{Pmorphism}.

\begin{remark}
\label{remark:PP'}
Note that we have defined $\bP(\sfP)$ as a quotient category.  First we defined the category $\bP(\sfP)'$ whose objects are the $\sfP$-propertopes and whose non-identity morphisms are finite chains of face maps \eqref{Pmorphism} without further conditions.  Then we obtained $\bP(\sfP)$ from $\bP(\sfP)'$ by imposing the four consistency conditions (i.e., by insisting that the diagrams \eqref{hconsistency1} -- \eqref{econsistency} be commutative).  There is a quotient functor
\begin{equation}
\label{piP}
\pi \colon \bP(\sfP)' \to \bP(\sfP),
\end{equation}
which is the identity map on objects, the $\sfP$-propertopes, and is surjective on maps.
\end{remark}

\begin{remark}
\label{PPkmodules}
Recall that Theorems ~\ref{thm:P+} and ~\ref{p+prop} are true with the category of $\bk$-modules in place of $\set$ (Remarks ~\ref{remark:P+chain} and ~\ref{remark:P+alg}).  It is easy to see that the above definition of the category $\bP(\sfP)$ of $\sfP$-propertopes also makes sense if $\sfP$ is a unital $\frakC$-colored PROP over $\bk$-modules.
\end{remark}

The horizontal, vertical, and equivariance consistency conditions (\eqref{hconsistency1}, \eqref{hconsistency2}, \eqref{vconsistency}, and \eqref{econsistency}) involve compositions of two face maps.  They are our $\sfP$-propertopic analogues of the simplicial identities
\[
d_id_j = d_{j-1}d_i \quad (i < j)
\]
in a simplicial set.  \emph{But why do these consistency conditions make sense?}  Consider, for example, the horizontal consistency conditions.  The commutativity of \eqref{hconsistency1} says that in $G_{\alpha\otimes\beta}$ \eqref{Galphatensorbeta}:
\begin{enumerate}
\item
the $i$th input color $\delta_i$ of $\alpha$ is also the $i$th input color of $\alpha\otimes\beta$, and
\item
the $k$th input color $\delta'_k$ of $\beta$ is also the $(|\udelta| + k)$th input color of $\alpha\otimes\beta$.
\end{enumerate}
This makes sense from the picture \eqref{Galphatensorbeta} and also from the definition of the horizontal composition, which gives
\[
\alpha \otimes \beta \in \sfP^{(n-2)+}\binom{\uepsilon,\uepsilon'}{\udelta,\udelta'}.
\]
Likewise, the commutativity of \eqref{hconsistency2} says essentially the same thing for the output colors of $\alpha$, $\beta$, and $\alpha \otimes \beta$.  The commutativity of the vertical and equivariance consistency diagrams (\eqref{vconsistency} and \eqref{econsistency}) can be similarly interpreted by looking at the definitions \eqref{Galphabeta} of $G_{\alpha\circ\beta}$ and \eqref{1alphatwisted} of $\sigma\ttone_\gamma\tau$.

\subsection{Combinatorics of $\sfP$-propertopes}
\label{subsec;draw}

Before moving on to the discussion of $\sfP$-propertopic sets, here we discuss the combinatorics of representing $\sfP$-propertopes graphically.  The discussion below about \textbf{metagraphs} can be regarded as a $\sfP$-propertopic generalization of the \emph{metatree} notation developed in \cite{bd1} for opetopes.  Other combinatorial descriptions of opetopes are given in \cite{bjkm,cheng4}.

Since we begin with a unital $\frakC$-colored PROP $\sfP$, we take for granted the elements in $\frakC$ and $\sfP$, i.e., the $0$-dimensional and $1$-dimensional $\sfP$-propertopes.  Our aim is to represent $n$-dimensional $\sfP$-propertopes for $n > 1$ in terms of elements in $\sfP$ and some purely combinatorial data.

We first consider a simple example involving a $3$-dimensional $\sfP$-propertope.  Consider the elements
\[
\alpha \in \sfP\binom{b_1,b_2,b_3,b_4}{a},\quad
\beta \in \sfP\binom{c_1,c_2}{b_1,b_2,b_3,b_4},\quad
\gamma \in \sfP\binom{d_1,d_2,d_3}{c_1,c_2}.
\]
Using the notations in \eqref{1alpha} and \eqref{Galphabeta}, we have the $2$-dimensional $\sfP$-propertopes
\[
\ttone_\gamma \in \sfP^+\binom{\gamma}{\gamma},\quad
G_{\beta\circ\alpha} \in \sfP^+\binom{\beta\circ\alpha}{\beta,\alpha},\quad
G_{\gamma\circ(\beta\circ\alpha)} \in \sfP^+\binom{\gamma\circ\beta\circ\alpha}{\gamma,\beta\circ\alpha}.
\]
In the horizontal composite
\[
\ttone_\gamma \otimes G_{\beta\circ\alpha} \in \sfP^{+}\binom{\gamma, \beta\circ\alpha}{\gamma,\beta,\alpha},
\]
the output profile is equal to the input profile in $G_{\gamma\circ(\beta\circ\alpha)}$, namely $(\gamma, \beta\circ\alpha)$.  Thus, the vertical composite
\[
G_{\gamma\circ\beta\circ\alpha} := G_{\gamma\circ(\beta\circ\alpha)} \circ (\ttone_\gamma \otimes G_{\beta\circ\alpha}) \in \sfP^+\binom{\gamma\circ\beta\circ\alpha}{\gamma,\beta,\alpha}
\]
makes sense.  Using the notation in \eqref{Galphabeta} again, we can thus form the $3$-dimensional $\sfP$-propertope
\[
\setlength{\unitlength}{.6mm}
\begin{picture}(50,58)(25,10)
\put(30,30){\circle*{2}}
\put(30,50){\circle*{2}}
\put(20,20){\vector(1,1){9.5}}
\put(40,20){\vector(-1,1){9.5}}
\put(30,20){\vector(0,1){9.5}}
\put(30,50){\vector(0,1){10}}
\put(28,48){\vector(1,1){1.5}}
\put(32,48){\vector(-1,1){1.5}}
\qbezier(30,30)(23,39)(28,48)
\qbezier(30,30)(37,39)(32,48)
\put(51,29){\makebox(0,0){$\ttone_\gamma \otimes G_{\beta\circ\alpha}$}}
\put(48,49){\makebox(0,0){$G_{\gamma\circ(\beta\circ\alpha)}$}}
\put(-23,44){\makebox(0,0){$G' = G_{G_{\gamma\circ(\beta\circ\alpha)} \circ (\ttone_\gamma \otimes G_{\beta\circ\alpha})} =$}}
\put(75,42){$\in \sfP^{2+}\dbinom{G_{\gamma\circ\beta\circ\alpha}}{G_{\gamma\circ(\beta\circ\alpha)}, \ttone_\gamma \otimes G_{\beta\circ\alpha}}$.}
\end{picture}
\]
In this graph, the bottom three edges are labeled $1$, $2$, and $3$ from left to right, and are decorated by $\gamma$, $\beta$, and $\alpha$, respectively.  The middle two edges are decorated by $\gamma$ and $\beta \circ\alpha$, respectively.  The top edge is decorated by $\gamma$.  The top (resp. bottom) vertex is labeled $1$ (resp. $2$) and is decorated by $G_{\gamma\circ(\beta\circ\alpha)}$ (resp. $\ttone_\gamma \otimes G_{\beta\circ\gamma}$).

We want to represent the $3$-dimensional $\sfP$-propertope $G'$ using graphs (Definition ~\ref{def:graph}) and elements in $\sfP$.  To simplify the graphs below, we will draw directed edges without the arrows, keeping in mind that they are always assumed to flow from the bottom to the top.  Also, we omit drawing the labels of the input or output edges if they are labeled consecutively $1, 2, \ldots$ from left to right.

First, from the $3$-dimensional $\sfP$-propertope $G'$, we obtain the following ``graph of graphs" or \textbf{metagraph} $\bM(G')$:
\begin{equation}
\label{graphofgraphs}
\setlength{\unitlength}{.7mm}
\begin{picture}(110,100)
\put(-35,2){\makebox(0,0){$\gamma$}}
\put(-10,2){\makebox(0,0){$\beta\circ\alpha$}}
\put(65,2){\makebox(0,0){$\gamma$}}
\put(95,2){\makebox(0,0){$\beta$}}
\put(125,2){\makebox(0,0){$\alpha$}}
\put(-25,30){\circle*{2}}
\put(-25,40){\circle*{2}}
\put(-25,25){\line(0,1){5}}
\put(-25,40){\line(-1,1){5}}
\put(-25,40){\line(0,1){5}}
\put(-25,40){\line(1,1){5}}
\qbezier(-25,30)(-30,35)(-25,40)
\qbezier(-25,30)(-20,35)(-25,40)
\put(-20,40){\makebox(0,0){$1$}}
\put(-20,30){\makebox(0,0){$2$}}
\put(65,35){\circle*{2}}
\put(60,30){\line(1,1){5}}
\put(70,30){\line(-1,1){5}}
\put(65,35){\line(-1,1){5}}
\put(65,35){\line(0,1){5}}
\put(65,35){\line(1,1){5}}
\put(70,35){\makebox(0,0){$1$}}
\put(110,30){\circle*{2}}
\put(110,40){\circle*{2}}
\put(110,25){\line(0,1){5}}
\put(110,40){\line(-1,1){5}}
\put(110,40){\line(1,1){5}}
\qbezier(110,30)(100,35)(110,40)
\qbezier(110,30)(107,35)(110,40)
\qbezier(110,30)(120,35)(110,40)
\qbezier(110,30)(113,35)(110,40)
\put(117,40){\makebox(0,0){$1$}}
\put(115,28){\makebox(0,0){$2$}}
\put(85,30){\makebox(0,0){,}}
\put(25,75){\circle*{2}}
\put(25,85){\circle*{2}}
\put(20,70){\line(1,1){5}}
\put(25,70){\line(0,1){5}}
\put(30,70){\line(-1,1){5}}
\put(25,85){\line(0,1){6}}
\qbezier(25,75)(20,80)(25,85)
\qbezier(25,75)(30,80)(25,85)
\put(30,85){\makebox(0,0){$1$}}
\put(30,75){\makebox(0,0){$2$}}
\put(-25,20){\line(-2,-3){8}}
\put(-25,20){\line(2,-3){8}}
\put(85,22){\line(-4,-3){18}}
\put(85,22){\line(2,-3){9}}
\put(85,22){\line(5,-2){37}}
\put(25,65){\line(-3,-1){45}}
\put(25,65){\line(3,-1){45}}
\end{picture}
\end{equation}
In the $3$-level metagraph $\bM(G')$ \eqref{graphofgraphs}, the top rocket-shaped graph is the underlying graph of $G'$.  In the middle row, the left-most graph with the shape of an upside-down rocket is the underlying graph of $G_{\gamma\circ(\beta\circ\alpha)}$, in which the vertex labeled $1$ (resp. $2$) is decorated by $\gamma$ (resp. $\beta\circ\alpha$).  Also in the middle row, the graph with only one vertex is the underlying graph of $\ttone_\gamma$.  The right-most graph is the underlying graph of $G_{\beta\circ\alpha}$, in which the vertex labeled $1$ (resp. $2$) is decorated by $\beta$ (resp. $\alpha$).  These two graphs on the right together, separated by a comma, is the underlying sequence of graphs of $\ttone_\gamma \otimes G_{\beta\circ\alpha}$.  In the bottom row are the elements in $\sfP$ that decorate the $2$-dimensional $\sfP$-propertopes $G_{\gamma\circ(\beta\circ\alpha)}$ and $\ttone_\gamma \otimes G_{\beta\circ\alpha}$.

Note that the large-scale shape of the metagraph $\bM(G')$ \eqref{graphofgraphs} is the following graph.
\begin{equation}
\label{largescaleshape}
\setlength{\unitlength}{.7mm}
\begin{picture}(50,35)
\put(10,5){\circle*{2}}
\put(20,5){\circle*{2}}
\put(35,5){\circle*{2}}
\put(40,5){\circle*{2}}
\put(45,5){\circle*{2}}
\put(15,15){\circle*{2}}
\put(40,15){\circle*{2}}
\put(27.5,25){\circle*{2}}
\put(15,15){\line(5,4){12.5}}
\put(40,15){\line(-5,4){12.5}}
\put(10,5){\line(1,2){5}}
\put(20,5){\line(-1,2){5}}
\put(35,5){\line(1,2){5}}
\put(40,5){\line(0,1){10}}
\put(45,5){\line(-1,2){5}}
\end{picture}
\end{equation}
From the perspectively of the graph \eqref{largescaleshape}, the description of the metagraph $\bM(G')$ \eqref{graphofgraphs} in the previous paragraph amounts to the following.  From a vertex $u$ in \eqref{largescaleshape}, the $i$th edge below it (from left to right, as always) extends to the underlying graph of the decoration of the $i$th vertex in $u$.  Of course, if $u$ is in the middle row, the $i$th edge below a vertex $u$ extends simply to the element in $\sfP$ decorating the $i$th vertex in $u$.

The $3$-level metagraph $\bM(G')$ \eqref{graphofgraphs} actually has enough information to uniquely determine the $3$-dimensional $\sfP$-propertope $G' \in \sfP^{2+}$.  Indeed, we can use the elements in the bottom row of the metagraph $\bM(G')$ to decorate the graphs above them.  We use the vertex labels of the graphs in the middle row to keep track of the vertex decorations.  The results are exactly the $2$-dimensional $\sfP$-propertopes $G_{\gamma\circ(\beta\circ\alpha)}$ and $\ttone_\gamma \otimes G_{\beta\circ\alpha}$ in $\sfP^+$.  In these two $\sfP$-decorated graphs, the decorations of the edges are uniquely determined by the input and output profiles of the vertex decorations.  Now we repeat the same process, starting at the middle row of \eqref{graphofgraphs}, which now contains the two elements in $\sfP^+$ from the previous step.  Using its vertex labels, we decorate the top graph in the metagraph $\bM(G')$ using the elements $G_{\gamma\circ(\beta\circ\alpha)}$ and $\ttone_\gamma \otimes G_{\beta\circ\alpha}$ in $\sfP^+$.  The result is exactly the element $G' \in \sfP^{2+}$.

In general, given an $n$-dimensional $\sfP$-propertope $\zeta \in \sfP^{(n-1)+}$ with $n \geq 2$, the procedure we used above for $G' \in \sfP^{2+}$ can be iterated and used on $\zeta$.  The example $G'$ illustrates that, in order to recover a $\sfP$-propertope $\zeta$, one needs to remember a finite number of graphs, which is purely combinatorial, and a finite number of elements in $\sfP$, which is algebraic.  The combinatorial data and the algebraic data fit together in a metagraph (or, more generally, a sequence of metagraphs), which uniquely determines the element $\zeta$.

Here is the procedure for obtaining the sequence of metagraphs $\bM(\zeta)$ of an $n$-dimensional $\sfP$-propertope $\zeta$ for $n \geq 2$.  Suppose that
\[
\zeta = (\zeta_1, \ldots , \zeta_r)  \in \sfP^{(n-1)+}
\]
is an $n$-dimensional $\sfP$-propertope, where each $\zeta_i$ is a $\sfP^{(n-2)+}$-decorated graph.  We consider each $\zeta_i$ separately.  Let $H$ be a typical connected component in $\zeta_i$.  Then $H$ gives rise to an $n$-level metagraph $\bM(H)$ as follows.
\begin{enumerate}
\item
First note that $H$ is a $\sfP^{(n-2)+}$-decorated graph.  At the top level (i.e., level $n$ counting from the bottom up) of the metagraph $\bM(H)$, draw the underlying graph of $H$.
\item
Each vertex $t \in v(H)$ in $H$ has a decoration $\xi(t) \in \elt(\sfP^{(n-2)+})$, which is itself a finite sequence of $\sfP^{(n-3)+}$-decorated graphs.  For each such decoration $\xi(t)$, at level $n-1$ of the metagraph $\bM(H)$, draw the underlying sequence of graphs of $\xi(t)$.  Separate the entries of the underlying sequence of graphs of $\xi(t)$ by commas if $\xi(t)$ is a sequence of length $> 1$.  These underlying sequences of graphs are arranged in the $(n-1)$st level of the metagraph from left to right, according to the labels of the vertices $t \in v(H)$.
\item
For each vertex $t \in v(H)$, draw an edge in the metagraph $\bM(H)$ from the top level to the underlying sequence of graphs of $\xi(t)$ in the $(n-1)$st level.
\item
The steps above are repeated, starting at the $(n-1)$st level.  In other words, each vertex decoration $\xi(u) \in \elt(\sfP^{(n-3)+})$ in each $\xi(t) \in \elt(\sfP^{(n-2)+})$ is a finite sequence of $\sfP^{(n-4)+}$-decorated graphs.  The underlying sequences of graphs of these $\xi(u)$ are drawn at the $(n-2)$nd level of the metagraph $\bM(H)$ as described above.  Also draw edges from the $(n-1)$st level to the $(n-2)$st level as described above.
\item
This process is done $n-1$ times, in which the last step is modified slightly.  In level $2$ of the metagraph $\bM(H)$, we have the underlying sequences of graphs of some elements in $\sfP^+$.  In level $1$ of the metagraph, we write down the elements in $\sfP$ that decorate these elements in $\sfP^+$ together with the corresponding edges.
\end{enumerate}

Let $k_i$ be the number of connected components in $\zeta_i$, and let $H^i_j$ be the $j$th connected component in $\zeta_i$.  Repeating the above steps for all $k_i$ connected components in $\zeta_i$, we obtain $k_i$ $n$-level metagraphs $\bM(H^i_j)$ $(1 \leq j \leq k_i)$.  The same process is performed on the other entries $\zeta_l$ in $\zeta$.  In the end, we obtain the sequence
\[
\bM(\zeta) = \left((\bM(H^1_1) \cdots \bM(H^1_{k_1})), \ldots , (\bM(H^r_1) \cdots \bM(H^r_{k_r}))\right)
\]
of $n$-level metagraphs.  The bottom level of this sequence of metagraphs contains elements in $\sfP$, some algebraic data.  The $n-1$ levels above it contains finite sequences of graphs, some purely combinatorial data.

The sequence $\bM(\zeta)$ of $n$-level metagraphs uniquely determines the $n$-dimensional $\sfP$-propertope $\zeta$.  To recover $\zeta$, as in the example $G'$ above, one starts at the bottom level.  Using their vertex labels, decorate the graphs in level $2$ in $\bM(\zeta)$ using the elements in $\sfP$ in level $1$.  We then obtain some finite sequences of $\sfP$-decorated graphs, i.e., elements in $\sfP^+$.  Now repeat this going-up process starting at level $2$ of $\bM(\zeta)$, now containing elements in $\sfP^+$.  This process is repeated $n-1$ times, after which we recover the $n$-dimensional $\sfP$-propertope $\zeta$.

\subsection{$\sfP$-propertopic sets}
\label{subsec:propertopicset}

Just as one defines simplicial objects as presheaves on the category $\Delta$, we now define $\sfP$-propertopic sets as presheaves on the category of $\sfP$-propertopes.

\begin{definition}
Given a unital $\frakC$-colored PROP $\sfP$ over $\set$, the functor category $\propset$ is called the category of \textbf{$\sfP$-propertopic sets}.
\end{definition}

\begin{definition}
Given a unital $\frakC$-colored PROP $\sfP$ over $\kmodule$ ($=$ the category of modules over a field $\bk$ of characteristic $0$), the functor category $\propmodule$ is called the category of \textbf{$\sfP$-propertopic $\bk$-modules}.
\end{definition}

Since the morphisms of $\sfP$-propertopes are generated by the face maps, it makes sense that a $\sfP$-propertopic set can be described by what it does to the $\sfP$-propertopes and the face maps.

\begin{proposition}
\label{propertopicset}
A $\sfP$-propertopic set $X \in \propset$ consists of exactly the following data:
\begin{enumerate}
\item
It assigns to each $n$-dimensional $\sfP$-propertope $\gamma \in \elt(\sfP^{(n-1)+})$ $(n \geq 0)$ a set $X(\gamma)$.
\item
It assigns to each \textbf{face map}
\[
f \colon \gamma \to \alpha \in \bP(\sfP)
\]
a function
\[
X(f) \colon X(\gamma) \to X(\alpha).
\]
\end{enumerate}
Moreover, the images under $X$ of the consistency diagrams \eqref{hconsistency1} -- \eqref{econsistency} are commutative.

Likewise, a map
\[
F \colon X \to Y
\]
of $\sfP$-propertopic sets consists of exactly the following data:  It assigns to each $n$-dimensional $\sfP$-propertope $\gamma \in \elt(\sfP^{(n-1)+})$ $(n \geq 0)$ a function
\[
F(\gamma) \colon X(\gamma) \to Y(\gamma)
\]
such that, for each \textbf{face map} $f \colon \gamma \to \alpha$, the square
\[
\SelectTips{cm}{10}
\xymatrix{
X(\gamma) \ar[r]^-{F(\gamma)} \ar[d]_-{X(f)} & Y(\gamma) \ar[d]^-{Y(f)}\\
X(\alpha) \ar[r]^-{F(\alpha)} & Y(\alpha)
}
\]
is commutative.
\end{proposition}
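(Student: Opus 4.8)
The plan is to read off the stated description directly from the generators-and-relations presentation of $\bP(\sfP)$ recorded in Remark \ref{remark:PP'}. By definition an object of $\propset = \set^{\bP(\sfP)}$ is a functor $X \colon \bP(\sfP) \to \set$, so it assigns a set to each propertope and a function $X(\phi)$ to each morphism $\phi$, compatibly with identities and composition; in particular it assigns to a face map $f\colon\gamma\to\alpha$ a function $X(f)\colon X(\gamma)\to X(\alpha)$ in the indicated direction. The whole point is that these data are governed entirely by the face maps, so the task is to make that reduction precise.

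First I would invoke the quotient functor $\pi \colon \bP(\sfP)' \to \bP(\sfP)$ of Remark \ref{remark:PP'}. Precomposition with $\pi$ identifies functors $\bP(\sfP) \to \set$ with those functors $\bP(\sfP)' \to \set$ that respect the congruence defining the quotient; this is the universal property of a quotient category. Since any functor preserves composites, respecting this congruence is equivalent to sending each of the four generating families of consistency diagrams \eqref{hconsistency1}--\eqref{econsistency} to a commutative diagram in $\set$. Next I would observe that $\bP(\sfP)'$ is the free (path) category on the quiver whose vertices are the $\sfP$-propertopes and whose edges are the face maps: by Remark \ref{remark:PP'}, every non-identity morphism of $\bP(\sfP)'$ is a chain of face maps \eqref{Pmorphism} subject to no relations, and because each face map strictly lowers dimension by one, such a chain is an unambiguous concatenation with no accidental coincidences. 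Hence a functor $\bP(\sfP)' \to \set$ is exactly a choice of a set $X(\gamma)$ for every propertope $\gamma$ together with a function $X(f)\colon X(\gamma) \to X(\alpha)$ for every face map $f\colon\gamma\to\alpha$, with no further constraints. Combining the two steps yields precisely data (1) and (2) subject to commutativity of the images of \eqref{hconsistency1}--\eqref{econsistency}.

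For the morphism statement I would argue analogously. A map $F \colon X \to Y$ in $\propset$ is a natural transformation, i.e.\ a family of components $F(\gamma)\colon X(\gamma)\to Y(\gamma)$ indexed by propertopes such that the naturality square commutes for every morphism of $\bP(\sfP)$. Pulling back along $\pi$, it suffices to check naturality over $\bP(\sfP)'$; and since the naturality square attached to a composite is the vertical pasting of the naturality squares of its constituent face maps, naturality over all of $\bP(\sfP)'$ is equivalent to naturality over the generating face maps alone. This is exactly the commuting square displayed in the proposition, so the two notions of morphism coincide.

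The content here is bookkeeping rather than genuine difficulty, as everything rests on Remark \ref{remark:PP'}. The one point deserving care is the passage through the quotient: one must confirm that the categorical congruence cutting $\bP(\sfP)$ out of $\bP(\sfP)'$ is generated precisely by the four families \eqref{hconsistency1}--\eqref{econsistency}, so that ``functor respecting the congruence'' collapses to ``functor making \eqref{hconsistency1}--\eqref{econsistency} commute'' with no hidden relations left to impose. Granting the presentation asserted in Remark \ref{remark:PP'}, this is immediate, and the remaining checks (identities and associativity on free-category composites, and the pasting of naturality squares) are routine.
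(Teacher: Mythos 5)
Your proposal is correct and takes essentially the same route as the paper: the paper's proof likewise observes that every morphism of $\bP(\sfP)$ is a composite of face maps as in \eqref{Pmorphism}, so a functor (or natural transformation) is determined by its values on face maps, with the consistency diagrams \eqref{hconsistency1}--\eqref{econsistency} supplying the only relations. Your version merely makes the factorization through the free category $\bP(\sfP)'$ and the quotient functor $\pi$ of Remark \ref{remark:PP'} more explicit than the paper does.
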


\begin{proof}
A $\sfP$-propertopic set $X$ has at least the stated data.  Because of the way composition is defined in $\bP(\sfP)$, the image under $X$ of a general morphism of $\sfP$-propertopes of the form \eqref{Pmorphism} must be the composition
\[
X(h_{k+1}) \circ \cdots \circ X(h_{n-1}) \circ X(h_n) \colon X(\gamma) \to X(\delta)
\]
of functions.  Since each $h_l$ is by definition a face map, it follows that the above function is determined by what $X$ does to the face maps.  The assertion about a map of $\sfP$-propertopic sets is proved similarly.
\end{proof}

\begin{remark}
The obvious $\bk$-module analogue of Proposition ~\ref{propertopicset} is also true.  Simply replace \emph{sets} by $\bk$-modules and \emph{functions} by $\bk$-linear maps.
\end{remark}

For a $\sfP$-propertopic set $X$ and an in-face/out-face map $f \in \bP(\sfP)$, we call $X(f)$ an \textbf{in-face/out-face map} in $X$.  A \textbf{face map} in $X$ is either an in-face map or an out-face map in $X$.

\begin{example}[\textbf{Standard $\sfP$-propertopic sets}]
Let $\gamma \in \elt(\sfP^{(n-1)+})$ be an $n$-dimensional $\sfP$-propertope for some $n \geq 0$.  Then there is a $\sfP$-propertopic set
\begin{equation}
\label{Deltagamma}
\Delta_\gamma = \bP(\sfP)(\gamma, -),
\end{equation}
given by the functor corepresented by $\gamma$.  In other words, if $\alpha$ is a $\sfP$-propertope, then
\[
\Delta_\gamma(\alpha) = \bP(\sfP)(\gamma,\alpha)
\]
is the set of morphisms
\[
\gamma \to \alpha \in \bP(\sfP).
\]
Given a map
\[
h \colon \alpha \to \alpha' \in \bP(\sfP),
\]
the function
\[
\Delta_\gamma(h) \colon \Delta_\gamma(\alpha) = \bP(\sfP)(\gamma,\alpha) \to \bP(\sfP)(\gamma,\alpha') = \Delta_\gamma(\alpha')
\]
is induced by composition of morphisms in $\bP(\sfP)$.  We call $\Delta_\gamma$ the \textbf{standard $\sfP$-propertopic set of shape $\gamma$}.  It is an analogue of the standard $n$-simplex $\Delta^n$ in the category of simplicial sets, or the $n$-dimensional disk $D^n$ (or the topological standard $n$-simplex) in the category of topological spaces.
\qed
\end{example}

\subsection{Cells, horns, and boundaries}
\label{subsec:horns}

A very important concept about simplicial sets is that of a \emph{Kan fibration}, which  is a map of simplicial sets with a certain lifting property with respect to some \emph{horn inclusions} (see, e.g., \cite[Chapter I]{gj}).  A \emph{Kan complex} is a simplicial set $A$ for which the map $A \to *$ is a Kan fibration.  In the standard model category structure of simplicial sets \cite{gj,hovey,quillen}, the Kan complexes are exactly the fibrant objects, which we think of as the \emph{good} objects.  We want an analogue of a Kan fibration/complex for $\sfP$-propertopic sets, so we first need to define the relevant concepts of cells, horns, and boundaries.

In a simplicial set $A \in \set^{\Delta^{op}}$, an element $a \in A(\textbf{n})$ is called an $n$-cell or an $n$-simplex, where $\textbf{n} = \{0 < 1 < \cdots < n\} \in \Delta$.  In our world of $\sfP$-propertopes, we have to replace the object $\textbf{n} \in \Delta$ with the \emph{set} $\elt(\sfP^{(n-1)+})$ of $n$-dimensional $\sfP$-propertopes.

\begin{definition}
\label{n-cells}
Let $X$ be a $\sfP$-propertopic set.  The set of \textbf{$n$-cells} in $X$ is defined as the disjoint union
\[
X_n = \coprod_{\gamma \in \elt(\sfP^{(n-1)+})} X(\gamma),
\]
indexed by the set $\elt(\sfP^{(n-1)+})$ of $n$-dimensional $\sfP$-propertopes.  An element $x \in X_n$ is called an \textbf{$n$-cell} in $X$.  If
\[
x \in X(\gamma) \subseteq X_n,
\]
then we call $\gamma$ the \textbf{shape} of the $n$-cell $x$ and call $x$ a \textbf{$\gamma$-cell}.
\end{definition}

If $X \in \propmodule$ is a $\sfP$-propertopic $\bk$-module, then its set of \textbf{$n$-cells} is defined as the direct sum
\[
X_n = \bigoplus_{\gamma \in \elt(\sfP^{(n-1)+})} X(\gamma).
\]

In a simplicial set $A$, each $n$-cell is represented by a map
\[
\Delta^n \to A
\]
of simplicial sets from the standard $n$-simplex $\Delta^n$.  There is a similar description for $\gamma$-cells in a $\sfP$-propertopic set.  The roles of $A(\textbf{n})$ and $\Delta^n$ are now played by the set $X(\gamma)$ of $\gamma$-cells and the standard $\sfP$-propertopic set $\Delta_\gamma$, respectively.

\begin{proposition}
\label{gammacellsmaps}
Let $X$ be a $\sfP$-propertopic set, and let $\gamma \in \elt(\sfP^{(n-1)+})$ be an $n$-dimensional $\sfP$-propertope.  Then there is a canonical bijection
\[
X(\gamma) \cong \propset(\Delta_\gamma,X),
\]
where $\Delta_\gamma$ is the standard $\sfP$-propertopic set defined in \eqref{Deltagamma}.
\end{proposition}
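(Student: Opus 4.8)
The plan is to recognize this proposition as an instance of the Yoneda lemma. Recall that $\propset = \set^{\bP(\sfP)}$ is the category of covariant functors $\bP(\sfP) \to \set$, and that by definition $\Delta_\gamma = \bP(\sfP)(\gamma,-)$ is the functor corepresented by $\gamma$. The Yoneda lemma asserts precisely that natural transformations from a corepresentable functor $\bP(\sfP)(\gamma,-)$ into an arbitrary functor $X$ are in canonical bijection with the set $X(\gamma)$. So the entire content of the proposition is this classical fact, and the proof amounts to exhibiting the two mutually inverse maps.

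First I would define the map $\propset(\Delta_\gamma, X) \to X(\gamma)$. Given a map $F \colon \Delta_\gamma \to X$ of $\sfP$-propertopic sets, I evaluate its component
\[
F(\gamma) \colon \Delta_\gamma(\gamma) = \bP(\sfP)(\gamma,\gamma) \to X(\gamma)
\]
on the identity morphism $1_\gamma$, which lies in $\bP(\sfP)(\gamma,\gamma)$ by the first of the morphism conditions defining $\bP(\sfP)$. This yields the element $F(\gamma)(1_\gamma) \in X(\gamma)$.

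Next I would construct the inverse $X(\gamma) \to \propset(\Delta_\gamma, X)$. Given $x \in X(\gamma)$, I define a candidate natural transformation $\Phi_x \colon \Delta_\gamma \to X$ whose component at a $\sfP$-propertope $\alpha$ is
\[
\Phi_x(\alpha) \colon \bP(\sfP)(\gamma,\alpha) \to X(\alpha), \quad h \mapsto X(h)(x).
\]
The key step is to verify that $\Phi_x$ is natural: for any morphism $k \colon \alpha \to \alpha'$ in $\bP(\sfP)$, the relevant square commutes because $X(k \circ h) = X(k) \circ X(h)$, i.e., this is exactly the functoriality of $X$. Since every morphism of $\bP(\sfP)$ is a composite of face maps and $X$ respects such composites by Proposition ~\ref{propertopicset}, it is enough to check naturality on the generating face maps; but functoriality of $X$ on all of $\bP(\sfP)$ supplies the commutativity directly.

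Finally I would check that the two constructions are mutually inverse. Starting from $x \in X(\gamma)$, applying $\Phi_x$ and evaluating at $1_\gamma$ gives $X(1_\gamma)(x) = x$. Conversely, starting from $F$ and setting $x = F(\gamma)(1_\gamma)$, I apply naturality of $F$ to an arbitrary morphism $h \colon \gamma \to \alpha$: the induced map $\Delta_\gamma(h)$ carries $1_\gamma \in \bP(\sfP)(\gamma,\gamma)$ to $h \in \bP(\sfP)(\gamma,\alpha)$, so that $F(\alpha)(h) = X(h)(F(\gamma)(1_\gamma)) = \Phi_x(\alpha)(h)$, giving $\Phi_x = F$. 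There is no genuine obstacle here; the only point deserving care is confirming that $\Delta_\gamma$ really is the functor corepresented by $\gamma$ on the \emph{full} category $\bP(\sfP)$ — including arbitrary composites of face maps, not merely the face maps themselves — so that the Yoneda argument applies verbatim. The canonicity (indeed naturality in $\gamma$) of the resulting bijection is then automatic from the Yoneda lemma.
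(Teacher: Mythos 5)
Your proposal is correct and is essentially the paper's own argument: the paper's proof consists of the single observation ``this is simply the Yoneda Lemma,'' together with the same formula $\eta \mapsto \eta(\gamma)(1_\gamma)$ for one direction of the bijection. Your explicit construction of the inverse $x \mapsto \Phi_x$ and the verification that the two maps are mutually inverse just spells out the standard Yoneda details that the paper leaves implicit.
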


\begin{proof}
This is simply the Yoneda Lemma.  In one direction, the bijection sends an element $\eta \in \propset(\Delta_\gamma,X)$ to
\[
x_\eta = \eta(\gamma)(1_\gamma) \in X(\gamma),
\]
which is a $\gamma$-cell in $X$.
\end{proof}

Suppose that
\[
\gamma \in \sfP^{(n-1)+}\binom{\beta_1, \ldots , \beta_s}{\alpha_1, \ldots , \alpha_r}
\]
is an $n$-dimensional $\sfP$-propertope as in \eqref{gammaPn-1}.  There are face maps
\[
\begin{split}
X(f_i)& \colon X(\gamma) \to X(\alpha_i) \quad (1 \leq i \leq r),\\
X(g_j)& \colon X(\gamma) \to X(\beta_j) \quad (1 \leq j \leq s)
\end{split}
\]
in $X$, one for each face map out of $\gamma$ (\eqref{outfacemap} and \eqref{infacemap}).  If $x \in X(\gamma)$ is a $\gamma$-cell in $X$, then the elements
\begin{equation}
\label{Xinoutface}
\begin{split}
y_i &= X(f_i)(x) \in X(\alpha_i) \subseteq X_{n-1} \quad (1 \leq i \leq r),\\
z_j &= X(g_j)(x) \in X(\beta_j) \subseteq X_{n-1} \quad (1 \leq j \leq s)
\end{split}
\end{equation}
are $(n-1)$-cells in $X$.

\begin{definition}
For a $\sfP$-propertopic set $X$ and a $\gamma$-cell $x \in X(\gamma) \subseteq X_n$, we call $y_i$ and $z_j$ in \eqref{Xinoutface} the \textbf{$i$th in-face of $x$} and the \textbf{$j$th out-face of $x$}, respectively.
\end{definition}

We depict an $n$-cell $x$ with all of its in-faces and out-faces as
\begin{equation}
\label{xfaces}
(y_1, \ldots , y_r) \xrightarrow{x} (z_1, \ldots , z_s).
\end{equation}
From a categorical view point, the $n$-cells are exactly the $n$-morphisms.  An $n$-morphism is a way of composing $(n-1)$-morphisms.  So we think of the $n$-cell $x \in X(\gamma)$ as a \emph{way of composition} with \emph{sources} the $(n-1)$-cells $(y_1, \ldots , y_r)$, i.e., the in-faces of $x$.  The $(n-1)$-cells $(z_1, \ldots , z_s)$ -- the out-faces of $x$ -- are \emph{composites} of $(y_1, \ldots , y_r)$.  We do not say \emph{the} composites of $(y_1, \ldots , y_r)$ because there may be another $n$-cell $x' \in X(\gamma')$ that also has in-faces the $(n-1)$-cells $(y_1, \ldots , y_r)$.  Given such an $x'$, we have another way of composition
\[
(y_1, \ldots , y_r) \xrightarrow{x'} (z_1', \ldots , z_s'),
\]
giving rise to possibly different composites $(z_1', \ldots , z_s')$.  This discussion leads naturally to analogues of horns and boundaries in the $\sfP$-propertopic world.

\begin{definition}
\label{gammahorndef}
Let
\[
\gamma \in \sfP^{(n-1)+}\binom{\beta_1, \ldots , \beta_s}{\alpha_1, \ldots , \alpha_r}
\]
be an $n$-dimensional $\sfP$-propertope for some $n \geq 1$, and let $X$ be a $\sfP$-propertopic set.
\begin{enumerate}
\item
A \textbf{$\gamma$-horn} in $X$ consists of $(n-1)$-cells $y_i \in X(\alpha_i)$ for $1 \leq i \leq r$.  We also write such a $\gamma$-horn as
\begin{equation}
\label{gammahorn}
(y_1, \ldots , y_r) \xrightarrow{?} ?.
\end{equation}
An \textbf{$n$-dimensional horn} in $X$ is a $\gamma$-horn in $X$ for some $n$-dimensional $\sfP$-propertope $\gamma$.
\item
A \textbf{filling} of the $\gamma$-horn \eqref{gammahorn} is a $\gamma$-cell $x \in X(\gamma)$ whose $i$th in-face is $y_i$ for $1 \leq i \leq r$.
\item
A \textbf{$\gamma$-boundary} in $X$ consists of $(n-1)$-cells $y_i \in X(\alpha_i)$ for $1 \leq i \leq r$ and $(n-1)$-cells $z_j \in X(\beta_j)$ for $1 \leq j \leq s$. We also write such a $\gamma$-boundary as
\begin{equation}
\label{gammaboundary}
(y_1, \ldots , y_r) \xrightarrow{?} (z_1, \ldots , z_s).
\end{equation}
An \textbf{$n$-dimensional boundary} in $X$ is a $\gamma$-boundary in $X$ for some $n$-dimensional $\sfP$-propertope $\gamma$.
\item
A \textbf{filling} of the $\gamma$-boundary \eqref{gammaboundary} is a $\gamma$-cell $x \in X(\gamma)$ whose $i$th in-face is $y_i$ for $1 \leq i \leq r$ and whose $j$th out-face is $z_j$ for $1 \leq j \leq s$.
\end{enumerate}
\end{definition}
The same definitions can be made if $\sfP$ is a colored PROP over $\kmodule$ and $X$ is a $\sfP$-propertopic $\bk$-module.

So a filling of a $\gamma$-horn (or $\gamma$-boundary) is an extension of \eqref{gammahorn} (or \eqref{gammaboundary}) to \eqref{xfaces}, and vice versa.  Our horns, boundaries, and fillings correspond to \emph{niches}, \emph{frames}, and \emph{occupants} in the opetopic setting of Baez and Dolan \cite{bd1}.  We prefer the terms \emph{horn} and \emph{boundary} because they sound more familiar.  Our $n$-dimensional horns are analogues of the $k$th horns $\Lambda^n_k \hookrightarrow \Delta^n$ inside the standard $n$-simplex.  Likewise, our $n$-dimensional boundaries are analogues of the boundary $\partial\Delta^n \hookrightarrow \Delta^n$ or the topological sphere $S^{n-1}$.

As in the case of $\gamma$-cells, the sets of $\gamma$-horns and $\gamma$-boundaries in a $\sfP$-propertopic set $X$ correspond to maps from certain universal $\sfP$-propertopic sets.  Indeed, let
\[
\gamma \in \sfP^{(n-1)+}\binom{\beta_1, \ldots , \beta_s}{\alpha_1, \ldots , \alpha_r}
\]
be an $n$-dimensional $\sfP$-propertope for some $n \geq 1$ as in Definition ~\ref{gammahorndef}.  To describe $\gamma$-boundaries as maps, we need a $\sfP$-propertopic set that is generated by the input and the output colors of $\gamma$, i.e., the $\alpha_i$ for $1 \leq i \leq r$ and the $\beta_j$ for $1 \leq j \leq s$.  Likewise, to describe $\gamma$-horns, we need a $\sfP$-propertopic set that is generated by the input colors of $\gamma$.  The precise definitions are given below.

\begin{definition}
\label{standardboundary}
Define the $\sfP$-propertopic set $\partial\Delta_\gamma$ by setting
\[
\partial\Delta_\gamma(\alpha) =
\begin{cases}
\bP(\sfP)(\gamma,\alpha) & \text{if $\alpha \in \elt(\sfP^{(k-1)+})$ with $k < n$},\\
\varnothing & \text{otherwise}.
\end{cases}
\]
At any map in $\bP(\sfP)$, $\partial\Delta_\gamma$ is induced by composition of maps in $\bP(\sfP)$.  We call $\partial\Delta_\gamma$ the \textbf{standard $\gamma$-boundary}.
\end{definition}

\begin{definition}
\label{standardhorn}
Define the $\sfP$-propertopic set $\Lambda_\gamma$ by setting
\[
\Lambda_\gamma(\alpha) =
\begin{cases}
\bP(\sfP)(\gamma,\alpha) &\text{if $\bP(\sfP)(\alpha_i,\alpha) \not= \varnothing$ for some $i \in \{1, \ldots , r\}$},\\
\varnothing &\text{otherwise}.
\end{cases}
\]
At any map in $\bP(\sfP)$, $\Lambda_\gamma$ is induced by composition of maps in $\bP(\sfP)$.  We call $\Lambda_\gamma$ the \textbf{standard $\gamma$-horn}.
\end{definition}

Now we have the analogue of Proposition ~\ref{gammacellsmaps} for $\gamma$-boundaries and $\gamma$-horns.

\begin{theorem}
\label{standardboundarymap}
Let $X$ be a $\sfP$-propertopic set, and let $\gamma \in \sfP^{(n-1)+}\binom{\ubeta}{\ualpha}$ be an $n$-dimensional $\sfP$-propertope for some $n \geq 1$ as in Definition ~\ref{gammahorndef}.  Then there are canonical bijections
\[
\left(\text{$\gamma$-boundaries in $X$}\right) \cong \propset(\partial\Delta_\gamma,X)
\]
and
\[
\left(\text{$\gamma$-horns in $X$}\right) \cong \propset(\Lambda_\gamma,X).
\]
\end{theorem}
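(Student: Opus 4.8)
The plan is to adapt the Yoneda argument of Proposition~\ref{gammacellsmaps} to the two sub-functors $\partial\Delta_\gamma,\Lambda_\gamma\subseteq\Delta_\gamma=\bP(\sfP)(\gamma,-)$. The key observation is that $\partial\Delta_\gamma$ is generated, under the functoriality (post-composition) of $\bP(\sfP)$, by the $(n-1)$-dimensional face maps out of $\gamma$, namely the in-face maps $f_i\colon\gamma\to\alpha_i$ ($1\le i\le r$) and the out-face maps $g_j\colon\gamma\to\beta_j$ ($1\le j\le s$). Indeed, any element $h\in\partial\Delta_\gamma(\alpha)$ is a morphism $\gamma\to\alpha$ with $\dim\alpha<n$, hence by the description \eqref{Pmorphism} a chain of face maps whose first arrow is one of the $f_i$ or $g_j$; writing $h=h'\circ f_i$ (resp. $h=h'\circ g_j$) exhibits $h$ as $\Delta_\gamma(h')$ applied to a generator. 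The analogous statement for $\Lambda_\gamma$ is that it is generated by the in-face maps $f_i$ alone.

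First I would define $\Phi\colon\propset(\partial\Delta_\gamma,X)\to(\gamma\text{-boundaries in }X)$ by evaluating on generators, $F\mapsto(y_i=F(\alpha_i)(f_i),\,z_j=F(\beta_j)(g_j))$; since $f_i\in\partial\Delta_\gamma(\alpha_i)$ and $g_j\in\partial\Delta_\gamma(\beta_j)$, this is exactly the data of Definition~\ref{gammahorndef}. For the inverse $\Psi$, given a boundary $(y_i;z_j)$ I would set $F(h)=X(h')(y_i)$ when $h=h'\circ f_i$, and $F(h)=X(h')(z_j)$ when $h=h'\circ g_j$. Granting that $\Psi$ lands in genuine natural transformations, the two composites are formal: $\Phi\Psi=\mathrm{id}$ because each $f_i,g_j$ factors as itself, and $\Psi\Phi=\mathrm{id}$ because for a given $F$ naturality gives $F(h)=F(h'\circ f_i)=X(h')(F(f_i))=X(h')(y_i)$.

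The main obstacle is the well-definedness of $\Psi$: an element $h$ is a single morphism of $\bP(\sfP)$ that may be written as several distinct chains of face maps, and $F(h)$ must be independent of this choice. Here I would pass through the free category $\bP(\sfP)'$ of Remark~\ref{remark:PP'}, whose morphisms are literally chains of face maps and therefore possess an unambiguous first arrow; over $\bP(\sfP)'$ the rule above visibly defines a natural transformation. The content is then to show that it descends along the quotient functor $\pi\colon\bP(\sfP)'\to\bP(\sfP)$ of \eqref{piP}, i.e. that two chains identified in $\bP(\sfP)$ receive equal values. Two such chains are connected by finitely many applications of the four consistency conditions, each replacing two consecutive face maps according to one of the squares \eqref{hconsistency1}--\eqref{econsistency}; by Proposition~\ref{propertopicset} the image of every such square under $X$ commutes, so each elementary move preserves the value. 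This descent is the entire mathematical content, the bijection being formal once $\Psi$ is seen to be natural.

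The horn statement is proved by the same factor-and-transport construction, now using only the in-face maps $f_i$ and the data $(y_1,\dots,y_r)$. The delicate extra point---and the place I expect the real work to be---is to confirm that $\Lambda_\gamma$ is exactly the sub-functor generated by the $f_i$, so that a natural transformation out of it is determined by the $y_i$ with no further freedom. This is immediate except when an input color $\alpha_i$ also occurs as an output color $\beta_j$, in which case the out-face map $g_j$ lies \emph{a priori} in $\Lambda_\gamma(\alpha_i)=\bP(\sfP)(\gamma,\alpha_i)$; one must then check that its $F$-value is already forced by the $y_i$. I would establish this using the unital, vertical, and equivariance consistency conditions (\eqref{vconsistency}, \eqref{econsistency}), by which such output-initial face maps are identified with input-initial ones whenever the target lies below an input color. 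With this identification in hand, the same well-definedness argument through Proposition~\ref{propertopicset} yields the second bijection.
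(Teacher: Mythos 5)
Your construction is the same as the paper's: one direction evaluates a natural transformation at the face maps $f_i\colon\gamma\to\alpha_i$ and $g_j\colon\gamma\to\beta_j$, and the inverse sends a boundary to the transformation defined on a chain of face maps by transporting $y_i$ or $z_j$ along the tail of the chain, according to whether the chain begins with $f_i$ or $g_j$. The one place you go further than the paper is well-definedness: the paper simply asserts that the resulting squares commute ``by direct inspection,'' whereas you make explicit that a morphism of $\bP(\sfP)$ is an equivalence class of chains and that the value must descend along the quotient $\pi\colon\bP(\sfP)'\to\bP(\sfP)$, which follows because $X$ sends the consistency diagrams \eqref{hconsistency1}--\eqref{econsistency} to commutative squares (Proposition~\ref{propertopicset}). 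That is a genuine and worthwhile sharpening of the same argument, not a different route.

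One caveat about your last paragraph. You are right to worry about $\Lambda_\gamma$ when some output color $\beta_j$ coincides (as a propertope) with an input color $\alpha_i$: then $\Lambda_\gamma(\alpha_i)=\bP(\sfP)(\gamma,\alpha_i)$ contains the out-face map $g_j$ and all chains beginning with it. But your proposed fix does not work as stated: the consistency conditions only identify two-step composites emanating from the special propertopes $G_{\alpha\otimes\beta}$, $G_{\alpha\circ\beta}$, $\ttone_{\alpha_1}\otimes\cdots\otimes\ttone_{\alpha_m}$, and $\sigma\ttone_\gamma\tau$; for a generic $\gamma$ with $\beta_j=\alpha_i$ there is no relation in $\bP(\sfP)$ identifying $g_j$ with any input-initial morphism, so the value $F(\beta_j)(g_j)$ is genuinely extra data not determined by $(y_1,\dots,y_r)$. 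The paper does not resolve this either (its proof of the horn bijection is only the phrase ``can be proved similarly''); the honest repair is to read $\Lambda_\gamma$ as the subfunctor of $\Delta_\gamma$ generated by the in-face maps $f_1,\dots,f_r$, i.e.\ consisting of those classes of chains admitting a representative whose first arrow is some $f_i$, after which your factor-and-transport argument goes through verbatim.
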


\begin{proof}
We prove the first bijection.  The second bijection can be proved similarly.  Suppose given
\[
\eta \in \propset(\partial\Delta_\gamma,X).
\]
We want to associate to $\eta$ a $\gamma$-boundary in $X$.  Suppose that
\[
f_i \colon \gamma \to \alpha_i \in \bP(\sfP)
\]
is the in-face map that records the $i$th input color $\alpha_i$ of $\gamma$ \eqref{infacemap} for some $i \in \{1, \ldots , r\}$.  Then $f_i \in (\partial\Delta_\gamma)(\alpha_i)$, and we obtain the $\alpha_i$-cell
\[
y_i = \eta(\alpha_i)(f_i) \in X(\alpha_i).
\]
Likewise, the map
\[
g_j \colon \gamma \to \beta_j \in \bP(\sfP)
\]
that records the $j$th output color $\beta_j$ of $\gamma$ \eqref{outfacemap} gives rise to the $\beta_j$-cell
\[
z_j = \eta(\beta_j)(g_j) \in X(\beta_j)
\]
for each $j \in \{1, \ldots , s\}$.  Putting these $(n-1)$-cells in $X$ together, we obtain
\[
\partial_\eta = \left((y_1, \ldots , y_r) \xrightarrow{?} (z_1, \ldots , z_s)\right),
\]
which is a $\gamma$-boundary in $X$.

Conversely, suppose given a $\gamma$-boundary $\partial$ in $X$ as in \eqref{gammaboundary}.  Define an element
\[
\varepsilon_\partial \in \propset(\partial\Delta_\gamma,X)
\]
as follows.  If $\delta \in \elt(\sfP^{(k-1)+})$ is a $k$-dimensional $\sfP$-propertope with $k \geq n$, then
\[
\varepsilon_\partial(\delta) \colon (\partial\Delta_\gamma)(\delta) = \varnothing \to X(\delta)
\]
is the trivial map.

Now suppose that $k < n$ and that $(\partial\Delta_\gamma)(\delta) \not= \varnothing$.  In this case, a typical element in
\[
(\partial\Delta_\gamma)(\delta) = \bP(\sfP)(\gamma,\delta)
\]
is a finite chain of face maps of the form \eqref{Pmorphism}.  The first face map
\[
h_n \colon \gamma \to \delta_{n-1}
\]
in such a typical element must be one of the in-face maps
\[
f_i \colon \gamma \to \alpha_i \quad (1 \leq i \leq r)
\]
or one of the out-face maps
\[
g_j \colon \gamma \to \beta_j \quad (1 \leq j \leq s).
\]
Define the map
\[
\varepsilon_\partial(\delta) \colon (\partial\Delta_\gamma)(\delta) \to X(\delta)
\]
by setting
\begin{multline*}
\varepsilon_\partial(\delta)\left(\gamma \xrightarrow{h_n} \delta_{n-1} \xrightarrow{h_{n-1}} \cdots \xrightarrow{h_{k+1}} \delta\right) =\\
\begin{cases}
y_i & \text{if $h_n = f_i \colon \gamma \to \alpha_i$ and $k = n-1$},\\
\left(X(h_{k+1}) \circ \cdots \circ X(h_{n-1})\right)(y_i) & \text{if $h_n = f_i \colon \gamma \to \alpha_i$ and $k < n-1$},\\
z_j & \text{if $h_n = g_j \colon \gamma \to \beta_j$ and $k = n-1$},\\
\left(X(h_{k+1}) \circ \cdots \circ X(h_{n-1})\right)(z_j) & \text{if $h_n = g_j \colon \gamma \to \beta_j$ and $k < n-1$}.
\end{cases}
\end{multline*}
One can check by direct inspection that the square
\[
\SelectTips{cm}{10}
\xymatrix{
(\partial\Delta_\gamma)(\delta) \ar[rr]^-{\varepsilon_\partial(\delta)} \ar[d]_-{(\partial\Delta_\gamma)(h)} & & X(\delta) \ar[d]^-{X(h)}\\
(\partial\Delta_\gamma)(\delta') \ar[rr]^-{\varepsilon_\partial(\delta')} & & X(\delta')
}
\]
is commutative for any face map $h \colon \delta \to \delta'$.  So $\varepsilon_\partial$ is indeed an element in $\propset(\partial\Delta_\gamma,X)$.  It is also not hard to check that the constructions
\[
\eta \mapsto \partial_\eta \quad\text{and}\quad\partial \mapsto \varepsilon_\partial
\]
are inverses of each other, so we have constructed the desired bijection between the set of $\gamma$-boundaries in $X$ and $\propset(\partial\Delta_\gamma,X)$.
\end{proof}

\subsection{$\sfP$-propertopic fibrations}
\label{subsec:propertopicfibration}

To define a fibration for $\sfP$-propertopic sets, let us first recall a Kan fibration. A map $p \colon A \to A'$ of simplicial sets is a \textbf{Kan fibration} if every solid-arrow commutative diagram
\begin{equation}
\label{Kanfibration}
\SelectTips{cm}{10}
\xymatrix{
\Lambda^n_k \ar[r] \ar@{^{(}->}[d] & A \ar[d]^-{p} \\
\Delta^n \ar[r] \ar@{.>}[ur]^-{\theta} & A'
}
\end{equation}
has a dotted-arrow lift $\theta \colon \Delta^n \to A$ that makes both resulting triangles commute.  The solid-arrow commutative square is equivalent to a $k$th horn
\[
(a_0, \ldots , \hat{a}_k, \ldots , a_n)
\]
in $A$ and an $n$-cell $a'$ in $A'$ such that
\[
d_ia' = p(a_i)
\]
for $i \not= k$.  The existence of the dotted arrow $\theta$ is equivalent to an $n$-cell $a$ in $A$ such that
\begin{enumerate}
\item
$a$ is a lift of $a'$ in the sense that $p(a) = a'$, and
\item
the $i$th face $d_ia$ is $a_i$ for $i \not= k$.
\end{enumerate}
In short, the map $p \colon A \to A'$ is a Kan fibration if every $k$th horn in $A$ with a compatible $n$-cell in $A'$ can be extended to a compatible $n$-cell in $A$.

Now we define an analogue of a Kan fibration for $\sfP$-propertopic sets.  A fibration for $\sfP$-propertopic sets should be a map such that every $\gamma$-horn in the source with a compatible $\gamma$-cell in the target can be extended to a compatible $\gamma$-cell in the source.  More precisely, we make the following definition.

\begin{definition}
\label{propertopicfibration}
A map
\[
p \colon X \to X'
\]
of $\sfP$-propertopic sets is called a \textbf{$\sfP$-propertopic fibration}, or simply a \textbf{fibration}, if it has the following \textbf{horn-filling property}: Suppose given
\begin{itemize}
\item
a $\gamma$-horn
\[
(y_1, \ldots , y_r) \xrightarrow{?} ?
\]
in $X$ as in Definition ~\ref{gammahorndef}, and
\item
a $\gamma$-cell
\[
x' \in X'(\gamma) \subseteq X'_n
\]
such that
\[
y_i' = p(\alpha_i)(y_i)
\]
for $1 \leq i \leq r$, where
\[
y_i' = X'(f_i)(x') \in X'(\alpha_i) \subseteq X'_{n-1}
\]
is the $i$th in-face of $x'$ \eqref{Xinoutface}.
\end{itemize}
Then there exists a $\gamma$-cell
\[
x \in X(\gamma) \subseteq X_n
\]
such that
\begin{enumerate}
\item
$x$ is a lift of $x'$ in the sense that
\[
p(\gamma)(x) = x',
\]
and
\item
$x$ is a filling of the $\gamma$-horn $(y_1, \ldots , y_r) \xrightarrow{?} ?$ in $X$ (Definition ~\ref{gammahorndef}).
\end{enumerate}
\end{definition}

As in the case of a Kan fibration, there is a diagrammatic way to describe $\sfP$-propertopic fibrations.  In fact, for any $n$-dimensional $\sfP$-propertope $\gamma$ with $n \geq 1$, there are entrywise inclusions
\begin{equation}
\label{standardinclusion}
\Lambda_\gamma \xrightarrow{i} \partial\Delta_\gamma \xrightarrow{i} \Delta_\gamma.
\end{equation}
At each $\sfP$-propertope $\delta$, each $i(\delta)$ is either the trivial map
\[
\varnothing \to \bP(\sfP)(\gamma,\delta)
\]
or the identity map on $\bP(\sfP)(\gamma,\delta)$.  We call the map $\partial\Delta_\gamma \hookrightarrow \Delta_\gamma$ the \textbf{$\gamma$-boundary inclusion} and the maps $\Lambda_\gamma \hookrightarrow \partial\Delta_\gamma$ and $\Lambda_\gamma \hookrightarrow \Delta_\gamma$ the \textbf{$\gamma$-horn inclusions}.

Below is a $\sfP$-propertopic analogue of the diagrammatic description \eqref{Kanfibration} of a Kan fibration.

\begin{proposition}
\label{fibrationdiagram}
Let
\[
p \colon X \to X'
\]
be a map of $\sfP$-propertopic sets. Then $p$ is a $\sfP$-propertopic fibration if and only if for every $n$-dimensional $\sfP$-propertope $\gamma$ with $n \geq 1$, every solid-arrow commutative diagram
\[
\SelectTips{cm}{10}
\xymatrix{
\Lambda_\gamma \ar[r] \ar@{^{(}->}[d]_-{i} & X \ar[d]^-{p} \\
\Delta_\gamma \ar[r] \ar@{.>}[ur]^-{\theta} & X'
}
\]
in $\propset$ admits a dotted-arrow lift
\[
\theta \colon \Delta_\gamma \to X
\]
that makes the two resulting triangles commute.
\end{proposition}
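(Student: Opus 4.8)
The plan is to deduce the equivalence from the two Yoneda-type bijections already in hand, translating the diagrammatic lifting property into the horn-filling property of Definition~\ref{propertopicfibration}. First I would unpack the solid-arrow square. By Theorem~\ref{standardboundarymap} a map $\Lambda_\gamma \to X$ is the same datum as a $\gamma$-horn $(y_1, \ldots, y_r) \xrightarrow{?} ?$ in $X$, and by Proposition~\ref{gammacellsmaps} a map $\Delta_\gamma \to X'$ is the same datum as a $\gamma$-cell $x' \in X'(\gamma)$. The two composites around the square, $\Lambda_\gamma \to X \xrightarrow{p} X'$ and $\Lambda_\gamma \xrightarrow{i} \Delta_\gamma \to X'$, both correspond under Theorem~\ref{standardboundarymap} to $\gamma$-horns in $X'$: the first to $\left(p(\alpha_1)(y_1), \ldots, p(\alpha_r)(y_r)\right)$, and the second to the horn $(y_1', \ldots, y_r')$ built from the in-faces $y_i' = X'(f_i)(x')$ of $x'$. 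Hence commutativity of the outer square is precisely the compatibility condition $p(\alpha_i)(y_i) = y_i'$ for $1 \leq i \leq r$ demanded in Definition~\ref{propertopicfibration}.

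Next I would read off the dotted arrow. A lift $\theta \colon \Delta_\gamma \to X$ corresponds by Proposition~\ref{gammacellsmaps} to a $\gamma$-cell $x \in X(\gamma)$. Commutativity of the lower triangle says exactly $p(\gamma)(x) = x'$, that is, $x$ is a lift of $x'$; commutativity of the upper triangle says that the restriction of $x$ along $i$ recovers the horn $(y_1, \ldots, y_r)$, that is, $X(f_i)(x) = y_i$ for all $i$, so that $x$ is a filling of the $\gamma$-horn. Assembling these dictionary entries yields the biconditional: a dotted lift exists for every such square if and only if every $\gamma$-horn in $X$ carrying a compatible $\gamma$-cell in $X'$ admits a filling lifting that cell, which is verbatim the horn-filling property.

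The one point that requires care is the identification, under the bijections, of restriction along the horn inclusion $i \colon \Lambda_\gamma \hookrightarrow \Delta_\gamma$ with the in-face operations. I would settle it by a direct naturality computation: under Proposition~\ref{gammacellsmaps} the map $\theta$ corresponds to $x = \theta(\gamma)(1_\gamma)$, and the $i$th entry of the horn corresponding to $\theta \circ i$ is $(\theta \circ i)(\alpha_i)(f_i) = \theta(\alpha_i)(f_i) = X(f_i)\left(\theta(\gamma)(1_\gamma)\right) = X(f_i)(x)$, exactly the $i$th in-face of $x$. The same computation, applied to the composite with $p$ and to the restriction of $x'$, supplies the identifications used in the first paragraph. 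This bookkeeping is the main (and essentially the only) obstacle; once it is in place, the remainder is a formal diagram chase.
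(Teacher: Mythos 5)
Your proposal is correct and follows exactly the paper's approach: the paper proves this proposition by observing it is a restatement of Definition~\ref{propertopicfibration} via the bijections of Proposition~\ref{gammacellsmaps} and Theorem~\ref{standardboundarymap}, which is precisely the dictionary you spell out. Your extra naturality computation identifying restriction along the horn inclusion with the in-face operations is a useful elaboration of what the paper leaves implicit.
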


\begin{proof}
This is a restatement of Definition ~\ref{propertopicfibration} using Proposition ~\ref{gammacellsmaps} and Theorem ~\ref{standardboundarymap}.
\end{proof}

There is a terminal object in the category of $\sfP$-propertopic sets.  Indeed, the object $* \in \propset$ defined as
\begin{equation}
\label{terminalpropset}
*(\gamma) = \{*\}
\end{equation}
for every $\sfP$-propertope $\gamma$ is the terminal object.  Likewise, in the $\bk$-linear setting, the terminal $\sfP$-propertopic $\bk$-module $*$ has
\[
*(\gamma) = 0
\]
for every $\sfP$-propertope $\gamma$.

\begin{definition}
\label{def:fibrantpropset}
A $\sfP$-propertopic set (or $\bk$-module) $X$ is said to be \textbf{fibrant} if the unique map
\[
X \to *
\]
to the terminal object is a $\sfP$-propertopic fibration.
\end{definition}

A special case of Proposition ~\ref{fibrationdiagram} (with $X' = *$) gives the following descriptions of a fibrant $\sfP$-propertopic set.

\begin{corollary}
\label{fibrantpropset}
Let $X$ be a $\sfP$-propertopic set.  Then the following statements are equivalent.
\begin{enumerate}
\item
The $\sfP$-propertopic set $X$ is fibrant.
\item
For each $n \geq 1$, every $n$-dimensional horn in $X$ admits a filling.
\item
For each $n$-dimensional $\sfP$-propertope $\gamma$ with $n \geq 1$, every solid-arrow diagram in $\propset$
\[
\SelectTips{cm}{10}
\xymatrix{
\Lambda_\gamma \ar[r] \ar@{^{(}->}[d]_-{i} & X \\
\Delta_\gamma \ar@{.>}[ur]_-{\theta} &
}
\]
admits a dotted-arrow lift
\[
\theta \colon \Delta_\gamma \to X
\]
that makes the triangle commute.
\end{enumerate}
\end{corollary}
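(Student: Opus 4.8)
The plan is to deduce all three equivalences from results already established, treating the corollary as a bookkeeping translation rather than a fresh argument. The backbone is Proposition \ref{fibrationdiagram}, specialized to the unique map $p \colon X \to *$ to the terminal object \eqref{terminalpropset}, together with the two Yoneda-style bijections: Proposition \ref{gammacellsmaps}, which identifies $\gamma$-cells with maps $\Delta_\gamma \to X$, and Theorem \ref{standardboundarymap}, which identifies $\gamma$-horns with maps $\Lambda_\gamma \to X$. I would prove (1) $\Leftrightarrow$ (3) first, then (2) $\Leftrightarrow$ (3).

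For (1) $\Leftrightarrow$ (3): by Definition \ref{def:fibrantpropset}, $X$ is fibrant exactly when $p \colon X \to *$ is a fibration, and by Proposition \ref{fibrationdiagram} this holds iff every solid-arrow square over $p$, indexed by $n$-dimensional propertopes $\gamma$ with $n \geq 1$, admits a lift. I would observe that because $*$ is terminal, both the bottom arrow $\Delta_\gamma \to *$ and the map $p$ are the unique maps to $*$; hence the outer square commutes automatically and the lower triangle of the required lift commutes for free. What remains of the lifting condition is precisely the existence of $\theta \colon \Delta_\gamma \to X$ extending the given $\Lambda_\gamma \to X$ along $i$, that is, statement (3). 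So (1) and (3) are literally the $X' = *$ instance of Proposition \ref{fibrationdiagram}.

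For (2) $\Leftrightarrow$ (3): I would pass back and forth through the bijections. Given an $n$-dimensional $\gamma$, a map $\Lambda_\gamma \to X$ corresponds by Theorem \ref{standardboundarymap} to a $\gamma$-horn $(y_1, \ldots, y_r)$, and a map $\Delta_\gamma \to X$ corresponds by Proposition \ref{gammacellsmaps} to a $\gamma$-cell $x$. The content of the triangle commuting is that restricting $x$ along $i \colon \Lambda_\gamma \hookrightarrow \Delta_\gamma$ returns the prescribed horn. The crucial computation is to identify this restriction with the passage ``$x \mapsto$ its tuple of in-faces'': for each in-face map $f_i \colon \gamma \to \alpha_i$ one has $f_i \in \Lambda_\gamma(\alpha_i)$, by taking the identity of $\alpha_i$ in Definition \ref{standardhorn}, and unwinding the Yoneda correspondence gives $\eta_x(\alpha_i)(f_i) = X(f_i)(x) = y_i$, the $i$th in-face of $x$ from \eqref{Xinoutface}. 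Therefore the triangle commutes iff $x$ fills the horn, and statement (3) is exactly statement (2) transported through the two bijections.

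The only genuinely nontrivial point, and the main obstacle, is this last identification: that under the Yoneda bijections the restriction operator $i^* \colon \propset(\Delta_\gamma, X) \to \propset(\Lambda_\gamma, X)$ corresponds to the operator sending a $\gamma$-cell to its underlying $\gamma$-horn of in-faces. This requires tracking the horn inclusion $i$, which is entrywise either trivial or an identity by \eqref{standardinclusion}, through both correspondences and invoking the explicit formula for the horn attached to a map $\Lambda_\gamma \to X$ from the horn analogue of the proof of Theorem \ref{standardboundarymap}. Everything else is formal: the reductions for (1) $\Leftrightarrow$ (3) are immediate from terminality, and no new coherence checks are needed beyond those already verified for $\bP(\sfP)$.
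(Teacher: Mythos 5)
Your proposal is correct and follows essentially the same route as the paper, which simply observes that the corollary is the special case $X' = *$ of Proposition~\ref{fibrationdiagram}; your extra unwinding of the Yoneda correspondences for (2) $\Leftrightarrow$ (3) just makes explicit what is already contained in the proof of that proposition (itself a restatement of Definition~\ref{propertopicfibration} via Proposition~\ref{gammacellsmaps} and Theorem~\ref{standardboundarymap}).
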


\section{Higher dimensional $\sfP$-algebras}
\label{sec:weakalgebra}

Fix a unital $\frakC$-colored PROP $\sfP$ over $\set$.

In \S\ref{subsec:weakndef} we define higher dimensional $\sfP$-algebras, called \emph{weak-$n$ $\sfP$-algebras}, for $0 \leq n \leq \infty$.  They are defined as $\sfP$-propertopic sets with certain lifting properties with respect to $\gamma$-horns and $\gamma$-boundaries.  The definition of a weak-$n$ $\sfP$-algebra is somewhat similar to that of a space with trivial homotopy groups in dimensions $\geq n + 1$, i.e., a homotopy $n$-type.

One should think of a weak-$n$ $\sfP$-algebra as an \emph{$n$-time categorified $\sfP$-algebra}.  There are two extreme cases.  When $n = \infty$, we have weak-$\infty$ $\sfP$-algebras, which we prefer to call \emph{weak-$\omega$ $\sfP$-algebras}.  From its very definition, a weak-$\omega$ $\sfP$-algebra is exactly a fibrant $\sfP$-propertopic set, which is analogous to a Kan complex in the category of simplicial sets.  When $n = 0$, we observe in \S\ref{subsec:weak0} that the category of weak-$0$ $\sfP$-algebras is equivalent to that of $\sfP$-algebras (Theorem ~\ref{thm:weak0algebra}).

For $1 \leq n < \infty$, the study of a weak-$n$ $\sfP$-algebra $X$ splits into two steps (Corollary ~\ref{cor1:P'}).  First, in \S\ref{subsec:EM} we look at an object $p(X)$ \eqref{pX}, which essentially consists of the $k$-cells in $X$ for $k \geq n$ and the face maps in those dimensions.  As we will explain below (Definition ~\ref{connectedweakn}), such an object $p(X)$ is somewhat analogous to an Eilenberg-Mac Lane space $K(\pi,n)$.  We call $p(X)$ an \emph{Eilenberg-Mac Lane weak-$n$ $\sfP$-algebra}.  An analogue of Theorem ~\ref{thm:weak0algebra} for higher values of $n$ (Theorem ~\ref{thm:EMweakn}) tells us that there is an equivalence between the categories of Eilenberg-Mac Lane weak-$n$ $\sfP$-algebras and $\sfP^{n+}$-algebras.

Second, we look at the $k$-cells in a weak-$n$ $\sfP$-algebra $X$ for $k \leq n+1$.  Combining the information from the two steps, we give a categorical description of 
weak-$n$ $\sfP$-algebras in \S\ref{subsec:catweakn}.  Briefly, in a weak-$n$ $\sfP$-algebra $X$ $(1 \leq n \leq \infty)$, composition of $k$-cells via $(k+1)$-cells for $0 \leq k \leq n - 1$ is, in general, not a function but a \emph{multi-valued function}, satisfying some consistency conditions.  Composition is an honest operation only for the \emph{top cells}, i.e., the $n$-cells when $n < \infty$.  These top dimensional compositions give the top cells the structure of a $\sfP^{n+}$-algebra.

As we will explain in more details below (p.\pageref{postnikovstragegy}), this two-step strategy for understanding weak-$n$ $\sfP$-algebras is analogous to the Postnikov tower of a homotopy $n$-type $Y$ in homotopy theory.  The Eilenberg-Mac Lane weak-$n$ $\sfP$-algebra $p(X)$ plays the role of the fiber $K(\pi_n(Y),n)$ at the $n$th stage of the Postnikov tower of $Y$.  The restriction of $X$ to $k$-dimensional cells for $k \leq n+1$ is analogous to the $(n-1)$st Postnikov approximation $Y_{n-1}$ of $Y$.


One feature of our theory of weak-$n$ $\sfP$-algebras is that coherence laws are treated as higher (multi-valued) compositions.  In fact, the coherence laws for the $k$-cells for $k < n$ are their compositions, which are governed by the $(k+1)$-cells.  When $n < \infty$, the coherence laws for the top cells ($= n$-cells) are encoded in the $\sfP^{n+}$-algebra structure on these cells.  To reiterate our point:
\begin{quote}
We make no difference between coherence laws and compositions.
\end{quote}
This feature of our theory of weak-$n$ $\sfP$-algebras is similar to Leinster's definition \cite[Chapter 9]{leinster} of a weak $n$-category, in which coherence laws and compositions are also treated as the same concept.

For $n < \infty$, we observe in \S\ref{subsec;underlyingcat} that a weak-$n$ $\sfP$-algebra $X$ has an underlying category $\Xtilde$ (Theorem ~\ref{thm:weakncategory}).  The objects in $\Xtilde$ are the $(n-1)$-cells in $X$, and the morphisms in $\Xtilde$ are the corresponding fillings of $n$-dimensional boundaries.

If $\varphi \colon \sfP \to \sfQ$ is a morphism of $\frakC$-colored PROPs, we observe in \S\ref{subsec:pullback} that there is a well-behaved induced functor
\[
\Phi \colon \bP(\sfP) \to \bP(\sfQ)
\]
from $\sfP$-propertopes to $\sfQ$-propertopes (Theorem ~\ref{varphipropertope}).  Moreover, the pullback functor
\[
\varphi^* \colon \propsetQ \to \propset
\]
restricts to a pullback functor
\[
\varphi^* \colon \algn(\sfQ) \to \algn(\sfP)
\]
from weak-$n$ $\sfQ$-algebras to weak-$n$ $\sfP$-algebras (Corollary ~\ref{cor2:pullback}).

\subsection{Definitions of weak-$n$ $\sfP$-algebras}
\label{subsec:weakndef}

Our weak-$n$ $\sfP$-algebras are $\sfP$-propertopic analogues of homotopy $n$-types in spaces or simplicial sets.  Recall that a $\sfP$-propertopic analogue of the $k$-dimensional disk $D^k$ is the standard $\sfP$-propertopic set $\Delta_\gamma$ of shape $\gamma$, where $\gamma$ is a $k$-dimensional $\sfP$-propertope \eqref{Deltagamma}.  An analogue of the sphere $S^{k-1}$ is the standard $\gamma$-boundary $\partial\Delta_\gamma$ (Definition ~\ref{standardboundary}).  An analogue of the boundary inclusion
\[
S^{k-1} \hookrightarrow D^k
\]
is the $\gamma$-boundary inclusion
\[
i \colon \partial\Delta_\gamma \to \Delta_\gamma
\]
defined in \eqref{standardinclusion}.  Using this analogy, we now make the following definitions.

\begin{definition}
\label{def:weakPalgebra}
Let $n \geq 0$ be an integer or $\infty$.  A \textbf{weak-$n$ $\sfP$-algebra} is defined as a $\sfP$-propertopic set $X \in \propset$ that satisfies the following three conditions:
\begin{enumerate}
\item
For $1 \leq k \leq n$, every $k$-dimensional horn in $X$ admits a filling  (Definition ~\ref{gammahorndef}).
\item
Every $(n+1)$-dimensional horn in $X$ admits a \textbf{unique} filling.
\item
For $N \geq n + 2$, every $N$-dimensional boundary in $X$ admits a \textbf{unique} filling.
\end{enumerate}
A \textbf{morphism} of weak-$n$ $\sfP$-algebras is a map of the underlying $\sfP$-propertopic sets.  The category of weak-$n$ $\sfP$-algebras is denoted by $\algn(\sfP)$.
\end{definition}

We also call a weak-$\infty$ $\sfP$-algebra a \textbf{weak-$\omega$ $\sfP$-algebra}, since in higher category theory the term \emph{$\omega$-category} is often used for $\infty$-category.

\begin{remark}
One might wonder why a morphism of weak-$n$ $\sfP$-algebras is not required to preserve the (unique) fillings in weak-$n$ $\sfP$-algebras.  In fact, such a morphism does preserve the (unique) fillings, which is a consequence of the definition of a map of $\sfP$-propertopic sets.  Recall that a map $F$ of $\sfP$-propertopic sets is compatible with the face maps (Proposition ~\ref{propertopicset}).  It follows that $F$ must send a filling of a horn/boundary to a filling of the corresponding horn/boundary in the image of $F$.  Therefore, in Definition ~\ref{def:weakPalgebra}, a morphism of weak-$n$ $\sfP$-algebras necessarily preserves the (unique) fillings that weak-$n$ $\sfP$-algebras are supposed to have.  This is why we do not have to put this condition in the definition of a morphism of weak-$n$ $\sfP$-algebras.
\end{remark}

\begin{remark}
If $\sfP$ is a unital $\frakC$-colored PROP over $\bk$-modules, then we can also define weak-$n$ $\sfP$-algebras as above.  Indeed, in this case, a \textbf{weak-$n$ $\sfP$-algebra} is defined as a $\sfP$-propertopic $\bk$-module $X \in \propmodule$ that satisfies the three conditions in Definition ~\ref{def:weakPalgebra}.
\end{remark}

Just as a $\sfP$-propertopic fibration can be described in terms of diagrams (Proposition ~\ref{fibrationdiagram}), weak-$n$ $\sfP$-algebras can also be described diagrammatically.

\begin{lemma}
A $\sfP$-propertopic set $X \in \propset$ is a weak-$n$ $\sfP$-algebra if and only if the following two conditions hold:
\begin{enumerate}
\item
For each $k$-dimensional $\sfP$-propertope $\gamma$ with $1 \leq k \leq n+1$, every solid-arrow diagram in $\propset$
\[
\SelectTips{cm}{10}
\xymatrix{
\Lambda_\gamma \ar[r] \ar@{^{(}->}[d]_-{i} & X \\
\Delta_\gamma \ar@{.>}[ur]_-{\theta} &
}
\]
admits a dotted-arrow lift
\[
\theta \colon \Delta_\gamma \to X
\]
that makes the triangle commute.  Moreover, the lift $\theta$ is unique if $\gamma$ has dimension $n+1$.
\item
For each $N$-dimensional $\sfP$-propertope $\gamma$ with $N \geq n+2$, every solid-arrow diagram in $\propset$
\[
\SelectTips{cm}{10}
\xymatrix{
\partial\Delta_\gamma \ar[r] \ar@{^{(}->}[d]_-{i} & X \\
\Delta_\gamma \ar@{.>}[ur]_-{\theta} &
}
\]
admits a \textbf{unique} dotted-arrow lift
\[
\theta \colon \Delta_\gamma \to X
\]
that makes the triangle commute.
\end{enumerate}
\end{lemma}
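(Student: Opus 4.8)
The plan is to prove the lemma by translating Definition \ref{def:weakPalgebra} into the language of lifting diagrams, exactly as was done for $\sfP$-propertopic fibrations in Proposition \ref{fibrationdiagram}. The two ingredients are Proposition \ref{gammacellsmaps}, which identifies the set $X(\gamma)$ of $\gamma$-cells with $\propset(\Delta_\gamma, X)$, and Theorem \ref{standardboundarymap}, which identifies the sets of $\gamma$-horns and $\gamma$-boundaries in $X$ with $\propset(\Lambda_\gamma, X)$ and $\propset(\partial\Delta_\gamma, X)$ respectively. Under these bijections every filling condition in Definition \ref{def:weakPalgebra} becomes a statement about the restriction maps induced by the inclusions $\Lambda_\gamma \hookrightarrow \Delta_\gamma$ and $\partial\Delta_\gamma \hookrightarrow \Delta_\gamma$ of \eqref{standardinclusion}.

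First I would record the key naturality point. Writing $\eta_x \colon \Delta_\gamma \to X$ for the map corresponding to a $\gamma$-cell $x$ under Proposition \ref{gammacellsmaps}, the Yoneda identity $\eta_x(\alpha)(h) = X(h)(x)$ for a face map $h \colon \gamma \to \alpha$ shows that the image of $\eta_x$ under the restriction $\propset(\Delta_\gamma, X) \to \propset(\Lambda_\gamma, X)$ corresponds, via Theorem \ref{standardboundarymap}, to the $\gamma$-horn $(y_1, \ldots, y_r)$ whose $i$th entry is the $i$th in-face $y_i = X(f_i)(x)$ of $x$. The same computation with the out-face maps $g_j$ shows that restriction along $\partial\Delta_\gamma \hookrightarrow \Delta_\gamma$ corresponds to sending $x$ to its $\gamma$-boundary. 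Consequently a dotted-arrow lift $\theta$ in either lifting triangle is exactly a $\gamma$-cell filling the given $\gamma$-horn or $\gamma$-boundary, and two lifts coincide if and only if the corresponding fillings do.

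With this dictionary in hand the equivalence is a matter of matching ranges of dimensions. Existence of a lift in the horn triangle is surjectivity of the restriction onto $\propset(\Lambda_\gamma, X)$, i.e.\ the statement that every $\gamma$-horn admits a filling; uniqueness of the lift is injectivity, i.e.\ uniqueness of the filling. Thus condition (1) of the lemma for $1 \le k \le n$ is precisely the existence clause (1) of Definition \ref{def:weakPalgebra}, and for $k = n+1$ it is the unique-filling clause (2). Likewise, for the boundary triangle, existence together with uniqueness of the lift is the unique-filling-of-boundaries clause (3) of Definition \ref{def:weakPalgebra}, giving condition (2) of the lemma for $N \ge n+2$. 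Both directions of the iff are then immediate, since each clause has been shown to be logically equivalent to its diagrammatic counterpart.

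I expect no genuine obstacle here: the statement is a reformulation rather than a substantive theorem, so the work is entirely bookkeeping. The only point that requires care is the naturality identification in the second paragraph, namely verifying that restriction along the standard inclusions really does correspond to the in-face and boundary operations on cells, together with keeping the three dimension ranges ($1 \le k \le n$, $k = n+1$, and $N \ge n+2$) aligned between the two formulations.
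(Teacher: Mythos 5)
Your proposal is correct and follows exactly the paper's own route: the paper proves this lemma as a direct restatement of Definition \ref{def:weakPalgebra} via Proposition \ref{gammacellsmaps}, Theorem \ref{standardboundarymap}, and the inclusions of \eqref{standardinclusion}, which is precisely the dictionary you set up. Your naturality check and the alignment of the three dimension ranges supply the bookkeeping the paper leaves implicit.
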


\begin{proof}
This is a restatement of Definition ~\ref{def:weakPalgebra} using Proposition ~\ref{gammacellsmaps}, Theorem ~\ref{standardboundarymap}, the $\gamma$-horn inclusion, and the $\gamma$-boundary inclusion \eqref{standardinclusion}.
\end{proof}

We make the following simple observations about the two extreme cases.

\begin{proposition}
\label{weak0algebra}
Let $X \in \propset$ be a $\sfP$-propertopic set.  Then:
\begin{enumerate}
\item
The object $X$ is a weak-$0$ $\sfP$-algebra if and only if its $1$-dimensional horns and $N$-dimensional boundaries (for $N \geq 2$) have unique fillings.
\item
The object $X$ is a weak-$\omega$ $\sfP$-algebra if and only if it is fibrant.
\end{enumerate}
\end{proposition}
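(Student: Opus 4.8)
The plan is to prove both statements by specializing Definition \ref{def:weakPalgebra} to the two extreme values $n = 0$ and $n = \infty$ and observing that, in each case, the list of three defining conditions collapses to the asserted one.

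For part (1), I would set $n = 0$ in Definition \ref{def:weakPalgebra}. The first condition there asks that, for every $k$ in the range $1 \leq k \leq n = 0$, each $k$-dimensional horn in $X$ admit a filling; since the index set $\{k : 1 \leq k \leq 0\}$ is empty, this condition is vacuously satisfied. The second condition, concerning $(n+1)$-dimensional horns, becomes precisely the requirement that every $1$-dimensional horn admit a unique filling, and the third condition, concerning $N$-dimensional boundaries with $N \geq n + 2 = 2$, becomes the requirement that every $N$-dimensional boundary with $N \geq 2$ admit a unique filling. These are exactly the two conditions in the statement, so the equivalence is immediate.

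For part (2), I would set $n = \infty$. Here the second and third conditions of Definition \ref{def:weakPalgebra} refer to horns of dimension $n + 1$ and boundaries of dimension $N \geq n + 2$; since there are no $\sfP$-propertopes whose dimension exceeds every finite integer, there are no horns or boundaries of these dimensions, and so both conditions are vacuously satisfied. What remains is the first condition: for every $k$ with $1 \leq k \leq \infty$, that is, for every $k \geq 1$, each $k$-dimensional horn in $X$ admits a filling. By the equivalence of statements (1) and (2) in Corollary \ref{fibrantpropset}, this holds if and only if $X$ is fibrant. Composing these two equivalences yields the characterization of weak-$\omega$ $\sfP$-algebras as fibrant $\sfP$-propertopic sets.

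Since each step is a direct unwinding of Definition \ref{def:weakPalgebra} together with a single appeal to the already-established Corollary \ref{fibrantpropset}, I do not anticipate any serious obstacle. The only point requiring care is the reading of ``dimension $n+1$'' and ``dimension $N \geq n+2$'' as empty ranges when $n = \infty$, which is what makes conditions (2) and (3) vacuous in the weak-$\omega$ case; I would state this convention explicitly so that the collapse of the defining conditions is unambiguous.
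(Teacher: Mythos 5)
Your proposal is correct and matches the paper's own argument: the paper likewise notes that part (1) follows immediately from Definition~\ref{def:weakPalgebra} with $n=0$, and that for $n=\infty$ only the horn-filling condition survives, which is equivalent to fibrancy by Corollary~\ref{fibrantpropset}. Your extra care in spelling out the vacuous index ranges is a harmless elaboration of the same reasoning.
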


\begin{proof}
The first statement follows immediately from the definition.  For the second statement, note that when $n = \infty$ in Definition ~\ref{def:weakPalgebra}, only condition (1) applies, which is equivalent to $X$ being fibrant by Corollary ~\ref{fibrantpropset}.
\end{proof}

\subsection{Weak-$0$ $\sfP$-algebras as $\sfP$-algebras}
\label{subsec:weak0}

More can be said about weak-$0$ $\sfP$-algebras.  From Definition ~\ref{def:weakPalgebra} or Proposition ~\ref{weak0algebra}, it is not entirely obvious that weak-$n$ $\sfP$-algebras have anything to do with $\sfP$-algebras.  To justify this terminology and the claim that weak-$n$ $\sfP$-algebras should be thought of as $n$-time categorified $\sfP$-algebras, we first show that weak-$0$ $\sfP$-algebras are equivalent to $\sfP$-algebras.

\begin{theorem}
\label{thm:weak0algebra}
There exist functors
\begin{equation}
\label{weak0equivalence}
\phi \colon \alg^0(\sfP) \rightleftarrows \alg(\sfP) \colon \psi
\end{equation}
that give an equivalence between the categories $\alg^0(\sfP)$ of weak-$0$ $\sfP$-algebras and $\alg(\sfP)$ of $\sfP$-algebras.
\end{theorem}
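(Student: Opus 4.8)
The plan is to exhibit the two functors explicitly and show they are mutually inverse, with the passage between cells and operations being exactly the unique horn- and boundary-filling that defines a weak-$0$ $\sfP$-algebra. First I would construct $\phi$. Given a weak-$0$ $\sfP$-algebra $X$, I set $A_c = X(c)$ for each color $c \in \frakC$; this $\frakC$-graded set of $0$-cells is the candidate carrier. For a $1$-dimensional $\sfP$-propertope $\alpha \in \sfP\binom{\ud}{\uc}$ with $\uc = (c_1,\ldots,c_n)$ and $\ud = (d_1,\ldots,d_m)$, a tuple $(y_1,\ldots,y_n) \in A_{\uc}$ is precisely an $\alpha$-horn in $X$. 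By Definition ~\ref{def:weakPalgebra} this horn has a unique filling $x \in X(\alpha)$, and reading off the out-faces of $x$ as in \eqref{Xinoutface} yields an element of $A_{\ud}$; declaring $\lambda(\alpha)(y_1,\ldots,y_n)$ to be this tuple defines the candidate structure map \eqref{propstructuremap}, one for each $\alpha$.

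The core of this direction is to check that the $\lambda(\alpha)$ satisfy the $\sfP$-algebra axioms, and here each axiom is matched with one of the distinguished $2$-dimensional $\sfP$-propertopes together with its consistency condition: bi-equivariance with the twisted unit $\sigma\ttone_\alpha\tau$ \eqref{1alphatwisted} and \eqref{econsistency}; vertical associativity with $G_{\alpha\circ\beta}$ \eqref{Galphabeta} and \eqref{vconsistency}; compatibility with horizontal composition (and the interchange rule) with $G_{\alpha\otimes\beta}$ \eqref{Galphatensorbeta} and \eqref{hconsistency1}--\eqref{hconsistency2}; and the unit law with $\ttone_{\alpha_1}\otimes\cdots\otimes\ttone_{\alpha_m}$ and \eqref{uconsistency}. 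In each case I would first use unique filling of the relevant $1$-dimensional horns to name the cells that will occur as faces, then invoke unique filling of the corresponding $2$-dimensional boundary (legitimate since $2 \geq n+2$ when $n=0$) to obtain a single $2$-cell whose faces are those named cells, and finally apply $X$ to the commuting consistency diagram. Since $X$ is a functor on $\bP(\sfP)$ (Proposition ~\ref{propertopicset}), the two legs of each consistency square agree on this $2$-cell, and unwinding the face maps turns this agreement into exactly the desired identity among the $\lambda(\alpha)$. A morphism of weak-$0$ $\sfP$-algebras, being a map of propertopic sets, restricts to a map of $0$-cells commuting with every face map, hence to a morphism of $\sfP$-algebras; this makes $\phi$ a functor.

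Next I would construct $\psi$. Given a $\sfP$-algebra $A$ with structure maps $\lambda$, I define a $\sfP$-propertopic set $X$ by $X(c) = A_c$ and, for a $1$-dimensional $\alpha \in \sfP\binom{\ud}{\uc}$, by $X(\alpha) = A_{\uc}$, taking the $i$th in-face map to be the projection to $A_{c_i}$ and the $j$th out-face map to be the $j$th component of $\lambda(\alpha)$. For an $n$-dimensional propertope with $n \geq 2$ I would force unique boundary filling by defining $X(\gamma)$, inductively, to be the set of compatible $\gamma$-boundaries assembled from the already-constructed lower-dimensional cells (equivalently, via Theorem ~\ref{standardboundarymap}, the systems represented by maps out of $\partial\Delta_\gamma$), with face maps induced from those of $\partial\Delta_\gamma$. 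Under this definition the consistency diagrams \eqref{hconsistency1}--\eqref{econsistency} that a functor on $\bP(\sfP)$ must respect translate precisely into the associativity, bi-equivariance, interchange and unit relations satisfied by $\lambda$, so the PROP axioms for $\sfP$ guarantee $X$ is well defined; the $1$-horns fill uniquely by construction of $X(\alpha)$ and the $N$-boundaries for $N \geq 2$ fill uniquely by construction of $X(\gamma)$. Naturality in $A$ is immediate, giving the functor $\psi$.

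Finally I would verify $\phi\psi = \mathrm{id}$ and $\psi\phi \cong \mathrm{id}$: starting from $A$, the $0$-cells of $\psi(A)$ are $A_c$ and the out-faces of the unique filling over $\alpha$ recompute $\lambda(\alpha)$, so $\phi\psi(A)=A$ on the nose; starting from $X$, the filling properties identify $X(\alpha)$ with $A_{\uc}$ via in-faces and each higher $X(\gamma)$ with its boundary set, which is exactly what $\psi\phi$ produces, naturally in $X$. I expect the main obstacle to be the construction of $\psi$ on propertopes of dimension $\geq 2$: one must confirm that defining $X(\gamma)$ as compatible boundaries is simultaneously consistent with all face maps and the four consistency conditions, and that the resulting assignment has genuinely \emph{unique} boundary fillings, not merely existing ones. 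This is where the full force of the $\sfP$-PROP axioms is used, in exact correspondence with the consistency diagrams that the higher face maps of $\bP(\sfP)$ impose.
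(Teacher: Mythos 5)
Your construction of $\phi$ coincides with the paper's: the carrier is the $\frakC$-graded set of $0$-cells, the structure maps come from unique fillings of $1$-dimensional horns, and each $\sfP$-algebra axiom is extracted by filling the $2$-dimensional boundary over the appropriate propertope ($G_{\alpha\otimes\beta}$, $G_{\alpha\circ\beta}$, $\ttone_{\alpha_1}\otimes\cdots\otimes\ttone_{\alpha_m}$, $\sigma\ttone_\alpha\tau$) and applying the corresponding consistency square --- this is exactly what Lemmas~\ref{lem1:weak0algebra} and~\ref{lem2:weak0algebra} do (for morphisms the paper additionally uses uniqueness of horn fillings to identify $F(\gamma)(x)$ with the filling $x'$ in the target, a step your one-line argument glosses over but which goes through). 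The genuine divergence is in $\psi$ at dimensions $\geq 2$, and it is precisely the point you flag as the main obstacle without resolving. The paper does not define $\psi(A)(\gamma)$ as a set of ``compatible boundaries''; it sets $X(\gamma)$ equal to the \emph{full} product $X(\alpha_1)\times\cdots\times X(\alpha_r)\times X(\beta_1)\times\cdots\times X(\beta_s)$ with all face maps projections, observes that this is tautologically a functor on the category $\bP(\sfP)'$ of Remark~\ref{remark:PP'} (where no consistency diagrams have to commute, so there is nothing to check), and then defines $\psi(A)$ as the right Kan extension of this diagram along the quotient $\pi\colon\bP(\sfP)'\to\bP(\sfP)$. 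The Kan extension is exactly the device that imposes the compatibility you would otherwise have to build in by hand, and its universal property is what the paper invokes both for functoriality on $\bP(\sfP)$ and for uniqueness of fillings. So your plan for $\psi$ has the right idea (the limit it computes is your set of compatible boundaries) but stops where the work begins: to finish it your way you would need to make the compatibility relations explicit, check inductively that the evaluation maps descend to the quotient category (i.e., respect all four consistency conditions), and then verify unique filling; adopting the paper's Kan-extension formulation accomplishes all three at once.
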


\begin{proof}
First we construct the functor $\phi$.  Let $X \in \propset$ be a weak-$0$ $\sfP$-algebra.  There is a set $X(\gamma)$ for each $n$-dimensional $\sfP$-propertope $\gamma$ for $n \geq 0$.  In particular, when $n = 0$, we have a set
\[
X_c = X(c)
\]
for each $c \in \frakC$ ($=$ the set of $0$-dimensional $\sfP$-propertopes).  To define the $\sfP$-algebra structure map on these sets, pick elements $y_i \in X_{c_i}$ for $1 \leq i \leq n$ and a $1$-dimensional $\sfP$-propertope
\begin{equation}
\label{gamma1propertope}
\gamma \in \sfP\binom{d_1, \ldots , d_m}{c_1, \ldots , c_n} = \sfP\binom{\ud}{\uc}.
\end{equation}
Then we have a $1$-dimensional $\gamma$-horn
\[
(y_1, \ldots , y_n)\xrightarrow{?} ?
\]
in $X$.  Since $X$ is a weak-$0$ $\sfP$-algebra, there exists a unique filling
\[
x \in X(\gamma)
\]
of this $1$-dimensional $\gamma$-horn (Proposition ~\ref{weak0algebra}). In particular, the $m$ out-faces \eqref{Xinoutface} of $x$ give an element
\[
\left(X(g_1)(x), \ldots , X(g_m)(x)\right) = (z_1, \ldots , z_m) \in X_{d_1} \times \cdots \times X_{d_m}.
\]
Since the $\gamma$-cell $x$ is uniquely determined by $\gamma$ and the $y_i$ $(1 \leq i \leq n)$, we thus have an operation
\begin{equation}
\label{weak0lambda}
\lambda \colon \sfP\binom{d_1, \ldots , d_m}{c_1, \ldots , c_n} \times X_{c_1} \times \cdots \times X_{c_n} \to X_{d_1} \times \cdots \times X_{d_m}
\end{equation}
with
\[
\lambda(\gamma, y_1, \ldots , y_n) = (z_1, \ldots , z_m)
\]
as above.  We claim that the operations $\lambda$ \eqref{weak0lambda} give the $\frakC$-graded set $\{X_c \colon c \in \frakC\}$ the structure of a $\sfP$-algebra. This will be proved in Lemma ~\ref{lem1:weak0algebra} below.  We then define
\[
\phi(X) = \{X_c \colon c \in \frakC\}
\]
with the $\sfP$-algebra structure maps $\lambda$.

We still need to specify what $\phi$ does to maps.  Let
\[
F \colon X \to X' \in \propset
\]
be a map of weak-$0$ $\sfP$-algebras.  For each $0$-dimensional $\sfP$-propertope $c \in \frakC$, we have a map
\begin{equation}
\label{Fcweak0}
F_c = F(c) \colon X(c) = X_c \to X'_c = X'(c).
\end{equation}
It will be proved in Lemma ~\ref{lem2:weak0algebra} below that
\[
\phi(F) = \{F_c \colon X_c \to X'_c \colon c \in \frakC\}
\]
is a map of $\sfP$-algebras from $\phi(X) = \{X_c\}$ to $\phi(X') = \{X'_c\}$.  The naturality of $\phi$ is clear from its definition.  Modulo Lemmas ~\ref{lem1:weak0algebra} and ~\ref{lem2:weak0algebra}, we have defined the functor
\[
\phi \colon \alg^0(\sfP) \to \alg(\sfP)
\]
in \eqref{weak0equivalence}.

Next we construct the functor $\psi$ in \eqref{weak0equivalence}.  Let $X = \{X_c \colon c \in \frakC\}$ be a $\sfP$-algebra.  There are $\sfP$-algebra structure maps $\lambda$ as in \eqref{weak0lambda}.  For any $0$-dimensional $\sfP$-propertope $c \in \frakC$, we set
\[
X(c) = X_c.
\]
Now let $\gamma \in \sfP\binom{\ud}{\uc}$ be a $1$-dimensional $\sfP$-propertope as in \eqref{gamma1propertope}.  We set
\begin{equation}
\label{Xgammadim1}
X(\gamma) = X_{c_1} \times \cdots \times X_{c_n}.
\end{equation}
The $i$th in-face map
\begin{equation}
\label{Xgammainface}
X(f_i) \colon X(\gamma) \to X(c_i) = X_{c_i} \quad (1 \leq i \leq n)
\end{equation}
is the projection onto the $i$th factor.  To define the out-face maps from $X(\gamma)$, pick elements $y_i \in X(c_i)$ $(1 \leq i \leq n)$.  The $\sfP$-algebra structure on $X$ gives an element
\[
\lambda(\gamma, y_1, \ldots , y_n) = (z_1, \ldots , z_m) \in X_{d_1} \times \cdots \times X_{d_m}.
\]
Define the $j$th out-face map
\begin{equation}
\label{1dimoutface}
X(g_j) \colon X(\gamma) \to X(d_j) = X_{d_j}
\end{equation}
by
\[
X(g_j)(y_1, \ldots , y_n) = z_j
\]
for $1 \leq j \leq m$.

Inductively, suppose that $N \geq 2$ and that we have already defined the sets $X(\alpha)$ for all $k$-dimensional $\sfP$-propertopes for $k < N$ and all the face maps in those dimensions.  Let
\[
\gamma \in \sfP\binom{\beta_1, \ldots , \beta_s}{\alpha_1, \ldots , \alpha_r} = \sfP\binom{\ubeta}{\ualpha}
\]
be an $N$-dimensional $\sfP$-propertope.  Define the set
\begin{equation}
\label{XgammadimN}
X(\gamma) = X(\alpha_1) \times \cdots \times X(\alpha_r) \times X(\beta_1) \times \cdots \times X(\beta_s),
\end{equation}
with face-maps
\begin{equation}
\label{XgammaNface}
\begin{split}
X(f_i) &\colon X(\gamma) \to X(\alpha_i) \quad (1 \leq i \leq r),\\
X(g_j) &\colon X(\gamma) \to X(\beta_j) \quad (1 \leq j \leq s)
\end{split}
\end{equation}
the corresponding projections.  By induction we have defined a functor
\[
X \colon \bP(\sfP)' \to \set,
\]
where $\bP(\sfP)'$ is the category in Remark ~\ref{remark:PP'}.

In the solid-arrow diagram
\begin{equation}
\label{KanextensionX}
\SelectTips{cm}{10}
\xymatrix{
\bP(\sfP)' \ar[d]_-{\pi} \ar[r]^-{X} & \set \\
\bP(\sfP) \ar@{.>}[ur]_-{\psi(X)} & ,
}
\end{equation}
the right Kan extension $\psi(X)$ of $X$ along $\pi$ exists because $\bP(\sfP)'$ is a small category and $\set$ is complete \cite[p.239 Corollary 2]{maclane2}.  Here $\pi$ is the quotient functor \eqref{piP} discussed in Remark ~\ref{remark:PP'}.  This defines the $\sfP$-propertopic set $\psi(X)$.  That $\psi(X)$ is a weak-$0$ $\sfP$-algebra can be seen follows.  From the definitions \eqref{Xgammadim1} and \eqref{Xgammainface} and the universal property of Kan extensions, it follows that $1$-dimensional horns in $\psi(X)$ have unique fillings.  Likewise, it follows from \eqref{XgammadimN}, \eqref{XgammaNface}, and the universal property of Kan extensions that $N$-dimensional boundaries in $\psi(X)$ for $N \geq 2$ have unique fillings.  Thus, by Proposition ~\ref{weak0algebra} $\psi(X)$ is a weak-$0$ $\sfP$-algebra.

The above construction of $\psi(X)$ is natural.  Indeed, for each $n$-dimensional $\sfP$-propertope $\gamma$ for $n \geq 1$, one observes from \eqref{Xgammadim1} and \eqref{XgammadimN} that the set $X(\gamma)$ is a finite product of some $X_c$ for $c \in \frakC$.  Thus, given a map
\[
F = \{F_c \colon X_c \to X'_c\}
\]
of $\sfP$-algebras, we can define the map
\[
F(\gamma) \colon X(\gamma) \to X'(\gamma)
\]
as the corresponding product of maps $F_c$.  These maps $F(\gamma)$ clearly commute with the face maps in $X$ and $X'$ that are defined as projections.  The only face maps that are not defined as projections are the lowest dimensional out-face maps $X(g_j)$ \eqref{1dimoutface}.  These out-face maps are defined as the components of the $\sfP$-algebra structure maps $\lambda$.  Since $F$ is a map of $\sfP$-algebras, the maps $F_c$ are compatible with the structure maps $\lambda$.  Thus, we have a map
\[
F \colon X \to X'
\]
in the functor category $\set^{\bP(\sfP)'}$.  This gives rise to the map
\[
\psi(F) \colon \psi(X) \to \psi(X') \in \propset
\]
by the naturality of right Kan extensions.  Therefore, we have defined the functor
\[
\psi \colon \alg(\sfP) \to \alg^0(\sfP)
\]
in \eqref{weak0equivalence}.  One can check that the functors $\phi$ and $\psi$ constructed above give an equivalence of the categories $\alg^0(\sfP)$ and $\alg(\sfP)$.
\end{proof}

To finish the construction of the functor $\phi$ in Theorem ~\ref{thm:weak0algebra}, we still need to prove the following two Lemmas.

\begin{lemma}
\label{lem1:weak0algebra}
Let $X$ be a weak-$0$ $\sfP$-algebra.  Then the operations $\lambda$ \eqref{weak0lambda} give the $\frakC$-graded set $\{X_c \colon c \in \frakC\}$ (where $X_c = X(c)$) the structure of a $\sfP$-algebra.
\end{lemma}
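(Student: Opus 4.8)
The plan is to verify directly that the collection of maps $\lambda$ in \eqref{weak0lambda} satisfies the four defining properties of a $\sfP$-algebra structure map \eqref{propstructuremap}: associativity with respect to the vertical composition $\circ$, associativity with respect to the horizontal composition $\otimes$, bi-equivariance, and preservation of the $c$-colored units. The common mechanism I would exploit is that, in a weak-$0$ $\sfP$-algebra, the unique-filling property for $1$-dimensional horns makes the in-face map $X(\gamma) \to X_{c_1} \times \cdots \times X_{c_n}$ a bijection for every $1$-dimensional propertope $\gamma \in \sfP\binom{\ud}{\uc}$; thus a $1$-cell is completely determined by its in-faces, and $\lambda(\gamma,-)$ is exactly the passage from in-faces to out-faces. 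Each algebra axiom will then be extracted from an appropriate $2$-dimensional propertope together with one of the consistency conditions of $\bP(\sfP)$.

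First I would treat vertical associativity. Given $\alpha \in \sfP\binom{\uepsilon}{\udelta}$, $\beta \in \sfP\binom{\udelta}{\ugamma}$ and $0$-cells $w \in \prod_k X_{\gamma_k}$, let $b$ be the unique filling of the $\beta$-horn $w$, so that its out-faces are $v = \lambda(\beta,w)$, and let $a$ be the unique filling of the $\alpha$-horn $v$, so that its out-faces are $\lambda(\alpha,v) = \lambda(\alpha,\lambda(\beta,w))$. The cells $a$ and $b$ are the two in-faces of a cell of shape $G_{\alpha\circ\beta}$ \eqref{Galphabeta}. Applying the functor $X$ to the vertical consistency diagram \eqref{vconsistency}, the out-face $c \in X(\alpha\circ\beta)$ of such a $2$-cell has in-faces equal to those of $b$, namely $w$, and out-faces equal to those of $a$, namely $\lambda(\alpha,\lambda(\beta,w))$. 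Since $1$-cells are determined by their in-faces, $c$ is the unique $1$-cell with in-faces $w$, whose out-faces by construction compute $\lambda(\alpha\circ\beta,w)$; comparing the two descriptions of the out-faces of $c$ then yields $\lambda(\alpha\circ\beta,w) = \lambda(\alpha,\lambda(\beta,w))$, which is the vertical associativity of $\lambda$.

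The remaining three axioms follow the same template with different $2$-dimensional propertopes and consistency conditions. Horizontal associativity uses the propertope $G_{\alpha\otimes\beta} = \mathtt{1}_\alpha \sqcup \mathtt{1}_\beta$ \eqref{Galphatensorbeta} together with the horizontal consistency diagrams \eqref{hconsistency1} and \eqref{hconsistency2}, which identify the in-faces and out-faces of the two components with the corresponding blocks of in-faces and out-faces of the composite cell; reading these off gives $\lambda(\alpha\otimes\beta,(y,y')) = (\lambda(\alpha,y),\lambda(\beta,y'))$. Bi-equivariance uses the relabeled unit propertope $\sigma\mathtt{1}_\gamma\tau$ \eqref{1alphatwisted} together with the equivariance consistency diagram \eqref{econsistency}, which matches the $j$th out-face of a $\gamma$-cell with the $\sigma^{-1}(j)$th out-face of the corresponding $\sigma\gamma\tau$-cell and thereby forces the required compatibility of $\lambda$ with the $\Sigma$-actions. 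Finally, preservation of units uses $\mathtt{1}_{\alpha_1} \otimes \cdots \otimes \mathtt{1}_{\alpha_m}$ together with the unital consistency diagram \eqref{uconsistency}, whose commutativity says precisely that the $i$th in-face equals the $i$th out-face, so that the corresponding component of $\lambda$ is the identity.

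The hard part, which is common to all four verifications, will be the construction of the witnessing $2$-cell. I would need to check that the $1$-cells produced from the given data (for instance $a$, $b$, and the composite $1$-cell $c$ above) genuinely assemble into a $\gamma$-boundary in $X$ — equivalently, into a map $\partial\Delta_\gamma \to X$ in the sense of Theorem \ref{standardboundarymap} — so that the unique boundary-filling property of Definition \ref{def:weakPalgebra} applies and the relevant consistency diagram can be evaluated on the resulting $2$-cell. The delicate point is the bookkeeping of which face maps are in-faces and which are out-faces, together with the way the consistency conditions interlock with the uniqueness of fillings to pin down the out-faces of the composite cell; once the $2$-cell is in hand, each algebra axiom is an immediate consequence of its governing consistency condition and the fact that $1$-cells are determined by their in-faces.
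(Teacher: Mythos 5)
Your proposal is correct and follows essentially the same route as the paper: the paper likewise obtains each algebra axiom by uniquely filling a $1$-dimensional horn for each input, uniquely filling the resulting $2$-dimensional boundary of shape $G_{\gamma\otimes\gamma'}$ (respectively $G_{\gamma\circ\gamma'}$, $\sigma\ttone_\gamma\tau$, $\ttone_{\alpha_1}\otimes\cdots\otimes\ttone_{\alpha_m}$), and then reading off the relevant consistency diagram applied to that $2$-cell, using uniqueness of $1$-dimensional horn fillings to identify the two candidate out-face tuples. The only cosmetic difference is that the paper writes out the horizontal case in full and declares the other three analogous, whereas you detail the vertical case; the substance is identical.
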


\begin{proof}
To show that the operations $\lambda$ give $\{X_c\}$ the structure of a $\sfP$-algebra, we need to show that they are bi-equivariant and are compatible with the horizontal composition $\otimes$, the vertical composition $\circ$, and the units in the unital $\frakC$-colored PROP $\sfP$.  Below we use the notation
\[
\begin{split}
X_{\ud} &= X_{d_1} \times \cdots \times X_{d_m}\\
&= X(d_1) \times \cdots \times X(d_m)
\end{split}
\]
for any $\frakC$-profile $\ud = (d_1, \ldots , d_m)$.

The compatibility of $\lambda$ with the horizontal composition $\otimes$ in $\sfP$ means the commutativity of the diagram
\begin{equation}
\label{Xlambdahorizontal}
\SelectTips{cm}{10}
\xymatrix{
\sfP\dbinom{\ud}{\uc} \times \sfP\dbinom{\ub}{\ua} \times X_{\uc,\ua} \ar[rr]^-{\text{shuffle}} \ar[d]_-{(\otimes, Id)} & & \left[\sfP\dbinom{\ud}{\uc} \times X_{\uc}\right] \times \left[\sfP\dbinom{\ub}{\ua} \times X_{\ua}\right] \ar[d]^{(\lambda,\lambda)}\\
\sfP\dbinom{\ud,\ub}{\uc,\ua} \times X_{\uc,\ua} \ar[rr]^-{\lambda} & & X_{\ud} \times X_{\ub} = X_{\ud,\ub}.
}
\end{equation}
To check that \eqref{Xlambdahorizontal} is commutative, pick $1$-dimensional $\sfP$-propertopes
\begin{equation}
\label{gammagamma'}
\gamma \in \sfP\binom{\ud}{\uc} = \sfP\binom{d_1, \ldots , d_m}{c_1, \ldots , c_n},\quad
\gamma' \in \sfP\binom{\ub}{\ua} = \sfP\binom{b_1, \ldots , b_k}{a_1, \ldots , a_l}
\end{equation}
and a $\gamma$-horn and a $\gamma'$-horn:
\begin{equation}
\label{yy'}
(y_1, \ldots , y_n) \in X_{\uc},\quad
(y_1', \ldots , y_l') \in X_{\ua}.
\end{equation}
These $1$-dimensional horns in $X$ have unique fillings,
\[
x \in X(\gamma) \quad\text{and}\quad
x' \in X(\gamma'),
\]
respectively.  The vertical maps $\lambda$ in \eqref{Xlambdahorizontal} are defined as the out-faces of $x$ and $x'$:
\begin{equation}
\label{lambdagammay}
\begin{split}
\lambda(\gamma, (y_1, \ldots , y_n)) &= (z_1, \ldots , z_m) = \left(X(g_1)(x), \ldots , X(g_m)(x)\right) \in X_{\ud},\\
\lambda(\gamma', (y_1', \ldots , y_l')) &= (z_1', \ldots , z_k') = \left(X(g_1)(x'), \ldots , X(g_k)(x')\right) \in X_{\ub}.
\end{split}
\end{equation}
On the other hand, the $1$-dimensional $(\gamma \otimes \gamma')$-horn
\[
(y_1, \ldots , y_n, y_1', \ldots , y_l') \in X_{\uc,\ua}
\]
in $X$ also has a unique filling
\[
\xbar \in X(\gamma \otimes \gamma').
\]
The horizontal $\lambda$ in \eqref{Xlambdahorizontal} is then defined as the out-faces of $\xbar$:
\[
\begin{split}
\lambda\left(\gamma \otimes \gamma', (y_1, \ldots , y_n, y_1', \ldots , y_l')\right)
&= (\zbar_1, \ldots , \zbar_m, \zbar'_1, \ldots , \zbar'_k)\\
&= \left(X(g_1)(\xbar), \ldots , X(g_{m+k})(\xbar)\right) \in X_{\ud,\ub}.
\end{split}
\]
The commutativity of the diagram \eqref{Xlambdahorizontal} is then equivalent to the equality
\begin{equation}
\label{Xlambdahorizontal'}
(z_1, \ldots , z_m, z_1', \ldots , z_k') = (\zbar_1, \ldots , \zbar_m, \zbar'_1, \ldots , \zbar'_k)
\end{equation}
in $X_{\ud,\ub}$.  We prove this equality using the horizontal consistency diagram \eqref{hconsistency2} with $\alpha = \gamma$ and $\beta = \gamma'$.

Consider the $2$-dimensional $\sfP$-propertope
\[
\setlength{\unitlength}{.6mm}
\begin{picture}(110,40)(15,12)
\put(30,30){\circle*{2}}      
\put(20,20){\vector(1,1){9.5}}
\put(40,20){\vector(-1,1){9.5}}
\put(30,30){\vector(-1,1){10}}
\put(30,30){\vector(1,1){10}}
\put(22,30){\makebox(0,0){$\gamma$}}
\put(31,23){\makebox(0,0){$\cdots$}}
\put(31,35){\makebox(0,0){$\cdots$}}
\put(20,16){\makebox(0,0){$1$}}
\put(40,16){\makebox(0,0){$n$}}
\put(20,43){\makebox(0,0){$1$}}
\put(40,43){\makebox(0,0){$m$}}
\put(60,30){\circle*{2}}      
\put(50,20){\vector(1,1){9.5}}
\put(70,20){\vector(-1,1){9.5}}
\put(60,30){\vector(-1,1){10}}
\put(60,30){\vector(1,1){10}}
\put(53,30){\makebox(0,0){$\gamma'$}}
\put(61,23){\makebox(0,0){$\cdots$}}
\put(61,35){\makebox(0,0){$\cdots$}}
\put(50,16){\makebox(0,0){$1$}}
\put(72,16){\makebox(0,0){$l$}}
\put(50,43){\makebox(0,0){$1$}}
\put(72,43){\makebox(0,0){$k$}}
\put(-4,30){\makebox(0,0){$G_{\gamma \otimes \gamma'} =$}}
\put(80,28){$\in \sfP^+\dbinom{\gamma \otimes \gamma'}{\gamma, \gamma'}$}
\end{picture}
\]
first defined in \eqref{Galphatensorbeta}.  From the previous paragraph, we have a $2$-dimensional $G_{\gamma\otimes\gamma'}$-boundary
\begin{equation}
\label{2dboundary}
(x,x') \xrightarrow{?} \xbar
\end{equation}
in $X$.  Since $X$ is a weak-$0$ $\sfP$-algebra, there is a unique filling
\[
w \in X(G_{\gamma\otimes\gamma'})
\]
of the $2$-dimensional boundary \eqref{2dboundary}.  The image under $X \in \propset$ of the horizontal consistency diagrams \eqref{hconsistency2} in this case are the commutative diagrams
\begin{equation}
\label{Xhconsistency2}
\SelectTips{cm}{10}
\xymatrix{
X(\gamma) \ar[d]_-{\text{out}} & X(G_{\gamma\otimes\gamma'}) \ar[l]_-{\text{in}} \ar[d]^-{\text{out}} \ar[r]^-{\text{in}} & X(\gamma') \ar[d]^-{\text{out}}\\
X(d_i) & X(\gamma\otimes\gamma') \ar[l]_-{\text{out}} \ar[r]^-{\text{out}} & X(b_j)
}
\end{equation}
for $1 \leq i \leq m$ and $1 \leq j \leq k$.  In the commutative diagram \eqref{Xhconsistency2}, the two top horizontal maps are in-face maps and the rest are the obvious out-face maps.  Starting at the element $w \in X(G_{\gamma\otimes\gamma'})$, its images in the lower-left corner $X(d_i)$ under the two composites in \eqref{Xhconsistency2} are $z_i$ and $\zbar_i$.  Thus, by commutativity we have
\[
z_i = \zbar_i.
\]
Likewise, the images of $w$ in the lower-right corner $X(b_j)$ in \eqref{Xhconsistency2} are $z_j'$ and $\zbar'_j$, so we have
\[
z_j' = \zbar'_j.
\]
This proves the equality \eqref{Xlambdahorizontal'} and hence the commutativity of the diagram \eqref{Xlambdahorizontal}.  Thus, we have shown that the operation $\lambda$ \eqref{weak0lambda} is compatible with the horizontal composition $\otimes$ in $\sfP$.

The compatibility of $\lambda$ with the vertical composition and the units in $\sfP$ are proved similarly using the vertical and the unital consistency diagrams \eqref{vconsistency} and \eqref{uconsistency}, respectively.  In proving the compatibility of $\lambda$ with the vertical composition $\circ$, one uses the $2$-dimensional $\sfP$-propertope
\[
\setlength{\unitlength}{.6mm}
\begin{picture}(50,58)(25,10)
\put(30,30){\circle*{2}}
\put(30,50){\circle*{2}}
\put(20,20){\vector(1,1){9.5}}
\put(40,20){\vector(-1,1){9.5}}
\put(30,50){\vector(-1,1){10}}
\put(30,50){\vector(1,1){10}}
\put(28,48){\vector(1,1){1.5}}
\put(32,48){\vector(-1,1){1.5}}
\qbezier(30,30)(15,39)(28,48)
\qbezier(30,30)(45,39)(32,48)
\put(20,30){\makebox(0,0){$\gamma'$}}
\put(20,50){\makebox(0,0){$\gamma$}}
\put(31,23){\makebox(0,0){$\cdots$}}
\put(31,39){\makebox(0,0){$\cdots$}}
\put(31,55){\makebox(0,0){$\cdots$}}
\put(17,18){\makebox(0,0){$1$}}
\put(45,18){\makebox(0,0){$|\ub|$}}
\put(17,63){\makebox(0,0){$1$}}
\put(45,63){\makebox(0,0){$|\ud|$}}
\put(-10,44){\makebox(0,0){$G_{\gamma\circ\gamma'} =$}}
\put(55,42){$\in \sfP^+\dbinom{\gamma\circ\gamma'}{\gamma,\gamma'}$}
\end{picture}
\]
first defined in \eqref{Galphabeta}.  Finally, the bi-equivariance of $\lambda$ is also proved by essentially the same argument as above using the equivariance consistency diagrams \eqref{econsistency}.
\end{proof}

\begin{lemma}
\label{lem2:weak0algebra}
If
\[
F \colon X \to X'
\]
is a map of weak-$0$ $\sfP$-algebras, then the maps $F_c$ $(c \in \frakC)$ \eqref{Fcweak0} give a map
\[
\phi(F) = \{F_c \colon X_c \to X'_c \colon c \in \frakC\}
\]
of $\sfP$-algebras from $\phi(X) = \{X_c\}$ to $\phi(X') = \{X'_c\}$.
\end{lemma}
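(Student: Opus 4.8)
The plan is to verify that $\phi(F) = \{F_c\}$ makes the structure-map square \eqref{algebramap} commute, i.e., that for each pair $(\ud;\uc)$ of $\frakC$-profiles one has $F_{\ud} \circ \lambda_X = \lambda_{X'} \circ (Id \times F_{\uc})$, where $\lambda_X$ and $\lambda_{X'}$ are the structure maps \eqref{weak0lambda} of $\phi(X)$ and $\phi(X')$ built from unique $1$-dimensional horn-fillings. The essential tool is that $F$, being a map of $\sfP$-propertopic sets, commutes with every face map via the naturality square of Proposition \ref{propertopicset}.

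First I would fix a $1$-dimensional $\sfP$-propertope $\gamma \in \sfP\binom{\ud}{\uc}$ as in \eqref{gamma1propertope} and elements $y_i \in X_{c_i}$ $(1 \leq i \leq n)$, giving a $\gamma$-horn $(y_1, \ldots , y_n) \xrightarrow{?} ?$ in $X$ with its unique filling $x \in X(\gamma)$, so that $\lambda_X(\gamma, y_1, \ldots , y_n) = (z_1, \ldots , z_m)$ with $z_j = X(g_j)(x)$. Set $x' = F(\gamma)(x) \in X'(\gamma)$. Using the naturality square for each in-face map $f_i \colon \gamma \to c_i$, I obtain
\[
X'(f_i)(x') = X'(f_i)\bigl(F(\gamma)(x)\bigr) = F_{c_i}\bigl(X(f_i)(x)\bigr) = F_{c_i}(y_i),
\]
so the $i$th in-face of $x'$ is exactly $F_{c_i}(y_i)$. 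Hence $x'$ is a filling of the $\gamma$-horn $(F_{c_1}(y_1), \ldots , F_{c_n}(y_n)) \xrightarrow{?} ?$ in $X'$.

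The key step is then to invoke that $X'$ is a weak-$0$ $\sfP$-algebra: by Proposition \ref{weak0algebra} this horn has a \emph{unique} filling, and since $x'$ is a filling, $x'$ must be that unique filling. Consequently $x'$ is precisely the cell used to compute $\lambda_{X'}(\gamma, F_{c_1}(y_1), \ldots , F_{c_n}(y_n))$ through its out-faces. Applying the naturality square once more, now for each out-face map $g_j \colon \gamma \to d_j$, gives $X'(g_j)(x') = F_{d_j}\bigl(X(g_j)(x)\bigr) = F_{d_j}(z_j)$, so that
\[
\lambda_{X'}\bigl(\gamma, F_{c_1}(y_1), \ldots , F_{c_n}(y_n)\bigr) = \bigl(F_{d_1}(z_1), \ldots , F_{d_m}(z_m)\bigr) = F_{\ud}\bigl(\lambda_X(\gamma, y_1, \ldots , y_n)\bigr).
\]
This is exactly the commutativity of \eqref{algebramap}, so $\phi(F)$ is a morphism of $\sfP$-algebras. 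The only point requiring genuine care, and thus the main obstacle, is the identification of $x'$ with the distinguished filling in $X'$: this rests entirely on the \emph{uniqueness} of $1$-dimensional horn-fillings in a weak-$0$ $\sfP$-algebra, without which $x'$ would merely be \emph{a} filling rather than \emph{the} one defining $\lambda_{X'}$. Everything else is a direct unwinding of the definitions of $\lambda$ and of a map of $\sfP$-propertopic sets.
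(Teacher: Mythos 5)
Your proposal is correct and follows essentially the same route as the paper's proof: both hinge on showing that $F(\gamma)(x)$ is a filling of the image horn in $X'$ (via naturality of $F$ with respect to in-face maps), identifying it with the unique filling defining $\lambda_{X'}$, and then applying naturality with respect to out-face maps. The only cosmetic difference is the order of presentation — the paper names the unique filling first and then proves it equals $F(\gamma)(x)$, whereas you start from $F(\gamma)(x)$ and show it is that filling.
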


\begin{proof}
We use the notations in the proof of Lemma ~\ref{lem1:weak0algebra}.  The map $\phi(F)$ is a map of $\sfP$-algebras if and only if the diagram
\begin{equation}
\label{Flambdacompatibility}
\SelectTips{cm}{10}
\xymatrix{
\sfP\dbinom{\ud}{\uc} \times X_{\uc} \ar[rr]^-{\lambda} \ar[d]_-{(Id,F_{\uc})} & & X_{\ud} \ar[d]^{F_{\ud}}\\
\sfP\dbinom{\ud}{\uc} \times X'_{\uc} \ar[rr]^-{\lambda} & & X'_{\ud}
}
\end{equation}
is commutative, where
\[
F_{\uc} = F_{c_1} \times \cdots \times F_{c_n}
\]
for $\uc = (c_1, \ldots , c_n)$.  Let $\gamma \in \sfP\binom{\ud}{\uc}$ and $(y_1, \ldots , y_n) \in X_{\uc}$ be as in \eqref{gammagamma'} and \eqref{yy'}, respectively.  We have
\[
F_{\ud}\lambda(\gamma, (y_1, \ldots , y_n)) = \left(F_{d_1}(z_1), \ldots , F_{d_m}(z_m)\right) \in X'_{\ud},
\]
where the
\[
z_j = X(g_j)(x) \in X(d_j)
\]
are the out-faces of $x \in X(\gamma)$ as in \eqref{lambdagammay}.

On the other hand, we have the $1$-dimensional $\gamma$-horn
\begin{equation}
\label{Fchorn}
\left(F_{c_1}(y_1), \ldots , F_{c_n}(y_n)\right) \in X'_{\uc}
\end{equation}
in the weak-$0$ $\sfP$-algebra $X'$.  There is a unique filling $x' \in X'(\gamma)$ of this $\gamma$-horn in $X'$.  Then we have
\[
\lambda\left(\gamma, \left(F_{c_1}(y_1), \ldots , F_{c_n}(y_n)\right)\right) = (X'(g_1)(x'), \ldots , X'(g_m)(x')),
\]
where
\[
g_j \colon \gamma \to d_j
\]
is the $j$th out-face map from $\gamma$.  The commutativity of \eqref{Flambdacompatibility} is equivalent to the equality
\begin{equation}
\label{FdzX'd}
F_{d_j}(z_j) = X'(g_j)(x') \in X'(d_j)
\end{equation}
for $1 \leq j \leq m$.

To prove \eqref{FdzX'd}, we first claim that
\begin{equation}
\label{Fgammaxx'}
x' = F(\gamma)(x).
\end{equation}
Indeed, since $F$ is a map of $\sfP$-propertopic sets, the diagram
\[
\SelectTips{cm}{10}
\xymatrix{
X(\gamma) \ar[r]^-{F(\gamma)} \ar[d]_{X(f_i)} & X'(\gamma) \ar[d]^-{X'(f_i)}\\
X(c_i) \ar[r]^-{F_{c_i}} & X'(c_i)
}
\]
is commutative, where
\[
f_i \colon \gamma \to c_i
\]
is the $i$th in-face map from $\gamma$.  Applied to $x \in X(\gamma)$, the commutativity of this diagram means that
\[
\begin{split}
F_{c_i}(y_i) &= F_{c_i}(X(f_i)(x))\\
&= X'(f_i)(F(\gamma)(x)).
\end{split}
\]
Since this equality holds for $1 \leq i \leq n$, we conclude that $F(\gamma)(x) \in X'(\gamma)$ is also a filling of the $1$-dimensional $\gamma$-horn \eqref{Fchorn}.  The uniqueness of the filling $x' \in X'(\gamma)$ of this $\gamma$-horn then gives the equality \eqref{Fgammaxx'}.

Since $F$ is compatible with all the face maps, we also have the commutative diagram
\begin{equation}
\label{Xgammagd}
\SelectTips{cm}{10}
\xymatrix{
X(\gamma) \ar[r]^-{F(\gamma)} \ar[d]_{X(g_j)} & X'(\gamma) \ar[d]^-{X'(g_j)}\\
X(d_j) \ar[r]^-{F_{d_j}} & X'(d_j).
}
\end{equation}
Therefore, we have
\[
\begin{split}
X'(g_j)(x') &= X'(g_j)(F(\gamma)(x)) \quad (\text{by \eqref{Fgammaxx'}})\\
&= F_{d_j}(X(g_j)(x)) \quad (\text{by \eqref{Xgammagd}})\\
&= F_{d_j}(z_j).
\end{split}
\]
This proves the equality \eqref{FdzX'd} and hence the commutativity of the diagram \eqref{Flambdacompatibility}.
\end{proof}

This completes the proof of Theorem ~\ref{thm:weak0algebra}.

\subsection{Eilenberg-Mac Lane weak-$n$ $\sfP$-algebras}
\label{subsec:EM}

Theorem ~\ref{thm:weak0algebra} has a generalization for weak-$n$ $\sfP$-algebras for $n \geq 1$.  Indeed, a close inspection of its proof reveals that much of it depends only on the existence of unique fillings of $1$-dimensional horns and $N$-dimensional boundaries for $N \geq 2$.  Weak-$n$ $\sfP$-algebras for $n \geq 1$ also have unique fillings of $(n+1)$-dimensional horns and $N$-dimensional boundaries for $N \geq n+2$.  These unique fillings are only about the $r$-cells for $r \geq n$.  In particular, to obtain a generalization of Theorem ~\ref{thm:weak0algebra} to $n \geq 1$, we should consider a version of a weak-$n$ $\sfP$-algebra that has trivial $k$-dimensional cells for $k < n$. 

\begin{definition}
\label{connectedweakn}
For $0 \leq n \leq \infty$, denote by $\algn(\sfP)'$ the full subcategory of $\algn(\sfP)$ consisting of the weak-$n$ $\sfP$-algebras $X$ such that
\[
X(\gamma) = \{*\}
\]
for any $k$-dimensional $\sfP$-propertope $\gamma \in \elt(\sfP^{(k-1)+})$ with $0 \leq k < n$.  Objects in $\algn(\sfP)'$ are called \textbf{Eilenberg-Mac Lane weak-$n$ $\sfP$-algebras}.
\end{definition}

Note that
\[
\alg^0(\sfP)' = \alg^0(\sfP).
\]
So Eilenberg-Mac Lane weak-$0$ $\sfP$-algebras are really just weak-$0$ $\sfP$-algebras, which by Theorem ~\ref{thm:weak0algebra} are equivalent to $\sfP$-algebras.  When $n = \infty$, the definition above is of little interest because the only object in $\alg^\infty(\sfP)'$ is the terminal $\sfP$-propertopic set $*$ \eqref{terminalpropset}.  So Eilenberg-Mac Lane weak-$n$ $\sfP$-algebras are only interesting when $n < \infty$.

Let us explain the terminology in Definition ~\ref{connectedweakn}.  Recall from the discussion just before Definition ~\ref{def:weakPalgebra} that weak-$n$ $\sfP$-algebras are $\sfP$-propertopic analogues of homotopy $n$-types, which are simplicial sets with trivial homotopy groups in dimensions $> n$.  Moreover, a simplicial set $A$ with $A(\bk) = \{*\}$ for all $k \leq n-1$ is \emph{$(n-1)$-connected}, i.e., has trivial homotopy groups in dimensions $\leq n-1$.  An $(n-1)$-connected simplicial set $A$ that is also a homotopy $n$-type has a non-trivial homotopy group only in dimension $n$.  By definition such a simplicial set $A$ is called an \emph{Eilenberg-Mac Lane space}.

An object $X \in \algn(\sfP)'$ has, by definition, a unique $\gamma$-cell for each $k$-dimensional $\sfP$-propertope $\gamma$ with $k \leq n-1$.  So we can think of it as an $(n-1)$-connected version of a weak-$n$ $\sfP$-algebra.  From the discussion of the previous paragraph, therefore, it makes sense to consider an object $X \in \algn(\sfP)'$ as a kind of Eilenberg-Mac Lane object.  This explains our terminology.

The following result says that the category of Eilenberg-Mac Lane weak-$n$ $\sfP$-algebras is a \emph{reflection} of the category of weak-$n$ $\sfP$-algebras.

\begin{proposition}
\label{P'reflective}
The category $\algn(\sfP)'$ is a reflective subcategory of $\algn(\sfP)$.  In other words, the inclusion functor
\[
i \colon \algn(\sfP)' \hookrightarrow \algn(\sfP)
\]
has a left adjoint
\[
p \colon \algn(\sfP) \to \algn(\sfP)'.
\]
\end{proposition}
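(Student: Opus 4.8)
The plan is to realise the reflector $p$ by \emph{collapsing} every cell of dimension below $n$ to a single point while retaining the cells of dimension $\geq n$ unchanged (the cases $n=0$ and $n=\infty$ being degenerate, since then $\algn(\sfP)'=\algn(\sfP)$ and $p=\mathrm{id}$, respectively only the terminal object survives). Given a weak-$n$ $\sfP$-algebra $X$, with $\dim\gamma = k$ for $\gamma\in\elt(\sfP^{(k-1)+})$, I would set
\[
(pX)(\gamma) = \begin{cases} \{*\} & \text{if } \dim\gamma < n,\\ X(\gamma) & \text{if } \dim\gamma \geq n,\end{cases}
\]
and let a face map $f\colon \gamma\to\alpha$ act as $X(f)$ when $\dim\alpha\geq n$ and as the unique map into $\{*\}$ otherwise. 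First I would check that $pX$ is a $\sfP$-propertopic set: the images of the consistency diagrams \eqref{hconsistency1}--\eqref{econsistency} whose terminal vertex has dimension $\geq n$ are inherited from $X$, and every other such diagram commutes automatically because its target is the one-point set. The collapse map $\eta_X\colon X\to i(pX)$, which is the identity in dimensions $\geq n$ and the unique map to $\{*\}$ below dimension $n$, is then evidently a natural transformation.

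The universal property is the clean part. For any Eilenberg--Mac Lane weak-$n$ $\sfP$-algebra $Y$ one has $Y(\delta)=\{*\}$ whenever $\dim\delta<n$, so a map $X\to i(Y)$ of $\sfP$-propertopic sets is forced to be trivial below dimension $n$ and is constrained only by the face maps among shapes of dimension $\geq n$; the sole face maps mixing the two regimes run from an $n$-dimensional shape to an $(n-1)$-dimensional one, and their naturality squares commute automatically since the lower-right corner is $Y(\alpha)=\{*\}$. Hence precomposition with $\eta_X$ would give a bijection $\propset(pX,Y)\cong\propset(X,iY)$, natural in both variables; since morphisms of weak-$n$ $\sfP$-algebras are just morphisms of the underlying propertopic sets (Definition \ref{def:weakPalgebra}), this is exactly the adjunction $p\dashv i$ --- \emph{provided} $pX$ genuinely lies in $\algn(\sfP)'$.

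Establishing that last provision is where the work concentrates. The unique-filling conditions for $(n+1)$-dimensional horns and for $N$-dimensional boundaries with $N\geq n+2$ involve only cells of dimension $\geq n$ together with the face maps among them, all of which agree with $X$; they therefore transfer verbatim from the weak-$n$ structure of $X$. Likewise, for $1\leq k<n$ every $k$-dimensional horn and its filling live in one-point sets, so these fillings are trivial. The one remaining condition is horn filling in dimension exactly $n$: after the collapse every $n$-dimensional horn is the trivial horn $(*,\dots,*)$, so a filling is simply \emph{any} $n$-cell of the prescribed shape $\gamma$, and the condition reduces to the inhabitation statement $X(\gamma)\neq\varnothing$ for every $n$-dimensional $\sfP$-propertope $\gamma$.

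The main obstacle is precisely this inhabitation, and it fails for sparse inputs (already for the empty weak-$n$ algebra, whose lower cells cannot supply horns to be filled). Repairing the construction requires replacing the naive collapse by a genuine free completion: after collapsing below dimension $n$, one must build an Eilenberg--Mac Lane weak-$n$ structure on top of the retained cells of dimension $\geq n$, adjoining the forced fillings (unique in dimensions $>n$) exactly as the weak-$n$ axioms demand. The crux is to carry this out so that the hom-bijection above survives, i.e.\ so that the adjoined cells introduce no free choices in a map to an EM weak-$n$ $\sfP$-algebra $Y$. I expect this to follow from the rigidity of such $Y$ --- their cells in dimensions $>n$ are determined by unique fillings and their $n$-cells carry a $\sfP^{n+}$-algebra structure --- so that a morphism out of the completion is pinned down by its restriction to $X$. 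Verifying this rigidity, and thereby the preservation of the adjunction, is the step I expect to demand the most care; it is essentially the same input that underlies the equivalence between Eilenberg--Mac Lane weak-$n$ $\sfP$-algebras and $\sfP^{n+}$-algebras. I would also note that one cannot shortcut the argument by localising inside $\propset$, since the orthogonal reflection onto presheaves that are singletons below dimension $n$ returns exactly the naive collapse, which is not weak-$n$; the completion must be performed inside $\algn(\sfP)$ itself.
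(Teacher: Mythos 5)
Your reflector is exactly the paper's: the published proof defines $p(X)(\gamma)$ to be $X(\gamma)$ in dimensions $\geq n$ and $\{*\}$ below, with face maps inherited or collapsed accordingly, and the adjunction is precisely the hom-set computation in your second paragraph (maps $X \to iY$ are trivial below dimension $n$, the only mixed naturality squares have target $\{*\}$, so restriction along $\eta_X$ is a bijection). Up to that point your proposal coincides with the paper, which dismisses the remaining verifications with ``one can check.''

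The divergence is your final paragraph, and that is where the proposal goes wrong. The inhabitation worry is a legitimate boundary case that the paper silently ignores: if some $X(c)=\varnothing$ then the collapse can fail horn-filling in dimension exactly $n$. But observe that this is the \emph{only} way it can fail, and that it cannot happen in the non-degenerate case: since in-face maps are functions, emptiness propagates upward, and conversely if $X(c)\neq\varnothing$ for every $c\in\frakC$ then iterated horn-filling in dimensions $1,\ldots,n$ (each profile being non-empty) forces $X(\gamma)\neq\varnothing$ for every $\sfP$-propertope $\gamma$ of dimension $\leq n$, so the naive collapse already lands in $\algn(\sfP)'$ and no completion is needed. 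More seriously, the repair you sketch cannot succeed: the rigidity you invoke holds only in dimensions $>n$, where fillings are unique, whereas fillings of $n$-dimensional horns in an Eilenberg--Mac Lane weak-$n$ $\sfP$-algebra $Y$ are not unique. Any $n$-cell freely adjoined to $p(X)$ to inhabit an empty $X(\gamma)$ must be sent to a \emph{chosen} filling in $Y(\gamma)$, and distinct choices give distinct morphisms; this strictly enlarges $\propset(p(X),iY)$ and destroys the bijection with $\propset(X,iY)$. (For the empty $X$ the latter is a singleton for every $Y$, so $p(X)$ would have to be initial in $\algn(\sfP)'$, which a free completion does not produce.) The correct move is not a completion but either a standing non-degeneracy hypothesis on the $0$-cells or the propagation observation above; as written, both your repaired construction and the paper's unqualified claim are problematic only in the degenerate case, and your repair does not fix it.
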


\begin{proof}
The functor $p$ is defined as follows.  Suppose that $X \in \algn(\sfP)$.  For a $k$-dimensional $\sfP$-propertope $\gamma \in \elt(\sfP^{(k-1)+})$, define
\begin{equation}
\label{pX}
p(X)(\gamma) =
\begin{cases}
X(\gamma) &\text{if $k \geq n$},\\
\{*\} &\text{otherwise}.
\end{cases}
\end{equation}
The face maps in $p(X)$ are those in $X$ if the targets have dimensions $\geq n$.  Otherwise, they are the unique maps to the one-element set $\{*\}$.  One can check that $p(X)$ is indeed an object in $\algn(\sfP)'$.  It is obvious how $p$ is defined at a map in $\algn(\sfP)$.  One can then check that $p$ is the left adjoint to the inclusion functor $i$.
\end{proof}

\begin{corollary}
\label{cor1:P'}
A weak-$n$ $\sfP$-algebra $X \in \algn(\sfP)$ is uniquely determined by:
\begin{enumerate}
\item
its image $p(X) \in \algn(\sfP)'$, and
\item
the restriction diagram of $X \in \propset$ to $k$-dimensional $\sfP$-propertopes for $k \leq n + 1$ and the face maps in those dimensions.
\end{enumerate}
\end{corollary}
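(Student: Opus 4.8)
The plan is to prove the statement in its uniqueness form: if $X, Y \in \algn(\sfP)$ are two weak-$n$ $\sfP$-algebras with $p(X) = p(Y)$ and with identical restriction diagrams to $\sfP$-propertopes of dimension $\leq n+1$, then $X = Y$. By Proposition \ref{propertopicset} a $\sfP$-propertopic set is completely determined by the sets it assigns to $\sfP$-propertopes together with the functions it assigns to face maps, so it suffices to check that the two listed pieces of data pin down $X(\gamma)$ for every $\sfP$-propertope $\gamma$ and $X(f)$ for every face map $f$. The whole argument is then a matter of verifying that the dimension ranges recorded by (1) and by (2) jointly exhaust all the relevant dimensions.

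First I would dispose of the objects. Let $\gamma$ be a $k$-dimensional $\sfP$-propertope. If $k \leq n+1$, then $X(\gamma) = Y(\gamma)$ directly by hypothesis (2). If instead $k \geq n$, then the defining formula \eqref{pX} for $p$ gives $p(X)(\gamma) = X(\gamma)$ and $p(Y)(\gamma) = Y(\gamma)$, so hypothesis (1) forces $X(\gamma) = Y(\gamma)$. Since every integer $k \geq 0$ satisfies $k \leq n+1$ or $k \geq n$, the two ranges overlap in dimensions $n \leq k \leq n+1$ and together cover all dimensions; hence $X(\gamma) = Y(\gamma)$ for every $\gamma$.

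Next I would treat the face maps. A face map $f \colon \gamma \to \alpha$ lowers dimension by one, so if $\dim \gamma = k$ then $\dim \alpha = k-1$, and $k \geq 1$. When $k \leq n+1$ both $\gamma$ and $\alpha$ have dimension $\leq n+1$, so $f$ is a morphism of the restriction diagram of (2) and $X(f) = Y(f)$. When $k \geq n+2$ the target has $\dim \alpha = k-1 \geq n+1 \geq n$, so by the description of the face maps of $p(X)$ recorded in the proof of Proposition \ref{P'reflective} (those with targets of dimension $\geq n$ are precisely the face maps inherited from $X$), the map $X(f)$ is a face map of $p(X)$, and likewise for $Y$; then $p(X) = p(Y)$ yields $X(f) = Y(f)$. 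These two cases form a clean partition of all source dimensions $k \geq 1$, so $X$ and $Y$ agree on every face map. Invoking once more that the face maps generate all morphisms of $\bP(\sfP)$ (Proposition \ref{propertopicset}), I conclude $X = Y$ as functors, which is the assertion.

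There is no genuinely hard step here; the content is entirely the bookkeeping of dimensions, and the one point worth stating explicitly is the coverage claim. The only place requiring a moment of care is the interface at $k = n+1$: the $(n{+}1)$-to-$n$ face maps are recorded by the low-dimensional restriction (2), while every face map emanating from a propertope of dimension $\geq n+2$ necessarily lands in dimension $\geq n+1 \geq n$ and is therefore captured by $p(X)$ rather than by (2). Because both records of any doubly-covered cell or map descend from the single ambient algebra, their agreement is automatic and no separate compatibility condition need be imposed.
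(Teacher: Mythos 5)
Your argument is correct and is essentially the paper's own proof, which simply observes that the claim "follows immediately from the definition of the functor $p$" in \eqref{pX}; you have written out the routine dimension bookkeeping that makes that observation precise. The case analysis on objects ($k\leq n+1$ versus $k\geq n$) and on face maps (source dimension $\leq n+1$ versus $\geq n+2$, the latter forcing the target into the range captured by $p(X)$) is exactly the right check, and no further content is needed.
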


\begin{proof}
This follows immediately from the definition of the functor $p$ \eqref{pX}.
\end{proof}

In view of this Corollary, the study of weak-$n$ $\sfP$-algebras splits into the following two parts.
\begin{enumerate}
\item
Understand the category $\algn(\sfP)'$ of Eilenberg-Mac Lane weak-$n$ $\sfP$-algebras.
\item
Study the restriction diagram of a weak-$n$ $\sfP$-algebra $X$ to the full subcategory
\begin{equation}
\label{Pn+1}
\bP^{n+1}(\sfP) \subseteq \bP(\sfP)
\end{equation}
consisting of the $k$-dimensional $\sfP$-propertopes for $k \leq n + 1$.
\end{enumerate}
When $n = 0$, this reduces to understanding $\alg^0(\sfP)' = \alg^0(\sfP)$.  This was done in Theorem ~\ref{thm:weak0algebra}, which says that $\alg^0(\sfP)$ is equivalent to the category of $\sfP$-algebras.

In the general case $n \geq 0$, Corollary ~\ref{cor1:P'} is somewhat analogous to a basic principle in homotopy theory.  Since this is how we will try to understand weak-$n$ $\sfP$-algebras below, it is worth recalling this basic piece of homotopy theory.  Every connected CW complex $Y$ has a \emph{Postnikov tower}\label{postnikovstragegy} \cite[Chapter 4]{hatcher}.  In particular, if $Y$ is a homotopy $n$-type, then its Postnikov tower is determined by the bottom $n$ stages.  The top of this $n$-stage Postnikov tower is the diagram
\[
\SelectTips{cm}{10}
\xymatrix{
Y \ar@{>>}[d]^-{\varphi} & K(\pi_n(Y),n) \ar[l]_-{i}\\
Y_{n-1}.
}
\]
Here $\varphi$ is a fibration that induces an isomorphism in homotopy groups in dimensions $\leq n-1$, and $Y_{n-1}$ is a homotopy $(n-1)$-type.  The fiber of this fibration is the Eilenberg-Mac Lane space $K(\pi_n(Y),n)$.  Although the spaces $Y_{n-1}$ and $ K(\pi_n(Y),n)$ contain all the homotopy groups of $Y$, they do \emph{not} determine the homotopy type of $Y$.  One needs to know how $Y_{n-1}$ and $K(\pi_n(Y),n)$ are glued together, and this is what the fibration $\varphi$ does.

Analogously, a weak-$n$ $\sfP$-algebra $X$ is a $\sfP$-propertopic version of a homotopy $n$-type. The restriction diagram of $X \in \propset$ to $k$-dimensional $\sfP$-propertopes for $k \leq n$ contains the lower dimensional information of $X$.  The image $p(X) \in \algn(\sfP)'$ contains the higher dimensional information of $X$.  Although these two pieces of $X$ together have all the cells and face maps in $X$ (and even $X_n$ in common), they do not determine $X$.  One needs to know how these pieces are glued together as well.  The consistency conditions (\eqref{hconsistency1} -- \eqref{vconsistency} and \eqref{econsistency}) for face maps between dimensions $n-1$, $n$, and $n+1$ are the necessary \emph{gluing data}.  This is why, in Corollary ~\ref{cor1:P'}, we need the restriction diagram of $X$ to $k$-dimensional $\sfP$-propertopes for $k \leq n+1$.  Had we used $k \leq n$, the gluing data would not have been accounted for.

Following the recipe above, we now study the higher dimensional information of weak-$n$ $\sfP$-algebras.  Recall the unital colored PROP $\sfP^{n+}$ from Definition ~\ref{def:Pn+}.  Its set of colors is $\elt(\sfP^{(n-1)+})$, the set of $n$-dimensional $\sfP$-propertopes.  We now observe that $\algn(\sfP)'$ is equivalent to the relatively well-behaved category of algebras over the colored PROP $\sfP^{n+}$.  This is the analogue of Theorem ~\ref{thm:weak0algebra} for $n \geq 1$.

\begin{theorem}
\label{thm:EMweakn}
For $1 \leq n < \infty$, there exist functors
\[
\phi^n \colon \algn(\sfP)' \rightleftarrows \alg(\sfP^{n+}) \colon \psi^n
\]
that give an equivalence between the categories $\algn(\sfP)'$ of Eilenberg-Mac Lane weak-$n$ $\sfP$-algebras and $\alg(\sfP^{n+})$ of $\sfP^{n+}$-algebras.
\end{theorem}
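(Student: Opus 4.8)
The plan is to prove Theorem \ref{thm:EMweakn} as a dimension-shifted copy of Theorem \ref{thm:weak0algebra}, applied with the base PROP $\sfP$ replaced by its slice PROP $\sfP^{n+}$. The guiding observation is that the colors of $\sfP^{n+}$ are precisely the $n$-dimensional $\sfP$-propertopes (Definition \ref{def:Pn+}), and that for an Eilenberg-Mac Lane object $X \in \algn(\sfP)'$ every cell of dimension $< n$ is a singleton. Consequently the face maps out of an $n$-dimensional propertope carry no information, so the $n$-dimensional propertopes behave exactly like the structureless colors $\frakC$ did in the $n=0$ argument. Under the identification of $(n+j)$-dimensional $\sfP$-propertopes with $j$-dimensional $\sfP^{n+}$-propertopes, the three defining conditions of a weak-$n$ $\sfP$-algebra (Definition \ref{def:weakPalgebra}) restricted to an Eilenberg-Mac Lane object become, after shifting dimension down by $n$, exactly the two defining conditions of a weak-$0$ $\sfP^{n+}$-algebra: condition (2) on unique fillings of $(n+1)$-dimensional horns becomes unique filling of $1$-dimensional $\sfP^{n+}$-horns, and condition (3) on unique fillings of $N$-dimensional boundaries for $N \geq n+2$ becomes unique filling of $N'$-dimensional $\sfP^{n+}$-boundaries for $N' \geq 2$. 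This is the structural reason the theorem holds, and it explains why the definition of $\algn(\sfP)'$ was arranged so that lower cells are trivial.

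To construct $\phi^n$, let $X \in \algn(\sfP)'$. Its underlying $\elt(\sfP^{(n-1)+})$-graded set is $\{X(\gamma) : \gamma \in \elt(\sfP^{(n-1)+})\}$, i.e. the $n$-cells graded by shape. The $\sfP^{n+}$-algebra structure map is defined exactly as in \eqref{weak0lambda}, but one dimension higher: given $n$-cells $x_i \in X(\alpha_i)$ and an $(n+1)$-dimensional propertope $\Gamma \in \sfP^{n+}\binom{\beta_1,\ldots,\beta_r}{\alpha_1,\ldots,\alpha_s}$, the $\Gamma$-horn $(x_1,\ldots,x_s) \xrightarrow{?} ?$ has a unique filling $\tilde{x} \in X(\Gamma)$ by condition (2), and one reads off the tuple of out-faces of $\tilde{x}$ in $X(\beta_1) \times \cdots \times X(\beta_r)$. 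Verifying that these maps give a genuine $\sfP^{n+}$-algebra is the shifted analogue of Lemma \ref{lem1:weak0algebra}: bi-equivariance, compatibility with the horizontal and vertical compositions $\otimes$ and $\circ$ of $\sfP^{n+}$, and compatibility with the units are each obtained from the corresponding consistency diagrams \eqref{hconsistency1}--\eqref{econsistency}, now read with $n$-dimensional inputs $\alpha,\beta$ and the $(n+1)$-dimensional propertopes $G_{\alpha\otimes\beta}$ \eqref{Galphatensorbeta}, $G_{\alpha\circ\beta}$ \eqref{Galphabeta}, and $\sigma\ttone_\gamma\tau$ \eqref{1alphatwisted}, together with the unique filling of the relevant $(n+2)$-dimensional boundary supplied by condition (3). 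The behaviour of $\phi^n$ on morphisms, and the fact that it preserves $\sfP^{n+}$-algebra maps, is the shifted analogue of Lemma \ref{lem2:weak0algebra}.

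To construct $\psi^n$, start from a $\sfP^{n+}$-algebra $A = \{A_\gamma\}$ and define a functor on the auxiliary category $\bP(\sfP)'$ of Remark \ref{remark:PP'}: set $X(\gamma) = \{*\}$ for $\dim\gamma < n$, $X(\gamma) = A_\gamma$ for $\dim\gamma = n$, let $X(\gamma)$ be the product of $A$ over the $n$-dimensional input colors of $\gamma$ when $\dim\gamma = n+1$ (with in-face maps the projections and out-face maps given by the structure map of $A$, exactly as in the lowest-dimensional step of the proof of Theorem \ref{thm:weak0algebra}), and let $X(\gamma)$ be the product over all faces when $\dim\gamma \geq n+2$ (all face maps projections). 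Then take the right Kan extension of this functor along the quotient functor $\pi$ \eqref{piP}, which exists by completeness of $\set$ as in \eqref{KanextensionX}. That $\psi^n(A)$ lies in $\algn(\sfP)'$ follows as before: the singletons in dimensions $< n$ give the Eilenberg-Mac Lane property and make the horn-filling condition (1) trivially satisfied below dimension $n$, while the product definitions together with the universal property of the Kan extension yield unique fillings of $(n+1)$-dimensional horns and of $N$-dimensional boundaries for $N \geq n+2$. Naturality of $\psi^n$ is immediate from the naturality of right Kan extensions, exactly as in the $n=0$ case.

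Finally, inspection of the defining formulas shows that $\phi^n$ and $\psi^n$ are mutually inverse, giving the asserted equivalence. I expect the main obstacle to be the verification that $\phi^n(X)$ is a genuine $\sfP^{n+}$-algebra, i.e. the shifted Lemma \ref{lem1:weak0algebra}: one must check that the consistency diagrams, now positioned at dimensions $n$ and $n+1$, supply exactly the commutativities needed for compatibility of the structure map with $\otimes$, $\circ$, the units, and the bi-equivariance of $\sfP^{n+}$, and that these commutativities are witnessed by the unique fillings of $(n+2)$-dimensional boundaries. The one point genuinely new relative to $n=0$ is the bottom dimension $k=n$: here condition (1) of Definition \ref{def:weakPalgebra} concerns $n$-dimensional horns whose input faces are forced to be the unique $(n-1)$-cells, so that this condition contributes no structure beyond the bare $n$-cells themselves; confirming that this is consistent with, and exactly matched by, the data of a $\sfP^{n+}$-algebra is what makes the Eilenberg-Mac Lane hypothesis the correct one.
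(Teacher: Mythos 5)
Your proposal is correct and follows essentially the same route as the paper: the paper also treats Theorem \ref{thm:EMweakn} as a dimension-shifted copy of Theorem \ref{thm:weak0algebra}, defining $\phi^n$ on the $n$-cells via the unique fillings of $(n+1)$-dimensional horns and their out-faces (with the consistency diagrams \eqref{hconsistency1}--\eqref{econsistency} supplying the shifted Lemma \ref{lem1:weak0algebra}), and defining $\psi^n$ by singletons below dimension $n$, products in dimensions $\geq n+1$, and a right Kan extension along $\pi$ as in \eqref{KanextensionX}. Your closing remark about the bottom dimension $k=n$ is a useful observation the paper leaves implicit, though note that for $n\geq 1$ condition (1) at $k=n$ actually forces $X(\gamma)\neq\varnothing$ for every $n$-dimensional $\gamma$, a nonemptiness constraint that both you and the paper pass over silently.
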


\begin{proof}
As discussed just before Definition ~\ref{connectedweakn}, the proof of Theorem ~\ref{thm:weak0algebra} can be used here with only cosmetic changes.  Indeed, suppose that $X \in \algn(\sfP)'$ and that
\[
\gamma \in \sfP^{n+}\binom{\beta_1, \ldots , \beta_s}{\alpha_1, \ldots , \alpha_r} = \sfP^{n+}\binom{\ubeta}{\ualpha}
\]
is an $(n+1)$-dimensional $\sfP$-propertope.  If $y_i \in X(\alpha_i)$ is an $n$-dimensional $\alpha_i$-cell in $X$ for $1 \leq i \leq r$, then
\[
(y_1, \ldots , y_r) \xrightarrow{?} ?
\]
is an $(n+1)$-dimensional $\gamma$-horn in $X$.  Since $X$ is a weak-$n$ $\sfP$-algebra, this horn has a unique filling
\[
x \in X(\gamma).
\]
This allows us to define the operation
\[
\lambda \colon \sfP^{n+}\binom{\ubeta}{\ualpha} \times X(\alpha_1) \times \cdots \times X(\alpha_r) \to X(\beta_1) \times \cdots \times X(\beta_s)
\]
with
\begin{equation}
\label{lambdagammayz}
\lambda\left(\gamma, (y_1, \ldots , y_r)\right)
= (z_1, \ldots , z_s),
\end{equation}
where $z_j \in X(\beta_j)$ is the $j$th out-face of $x$.  Then essentially the same argument as in the proof of Lemma ~\ref{lem1:weak0algebra} shows that
\[
\phi^n(X) = \{X(\alpha) \colon \alpha \in \elt(\sfP^{(n-1)+})\}
\]
is a $\sfP^{n+}$-algebra with structure maps $\lambda$.  Together with a minor variation of Lemma ~\ref{lem2:weak0algebra}, this gives the functor $\phi^n$.

The functor $\psi^n$ is similarly adapted from $\psi$ \eqref{weak0equivalence}.  If $X = \{X_\alpha\}$ is a $\sfP^{n+}$-algebra, then we set
\[
X(\gamma) = \{*\}
\]
for any $k$-dimensional $\sfP$-propertope $\gamma$ with $k \leq n - 1$.  Face maps in $X$ landing in dimensions $< n$ are the unique maps to the one-element set.  The sets $X(\gamma)$ for $\gamma$ of dimensions $\geq n+1$ are defined as certain products of the $X_\alpha$ $(\alpha \in \elt(\sfP^{(n-1)+})$ as in \eqref{Xgammadim1} and \eqref{XgammadimN}.  The face maps in these dimensions are the corresponding projection maps.  The out-face maps from $(n+1)$-cells to $n$-cells in $X$ are defined by the $\sfP^{n+}$-algebra structure on $X$ as in \eqref{1dimoutface}.  The data defined so far is a functor
\[
X \colon \bP(\sfP)' \to \set.
\]
As in \eqref{KanextensionX}, one takes the right Kan extension of this $X$ along $\pi$ to obtain $\psi^n(X) \in \propset$.  Using the universal properties of Kan extensions, one checks that this $\psi^n(X)$ is actually an object in $\algn(\sfP)'$ and that $\phi^n$ and $\psi^n$ give an equivalence of categories.
\end{proof}


\subsection{Categorical description of weak-$n$ $\sfP$-algebras}
\label{subsec:catweakn}

Here we give a categorical description of 
weak-$n$ $\sfP$-algebras for $1 \leq n \leq \infty$. In the rest of this section, the condition $i \leq k \leq n$ means $k \geq i$ if $n = \infty$.

Let $X \in \propset$ be a weak-$n$ $\sfP$-algebra for some $n$ in the range $1 \leq n \leq \infty$.  For each $k$ in the range $0 \leq k \leq n$, $X$ has a set of \textbf{$k$-cells}
\[
X_k = \coprod_{\alpha \in \elt(\sfP^{(k-1)+})} X(\alpha).
\]
We also call the elements in $X_0$ the \textbf{objects} in $X$. If $n < \infty$, then the $n$-cells are also called the \textbf{top cells}.

For $1 \leq k \leq n$, if
\[
\alpha \in \sfP^{(k-1)+}\binom{\varepsilon_1, \ldots , \varepsilon_s}{\delta_1, \ldots , \delta_r} = \sfP^{(k-1)+}\binom{\uepsilon}{\udelta}
\]
is a $k$-dimensional $\sfP$-propertope, then its in-face and out-face maps are
\begin{equation}
\label{Xnfacemaps}
\SelectTips{cm}{10}
\xymatrix{
 & & X(\alpha) \ar@{>>}[dll]_-{(X(f_1), \ldots , X(f_r))} \ar[drr]^-{(X(g_1), \ldots , X(g_s))} & & \\
X(\delta_1) \times \cdots \times X(\delta_r) & & & & X(\varepsilon_1) \times \cdots \times X(\varepsilon_s).
}
\end{equation}
Since $X$ is a weak-$n$ $\sfP$-algebra, $k$-dimensional horns have fillings for $1 \leq k \leq n$.  So the combined in-face map
\begin{equation}
\label{Xncombindedinface}
(X(f_1), \ldots , X(f_r)) \colon X(\alpha) \to X(\delta_1) \times \cdots \times X(\delta_r)
\end{equation}
in \eqref{Xnfacemaps} is surjective.  We thus have a \textbf{multi-valued composition function of $(k-1)$-cells}:
\begin{equation}
\label{Xnmulticomposition}
\alpha \colon X(\delta_1) \times \cdots \times X(\delta_r) \to X(\varepsilon_1) \times \cdots \times X(\varepsilon_s).
\end{equation}
The image of a sequence $\uy \in \prod X(\delta_i)$ of $(k-1)$-cells under the multi-valued composition function $\alpha$ is the non-empty subset
\[
\alpha(\uy) = \left\{(X(g_1), \ldots , X(g_s))(x) \colon x \in X(\alpha),\, (X(f_1), \ldots , X(f_r))(x) = \uy \right\}
\]
of $\prod X(\varepsilon_j)$.

If $x \in X(\alpha)$ is a $k$-cell for $1 \leq k \leq n$, then we call the sequences of $(k-1)$-cells
\[
\begin{split}
(y_1, \ldots , y_r) &= (X(f_1), \ldots , X(f_r))(x) \in X(\delta_1) \times \cdots \times X(\delta_r),\\
(z_1, \ldots, z_s) &= (X(g_1), \ldots , X(g_s))(x) \in X(\varepsilon_1) \times \cdots \times X(\varepsilon_s)
\end{split}
\]
the \textbf{source} and the \textbf{target} of $x$, respectively.  We depict such a $k$-cell $x$ together with its source and target as
\[
(y_1, \ldots , y_r) \xrightarrow{x} (z_1, \ldots , z_s).
\]
We think of the target $(z_1, \ldots , z_s)$ as either:
\begin{enumerate}
\item
a composite of $(y_1, \ldots , y_r)$ of shape $\alpha$ or
\item
the composite of $(y_1, \ldots , y_r)$ via $x$.
\end{enumerate}
The surjectivity of the combined in-face map \eqref{Xncombindedinface} implies that each sequence $(y_1, \ldots , y_n) \in \prod X(\delta_i)$ of $(k-1)$-cells has \emph{at least one} composite of shape $\alpha$.  In general, there may be many different composites of the $(k-1)$-cells $(y_1, \ldots , y_n)$.

The source and target (i.e., the in-faces and the out-faces) of $k$-cells for $1 \leq k \leq n$ satisfy the horizontal, vertical, unital, and equivariance consistency conditions of $X \in \propset$.  These conditions are the images under $X$ of the commutative diagrams \eqref{hconsistency1} -- \eqref{econsistency} in the specified dimensions.

If $n = \infty$, then we have given a categorical description of a weak-$\omega$ $\sfP$-algebra.

If $n < \infty$, then we still need to describe compositions of the top cells, i.e., the $n$-cells.  If
\[
\gamma \in \sfP^{n+}\binom{\beta_1, \ldots , \beta_s}{\alpha_1, \ldots , \alpha_r} = \sfP^{n+}\binom{\ubeta}{\ualpha}
\]
is an $(n+1)$-dimensional $\sfP$-propertope, then its in-face and out-face maps are
\begin{equation}
\label{weaknhigherface}
\SelectTips{cm}{10}
\xymatrix{
 & & X(\gamma) \ar[dll]^-{\cong}_-{(X(f_1), \ldots , X(f_r))} \ar[drr]^-{(X(g_1), \ldots , X(g_s))} & & \\
X(\alpha_1) \times \cdots \times X(\alpha_r) & & & & X(\beta_1) \times \cdots \times X(\beta_s).
}
\end{equation}
Since $X$ is a weak-$n$ $\sfP$-algebra with $n < \infty$, every $(n+1)$-dimensional horn has a \emph{unique} filling.  So the combined in-face map in \eqref{weaknhigherface} is a bijection as indicated.  Using the inverse of this bijection and the combined out-face map in \eqref{weaknhigherface}, we thus have a \textbf{composition function of the top cells}
\begin{equation}
\label{ndimcomposition}
\gamma \colon X(\alpha_1) \times \cdots \times X(\alpha_r) \to X(\beta_1) \times \cdots \times X(\beta_s).
\end{equation}
These composition functions, the source, and target of the top cells satisfy the consistency conditions of $X \in \propset$ (\eqref{hconsistency1} -- \eqref{econsistency}).

Moreover, the composition functions $\gamma$ \eqref{ndimcomposition} of the top cells give the $\elt(\sfP^{(n-1)+})$-graded set
\begin{equation}
\label{topcellPnalgebra}
\{X(\alpha) \colon \alpha \in \elt(\sfP^{(n-1)+})\}
\end{equation}
the structure of a $\sfP^{n+}$-algebra.  This is a consequence of the consistency conditions of $X$ in dimensions $\geq n$ and the existence of unique fillings of $(n+1)$-dimensional horns and $N$-dimensional boundaries for $N \geq n+2$  (Proposition ~\ref{P'reflective} and Theorem ~\ref{thm:EMweakn}).

In summary, in a weak-$n$ $\sfP$-algebra $X$ with $1 \leq n \leq \infty$:
\begin{enumerate}
\item
There is a set $X_k$ of \textbf{$k$-cells} for each $k$ in the range $0 \leq k \leq n$.
\item
For $0 \leq k < n$, composition of $k$-cells via $(k+1)$-cells is \emph{not} a function.  Instead, it is in general a \textbf{multi-valued composition function} \eqref{Xnmulticomposition}, satisfying some consistency conditions (\eqref{hconsistency1} -- \eqref{econsistency}).  The multi-valued composition of $k$-cells $(0 \leq k < n)$ via $(k+1)$-cells encodes the \textbf{coherence laws} of the $k$-cells.
\item
If $n < \infty$, then composition of the \textbf{top cells} (i.e., the $n$-cells) is an honest operation \eqref{ndimcomposition}.  This operation gives the top cells the structure of a $\sfP^{n+}$-algebra \eqref{topcellPnalgebra}.  This $\sfP^{n+}$-algebra encodes the \textbf{coherence laws} of the top cells.
\item
The composition function, the source, and the target of the top cells also satisfy the consistency conditions (\eqref{hconsistency1} -- \eqref{econsistency}).
\end{enumerate}

\subsection{Underlying category of a weak-$n$ $\sfP$-algebra}
\label{subsec;underlyingcat}

In a weak-$n$ $\sfP$-algebra $X$ with $1 \leq n < \infty$, we have seen that composition of the top cells (i.e., the $n$-cells) is an honest operation.  So it appears that $X$ should have an underlying category whose objects are the $(n-1)$-cells and whose morphisms are certain $n$-cells.  Here we prove directly that this is true, without referring to the $\sfP^{n+}$-algebra structure on $p(X) \in \algn(\sfP)'$.  So for each unital $\frakC$-colored PROP $\sfP$ over $\set$, every weak-$n$ $\sfP$-algebra with $1 \leq n < \infty$ has an underlying category.

\begin{theorem}
\label{thm:weakncategory}
Let $\sfP$ be a unital $\frakC$-colored PROP, and let $X$ be a weak-$n$ $\sfP$-algebra with $1 \leq n < \infty$.  Then there is a category $\Xtilde$ (without identity) in which:
\begin{enumerate}
\item
the objects are the $(n-1)$-cells
\[
X_{n-1} = \coprod_{\alpha \in \elt(\sfP^{(n-2)+})} X(\alpha)
\]
in $X$;
\item
the set of morphisms $\Xtilde(y,z)$, with $y \in X(\alpha)$ and $z \in X(\beta)$, is the set of fillings of the $n$-dimensional $\gamma$-boundary
\[
y \xrightarrow{?} z
\]
in $X$, where $\gamma$ runs through the $\sfP$-propertopes in $\sfP^{(n-1)+}\binom{\beta}{\alpha}$.
\end{enumerate}
\end{theorem}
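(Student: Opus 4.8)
The plan is to build $\Xtilde$ from the top two layers of cells of $X$, using the vertical composition of $\sfP^{(n-1)+}$ as the composition law. Write $\sfQ=\sfP^{(n-1)+}$, so the $n$-cells of $X$ sit over $\elt(\sfQ)$ and the $(n+1)$-cells over $\elt(\sfQ^+)=\elt(\sfP^{n+})$. A morphism $y\to z$, with $y\in X(\alpha)$ and $z\in X(\beta)$, is by definition a filling $x\in X(\gamma)$ of the $n$-dimensional $\gamma$-boundary $y\xrightarrow{?}z$ for some $\gamma\in\sfQ\binom{\beta}{\alpha}$; since the sets $X(\gamma)$ are disjoint, such an $x$ remembers its shape $\gamma$ as well as its in-face $y$ and out-face $z$. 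To compose $x\in\Xtilde(y,z)$ of shape $\gamma\in\sfQ\binom{\beta}{\alpha}$ with $x'\in\Xtilde(z,z')$ of shape $\gamma'\in\sfQ\binom{\epsilon}{\beta}$, I would first form the vertical composite $\gamma'\circ\gamma\in\sfQ\binom{\epsilon}{\alpha}$ and the $(n+1)$-dimensional propertope $G_{\gamma'\circ\gamma}\in\sfQ^+\binom{\gamma'\circ\gamma}{\gamma',\gamma}$ of \eqref{Galphabeta}. The pair $(x',x)$ is then a $G_{\gamma'\circ\gamma}$-horn in $X$, which has a unique filling $w$ because $X$ is a weak-$n$ $\sfP$-algebra; I define $x'\circ x\in X(\gamma'\circ\gamma)$ to be the out-face of $w$.

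First I would check that $x'\circ x$ is genuinely a morphism $y\to z'$. This is exactly the content of the vertical consistency condition: applying $X$ to the instance of the commutative diagram \eqref{vconsistency} for $G_{\gamma'\circ\gamma}$ (with $\gamma',\gamma$ in the roles of $\alpha,\beta$) and chasing $w$ through its two squares shows that the out-face of $x'\circ x$ equals the out-face of $x'$, namely $z'$, and the in-face of $x'\circ x$ equals the in-face of $x$, namely $y$. Since $X$ carries the consistency diagrams to commutative diagrams by Proposition~\ref{propertopicset}, this step is routine. Note that the statement asks only for an associative composition, the category being declared without identity, so no identities need to be produced.

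The hard part will be associativity. Given composable $x,x',x''$ of shapes $\gamma\in\sfQ\binom{\beta}{\alpha}$, $\gamma'\in\sfQ\binom{\epsilon}{\beta}$, $\gamma''\in\sfQ\binom{\zeta}{\epsilon}$, both bracketings lie in $X(\gamma''\circ\gamma'\circ\gamma)$ because $\circ$ is associative in $\sfQ$; but agreement of faces does not force agreement of cells, since at the top dimension $n$ the fillings of $n$-dimensional boundaries are not unique. My strategy is to compare both bracketings against a single ternary composite, carrying out the comparison one dimension higher, in the spirit of the proof of Lemma~\ref{lem1:weak0algebra} that $\lambda$ is compatible with vertical composition, now shifted up by $n$. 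I would set $A=G_{\gamma''\circ(\gamma'\circ\gamma)}$ and $B=\ttone_{\gamma''}\otimes G_{\gamma'\circ\gamma}$ in $\sfQ^+$, put $\Gamma_3=A\circ B\in\sfQ^+\binom{\gamma''\circ\gamma'\circ\gamma}{\gamma'',\gamma',\gamma}$, let $W$ be the unique filling of the $\Gamma_3$-horn $(x'',x',x)$, and let $m$ be its out-face. The witness $G_{A\circ B}$ of the composite $A\circ B$ (again as in \eqref{Galphabeta}) is an $(n+2)$-dimensional propertope, an element of $\sfP^{(n+1)+}$, with input colors $A,B$ and output color $\Gamma_3$. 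Placing the filling $w_2$ of the $A$-horn $(x'',x'\circ x)$ at $A$, any $(n+1)$-cell at $B$ (one exists since $B$-horns have fillings), and $W$ at $\Gamma_3$ yields a $G_{A\circ B}$-boundary, which has a unique filling $\Omega$ because $n+2\geq n+2$ (condition (3) of Definition~\ref{def:weakPalgebra}). Evaluating the left square of the vertical consistency diagram \eqref{vconsistency} for $G_{A\circ B}$ at $\Omega$ then equates the out-face of $w_2$ with the out-face of $W$, that is $x''\circ(x'\circ x)=m$. The mirror-image choice $A'=G_{(\gamma''\circ\gamma')\circ\gamma}$, $B'=G_{\gamma''\circ\gamma'}\otimes\ttone_\gamma$ gives $(x''\circ x')\circ x=m$, whence associativity.

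I expect the one genuinely computational point to be the identity $\Gamma_3=A\circ B=A'\circ B'$ in $\sfP^{n+}$: that both iterated bracketings of the three witnesses yield the same three-vertex chain propertope. This is the associativity of vertical composition in $\sfQ=\sfP^{(n-1)+}$ read through the graph-substitution description of $\circ$ and $\otimes$ in the slice PROP (Theorem~\ref{p+prop}), together with the neutral behavior of the $\ttone$ factors; it is the one-level-up analogue of the computation $G_{\gamma\circ\beta\circ\alpha}=G_{\gamma\circ(\beta\circ\alpha)}\circ(\ttone_\gamma\otimes G_{\beta\circ\alpha})$ made in \S\ref{subsec;draw}. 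Granting this identity, every remaining ingredient is either a unique-filling clause of Definition~\ref{def:weakPalgebra} or an instance of the vertical consistency condition, so I do not expect any further coherence data to be required.
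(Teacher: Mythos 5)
Your proposal is correct and follows essentially the same route as the paper: composition via the unique filling of the $G_{\gamma'\circ\gamma}$-horn, source/target via the vertical consistency condition, and associativity by comparing both bracketings to a ternary composite using the identity $G_{\gamma''\circ(\gamma'\circ\gamma)}\circ(\ttone_{\gamma''}\otimes G_{\gamma'\circ\gamma})=G_{\gamma''\circ\gamma'\circ\gamma}$, the $(n+2)$-dimensional witness of that reduction law, and the vertical consistency square one level up. The only cosmetic difference is that the paper places the specific filling of the $(\ttone_{\gamma''}\otimes G_{\gamma'\circ\gamma})$-horn over $(x'',x',x)$ at the $B$-slot of the boundary rather than an arbitrary cell, but your version also works since condition (3) of Definition~\ref{def:weakPalgebra} fills every $(n+2)$-dimensional boundary uniquely.
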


\begin{proof}
First we define composition of morphisms in $\Xtilde$.  Consider morphisms
\[
f \colon y_1 \to y_2 \in \Xtilde \quad\text{and}\quad
g \colon y_2 \to y_3 \in \Xtilde
\]
with $y_i \in X(\alpha_i)$ $(1 \leq i \leq 3)$, $f \in X(\alpha)$, $g \in X(\beta)$, $\alpha \in \sfP^{(n-1)+}\binom{\alpha_2}{\alpha_1}$, and $\beta \in \sfP^{(n-1)+}\binom{\alpha_3}{\alpha_2}$.  To define the composition $gf$, consider the $(n+1)$-dimensional $G_{\beta \circ \alpha}$-horn
\[
(g,f) \xrightarrow{?} ?
\]
in $X$, where
\begin{equation}
\label{Gbetaalpha}
\setlength{\unitlength}{.6mm}
\begin{picture}(50,53)(25,10)
\put(30,30){\circle*{2}}
\put(30,45){\circle*{2}}
\put(30,45){\vector(0,1){10}}
\put(30,30){\vector(0,1){14.5}}
\put(30,20){\vector(0,1){9.5}}
\put(25,30){\makebox(0,0){$\alpha$}}
\put(25,45){\makebox(0,0){$\beta$}}
\put(-3,40){\makebox(0,0){$G_{\beta\circ\alpha} =$}}
\put(45,38){$\in \sfP^{n+}\dbinom{\beta\circ\alpha}{\beta,\alpha}$}
\end{picture}
\end{equation}
is first defined in \eqref{Galphabeta}.  Since $X$ is a weak-$n$ $\sfP$-algebra, this $(n+1)$-dimensional $G_{\beta\circ\alpha}$-horn has a unique filling
\[
w_{g,f} \in X(G_{\beta\circ\alpha}).
\]
The out-face of $w_{g,f}$ in $X(\beta\circ\alpha)$ is, by definition, the composition $gf$, so we have
\[
(g,f) \xrightarrow{w_{g,f}} gf.
\]
We need to check the following statements:
\begin{enumerate}
\item
The composition $gf \in X(\beta\circ\alpha)$ is actually a morphism
\[
gf \colon y_1 \to y_3 \in \Xtilde.
\]
In other words, the in-face of $gf$ is $y_1$, and the out-face of $gf$ is $y_3$.
\item
Composition is associative.
\end{enumerate}

To check that $gf \in \Xtilde(y_1,y_3)$, consider the vertical consistency condition for $X$ \eqref{vconsistency} when applied to $G_{\beta\circ\alpha}$.  In this case, the vertical consistency condition says that the following diagram is commutative:
\[
\SelectTips{cm}{10}
\xymatrix{
X(\beta) \ar[d]_-{\text{out}} & X(G_{\beta\circ\alpha}) \ar[l]_-{\text{in}} \ar[d]^-{\text{out}} \ar[r]^-{\text{in}} & X(\alpha) \ar[d]^-{\text{in}}\\
X(\alpha_3) & X(\beta\circ\alpha) \ar[l]_-{\text{out}} \ar[r]^-{\text{in}} & X(\alpha_1).
}
\]
Each map in this commutative diagram is either an in-face map or an out-face map as indicated.  Starting with the $(n+1)$-cell $w_{g,f} \in X(G_{\beta\circ\alpha})$, the ``left-followed-by-down" composite yields $y_3$.  On the other hand, the ``down-followed-by-left" composite yields the out-face of $gf$.  Since the left square is commutative, we conclude that the out-face of $gf$ is $y_3$.  Similarly, by considering the images of $w_{g,f}$ in $X(\alpha_1)$ under the two composites in the right square, we see that the in-face of $gf$ is $y_1$.  This shows that $gf \in \Xtilde(y_1,y_3)$.

Next we check that composition in $\Xtilde$ is associative.  In other words, suppose that
\[
h \colon y_3 \to y_4 \in \Xtilde
\]
is a morphism with $y_4 \in X(\alpha_4)$, $h \in X(\gamma)$, and $\gamma \in \sfP\binom{\alpha_4}{\alpha_3}$.  Then we need to show that
\begin{equation}
\label{Xtildeassociative}
h(gf) = (hg)f \in X(\gamma\circ\beta\circ\alpha).
\end{equation}
Consider the $(n+1)$-dimensional $G_{\gamma\circ\beta\circ\alpha}$-horn
\begin{equation}
\label{hgfhorn1}
(h,g,f) \xrightarrow{?} ?
\end{equation}
in $X$, where, using the notations in \eqref{pbar+},
\begin{equation}
\label{Ggammabetaalpha}
\setlength{\unitlength}{.6mm}
\begin{picture}(50,70)(50,10)
\put(30,30){\circle*{2}}
\put(30,45){\circle*{2}}
\put(30,60){\circle*{2}}
\put(30,20){\vector(0,1){9.5}}
\put(30,30){\vector(0,1){14.5}}
\put(30,45){\vector(0,1){14.5}}
\put(30,60){\vector(0,1){10}}
\put(25,30){\makebox(0,0){$\alpha$}}
\put(25,45){\makebox(0,0){$\beta$}}
\put(25,60){\makebox(0,0){$\gamma$}}
\put(-3,45){\makebox(0,0){$G_{\gamma\circ\beta\circ\alpha} =$}}
\put(45,43){$\in \overline{\sfP^{(n-1)+}}^{+}_{(3)}(\gamma\circ\beta\circ\alpha; \gamma, \beta, \alpha) \subseteq  \sfP^{n+}\dbinom{\gamma\circ\beta\circ\alpha}{\gamma,\beta,\alpha}$.}
\end{picture}
\end{equation}
Here the three vertices are labeled $1$, $2$, and $3$, respectively, from top to bottom.  The four edges are decorated by $\alpha_4$, $\alpha_3$, $\alpha_2$, and $\alpha_1$, respectively, from top to bottom.  The $(n+1)$-dimensional $G_{\gamma\circ\beta\circ\alpha}$-horn \eqref{hgfhorn1} has a unique filling
\[
w_{h,g,f} \in X(G_{\gamma\circ\beta\circ\alpha}).
\]
Define
\[
hgf = \text{the out-face of $w_{h,g,f}$ in $X(\gamma\circ\beta\circ\alpha)$}.
\]
To prove the required equality \eqref{Xtildeassociative}, it suffices to show that
\begin{equation}
\label{Xtildeass}
h(gf) = hgf \quad\text{and}\quad (hg)f = hgf.
\end{equation}
We will show that
\begin{equation}
\label{Xtildeass1}
h(gf) = hgf
\end{equation}
only, since the second equality in \eqref{Xtildeass} can be proved by essentially the same argument.

To prove \eqref{Xtildeass1}, first we spell out how $h(gf)$ is defined.  From the very definition of composition, we need to consider the $(n+1)$-dimensional $G_{\gamma\circ(\beta\circ\alpha)}$-horn
\[
(h, gf) \xrightarrow{?} ?
\]
in $X$, where
\begin{equation}
\label{Ggammabetaalpha'}
\setlength{\unitlength}{.6mm}
\begin{picture}(50,53)(25,10)
\put(30,30){\circle*{2}}
\put(30,45){\circle*{2}}
\put(30,45){\vector(0,1){10}}
\put(30,30){\vector(0,1){14.5}}
\put(30,20){\vector(0,1){9.5}}
\put(20,30){\makebox(0,0){$\beta\circ\alpha$}}
\put(25,45){\makebox(0,0){$\gamma$}}
\put(-12,40){\makebox(0,0){$G_{\gamma\circ(\beta\circ\alpha)} =$}}
\put(45,38){$\in \sfP^{n+}\dbinom{\gamma\circ\beta\circ\alpha}{\gamma,\beta\circ\alpha}$.}
\end{picture}
\end{equation}
This $(n+1)$-dimensional $G_{\gamma\circ(\beta\circ\alpha)}$-horn has a unique filling
\[
w_{h,gf} \in X(G_{\gamma\circ(\beta\circ\alpha)}),
\]
whose out-face in $X(\gamma\circ\beta\circ\alpha)$ is, by definition, $h(gf)$.  To prove \eqref{Xtildeass1}, we need to understand the relationships between $w_{h,g,f}$ and $w_{h,gf}$.  Thus, we should first understand the relationships between their shapes, i.e., the various $G$'s in $\sfP^{n+}$.

Consider the vertical composition
\[
\sfP^{n+}\binom{\gamma\circ\beta\circ\alpha}{\gamma, \beta\circ\alpha} \times \sfP^{n+}\binom{\gamma, \beta\circ\alpha}{\gamma,\beta,\alpha} \xrightarrow{\circ} \sfP^{n+}\binom{\gamma\circ\beta\circ\alpha}{\gamma,\beta,\alpha}
\]
in $\sfP^{n+}$, which is defined by graph substitution \eqref{p+verticalcomp}.  Under this vertical composition, we have
\begin{equation}
\label{Gequality}
G_{\gamma\circ(\beta\circ\alpha)} \circ (\ttone_\gamma \otimes G_{\beta\circ\alpha}) = G_{\gamma\circ\beta\circ\alpha},
\end{equation}
where
\[
\setlength{\unitlength}{.6mm}
\begin{picture}(40,35)(15,17)
\put(30,30){\circle*{2}}
\put(30,20){\vector(0,1){9.5}}
\put(30,30){\vector(0,1){10}}
\put(25,30){\makebox(0,0){$\gamma$}}
\put(10,30){\makebox(0,0){$\mathtt{1}_{\gamma} =$}}
\put(40,28){$\in \sfP^{n+}\dbinom{\gamma}{\gamma}$}
\end{picture}
\]
is first defined in \eqref{1alpha}.  Since \eqref{Gequality} is a reduction law in $\sfP^{n+}$, it can be represented as an element in $\sfP^{(n+1)+}$, namely, the ``rocket" $\sfP^{n+}$-decorated graph
\begin{equation}
\setlength{\unitlength}{.6mm}
\label{Gprime}
\begin{picture}(50,58)(25,10)
\put(30,30){\circle*{2}}
\put(30,50){\circle*{2}}
\put(20,20){\vector(1,1){9.5}}
\put(40,20){\vector(-1,1){9.5}}
\put(30,20){\vector(0,1){9.5}}
\put(30,50){\vector(0,1){10}}
\put(28,48){\vector(1,1){1.5}}
\put(32,48){\vector(-1,1){1.5}}
\qbezier(30,30)(23,39)(28,48)
\qbezier(30,30)(37,39)(32,48)
\put(51,29){\makebox(0,0){$\ttone_\gamma \otimes G_{\beta\circ\gamma}$}}
\put(48,49){\makebox(0,0){$G_{\gamma\circ(\beta\circ\alpha)}$}}
\put(-23,44){\makebox(0,0){$G' = G_{G_{\gamma\circ(\beta\circ\alpha)} \circ (\ttone_\gamma \otimes G_{\beta\circ\gamma})} =$}}
\put(75,42){$\in \sfP^{(n+1)+}\dbinom{G_{\gamma\circ\beta\circ\alpha}}{G_{\gamma\circ(\beta\circ\alpha)}, \ttone_\gamma \otimes G_{\beta\circ\gamma}}$.}
\end{picture}
\end{equation}
From left to right, the bottom three edges are labeled $1$, $2$, and $3$, and are decorated by $\gamma$, $\beta$, and $\alpha$, respectively.  The two mid-level edges are decorated by $\gamma$ and $\beta\circ\alpha$, respectively.  The top edge is decorated by $\gamma\circ\beta\circ\alpha$.  We will come back to this element $G' \in \elt(\sfP^{(n+1)+})$ shortly.

The $(n+1)$-dimensional $(\ttone_\gamma \otimes G_{\beta\circ\gamma})$-horn
\[
(h,g,f) \xrightarrow{?} ?
\]
in $X$ has a unique filling
\[
w \in X(\ttone_\gamma \otimes G_{\beta\circ\gamma}).
\]
Consider the $(n+2)$-dimensional $G'$-boundary
\[
(w_{h,gf}, w) \xrightarrow{?} w_{h,g,f}
\]
in $X$.  Since $X$ is a weak-$n$ $\sfP$-algebra, this $(n+2)$-dimensional boundary has a unique filling
\[
u \in X(G').
\]
Part of the vertical consistency condition for $X$ \eqref{vconsistency}, when applied to $G'$, gives the commutative square
\[
\SelectTips{cm}{10}
\xymatrix{
X(G') \ar[r]^-{\text{out}} \ar[d]_-{\text{in}} & X(G_{\gamma\circ\beta\circ\alpha}) \ar[d]^-{\text{out}}\\
X(G_{\gamma\circ(\beta\circ\alpha)}) \ar[r]^-{\text{out}} & X(\gamma\circ\beta\circ\alpha).
}
\]
Starting at the $G'$-cell $u$, the ``down-followed-by-right" composite gives $h(gf)$. The other composite gives $hgf$.  Since this square is commutative, we have proved the equality \eqref{Xtildeass1}.
\end{proof}

\subsection{Pullback weak-$n$ $\sfP$-algebras}
\label{subsec:pullback}

Fix a map
\[
\varphi \colon \sfP \to \sfQ
\]
of unital $\frakC$-colored PROPs.  Here we observe that weak-$n$ $\sfQ$-algebras have pullback weak-$n$ $\sfP$-algebra structures.

First we observe that $\varphi$ induces a functor from $\sfP$-propertopes to $\sfQ$-propertopes.

\begin{theorem}
\label{varphipropertope}
Let $\varphi \colon \sfP \to \sfQ$ be a map of unital $\frakC$-colored PROPs.  Then there exists a functor
\[
\Phi \colon \bP(\sfP) \to \bP(\sfQ)
\]
such that:
\begin{enumerate}
\item
For $\alpha \in \elt(\sfP)$, one has
\begin{equation}
\label{Phi=phi}
\Phi(\alpha) = \varphi(\alpha).
\end{equation}
\item
For
\[
\gamma \in \sfP^{n+}\binom{\beta_1, \ldots , \beta_s}{\alpha_1, \ldots , \alpha_r}
\]
with $n \geq 0$, one has
\begin{equation}
\label{Phigamma}
\Phi(\gamma) \in \sfQ^{n+}\binom{\Phi(\beta_1), \ldots , \Phi(\beta_s)}{\Phi(\alpha_1), \ldots , \Phi(\alpha_r)}.
\end{equation}
\item
For $\gamma, \gamma' \in \sfP^{n+}$ with $n \geq 0$ and permutations $\sigma \in \Sigma_s$ and $\tau \in \Sigma_r$, one has
\begin{equation}
\label{Phioperations}
\begin{split}
\Phi(\gamma \otimes \gamma') &= \Phi(\gamma) \otimes \Phi(\gamma'),\\
\Phi(\gamma \circ \gamma') &= \Phi(\gamma) \circ \Phi(\gamma'),\\
\Phi(\sigma\gamma\tau) &= \sigma\Phi(\gamma)\tau.
\end{split}
\end{equation}
\end{enumerate}
\end{theorem}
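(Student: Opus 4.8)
The plan is to construct $\Phi$ by induction on the dimension of $\sfP$-propertopes, running in parallel an inductive verification that at each dimension the map satisfies \eqref{Phigamma} and \eqref{Phioperations}; these two properties are mutually reinforcing, and keeping them synchronized is the organizing idea of the whole argument. Write $\Phi_k$ for the restriction of $\Phi$ to $k$-dimensional $\sfP$-propertopes, i.e.\ the map $\elt(\sfP^{(k-1)+}) \to \elt(\sfQ^{(k-1)+})$ to be built. I would set $\Phi_0 = \mathrm{id}_\frakC$ on the $0$-dimensional propertopes and $\Phi_1 = \varphi$ on the $1$-dimensional propertopes, which is exactly \eqref{Phi=phi}. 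Since $\varphi$ is a map of $\frakC$-colored PROPs, $\Phi_1$ sends $\sfP\binom{\ud}{\uc}$ into $\sfQ\binom{\ud}{\uc}$ and commutes with the horizontal and vertical compositions, the permutation actions, and the units; thus \eqref{Phigamma} and \eqref{Phioperations} hold at the bottom level, and this is the base case of the induction.

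For the inductive step, recall from Definition \ref{def:Pn+} that a $k$-dimensional propertope (an element of $\elt(\sfP^{(k-1)+})$) is a finite sequence of $\sfP^{(k-2)+}$-decorated graphs, whose vertices carry decorations in $\elt(\sfP^{(k-2)+})$ and whose edges carry decorations in $\elt(\sfP^{(k-3)+})$. Given such a decorated graph $(G,\xi)$, I would define $\Phi_k$ by keeping the underlying graph $G$ unchanged and relabeling each decoration by $\Phi$ in the appropriate dimension: a vertex decoration $\xi(v)$ goes to $\Phi_{k-1}(\xi(v))$ and an edge decoration $\xi(e)$ goes to $\Phi_{k-2}(\xi(e))$. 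The color-matching property \eqref{colormatching} for $(G,\Phi\circ\xi)$ is inherited from that of $(G,\xi)$ precisely because $\Phi_{k-1}$ preserves input and output profiles by the inductive hypothesis \eqref{Phigamma}; hence $(G,\Phi\circ\xi)$ is a legitimate $\sfQ^{(k-2)+}$-decorated graph, and applying this componentwise to a sequence defines $\Phi_k$. Moreover evaluation is $\Phi$-equivariant, $ev((G,\Phi\circ\xi)) = \Phi_{k-1}(ev((G,\xi)))$ by \eqref{grapheval} and \eqref{evalG}, since $ev$ is assembled from $\otimes$, $\circ$, the permutation maps, and the units, all of which $\Phi_{k-1}$ respects by \eqref{Phioperations}; this gives \eqref{Phigamma} at dimension $k$. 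Property \eqref{Phioperations} at dimension $k$ is then immediate from the combinatorial descriptions in Theorem \ref{p+prop} of $\otimes$ (concatenation of sequences), $\circ$ (graph substitution \eqref{p+verticalcomp}), and $\sigma(-)\tau$ (relabeling \eqref{1alphatwisted}), because $\Phi_k$ touches only the decorations and leaves the graphs and their labels untouched.

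It remains to define $\Phi$ on morphisms and to check functoriality. Because $\Phi$ preserves profiles entrywise \eqref{Phigamma}, an occurrence of $\alpha$ as the $i$th input (resp.\ the $j$th output) color of $\gamma$ corresponds to an occurrence of $\Phi(\alpha)$ as the $i$th input (resp.\ the $j$th output) color of $\Phi(\gamma)$, so I would send the in-face map $f_i \colon \gamma \to \alpha$ of \eqref{infacemap} (resp.\ the out-face map $g_j \colon \gamma \to \beta_j$ of \eqref{outfacemap}) to the in-face (resp.\ out-face) map of $\Phi(\gamma)$ carrying the same index. Extending along chains of face maps \eqref{Pmorphism} first produces a functor on the free category $\bP(\sfP)'$ of Remark \ref{remark:PP'}. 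To see that it descends through the quotient $\pi$ \eqref{piP}, I would note that $\Phi$ preserves the distinguished propertopes $\ttone_{\alpha_i}$, $G_{\alpha\circ\beta}$, $G_{\alpha\otimes\beta}$, and $\sigma\ttone_\gamma\tau$ entering the consistency diagrams—this is \eqref{Phioperations} together with the definitions \eqref{1alpha}, \eqref{Galphabeta}, \eqref{Galphatensorbeta}, \eqref{1alphatwisted}—so that $\Phi$ carries each horizontal, vertical, unital, and equivariance consistency diagram \eqref{hconsistency1}--\eqref{econsistency} of $\bP(\sfP)$ to the corresponding one of $\bP(\sfQ)$, which commutes by construction of $\bP(\sfQ)$. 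Hence the functor on $\bP(\sfP)'$ respects the defining relations and factors uniquely through $\bP(\sfP)$.

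The main obstacle I anticipate is not any individual computation but the bookkeeping in this coupled induction: one must use that $\Phi_{k-1}$ preserves the PROP operations and units both to verify color-matching for $(G,\Phi\circ\xi)$ and to obtain $\Phi$-equivariance of $ev$, while simultaneously propagating \eqref{Phioperations} to dimension $k$—so the two threads \eqref{Phigamma} and \eqref{Phioperations} must be advanced together at every level. Equally delicate is matching face-map indices correctly, since this is exactly what makes the assignment on morphisms well defined and lets the consistency diagrams transport cleanly to $\bP(\sfQ)$. The underlying simplification, that $\Phi$ merely relabels decorations without altering any graph or any label, is what renders each of these verifications routine once the inductive framework is set up.
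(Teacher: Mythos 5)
Your proposal is correct and follows essentially the same route as the paper: induction on dimension with $\Phi_0=\mathrm{id}_\frakC$, $\Phi_1=\varphi$, extension to higher dimensions by decoration replacement on the underlying graphs, and sending each face map to the face map of the same index on $\Phi(\gamma)$. Your explicit check that the functor on $\bP(\sfP)'$ descends through the quotient $\pi$ by preserving the consistency diagrams is a welcome point that the paper's proof leaves implicit, but it does not change the argument.
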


\begin{proof}
Since both $\sfP$ and $\sfQ$ are $\frakC$-colored PROPs, $0$-dimensional $\sfP$-propertopes are exactly the $0$-dimensional $\sfQ$-propertopes.  Thus, we can define
\[
\Phi(c) = c
\]
for $c \in \frakC = \elt(\sfP^{(-1)+})$.

Going one dimensional higher, for $\alpha \in \sfP\binom{\ud}{\uc}$, we define
\[
\Phi(\alpha) = \varphi(\alpha) \in \sfQ\binom{\ud}{\uc} = \sfQ\binom{\Phi(\ud)}{\Phi(\uc)}.
\]
Here for $\ud = (d_1, \ldots , d_m)$, we used (and will use) the shorthand
\[
\Phi(\ud) = (\Phi(d_1), \ldots , \Phi(d_m)).
\]
In particular, \eqref{Phi=phi} and the $n = 0$ cases of \eqref{Phigamma} and \eqref{Phioperations} all hold.  If $h_i \colon \alpha \to b_i$ is a face map out of $\alpha$ in $\bP(\sfP)$, then
\[
\Phi(h_i) \colon \Phi(\alpha) = \varphi(\alpha) \to b_i = \Phi(b_i)
\]
is the corresponding face map out of $\Phi(\alpha)$ in $\bP(\sfQ)$.  We extend $\Phi$ to higher dimensional $\sfP$-propertopes and face maps by induction.

Suppose that $n \geq 1$.  Inductively, suppose we have defined $\Phi$ on the subcategory of $\bP(\sfP)$ consisting of all the $k$-dimensional $\sfP$-propertopes for $k \leq n$ such that \eqref{Phi=phi} -- \eqref{Phioperations} are satisfied in these dimensions.  Let $\gamma \in \elt(\sfP^{n+})$ be an $(n+1)$-dimensional $\sfP$-propertope as in the statement of this Theorem.  By the construction of $\sfP^{n+} = (\sfP^{(n-1)+})^+$, we have
\[
\gamma = (G_1, \ldots , G_l),
\]
where each $G_i$ is a $\sfP^{(n-1)+}$-decorated graph.

Define $\Phi(\gamma)$ using \textbf{decoration replacement} as follows.  In $G_i$, if a typical vertex $u \in v(G_i)$ has decoration
\[
\xi(u) = \alpha \in \sfP^{(n-1)+}\binom{\uepsilon}{\udelta},
\]
then, using the induction hypothesis, we replace this decoration by
\[
\Phi(\alpha) \in \sfQ^{(n-1)+}\binom{\Phi(\uepsilon)}{\Phi(\udelta)}.
\]
We know that the edges of $u$ must be decorated by the $\varepsilon$'s and the $\delta$'s.  We replace these edge decorations in $G_i$ by the $\Phi(\varepsilon)$'s and the $\Phi(\delta)$'s accordingly.  This decoration replacement process is performed on all the vertices and edges in $G_i$.  Denote the result by $\Phi(G_i)$.  It is easy to see that $\Phi(G_i)$ is, in fact, a $\sfQ^{(n-1)+}$-decorated graph whose vertex decorations are the $\Phi(\xi(u))$ for $u \in v(G_i)$.

Observe that by the induction hypothesis again, we have
\[
\Phi(\gamma) := \left(\Phi(G_1), \ldots , \Phi(G_l)\right) \in \sfQ^{n+}\binom{\Phi(\ubeta)}{\Phi(\ualpha)}.
\]
Thus, \eqref{Phigamma} is satisfied.  The condition \eqref{Phioperations} is also satisfied.  In fact, the horizontal composition $\otimes$ in $\sfP^{n+}$ (resp. $\sfQ^{n+}$) is defined as splicing together two sequences of $\sfP^{(n-1)+}$-decorated (resp. $\sfQ^{(n-1)+}$-decorated) graphs.  In particular, we have the equality
\[
\Phi(\gamma \otimes \gamma') = \Phi(\gamma) \otimes \Phi(\gamma')
\]
because decoration replacement commutes with splicing sequences of graphs.  The other two equalities in \eqref{Phioperations} follow by the same reasoning.

Finally, if $h_i \colon \gamma \to \kappa$ is a face map out of $\gamma$ in $\bP(\sfP)$, then, using \eqref{Phigamma}, $\Phi(h_i)$ is the corresponding face map out of $\Phi(\gamma)$ in $\bP(\sfQ)$.  This finishes the induction and proves the Theorem.
\end{proof}

Consider a $\sfQ$-propertopic set $X \in \propsetQ$.  Using the functor $\Phi$ in Theorem ~\ref{varphipropertope}, we obtain the pullback $\sfP$-propertopic set $\varphi^*(X) \in \propset$, which is defined as the composite
\[
\bP(\sfP) \xrightarrow{\Phi} \bP(\sfQ) \xrightarrow{X} \set.
\]
We thus have a pullback functor
\begin{equation}
\label{pullbackfunctor}
\varphi^* \colon \propsetQ \to \propset.
\end{equation}

\begin{corollary}
\label{cor1:pullback}
The pullback functor $\varphi^*$ has both a left adjoint and a right adjoint.  In particular, $\varphi^*$ is an exact functor.
\end{corollary}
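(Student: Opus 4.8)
The plan is to recognize $\varphi^*$ as an instance of restriction (precomposition) along a functor between small categories, for which the existence of both adjoints is a formal consequence of the Kan extension machinery. By its very definition \eqref{pullbackfunctor}, the pullback functor is precomposition with the functor $\Phi \colon \bP(\sfP) \to \bP(\sfQ)$ constructed in Theorem \ref{varphipropertope}; that is, $\varphi^*(X) = X \circ \Phi$ for every $X \in \propsetQ$, and likewise on morphisms. So the task reduces to the standard statement that restriction along a functor of small categories, valued in a bicomplete target, has adjoints on both sides.

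First I would record that $\bP(\sfP)$ and $\bP(\sfQ)$ are small categories. This is immediate from their construction (Definition \ref{def:Pn+} and the ensuing discussion): their objects form the set $\coprod_{n \geq 0} \elt(\sfP^{(n-1)+})$, respectively its $\sfQ$-analogue, while each morphism is a finite chain of face maps \eqref{Pmorphism}, so the morphisms also constitute a set. Since $\set$ is complete and cocomplete, the left and right Kan extensions along $\Phi$ of any functor $\bP(\sfP) \to \set$ exist \cite[Ch.~X]{maclane2}; write them $\mathrm{Lan}_\Phi$ and $\mathrm{Ran}_\Phi$, both functors $\propset \to \propsetQ$. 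Their defining universal properties yield exactly the chain of adjunctions
\[
\mathrm{Lan}_\Phi \dashv \varphi^* \dashv \mathrm{Ran}_\Phi,
\]
exhibiting $\mathrm{Lan}_\Phi$ as the left adjoint and $\mathrm{Ran}_\Phi$ as the right adjoint of $\varphi^*$.

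For the exactness assertion I would argue formally: a functor that is simultaneously a right adjoint (here, to $\mathrm{Lan}_\Phi$) and a left adjoint (here, to $\mathrm{Ran}_\Phi$) preserves all small limits and all small colimits. In the $\bk$-linear setting of Remark \ref{PPkmodules}, where the $\set$-valued functor categories are replaced by the abelian categories $\kmodule^{\bP(\sfQ)}$ and $\propmodule$ and the two Kan extensions exist for the same reason ($\kmodule$ is bicomplete), this preservation of finite limits and colimits is precisely the exactness of $\varphi^*$.

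The remaining verifications—that $\Phi$ is genuinely a functor, which is what makes precomposition well defined, and the naturality of the adjunction isomorphisms—are already furnished by Theorem \ref{varphipropertope} and the classical Kan extension theory, so there is essentially no real obstacle here. The only point I would flag as needing explicit care is the smallness of $\bP(\sfP)$ and $\bP(\sfQ)$, since it is exactly this hypothesis, together with bicompleteness of the target, that licenses the simultaneous existence of $\mathrm{Lan}_\Phi$ and $\mathrm{Ran}_\Phi$; I would therefore state it before invoking \cite{maclane2}.
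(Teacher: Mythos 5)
Your proposal is correct and follows essentially the same route as the paper: the paper likewise observes that $\bP(\sfP)$ is small and $\set$ is complete and cocomplete, obtains the two adjoints as left and right Kan extensions along $\Phi$, and concludes exactness from the existence of both adjoints. Your write-up is in fact slightly more careful than the paper's in spelling out that the Kan extensions are taken along $\Phi$ and in flagging where the exactness statement is most meaningful, but there is no substantive difference in method.
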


\begin{proof}
Since $\bP(\sfP)$ is a small category and $\set$ is complete and cocomplete, the pullback functor $\varphi^*$ has both a left Kan extension and a right Kan extension. These Kan extensions are the left and the right adjoints of $\varphi^*$ \cite[p.239]{maclane2}.  A functor that has both a left adjoint and a right adjoint is automatically exact.
\end{proof}

Recall from Definition ~\ref{def:weakPalgebra} that a weak-$n$ $\sfP$-algebra is a $\sfP$-propertopic set in which certain horns and boundaries have (unique) fillings.  The category $\algn(\sfP)$ of weak-$n$ $\sfP$-algebras is a full subcategory of the category $\propset$ of $\sfP$-propertopic sets.

\begin{corollary}
\label{cor2:pullback}
Let $\varphi \colon \sfP \to \sfQ$ be a map of unital $\frakC$-colored PROPs.  Then the pullback functor $\varphi^*$ \eqref{pullbackfunctor} restricts to a functor
\[
\varphi^* \colon \algn(\sfQ) \to \algn(\sfP)
\]
for any $n$ in the range $0 \leq n \leq \infty$.
\end{corollary}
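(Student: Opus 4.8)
The plan is to show that if $X \in \algn(\sfQ)$, then the pullback $\sfP$-propertopic set $\varphi^*(X) = X \circ \Phi$ again satisfies the three filling conditions of Definition~\ref{def:weakPalgebra}, where $\Phi$ is the functor of Theorem~\ref{varphipropertope}. Since $\algn(\sfP)$ and $\algn(\sfQ)$ are full subcategories of $\propset$ and $\propsetQ$, respectively, and $\varphi^*$ is already a functor on all propertopic sets \eqref{pullbackfunctor}, once we know that $\varphi^*$ carries weak-$n$ $\sfQ$-algebras to weak-$n$ $\sfP$-algebras, the asserted restriction is automatically a functor and nothing further about morphisms needs to be checked.

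First I would record the basic compatibility between the combinatorics at $\gamma$ and at $\Phi(\gamma)$. For any $\sfP$-propertope $\gamma$ we have $\varphi^*(X)(\gamma) = X(\Phi(\gamma))$, so the $\gamma$-cells of $\varphi^*(X)$ are literally the $\Phi(\gamma)$-cells of $X$. Moreover, by \eqref{Phigamma}, if
\[
\gamma \in \sfP^{(n-1)+}\binom{\beta_1,\ldots,\beta_s}{\alpha_1,\ldots,\alpha_r},
\]
then
\[
\Phi(\gamma) \in \sfQ^{(n-1)+}\binom{\Phi(\beta_1),\ldots,\Phi(\beta_s)}{\Phi(\alpha_1),\ldots,\Phi(\alpha_r)},
\]
so $\gamma$ and $\Phi(\gamma)$ have the same number $r$ of input slots and the same number $s$ of output slots. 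As recorded in the proof of Theorem~\ref{varphipropertope}, $\Phi$ carries the $i$th in-face map $f_i \colon \gamma \to \alpha_i$ to the $i$th in-face map $\Phi(\gamma) \to \Phi(\alpha_i)$, and likewise for out-face maps; hence $\Phi$ gives a position-preserving, dimension-preserving bijection between the face maps out of $\gamma$ and those out of $\Phi(\gamma)$. Combined with $\varphi^*(X)(f_i) = X(\Phi(f_i))$, this yields bijections between the $\gamma$-horns (resp.\ $\gamma$-boundaries) of $\varphi^*(X)$ and the $\Phi(\gamma)$-horns (resp.\ $\Phi(\gamma)$-boundaries) of $X$, under which a $\gamma$-cell $x$ of $\varphi^*(X)$ fills the given $\gamma$-horn or $\gamma$-boundary exactly when the same element, viewed as a $\Phi(\gamma)$-cell of $X$, fills the corresponding $\Phi(\gamma)$-horn or boundary (Definition~\ref{gammahorndef}).

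With this dictionary in place, the three conditions of Definition~\ref{def:weakPalgebra} for $\varphi^*(X)$ transfer directly from those for $X$. Because $\Phi$ sends a $k$-dimensional $\sfP$-propertope to a $k$-dimensional $\sfQ$-propertope, a $k$-dimensional horn or boundary in $\varphi^*(X)$ corresponds to one of the \emph{same} dimension $k$ in $X$. Thus a filling of a $k$-dimensional horn in $\varphi^*(X)$ for $1 \leq k \leq n$ is supplied by the corresponding filling in $X$; and the unique filling of an $(n+1)$-dimensional horn, as well as the unique filling of an $N$-dimensional boundary for $N \geq n+2$ in $\varphi^*(X)$, are supplied by --- and their uniqueness is inherited from --- the corresponding unique fillings in the weak-$n$ $\sfQ$-algebra $X$. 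Hence $\varphi^*(X) \in \algn(\sfP)$, and the same argument applies for every $n$ in the range $0 \leq n \leq \infty$.

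The main obstacle is the bookkeeping in the second paragraph: one must verify that $\Phi$ induces a genuine bijection between the face maps out of $\gamma$ and those out of $\Phi(\gamma)$ (not merely an injection), so that horns, boundaries, and their fillings match up on the nose even when $\varphi$ (and hence $\Phi$) fails to be injective on objects. This rests entirely on \eqref{Phigamma}, which guarantees that $\Phi$ preserves the ordered input and output profiles, hence the total count $r + s$ of occurrences that index the face maps out of $\gamma$; everything else is a routine transfer of lifting properties through the identification $\varphi^*(X)(\gamma) = X(\Phi(\gamma))$. The $\bk$-linear case is identical, with sets and functions replaced by $\bk$-modules and $\bk$-linear maps throughout.
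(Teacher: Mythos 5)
Your proposal is correct and follows essentially the same route as the paper's own proof: identify $\varphi^*(X)(\gamma) = X(\Phi(\gamma))$, use \eqref{Phigamma} to match $\gamma$-horns and $\gamma$-boundaries in $\varphi^*(X)$ with $\Phi(\gamma)$-horns and $\Phi(\gamma)$-boundaries in $X$, and transfer the (unique) filling conditions dimension by dimension. Your extra care in checking that $\Phi$ gives a position-preserving bijection on face maps is a worthwhile elaboration of what the paper states more tersely as "can also be regarded as... and vice versa," but it is not a different argument.
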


\begin{proof}
Let $X \in \algn(\sfQ)$ be a weak-$n$ $\sfQ$-algebra.  If $\gamma \in \sfP^{(k-1)+}\binom{\ubeta}{\ualpha}$, then
\[
\Phi(\gamma) \in \sfQ^{(k-1)+}\binom{\Phi(\ubeta)}{\Phi(\ualpha)}
\]
by \eqref{Phigamma}.  Moreover, we have
\[
\varphi^*(X)(\gamma) = X(\Phi(\gamma))
\]
by the definition of the pullback functor $\varphi^*$.  In particular, any $k$-dimensional $\gamma$-horn
\[
(y_1, \ldots , y_r) \xrightarrow{?} ?
\]
in $\varphi^*(X) \in \propset$ can also be regarded as a $k$-dimensional $\Phi(\gamma)$-horn in $X$, and vice versa.  The same remark applies to boundaries instead of horns.  Then it follows from the existence of (unique) fillings of horns and boundaries in $X$ that $\varphi^*(X)$ is a weak-$n$ $\sfP$-algebra.
\end{proof}

For a weak-$n$ $\sfQ$-algebra $X \in \algn(\sfQ)$, we call $\varphi^*(X) \in \algn(\sfP)$ the \textbf{underlying weak-$n$ $\sfP$-algebra} of $X$.

\section{Higher dimensional algebras for applications}
\label{sec;somehighercat}

The purpose of this section is to point out several weak-$n$ $\sfP$-algebras (Definition ~\ref{def:weakPalgebra}) that should be relevant in various applications of our theory of higher dimensional algebras.  We do not do much more than giving the basic definitions.  Deeper understanding of some of the concepts defined below requires much further work.

In \S\ref{subsec:hdcat} we consider \emph{higher category theory}.  By choosing the PROP $\sfP$ appropriately, we define weak $n$-categories, bicommutative bimonoidal weak $n$-categories, $2$-fold monoidal weak $n$-categories, and weak $n$ versions of polycategories.  In particular, if $\sfP$ is a unital $1$-colored PROP, then every weak-$n$ $\sfP$-algebra has an underlying weak $n$-category via a pullback functor.  Likewise, every bicommutative bimonoidal weak $n$-category has an underlying (trivial) weak-$n$ $\sfP$-algebra via a pullback functor (Corollary ~\ref{underlyingweakncat}).

In \S\ref{subsec:hdtft} we consider \emph{higher topological field theories}.  We take $\sfP$ to be the Segal PROP $\Se$ considered in Example ~\ref{ex:CFT}.  By first applying a suitable homology functor, we define weak $n$ versions of Cohomological Field Theories-$I$ and Topological Quantum Field Theories.

In \S\ref{subsec:nstack} we consider \emph{higher algebraic geometry} by defining weak $n$ versions of stacks.

Throughout the rest of this section, let $n$ be in the range $0 \leq n \leq \infty$, unless otherwise specified.

\subsection{Higher category theory}
\label{subsec:hdcat}

Here we consider a few concepts regarding higher category theory.

In Example ~\ref{initialprop} we considered the initial unital $1$-colored PROP $\sfI$, whose category of algebras is isomorphic to $\set$.  Thus, it makes sense to make the following definition.

\begin{definition}
\label{def:weakncat}
A \textbf{weak $n$-category} is defined as a weak-$n$ $\sfI$-algebra, where $\sfI$ is the initial unital $1$-colored PROP in $\set$.  A \textbf{morphism} of weak $n$-categories is defined as a morphism of weak-$n$ $\sfI$-algebras.
\end{definition}

In some sense, weak $n$-categories are the simplest kinds of weak-$n$ $\sfP$-algebras because $\sfI$ is the initial object among the unital $1$-colored PROPs.

In Example ~\ref{terminalprop} we considered the unital $1$-colored PROP $\sfT$, which is the terminal object among all the $1$-colored PROPs.  The $\sfT$-algebras are the bicommutative bimonoids.

\begin{definition}
\label{def:bicomweakncat}
A \textbf{bicommutative bimonoidal weak $n$-category} is defined as a weak-$n$ $\sfT$-algebra, where $\sfT$ is the terminal $1$-colored PROP in $\set$.  A \textbf{morphism} of bicommutative bimonoidal weak $n$-categories is defined as a morphism of weak-$n$ $\sfT$-algebras.
\end{definition}

If $\sfP$ is an arbitrary unital $1$-colored PROP in $\set$, then there are unique maps
\[
\sfI \xrightarrow{\iota} \sfP \xrightarrow{\tau} \sfT
\]
of unital $1$-colored PROPs.  Using the pullback functor $\varphi^*$ in Corollary ~\ref{cor2:pullback} (with $\varphi = \iota$ or $\tau$), we obtain the following consequences.

\begin{corollary}
\label{underlyingweakncat}
Let $\sfP$ be an arbitrary unital $1$-colored PROP in $\set$.  Then:
\begin{enumerate}
\item
Every weak-$n$ $\sfP$-algebra has an underlying weak $n$-category.
\item
Every bicommutative bimonoidal weak $n$-category has an underlying weak-$n$ $\sfP$-algebra.
\end{enumerate}
\end{corollary}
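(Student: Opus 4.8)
The plan is to derive both statements directly from the pullback construction of Corollary~\ref{cor2:pullback}, applied to the two canonical maps of unital $1$-colored PROPs
\[
\sfI \xrightarrow{\iota} \sfP \xrightarrow{\tau} \sfT
\]
that exist because $\sfI$ is initial (Example~\ref{initialprop}) and $\sfT$ is terminal (Example~\ref{terminalprop}) among unital $1$-colored PROPs. The entire argument is a matter of assembling the relevant definitions; no new construction is needed.

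For part~(1), I would begin with a weak-$n$ $\sfP$-algebra $X \in \algn(\sfP)$ and form its pullback $\iota^*(X)$ along $\iota \colon \sfI \to \sfP$. By Corollary~\ref{cor2:pullback} the pullback functor $\iota^*$ restricts to a functor $\algn(\sfP) \to \algn(\sfI)$, so $\iota^*(X)$ is a weak-$n$ $\sfI$-algebra. Since a weak $n$-category is by Definition~\ref{def:weakncat} precisely a weak-$n$ $\sfI$-algebra, this exhibits $\iota^*(X)$ as the underlying weak $n$-category of $X$.

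For part~(2), I would run the same reasoning along $\tau \colon \sfP \to \sfT$. Starting from a bicommutative bimonoidal weak $n$-category $Y$, which by Definition~\ref{def:bicomweakncat} is a weak-$n$ $\sfT$-algebra, I would form the pullback $\tau^*(Y)$. Corollary~\ref{cor2:pullback} again guarantees that $\tau^*$ carries $\algn(\sfT)$ into $\algn(\sfP)$, so $\tau^*(Y)$ is a weak-$n$ $\sfP$-algebra, the desired underlying weak-$n$ $\sfP$-algebra. In both cases the underlying-object assignment is functorial, being a restriction of the pullback functor $\varphi^*$.

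The only point demanding care---and the closest thing to an obstacle---is confirming that $\iota$ and $\tau$ are maps of \emph{unital} $1$-colored PROPs, since Corollary~\ref{cor2:pullback} is phrased for maps of unital $\frakC$-colored PROPs. This is supplied by the universal properties recorded in the discussion preceding the statement: the initiality of $\sfI$ among unital PROPs gives a unique unital $\iota$, while $\sfT$ having every component a one-point set forces the unique map $\tau$ to preserve units automatically. Granting this, both parts follow at once, with no genuine computational content.
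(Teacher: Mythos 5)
Your proposal is correct and is exactly the argument the paper gives: it applies the pullback functor of Corollary~\ref{cor2:pullback} to the unique maps $\sfI \xrightarrow{\iota} \sfP \xrightarrow{\tau} \sfT$ furnished by the initiality of $\sfI$ and the terminality of $\sfT$ among ($1$-colored) PROPs. Your added remark checking that $\iota$ and $\tau$ are maps of unital PROPs is a reasonable bit of extra care but does not change the substance.
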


In Example ~\ref{T+}, we considered the unital $\frakC$-colored PROP $\sfT_\frakC^+$, whose algebras are exactly the $\frakC$-colored PROPs.  Recall that $\sfT_\frakC$ is the terminal object among all the $\frakC$-colored PROPs.

\begin{definition}
A \textbf{weak-$n$ $\frakC$-colored PROP} is defined as a weak-$n$ $\sfT_\frakC^+$-algebra.  A \textbf{morphism} of weak-$n$ $\frakC$-colored PROPs is defined as a morphism of weak-$n$ $\sfT_\frakC^+$-algebras.
\end{definition}

Recall that to a polycategory $C$, one can associate an $Ob(C)$-colored PROP that determines the polycategory $C$ (Example ~\ref{ex:polycategory}).  Thus, one can think of weak-$n$ $\frakC$-colored PROPs, for different sets $\frakC$, as (containing the) $n$-time categorified polycategories, or \textbf{weak $n$-polycategories}.

In Example ~\ref{I+}, we observed that $\sfI^+$-algebras are exactly the (bi-equivariant graded) monoidal monoids.  We also discussed that monoidal monoids are  (bi-equivariant graded) de-categorified versions of $2$-fold monoidal categories \cite{bfsv}.

\begin{definition}
\label{def:monoidalmonoidal}
A \textbf{monoidal monoidal weak $n$-category}, or \textbf{$2$-fold monoidal weak $n$-category}, is defined as a weak-$n$ $\sfI^+$-algebra.  A \textbf{morphism} of monoidal monoidal weak $n$-categories is defined as a morphism of weak-$n$ $\sfI^+$-algebras.
\end{definition}

\subsection{Higher topological field theories}
\label{subsec:hdtft}

The next two definitions have to do with \emph{higher topological field theories}.

In Example ~\ref{ex:CFT} we discussed the Segal PROP $\Se$, which is a $1$-colored topological PROP.  We also noted that there is an obvious colored version of $\Se$, in which the boundary holes are allowed to have different circumferences.  Here, as in Example ~\ref{ex:CFT}, to simplify the discussion we only consider the $1$-colored version of $\Se$.

Recall from \eqref{H*Se} that if $H_*$ is the singular homology functor with coefficients in $\bk$, then $H_*(\Se)$ is the graded $\bk$-linear PROP for Cohomological Field Theories-$I$.

\begin{definition}
A \textbf{weak-$n$ Cohomological Field Theory-$I$} is defined as a weak-$n$ $H_*(\Se)$-algebra.  A \textbf{morphism} of weak-$n$ Cohomological Field Theories-$I$ is defined as a morphism of weak-$n$ $H_*(\Se)$-algebras.
\end{definition}

If we only take the $0$th homology, then $H_0(\Se)$ \eqref{H0Se} is the $\bk$-linear PROP for Topological Quantum Field Theories.

\begin{definition}
A \textbf{weak-$n$ Topological Quantum Field Theory}, abbreviated to \textbf{weak-$n$ TQFT}, is defined as a weak-$n$ $H_0(\Se)$-algebra.  A \textbf{morphism} of weak-$n$ TQFTs is defined as a morphism of weak-$n$ $H_0(\Se)$-algebras.
\end{definition}

One can consider weak-$n$ TQFT as one way to realize a higher dimensional version of TQFT as discussed in \cite{bd0}.

\subsection{Higher algebraic geometry}
\label{subsec:nstack}

In \cite{gro} Grothendieck suggested a higher dimensional version of stacks, or $n$-stacks.  The case $n = 2$ was considered by Breen \cite{breen}.  More generally, using Tamsamani's definition of weak $n$-category \cite{tam} (for $n < \infty$), Simpson \cite{simpson} discussed a notion of $n$-stacks as a parametrized family of Tamsamani's weak $n$-categories.  Here we suggest our own naive concept of $n$-stacks as a parametrized family of weak-$n$ $\sfP$-algebras.

If $X$ is a category, then a \emph{stack on $X$} is a sheaf of groupoids on $X$ satisfying some descent conditions.  So a stack on $X$ is a well-behaved functor
\[
F \colon X^{op} \to \mathbf{Gpd},
\]
where $\mathbf{Gpd}$ denotes the category of groupoids.  One way to fit stacks into our theory of higher dimensional algebras is as follows.

A groupoid is a category in which all the morphisms are invertible.  So a categorified generalization of it is a weak-$n$ $\sfP$-algebra, where $\sfP$ is any unital $\frakC$-colored PROP.  Thus, we should replace the category $\mathbf{Gpd}$ of groupoids by the category $\algn(\sfP)$ of weak-$n$ $\sfP$-algebras.  We take the notion of \emph{well-behaved} to mean \emph{fibrant} (Definition ~\ref{def:fibrantpropset}).

\begin{definition}
Let $X$ be a category and $\sfP$ be a unital $\frakC$-colored PROP.  A \textbf{weak-$n$ $\sfP$-stack on $X$} is defined as a functor
\[
F \colon X^{op} \to \algn(\sfP)
\]
such that $F(x)$ is a fibrant $\sfP$-propertopic set for each object $x$ in $X$.  A \textbf{morphism} of weak-$n$ $\sfP$-stacks on $X$ is a natural transformation of such functors.  Denote the category of weak-$n$ $\sfP$-stacks on $X$ by $\stack(\sfP,n,X)$.
\end{definition}

So a weak-$n$ $\sfP$-stack on $X$ is an $X^{op}$-diagram of fibrant weak-$n$ $\sfP$-algebras.  Recall from Corollary ~\ref{fibrantpropset} that a $\sfP$-propertopic set $Y$ is \emph{fibrant} if and only if every horn in $Y$ has a filling.  Another way to say it is that $Y$ is fibrant if and only if $Y$ is a weak-$\omega$ $\sfP$-algebra (Proposition ~\ref{weak0algebra}).  In particular, when $n = \infty$, a \textbf{weak-$\omega$ $\sfP$-stack on $X$} is exactly a functor
\[
F \colon X^{op} \to \alg^\infty(\sfP).
\]
In other words, we have
\[
\stack(\sfP,\infty,X) = \left(\alg^\infty(\sfP)\right)^{X^{op}}.
\]

Let $\varphi \colon \sfP \to \sfQ$ be a map of unital $\frakC$-colored PROPs.  Then there is a pullback functor
\[
\varphi^* \colon \algn(\sfQ) \to \algn(\sfP)
\]
for each $n$ in the range $0 \leq n \leq \infty$ (Corollary ~\ref{cor2:pullback}).  It follows that there is a pullback functor
\[
\varphi^* \colon \stack(\sfQ,n,X) \to \stack(\sfP,n,X)
\]
at the level of stacks.

A substantial piece of work on higher algebraic geometry is Lurie's book \cite{lurie}, which is based on one version of weak $\omega$-categories, called \emph{quasicategories}.  It would be nice to generalize Lurie's work to weak-$\omega$ $\sfP$-algebras for an arbitrary unital colored PROP $\sfP$.


\end{document}